\documentclass{louloupart1}
\usepackage{newtxtext,newtxmath}
\usepackage{pinlabel}
\usepackage{graphicx}
\usepackage[all]{xy}
\usepackage[sorting=nty,style=alphabetic]{biblatex}
\title[Smallest quotients and profinite rigidity of irreducible spherical type Artin groups]{Smallest quotients and profinite rigidity of\\ irreducible spherical type Artin groups}
\author[F. Gavazzi]{Federica Gavazzi}
\givenname{Federica}
\surname{Gavazzi}
\address{Federica Gavazzi, Department of Mathematics, Heriot-Watt University and Maxwell Institute for Mathematical Sciences, Edinburgh, UK.}
\email{F.Gavazzi@hw.ac.uk}
\author[I. Haladjian]{Igor Haladjian}
\givenname{Igor}
\surname{Haladjian}
\address{Igor Haladjian, Institut Denis Poisson, Université de Tours, UMR 7013, 37000 Tours, France.}
\email{igor.haladjian@univ-tours.fr}
\author[L. Paris]{Luis Paris}
\givenname{Luis}
\surname{Paris}
\address{Luis Paris, Universit\'e Bourgogne Europe, CNRS, IMB, UMR 5584, 21000 Dijon, France.}
\email{lparis@u-bourgogne.fr}

\newtheorem{thm}{Theorem}[section]
\newtheorem{lem}[thm]{Lemma}
\newtheorem{prop}[thm]{Proposition}
\newtheorem{corl}[thm]{Corollary}

\theoremstyle{definition}

\newtheorem{rem}[thm]{Remark}

\theoremstyle{definition}
\newtheorem{defn}[thm]{Definition}

\theoremstyle{definition2}
\newtheorem*{acknow}{Acknowledgments}

\newtheorem*{discloAI}{Disclosure of AI tools}

\numberwithin{equation}{section}

\newcommand{\llangle}{\langle\!\langle}
\newcommand{\rrangle}{\rangle\!\rangle}
\newcommand{\pg}{\mathrm{PerfectGroup}}

\makeatletter
\renewcommand{\thefigure}{\ifnum \c@section>\z@ \thesection.\fi
 \@arabic\c@figure}
\@addtoreset{figure}{section}
\makeatother

\begin{document}

\def\A{{\rm A}} 
\def \W{{\rm W}}
\def \M{{\rm M}}
\def\id{{\rm id}} \def\R{\mathbb R} 
\def \Prod{\rm Prod}
\def \B {{\mathcal{B}}}
\def \F {{\mathcal{F}}}
\def \C {{\mathcal{C}}}
\def \S {{\mathfrak{S}}}
\def \N {{\mathcal{N}}}
\def \ab {{\mathrm{ab}}}


\begin{abstract}
We prove that irreducible spherical type Artin groups are profinitely rigid within the class of all spherical type Artin groups. As part of the proof, building on ideas of Kolay for the $n$-strand braid group, we compute the smallest non-abelian quotient of every spherical type Artin group. In particular, given an irreducible spherical type Artin group, its smallest non-abelian quotient is always that of the corresponding Coxeter group, except for dihedral Artin groups of weight divisible by $4$, which all have $\mathfrak S_3$ as a non-abelian quotient. Parts of the proof also rely on the computation of the cohomological dimension of the profinite completion of certain Artin groups.

\smallskip\noindent
{\bf AMS Subject Classification.\ \ } 
Primary: 20F36.

\smallskip\noindent
{\bf Keywords.\ \ } Artin groups, profinite rigidity, profinite completion, smallest quotients.

\end{abstract}

\maketitle


\section{Introduction}

Let $S$ be a finite set. A \textit{Coxeter matrix} on $S$ is a square, symmetric matrix $\M=(m_{s,t})_{s,t\in S}$ with $m_{s,t}\in \mathbb{N}_{\geq 1}\cup \{\infty\}$ such that $m_{s,s}=1$ for all $s\in S$, and $m_{s,t}\geq 2$ for all $s\neq t$ in $S$. Such a matrix is usually encoded by a simplicial labeled graph $\Gamma$, called a \textit{Coxeter graph}, defined as follows. The vertex set of $\Gamma$ is the set $S$, and two distinct vertices $s,t$ are joined by and edge whenever $m_{s,t}\geq 3$, and the edge is labeled by $m_{s,t}$ if $m_{s,t}\geq 4$ or $m_{s,t}=\infty$.

Given a Coxeter graph $\Gamma$ on a finite set $S$, the \textit{Artin group} associated with $\Gamma$, denoted by $\A[\Gamma]$, is the group with the presentation
\[
\A[\Gamma]=\langle S\mid \underbrace{s\,t\,s\;\cdots }_{m_{s,t}\; \text{letters} }=\underbrace{t\,s\,t\;\cdots }_{m_{s,t}\; \text{letters} },\; \text{for } s,t\in S \;\text{such that } s\neq t, m_{s,t}\neq \infty \rangle\,.
\]
The \textit{Coxeter group} associated with $\Gamma$, denoted by $\W[\Gamma]$, is the quotient of $\A[\Gamma]$ by the relations $s^2=\id$ for all $s\in S$. When $\W[\Gamma]$ is a finite group, the Coxeter graph $\Gamma$, the Coxeter group $\W[\Gamma]$, and the Artin group $\A[\Gamma]$ are called \textit{of spherical type}.

Artin groups were introduced by Tits in \cite{Tit66} as extensions of Coxeter groups, but their systematic study began in the 1970s with the works of Brieskorn \cite{Bri71,Bri73}, Brieskorn--Saito \cite{BS72}, and Deligne \cite{Deligne}. These groups have attracted renewed interest in recent years, particularly in geometric group theory. One of the most studied families of Artin groups--and historically the first to be studied--is that of spherical-type Artin groups. These groups served as a model for the development of the theory of Garside groups \cite{DehornoyParis}, and braid groups are iconic examples of them \cite{Art47}.

Let $\Gamma$ be a Coxeter graph that can be written as a disjoint union of full subgraphs $\Gamma=\Gamma_1\sqcup\cdots \sqcup \Gamma_k$. It is easily seen that the associated Artin and Coxeter groups naturally decompose as $\A[\Gamma]=\A[\Gamma_1]\times \cdots \times \A[\Gamma_k]$ and $\W[\Gamma]=\W[\Gamma_1]\times \cdots \times \W[\Gamma_k]$, respectively. If $\Gamma$ is connected, then $\Gamma$, $\W[\Gamma]$ and $\A[\Gamma]$ are called \textit{irreducible}, while they are called \textit{reducible} otherwise. The Coxeter graphs that give rise to irreducible Coxeter groups of spherical type have been completely classified by Coxeter \cite{Cox34,Cox35}, and they must belong to the list of graphs illustrated in Figure \ref{fig:Cox-graphs-spherical-type}.

\begin{figure}
   \centering
\includegraphics[width=0.75\linewidth]{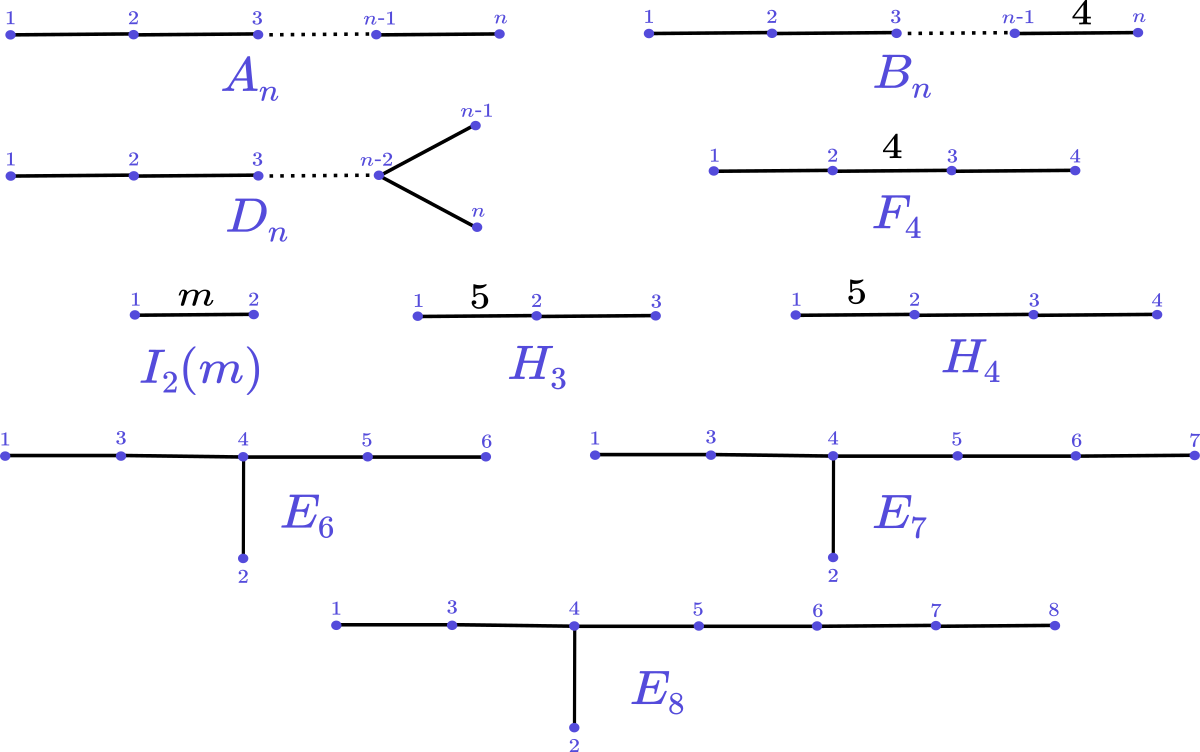}
    \caption{The irreducible Coxeter graphs of spherical type.}
    \label{fig:Cox-graphs-spherical-type}
\end{figure}

A classical question in the field, known as the \emph{isomorphism problem}, consists of determining when two Coxeter graphs define isomorphic Artin groups. There are examples of distinct Coxeter graphs that define isomorphic Artin groups (see \cite{BMMN02}). On the other hand, it was shown in \cite{Par04} that, if two spherical-type Coxeter graphs define isomorphic Artin groups, then the corresponding Coxeter graphs are isomorphic. Then the class of spherical-type Artin groups is said to be \emph{rigid}. The isomorphism problem remains widely open; beyond spherical-type Artin groups, it has been solved only for right-angled Artin groups \cite{Dro87} and for large-type Artin groups \cite{Vas23}.

In this work we tackle a stronger form of rigidity for spherical type Artin groups: the \textit{profinite rigidity}. Given a finitely generated group $G$, denote by $\F(G)$ the set of isomorphism classes of finite quotients of $G$. The group $G$ is called \textit{profinitely rigid relative to a class of groups} $\C$ if $G\in \C$ and, for any group $H\in \C$, whenever $\F(G)=\F(H)$, then $G\cong H$. Recall that a finitely generated group $G$ is called \textit{residually finite} if, for all $g\in G\backslash \{\id\}$, there exist a finite group $Q$ and a surjective homomorphism $p:G\longrightarrow Q$ such that $p(g)\neq \id$. A finitely generated residually finite group $G$ is called
\textit{profinitely rigid} if $G$ is profinitely rigid among all finitely generated residually finite groups.

In general, studying profinite properties of groups is only interesting when the groups are residually finite (more details are given in Section \ref{section-preliminaries}). Artin groups of spherical type were shown to be linear (see \cite{Digne} and \cite{CohenWales}), thus, by a classical result of Mal'cev \cite{Malcev1940}, they are residually finite. The main result of this article is the following.

\begin{thm}\label{mainthm}
Let $\Gamma$ and $\Omega$ be two Coxeter graphs of spherical type.
If $\Gamma$ is irreducible and $\widehat{\A [\Gamma]} \cong \widehat{\A [\Omega]}$, then $\Gamma \cong \Omega$.
\end{thm}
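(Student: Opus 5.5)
Our strategy is to read off from $\widehat{\A[\Gamma]}\cong\widehat{\A[\Omega]}$ enough profinite invariants to first force $\Omega$ to be irreducible and then pin down its type. Every invariant we use depends only on the set $\F(-)$ of finite quotients, or on the profinite completion itself.

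\textbf{Abelianization and center.} The abelianization $H_1(\A[\Gamma];\mathbb Z)\cong\mathbb Z^{c}$ is recovered from finite abelian quotients: $c$ is the number of conjugacy classes of generators, i.e.\ the number of components of the graph obtained from $\Gamma$ by keeping only the edges with odd label. This gives $c(\Gamma)=c(\Omega)$. Next we use the virtual cohomological dimension. For spherical Artin groups the Salvetti complex gives $\mathrm{cd}\,\A[\Gamma]=|S|$. We would invoke the computation of the cohomological dimension of $\widehat{\A[\Gamma]}$, which we expect to equal $|S|$ because these groups are good in the sense of Serre; goodness is known via the pure braid-type fibrations for several types, and is what the abstract alludes to. This yields $|S(\Gamma)|=|S(\Omega)|$. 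To detect irreducibility, note that $Z(\A[\Gamma])\cong\mathbb Z$ when $\Gamma$ is irreducible, whereas a reducible $\Omega$ with $k$ components has center $\mathbb Z^k$. I would try to show that the closure of the center, or better the maximal rank of a normal free abelian pro-$p$ subgroup, is a profinite invariant. A cleaner alternative is the number of factors in a direct product decomposition, via a rank-one argument: $\A[\Gamma]/Z$ is virtually centerless and acylindrically hyperbolic, so its completion has no nontrivial abelian normal subgroups, whereas $\widehat{\A[\Omega]}$ modulo its center still has many. This comparison of centers is the step where I expect the most trouble.

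\textbf{Smallest non-abelian quotient.} Once both graphs are irreducible with the same number of vertices, the remaining ambiguities in Figure \ref{fig:Cox-graphs-spherical-type} are few.
\begin{itemize}
\item $A_n$, $B_n$, $D_n$ have the same rank when they have the same index. They are separated by $c$: it equals $1$ for $A_n$ and $D_n$, and $2$ for $B_n$.
\item $E_6,E_7,E_8$ against $A_n,D_n$ of the same rank, together with $F_4$ against $B_4$ and $H_3,H_4$ against $B$, are distinguished by the smallest non-abelian quotient. Here we use the main computation of the paper, building on Kolay: the smallest non-abelian quotient of an irreducible $\A[\Gamma]$ is that of $\W[\Gamma]$ (for $\S_{n+1}$ it is $\S_{n+1}$ itself, and so on), with the exception of dihedral groups of weight divisible by $4$, which have $\S_3$. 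Since $\F$ is shared, these minimal quotients agree and separate the remaining types.
\item In rank $2$, the dihedral groups $I_2(m)$ require recovering $m$. For $m$ odd, $c=1$ and $\A=\langle x,y\mid x^2=y^m\rangle$; for $m$ even, $c=2$. In both cases $m$ is recovered from the finite dihedral-type quotients $\A\to D_{m}$, and in fact from the orders of the center image in finite quotients. More simply, $\A/Z$ is virtually free and its finite quotients record $m$.
\end{itemize}

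The main obstacle will be proving that irreducibility, i.e.\ the rank of the center, is a profinite invariant. The fallback is to use the smallest-quotient data for a reducible $\Omega$: its minimal non-abelian quotient comes from one factor, which has strictly smaller rank. Combining this with the cohomological dimension equality should yield a contradiction when checked case by case.
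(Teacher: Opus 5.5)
Your toolkit (abelianization, profinite cohomological dimension, smallest non-abelian quotients) is the paper's. However, the reducible case, which you flag yourself, has no working argument.

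\textbf{The reducible case.} The abelian-normal-subgroup invariant does not separate anything as formulated. If each $\widehat{\A[\Omega_i]/Z}$ had no nontrivial abelian normal subgroup, neither would their product: an abelian normal $N\le G_1\times G_2$ meets each factor in an abelian normal subgroup, hence trivially, so it commutes with both factors and is central. What you would actually need is that the centre of $\widehat{\A[\Gamma]}$ is procyclic. That is a nontrivial statement about the completion, and acylindrical hyperbolicity of $\A[\Gamma]/Z$ does not give it.

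The fallback fails as well. Since $c(\Omega)=c(\Gamma)\le 2$, $\Omega$ has two components, each with abelianization $\mathbb Z$, and $\Gamma\in\{B_n,F_4\}\cup\{I_2(2k)\}$. Now compare:
$B_3$ with $A_2\sqcup A_1$; $B_4$ with $A_3\sqcup A_1$ or $A_2\sqcup A_2$; $B_n$ with $A_{n-1}\sqcup A_1$ for $n\ge 5$; and $F_4$ with $A_2\sqcup A_2$.
Each pair has the same abelianization, the same rank and the same smallest non-abelian quotient ($\mathfrak S_3$, resp.\ $\mathfrak S_n$). So ``the smallest quotient comes from a factor of smaller rank'' yields no contradiction.

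The paper needs genuinely different inputs here:
\begin{itemize}
\item Lemma~\ref{lem-strict-quotient-rigidity}: for instance $\A[A_{n-1}]\times\A[A_1]$ is a proper quotient of $\A[B_n]$, obtained by imposing $[\sigma_{n-1},\sigma_n]=1$.
\item The centreless, directly indecomposable quotients $\W[F_4]/Z(\W[F_4])$ and $\W[B_4]/Z(\W[B_4])$, which have abelianization $(\mathbb Z/2)^2$. They therefore cannot be quotients of a product of two groups with cyclic abelianization.
\item When a factor is of type $E$ or $H$, only the inequality $|\Omega|\ge|\Gamma_2|$ is available. The paper then uses Heisenberg-group quotients and the lemmas on quotients of $\A[B_n]$.
\end{itemize}

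\textbf{The irreducible case.} Rank is a profinite invariant only when no component is of type $E$, $F$ or $H$. Goodness is known only there, and one also needs a finite module with nonzero top cohomology (Proposition~\ref{prop-rank-profinite}).
\begin{itemize}
\item For $E$ and $H$ this is harmless, since $\W[\Gamma]/Z(\W[\Gamma])$ is the smallest quotient of no other irreducible type.
\item For $F_4$ it is not harmless. Its smallest non-abelian quotient is $\mathfrak S_3$, exactly as for $B_2$, $B_3$, $B_4$ and $I_2(m)$ with $4\mid m$ or $6\mid m$. So your claim that the smallest quotient separates $F_4$ from $B_4$ is false. Without rank you must also separate $F_4$ from $B_2$, $B_3$ and those dihedral types. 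The paper does this with explicit quotients checked in GAP: $\W[F_4]$, $\W[A_2]\times\W[A_2]$ and $\W[I_2(5)]$.
\item For $4\mid m$ the dihedral quotients carry no information. Indeed $\A[I_2(m)]/Z\cong\mathbb Z*\mathbb Z/(m/2)$ surjects onto $\mathbb Z*\mathbb Z/2$, hence onto every dihedral group. Your alternative via $\A/Z$ is never turned into an actual invariant of $\widehat{\A[I_2(m)]}$.
\end{itemize}
For the dihedral case the paper instead counts $|\mathrm{Epi}(\A[I_2(m)],\W[I_2(p)])|$, which equals $3p\,\phi(p)$ or $2p\,\phi(p)$ according as $p\mid m$ or not; this recovers the odd part of $m$. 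It then recovers the $2$-part by Lemma~\ref{lem-strict-quotient-rigidity}, since $\A[I_2(m_1)]$ is a proper quotient of $\A[I_2(m_2)]$ whenever $m_1\mid m_2$ are distinct and even.
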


The proof of Theorem \ref{mainthm} is independent of \cite{Par04}.
In particular, it yields a new proof of the main theorem of \cite{Par04} in the case where one of the Coxeter graphs is irreducible.

\begin{corl}[Paris \cite{Par04}]\label{mainthmC1}
Let $\Gamma$ and $\Omega$ be two Coxeter graphs of spherical type.
If $\Gamma$ is irreducible and $\A[\Gamma] \cong \A[\Omega]$, then $\Gamma \cong \Omega$.
\end{corl}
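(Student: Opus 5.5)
This corollary follows directly from Theorem \ref{mainthm}, so the plan is to reduce it to that theorem. Suppose $\A[\Gamma]\cong\A[\Omega]$ with $\Gamma$ irreducible and both graphs of spherical type. An abstract isomorphism $\varphi:\A[\Gamma]\to\A[\Omega]$ induces a bijection between finite-index normal subgroups: $N\mapsto\varphi(N)$ preserves both finite index and normality. The quotients are also identified, since $\A[\Gamma]/N\cong\A[\Omega]/\varphi(N)$. Consequently $\varphi$ extends to a continuous isomorphism of the inverse limits,
\[
\widehat{\A[\Gamma]}=\varprojlim \A[\Gamma]/N\;\cong\;\varprojlim \A[\Omega]/\varphi(N)=\widehat{\A[\Omega]}.
\]
This is just functoriality of the profinite completion, and it requires no residual finiteness or other property of the groups.

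With the profinite completions identified, the hypotheses of Theorem \ref{mainthm} hold: $\Gamma$ and $\Omega$ are of spherical type, $\Gamma$ is irreducible, and $\widehat{\A[\Gamma]}\cong\widehat{\A[\Omega]}$. The theorem therefore gives $\Gamma\cong\Omega$, which is the claim.

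There is essentially no obstacle here; all the substance lies in Theorem \ref{mainthm}. The only point to check is that the argument does not invoke \cite{Par04}. It does not, because the proof of Theorem \ref{mainthm} is stated to be independent of \cite{Par04}. That independence is what makes the corollary a genuinely new proof of Paris's result in the irreducible case.
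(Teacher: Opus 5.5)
Your proposal is correct and is the same argument the paper intends. An abstract isomorphism $\A[\Gamma]\cong\A[\Omega]$ induces an isomorphism of profinite completions by functoriality, so Theorem~\ref{mainthm} applies directly; the remark about independence from \cite{Par04} matters only for the ``new proof'' claim, not for the validity of the deduction.
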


Another direct consequence of Theorem \ref{mainthm} is the following.

\begin{corl}\label{mainthmC2}
    Irreducible Artin groups of spherical type are profinitely rigid in the class of Artin groups of spherical type.
\end{corl}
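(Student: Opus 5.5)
The statement immediately above is Corollary \ref{mainthmC2}, and it follows directly from Theorem \ref{mainthm} once we relate the set $\F(G)$ of finite quotients to the profinite completion. The plan is as follows.

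Let $\Gamma$ be an irreducible spherical type Coxeter graph, and let $\Omega$ be any spherical type Coxeter graph with $\F(\A[\Gamma])=\F(\A[\Omega])$. First, $\A[\Gamma]$ belongs to the class of spherical type Artin groups by definition. Second, both groups are finitely generated, since they are generated by the finite vertex sets of their graphs. For finitely generated groups $G$ and $H$, the classical theorem of Dixon--Formanek--Poland--Ribes says that $\F(G)=\F(H)$ holds if and only if $\widehat{G}\cong\widehat{H}$. We will recall this fact in Section \ref{section-preliminaries}.

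The idea behind the non-trivial direction of that theorem is short. For each $n$, a finitely generated group has only finitely many normal subgroups of index $n$. Equality of finite quotient sets then gives compatible isomorphisms at each finite level of the two inverse systems of characteristic finite-index quotients. A compactness (Kőnig's lemma) argument assembles these into an isomorphism of the inverse limits.

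Applying this fact gives $\widehat{\A[\Gamma]}\cong\widehat{\A[\Omega]}$. Since $\Gamma$ is irreducible and of spherical type, Theorem \ref{mainthm} then yields $\Gamma\cong\Omega$. A graph isomorphism preserves the labels $m_{s,t}$, so it induces a bijection of generating sets that respects the defining relations. Hence $\A[\Gamma]\cong\A[\Omega]$, which is exactly the required profinite rigidity within the class.

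There is no real obstacle here. The only point needing care is to cite the equivalence between having the same finite quotients and having isomorphic profinite completions correctly, including the finite generation hypothesis. All of the substantive work lies in Theorem \ref{mainthm} itself.
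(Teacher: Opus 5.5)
Your proposal is correct and matches the paper, which states this corollary as a direct consequence of Theorem \ref{mainthm}. Equal sets of finite quotients give isomorphic profinite completions by Theorem \ref{thm-same-finite-quotients-same-profinite-completion} (Dixon--Formanek--Poland--Ribes), and then Theorem \ref{mainthm} yields $\Gamma\cong\Omega$, hence $\A[\Gamma]\cong\A[\Omega]$.
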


Theorem \ref{mainthm} will follow from a deep analysis of the finite quotients of irreducible Artin groups of spherical type. From now on, when referring to a quotient $Q$ of some group $G$, we will say that $Q$ is the \textit{smallest non-abelian quotient of} $G$ (sometimes shortened in \textit{SNAQ}) if $Q$ is finite and, for every finite non-abelian quotient $R$ of $G$, we have either $|R|> |Q|$, or $R\cong Q$. Note that such a quotient may not exist.

For the braid groups, Kolay \cite[Theorem 1]{Kolay} showed the following result, previously conjectured by Margalit (see \cite{CKLP,ScherichVerberne_2023}):

\begin{thm}[Kolay \cite{Kolay}]\label{thm-Kolay}\leavevmode
\begin{enumerate}
    \item [(i)]Let $n\in\mathbb{N}_{\geq 5}$. The smallest non-abelian quotient of the braid group $\B_{n}$ is the symmetric group $\S_n$.
    \item [(ii)] The symmetric group on three elements $\S_3$ is the smallest non-abelian quotient of $\B_3$ and $\B_4$.
\end{enumerate}
\end{thm}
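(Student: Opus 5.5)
Let $\varphi\colon\B_n\to Q$ be a surjection onto a finite non-abelian group with $|Q|\le n!$; the goal is to show $Q\cong\S_n$ (for $n\ge5$). Write $\sigma_1,\dots,\sigma_{n-1}$ for the standard generators. They are pairwise conjugate, and $\B_n^{\ab}\cong\mathbb Z$.

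The first step is a reduction to the commutator subgroup. For $n\ge5$ the commutator subgroup $[\B_n,\B_n]$ is perfect (Gorin--Lin), so $\varphi([\B_n,\B_n])=[Q,Q]$ is perfect. It is nontrivial because $Q$ is non-abelian. Also $Q/[Q,Q]$ is cyclic, generated by the image of $\sigma_1$.

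The second step compares $\varphi(\sigma_1)$ with $\varphi(\sigma_3)$.
\begin{itemize}
\item If $\varphi(\sigma_1)=\varphi(\sigma_3)$, the braid relations collapse. Conjugating by the $\sigma_i$ gives $\varphi(\sigma_i)=\varphi(\sigma_j)$ whenever $|i-j|\ge2$. For $n\ge5$ this connects all indices, so all $\varphi(\sigma_i)$ are equal and $Q$ is cyclic, a contradiction.
\item Otherwise, since $\sigma_1$ and $\sigma_3$ commute and are conjugate, the elements $\varphi(\sigma_1),\varphi(\sigma_3),\dots$ give a large family of commuting conjugates.
\end{itemize}

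The third step, and the main obstacle, is the counting argument. Let $C$ be the conjugacy class of $x=\varphi(\sigma_1)$ in $Q$. Then $|C|=[Q:C_Q(x)]$, and $C_Q(x)$ contains the image of the centralizer of $\sigma_1$. That centralizer contains $\sigma_1$, $\langle\sigma_3,\dots,\sigma_{n-1}\rangle\cong\B_{n-2}$, and $\Delta^2$. The plan is to induct on $n$, which is Kolay's idea.
\begin{itemize}
\item If the restriction of $\varphi$ to $\B_{n-2}$ (or to $\B_{n-1}$) has non-abelian image, induction makes that image at least $(n-2)!$, resp.\ $(n-1)!$.
\item It then remains to show $|C|\ge n$, with the bound tight only when $Q$ acts on $C$ like $\S_n$ acting on transpositions, or on $n$ points.
\end{itemize}
I would use the action of $Q$ by conjugation on the set $\{\varphi(\sigma_i)\}$ together with its conjugates, giving a homomorphism $Q\to\mathrm{Sym}(C)$. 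The kernel lies in $Z(Q)\cap$(elements commuting with all $\varphi(\sigma_i)$), which is central. Modulo the center, $Q$ embeds in $\mathrm{Sym}(C)$. Invoking Artin's classical theorem, a transitive homomorphism $\B_n\to\S_m$ with $m<n$ has cyclic image (for $n\ge5$), and for $m=n$ it is standard up to conjugation (for $n\ne6$). From this I would deduce that either $|C|\ge n$ with the image $\S_n$ when equality holds, or the image of $\B_n$ in $\mathrm{Sym}(C)$ is cyclic, forcing $Q$ abelian.

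The final step handles the center. If $Q/Z\cong\S_n$ with $Z$ nontrivial, then $|Q|>n!$. So equality $|Q|=n!$ forces $Q\cong\S_n$.

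The case $n=6$, where $\S_6$ has the exotic outer automorphism, needs a separate check; the image is still $\S_6$. Small cases of the induction need checking by hand: for $n=3,4$ one has $\B_4\to\B_3\to\S_3$, and any quotient of order $<6$ is abelian.

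For part (ii), every group of order less than 6 is abelian, and $\S_3$ is a quotient of $\B_3$ and, via the map $\B_4\to\B_3$, of $\B_4$. So $\S_3$ is the smallest non-abelian quotient of both.
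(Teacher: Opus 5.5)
\emph{Gap in the counting step.} Your third step does not work, and the problem already shows in the arithmetic $|Q|=|C|\cdot|C_Q(x)|$. No general lower bound on $|C_Q(x)|$ better than $2\,(n-2)!$ can hold, since that is the order of the centralizer of a transposition in $\S_n$. It is also what $\langle x\rangle\cdot\varphi(\langle\sigma_3,\dots,\sigma_{n-1}\rangle)$ gives you. Neither $\langle\sigma_1,\dots,\sigma_{n-2}\rangle$ nor $\langle\sigma_2,\dots,\sigma_{n-1}\rangle$ centralizes $\sigma_1$, so no factor $(n-1)!$ is available. Together with $|C|\ge n$ you only get $|Q|\ge 2n\,(n-2)!$, which is smaller than $n!$ for all $n\ge 4$.

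What is needed is $|C|\ge\binom{n}{2}$. For the standard quotient $\S_n$, $C$ is the set of transpositions, so equality in $|C|\ge n$ is never attained. The Artin and Lin classifications cannot supply this bound: they only concern transitive homomorphisms $\B_n\to\S_m$ with $m$ at most linear in $n$, whereas $|C|$ is of order $n^2/2$. Also, the embedding $Q/Z(Q)\hookrightarrow\mathrm{Sym}(C)$ bounds $|Q|$ from above, not from below.

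The missing idea is Kolay's Lemma~8 (Lemma~\ref{lem-size-conjugacy-kolay}). The paper does not reprove the theorem; it cites Kolay and imitates this lemma in Section~\ref{sec-reduction-type-E} through Lemmas~\ref{lem-distinct-commuting-reflections} and~\ref{lem-distinct-reflections-braid}. The idea is to exhibit $\binom{n}{2}$ explicit conjugates of $\sigma_1$ (the half-twists $\sigma_{i,j}$) and show that identifying any two of them makes the quotient cyclic. Two distinct commuting ones are simultaneously conjugate to $\sigma_1,\sigma_3$. For two non-commuting ones there is a third conjugate of $\sigma_1$ commuting with one and braiding with the other. Hence their images in a non-abelian $Q$ are pairwise distinct elements of $C$.

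\emph{The centralizer bound.} This half also needs more than you indicate. With $K=\varphi(\langle\sigma_3,\dots,\sigma_{n-1}\rangle)$ one has $|\langle x\rangle\cdot K|=o(x)\,|K/(\langle x\rangle\cap K)|$. So you must check that $K/(\langle x\rangle\cap K)$ is still non-abelian. If $\varphi(\sigma_3)=\varphi(\sigma_4)x^j$, the braid relation forces $x^j=1$ and everything collapses; this is the analogue of passing from $K_2$ to $K_3$ in Section~\ref{sec-SNAQ-type-E-H}.

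More seriously, for $n=6$ this subgroup is a copy of $\B_4$, whose smallest non-abelian quotient is $\S_3$, so induction gives only $6<4!$. You must additionally show two things:
\begin{enumerate}
\item $\varphi(\sigma_3)$ and $\varphi(\sigma_5)$ remain distinct modulo $\langle x\rangle$ (same braid-relation trick);
\item every quotient of $\B_4$ in which $\sigma_1$ and $\sigma_3$ have distinct images has order at least $24$.
\end{enumerate}
This, not the exotic automorphism of $\S_6$, is where $n=6$ is special in this approach.

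\emph{Equality and the remaining parts.} Once both bounds hold, $|Q|\ge\binom{n}{2}\,o(x)\,(n-2)!=o(x)\,n!/2$. So $|Q|\le n!$ forces $o(x)=2$. Then $Q$ is a quotient of $\W[A_{n-1}]=\S_n$ of order $n!$, hence $Q\cong\S_n$. Your centre argument and the appeal to Artin's theorem become unnecessary, and the Gorin--Lin step is never used. Part (ii) is fine once you add that $\S_3$ is the unique non-abelian group of order $6$.
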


Observe that $\B_2$ is isomorphic to $\Z$, thus it cannot have non-abelian quotients. Recall that $\B_n=\A[A_{n-1}]$ and $\S_n=\W[A_{n-1}]$. Inspired by Kolay's theorem, we show the following, which is another crucial result in this work.

\begin{thm}\label{thm-SNAQ}
Let $\Gamma$ be an irreducible Coxeter graph of spherical type.
\begin{enumerate}
    \item [(i)] Let $n\geq 5$. If $\Gamma\in \{A_{n-1},B_n,D_n\}$, then the smallest non-abelian quotient of $\A[\Gamma]$ is $\W[A_{n-1}]=\S_n$.
    \item [(ii)] If $\Gamma\in\{A_2,A_3,B_2,B_3,B_4,D_4,F_4\}$ or $\Gamma=I_2(m)$ with $4\mid m$, then the smallest non-abelian quotient of $\A[\Gamma]$ is $\W[A_2]=\S_3$.
    \item[(iii)] If $\Gamma=I_2(m)$ and ${4\nmid m}$, then the smallest non-abelian quotient of $\A[\Gamma]$ is $\W[I_2(p)]$, where $p$ is the smallest odd divisor of $m$.
    \item [(iv)] If $\Gamma\in\{E_6,E_7,E_8,H_3,H_4\}$, then the smallest non-abelian quotient of $\A[\Gamma]$ is $\W[\Gamma]/Z(\W[\Gamma])$.
\end{enumerate}
\end{thm}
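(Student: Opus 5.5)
The plan has two halves for each family: first exhibit the claimed quotient, then show that every finite non-abelian quotient $R$ of $\A[\Gamma]$ either has larger order or is isomorphic to it.

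\textbf{Existence of the quotients.} Each quotient comes from an explicit surjection.
\begin{itemize}
\item For $B_n$ and $D_n$, the Coxeter groups are $(\Z/2)^n\rtimes\S_n$ and $(\Z/2)^{n-1}\rtimes\S_n$, both of which surject onto $\S_n$. Composing with $\A[\Gamma]\to\W[\Gamma]$ gives $\A[\Gamma]\twoheadrightarrow\S_n$.
\item For $F_4$, the Weyl group $W(F_4)$ has a quotient $\S_3\times\S_3$, which surjects onto $\S_3$.
\item For $I_2(m)$ with $4\mid m$, I would send $s\mapsto(12)$ and $t\mapsto(123)$. Since $m$ is divisible by $4$, hence by $2$, the relation $(st)^{m/2}=(ts)^{m/2}$ must hold. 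Here $st$ and $ts$ both have order $2$, and $m/2$ is even, so both sides are trivial.
\item For $I_2(m)$ with $p$ the smallest odd divisor of $m$, the map $\A\to\W[I_2(p)]$ is well defined because the braid relation of length $m$ follows from that of length $p$. The relation of length $p$ gives $(st)^p=1$ in $\W$, and the relation of length $m$ is an odd/even consequence of it. I would check parity carefully here.
\item For $E_6,E_7,E_8,H_3,H_4$, the quotient $\W/Z(\W)$ is obvious. For $E_6$ (and $A_{n-1}$) the center is trivial, and one uses the simple index-2 subgroup structure.
\end{itemize}

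\textbf{Minimality.} Let $\pi\colon\A[\Gamma]\to R$ be a finite non-abelian quotient. All standard generators connected by an odd-labelled edge are conjugate in $\A[\Gamma]$, so their images are conjugate in $R$.
\begin{itemize}
\item For $\Gamma\supseteq A_{n-1}$ with $n\geq5$ (the types $A_{n-1}$, $B_n$, $D_n$), restrict $\pi$ to the standard parabolic subgroup $\B_n$. If $\pi(\B_n)$ is non-abelian, Theorem \ref{thm-Kolay} gives $|R|\geq|\pi(\B_n)|\geq n!$. In the case of equality, $R=\pi(\B_n)\cong\S_n$.
\item If instead $\pi(\B_n)$ is abelian, then all $A$-type generators map to a single element. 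Braid relations with the remaining generator (the $B$-type vertex with $m=4$, or the $D$-fork, which is odd and hence already covered) then force $R$ to be abelian. In type $B$, I would check that the element $x$ commuting with all but one generator, together with $s_1s_0s_1s_0=s_0s_1s_0s_1$, yields a generated group that is abelian or metabelian. I would rule out the small metabelian options by showing they are images of $\A[B_2]$ in which $s_1$ commutes with the other generators.
\item For part (ii), it suffices to show $\S_3$ is a quotient and that the only non-abelian groups of order at most $6$ are $\S_3$ and groups of order $8$. Since $|\S_3|=6<8$, only $\S_3$ competes at order $6$, and $\S_3$ is a quotient.
\item For part (iii), $\A[I_2(m)]=\langle s,t\mid (st)^{m/2}=(ts)^{m/2}\rangle$, or the odd analogue. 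A non-abelian finite quotient generated by two elements $x,y$ satisfying this relation is analysed through its abelianisation and the central element $(xy)^{m/2}$ (or $\Delta^2$). I would show that the quotient by the centre is dihedral of order $2d$ with $d\mid m$ odd, or contains the quaternion or $\S_3$ obstruction. This forces order at least $2p$, and when $4\nmid m$ rules out the order-$6$ and order-$8$ candidates below $2p$.
\end{itemize}

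\textbf{Main obstacle: part (iv).} For $E_6,E_7,E_8,H_3,H_4$ I would argue through a minimal counterexample $R$, with the following steps.
\begin{enumerate}
\item All generators map to conjugate elements, since these graphs are simply laced or have odd labels only ($H$ has label $5$).
\item The commutator subgroup $\A'$ is perfect for $n\geq5$ (by Gorin–Lin type results), so $R$ has a perfect subgroup $\pi(\A')$ of index dividing $|R^{ab}|$.
\item A minimal $R$ should be almost simple, or close to it. Its non-abelian simple section must receive the conjugate generators satisfying braid relations.
\item Use the classification of small simple groups, and their minimal degrees and classes of elements satisfying braid relations of the Coxeter graph, to show the section is $\W'/Z$. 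For example, $U_4(2)$ for $E_6$, $\mathrm{Sp}_6(2)$ for $E_7$, $O_8^+(2)$ for $E_8$, $A_5$ for $H_3$, and $(A_5\times A_5)\rtimes 2$-related groups for $H_4$.
\end{enumerate}
This last check, possibly assisted by computer search over simple groups of order below $|\W/Z|$, is where I expect the real difficulty.
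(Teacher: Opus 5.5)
Your existence arguments and parts (i)--(ii) follow the paper. Part (iv), however, is not proved, and it is most of the theorem: you name the expected answers and leave Step 4 open.

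Step 3 can be made precise. If $\A[\Gamma]'$ is perfect and $\A[\Gamma]^{\mathrm{ab}}\cong\mathbb Z$, then in a non-abelian quotient $R$ with no proper non-abelian quotient, $R'$ is the unique minimal normal subgroup. Hence $R'\cong T^k$ with $T$ non-abelian simple and $R/R'$ cyclic. You must then exclude every such $R$ of order at most $|\W[\Gamma]/Z(\W[\Gamma])|$, which is about $3.5\cdot 10^8$ for $E_8$. That means running through $\mathfrak A_n$ for $n\le 12$, $\mathrm{PSL}_2(q)$ for more than a hundred values of $q$, many other groups of Lie type, several sporadic groups, and powers such as $\mathfrak A_5^k$ with their cyclic extensions. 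For each one you must decide whether some generating family of conjugate elements satisfies the $E_8$ relations. Your proposal gives no method for this and no uniform argument for the infinite families. It also does not show that $|R|=|\W[\Gamma]/Z(\W[\Gamma])|$ forces $R\cong\W[\Gamma]/Z(\W[\Gamma])$. Finally, perfectness of $\A[\Gamma]'$ is only gestured at for $E_n$, and your ``$n\ge 5$'' does not cover $H_3$ or $H_4$.

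The paper needs no classification. Following Kolay, it writes $|P|=|\C|\,|Z_P(p_i)|$ and bounds $|\C|$ below by $36$, $63$, $109$ (and $30$ for $H_4$). It does this with explicit reflections of $\A[\Gamma]$, any two of which, once identified, collapse $\A[\Gamma]$ to $\mathbb Z$. It bounds the centralizer below through images of parabolic subgroups commuting with $p_i$ and their successive quotients $K_3,K_5,K_7$, with a case split on the order $k$ of the generators. Small $k$ is settled using $G_{32}$, $G_{16}$ and GAP searches for perfect quotients. $H_3$ is settled by the perfectness of $(\A[H_3]/Z(\A[H_3]))'$ together with $\mathfrak A_5$ being the smallest perfect group.

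Part (iii) also fails as written. The claim that $R/Z(R)$ is dihedral is false: $\A[I_2(5)]/Z(\A[I_2(5)])\cong\mathbb Z/2*\mathbb Z/5$ maps onto $\mathfrak A_5$. Send generators of the factors to $(12)(34)$ and $(12345)$; the image is centreless and not dihedral. Moreover, excluding only the non-abelian groups of orders $6$ and $8$ is not enough once $p\ge 7$, since there are non-abelian groups of orders $10, 12, \dots$ below $2p$.

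What is needed is the paper's counting argument. Put $u=s_1s_2$; then $u^m$ (for $m$ odd) or $u^{m/2}$ (for $m\equiv 2 \pmod 4$) is central. So the image of $u$ in $R/Z(R)$ has odd order $l$. Also $l\neq 1$, since otherwise the image of $u$ is central and $R$, generated by it and the image of $s_1$, is abelian. Hence $l\ge p$ and $l$ divides $|R|$, while $|R|\neq l$ (else $R$ is cyclic). So $|R|\ge 2l\ge 2p$, and equality forces $R$ to be the unique non-abelian group of order $2p$, namely $\W[I_2(p)]$.

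Lastly, your ``abelian or metabelian'' detour in type $B_n$ is unnecessary. Since $\sigma_n$ commutes with $\sigma_1$, once $\sigma_1,\dots,\sigma_{n-1}$ share an image, $R$ is abelian.
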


\begin{rem}
Let $\Gamma$ be a not necessarily irreducible Coxeter graph of spherical type. Assume that $\Gamma$ is not a disjoint union of isolated vertices, that is, $\A[\Gamma]$ is non-abelian. By Theorem \ref{thm-SNAQ}, the Artin group $\A[\Gamma]$ admits a smallest non-abelian quotient, which can be determined as follows. Let $\Gamma_1, \dots, \Gamma_k$ be the connected components of $\Gamma$ that are not isomorphic to $A_1$. For each $1 \le i \le k$, let $Q_i$ denote the smallest non-abelian quotient of $\A[\Gamma_i]$. After reordering the indices if necessary, we may assume that $|Q_1| \le |Q_2| \le \cdots \le |Q_k|$. It follows readily from Theorem \ref{thm-SNAQ} that, if $Q$ and $Q'$ are the smallest non-abelian quotients of two irreducible spherical-type Artin groups such that $|Q| = |Q'|$, then $Q \cong Q'$. It follows that $Q_1$ is the smallest non-abelian quotient of $\A[\Gamma]$.
\end{rem}

Our proof strategy is as follows. We use Theorem \ref{thm-Kolay} along with many case-by-case analysis and computer calculations to prove Theorem \ref{thm-SNAQ}. Then, along with other new intermediate results, we use Theorem \ref{thm-SNAQ} to deduce Theorem \ref{mainthm}. 

The article is organized as follows. In Section \ref{section-preliminaries}, we give preliminaries on profinite completions. In Section \ref{section-SNAQ}, we provide a proof of Theorem \ref{thm-SNAQ} for all types but $E$ or $H$, which are handled separately in Section \ref{sec-SNAQ-type-E-H}. Then, in Section \ref{subs-intermediate-results}, we prove Theorem \ref{mainthm} by providing many intermediate results, and assuming those proven in Section \ref{sec-SNAQ-type-E-H}.

We mention that Sam Hughes, Thomas Ng, Kaitlin Ragosta, Nancy Scherich, and Yvon Verberne also have independently obtained similar results concerning the smallest quotients of spherical-type Artin groups, which will appear in an article under the name “Finite quotients of spherical Artin groups”. However, we became aware of their work only after completing this article.

\begin{acknow}    
The authors thank Ivan Marin for enlightening discussions about profinite completions of Artin groups. The second author wishes to thank the Institut de Mathématiques de Bourgogne for the two invitations that allowed part of this work to be done. The first author also thanks the Institut de Mathématiques de Bourgogne for the invitation in February 2026. The first author is supported by the EPSRC Standard Research Grant UKRI1018. The second author is partially supported by the French project ``CORTIPOM'' (ANR-21-CE40-0019) of the ANR. The third author is partially supported by the French project ``CaGeT'' (ANR-25-CE40-4162) of the ANR. The Institut de Mathématiques de Bourgogne (IMB) receives support from the EIPHI Graduate School (grant ANR-17-EURE-0002).
\end{acknow}

\begin{discloAI}
    In this work, the use of LLMs (namely the free versions of ChatGPT and Gemini) is confined to minor corrections and suggestions for English language, as well as debugging some GAP programs. After using these tools, the authors reviewed and edited the content as needed. The results and proofs come from the authors, which take full responsibility for their correctness.
\end{discloAI} 

\section{Preliminaries on profinite completions of groups}\label{section-preliminaries}

\begin{defn}
Let $G$ be a finitely generated group (not necessarily residually finite for the moment), and let $\N$ denote the collection of finite index normal subgroups of $G$. Since $G\in \N$, this set in non-empty, and it can be made into a directed set (i.e. a poset in which every finite subset has an upper bound) by imposing for all $M,N\in \N$
\[
M\leq N \quad \text{whenever }\quad M\supseteq N\,.
\]
With this notion, there are natural surjective homomorphisms $\varphi_{N,M}:G/N\longrightarrow G/M$, which turn $(G/N, \varphi_{N,M}, \N)$ into an inverse system.
The inverse limit of  $(G/N, \varphi_{N,M}, \N)$ is denoted by $\widehat{G}$ and is called the \textit{profinite completion} of $G$.
\[
\widehat{G}=\underset{\underset{N\in \N}{\longleftarrow} }{\lim}\;(G/N)=\left\{(gN)\in \prod_{N\in \N}G/N\; \bigm\lvert\; \varphi_{N,M}(gN)=gM\; \text{ whenever } \; M\leq N\right\}.
\]
\end{defn}

Standard references for this topic are \cite{Reid-Survey} and \cite{RibesZalesskii2010}. 

There is a natural group homomorphism  
\[
i:G\longrightarrow \widehat{G} \subseteq \prod_{N\in \N}G/N,\qquad g\longmapsto (gN)_{N\in \N}\,.
\]
Observe that such a map is injective if and only if the group $G$ is residually finite. For this reason, from now on, we will always assume this hypothesis.

The profinite completion of a group is naturally equipped with a topology: for each $N\in \N$, first equip each $G/N$ with the discrete topology, then $\prod_{N\in \N}G/N$ is a compact space and $\widehat{G}$ can be identified with $\overline{i(G)}$ in $\prod_{N\in \N}G/N$. From now on, given two groups $G$ and $H$, when we write $\widehat{G}\cong\widehat{H}$ we will always mean that they are isomorphic as topological groups, with the topology defined above.

\begin{defn}
    Let $G$ be a group. Define $\F(G)$ to be the set of isomorphism classes of finite quotients of $G$.  
\end{defn}

It turns out that the data of $\F(G)$ suffices to recover the profinite completion of $G$. In particular, Dixon, Formanek, Poland and Ribes show the following.

\begin{thm}[Dixon--Formanek--Poland--Ribes \cite{DFPR}]\label{thm-same-finite-quotients-same-profinite-completion}
    Let $G,H$ be two finitely generated groups. The groups $\widehat G$ and $\widehat H$ are isomorphic as topological groups if and only if $\mathcal F(G)=\mathcal F(H)$. 
\end{thm}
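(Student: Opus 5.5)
The plan is to prove the two implications separately, working throughout with the continuous finite quotients of $\widehat G$. The basic dictionary is that finite quotients of $G$ correspond to open normal subgroups of $\widehat G$. For every finite group $Q$, composing with $i$ gives a bijection between continuous surjections $\widehat G\to Q$ and surjections $G\to Q$. This holds because $i(G)$ is dense in $\widehat G$, and every homomorphism $G\to Q$ onto a finite group factors through some $G/N$ with $N\in\N$, hence extends continuously to $\widehat G$. In particular, $\F(G)$ equals the set of isomorphism classes of continuous finite quotients of $\widehat G$. This already gives the direction ($\Rightarrow$): a topological isomorphism $\widehat G\cong\widehat H$ preserves continuous finite quotients, so $\F(G)=\F(H)$.

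For ($\Leftarrow$), assume $\F(G)=\F(H)$. I would first reduce to a statement about surjections. Since $G$ is finitely generated, it has only finitely many normal subgroups of each index $n$, as each one is the kernel of a map to $\mathfrak S_n$ given by the action on cosets. Let $U_n$ be the intersection of all normal subgroups of index at most $n$. Then $U_n$ is characteristic of finite index, the $U_n$ are cofinal in $\N$, and so $\widehat G=\varprojlim G/U_n$. The same construction applies to $H$, giving $V_n$.

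Next I would construct a surjection $H\to G/U_n$ for each $n$. Since $G/U_n\in\F(G)=\F(H)$, there exists such a surjection; call its kernel $K$, so that $H/K\cong G/U_n$. The key counting step is to show that $K=V_n$. The finite quotients of $H/K$ correspond to normal subgroups of $H$ containing $K$, and $V_n$ is the intersection of all normal subgroups of $H$ of index at most $n$. Since $H/K\cong G/U_n$ and $U_n$ is the intersection of the normal subgroups of $G$ of index at most $n$, we get $V_n\subseteq K$, and hence $|H/V_n|\ge|G/U_n|$. Applying the symmetric argument with $H$ and $G$ swapped gives $|G/U_n|\ge|H/V_n|$. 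Therefore equality holds, $K=V_n$, and $G/U_n\cong H/V_n$.

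The hardest step is making these isomorphisms compatible along the two inverse systems. Fix $n$. The set $X_n$ of isomorphisms $G/U_n\to H/V_n$ is finite and, by the previous step, nonempty. Since $U_{n+1}\subseteq U_n$ is the unique characteristic-type subgroup, namely the intersection of the relevant kernels, any isomorphism $G/U_{n+1}\to H/V_{n+1}$ carries the image of $U_n/U_{n+1}$ onto the image of $V_n/V_{n+1}$. Hence every element of $X_{n+1}$ induces an element of $X_n$. This makes $(X_n)$ an inverse system of nonempty finite sets, and by the standard compactness argument (König's lemma) its inverse limit is nonempty. A compatible family of isomorphisms then induces an isomorphism $\widehat G=\varprojlim G/U_n\to\varprojlim H/V_n=\widehat H$. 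This map is continuous with continuous inverse, since both topologies are the inverse-limit topologies with respect to these cofinal systems.
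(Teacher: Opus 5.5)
Your argument is correct. It is essentially the standard proof of Dixon--Formanek--Poland--Ribes (reproduced in Ribes--Zalesskii \cite{RibesZalesskii2010} and in Reid's survey \cite{Reid-Survey}); the paper gives no proof of its own and simply cites \cite{DFPR}.

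The only step worth stating more precisely is the compatibility step. The isomorphism-invariant fact you need is that $U_n/U_{n+1}$ is the intersection of all normal subgroups of index at most $n$ in $G/U_{n+1}$. This holds because every normal subgroup of $G$ of index at most $n$ contains $U_n\supseteq U_{n+1}$. Consequently any isomorphism $G/U_{n+1}\to H/V_{n+1}$ carries $U_n/U_{n+1}$ onto $V_n/V_{n+1}$.
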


This result holds in the full generality of finitely generated group, but we will use it in the restricted context of residually finite groups. 
An immediate corollary which we shall use extensively throughout the article is the following.

\begin{corl}\label{cor-smallest-diff-profinite-diff}
    If two finitely generated residually finite groups have non-isomorphic smallest non-abelian quotients, then their profinite completions are not isomorphic. 
\end{corl}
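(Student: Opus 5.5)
We argue by contraposition using Theorem \ref{thm-same-finite-quotients-same-profinite-completion}. Let $G$ and $H$ be finitely generated residually finite groups, and let $Q_G$ and $Q_H$ be their smallest non-abelian quotients, with $Q_G\not\cong Q_H$. We will show that $\F(G)\neq\F(H)$, which by Theorem \ref{thm-same-finite-quotients-same-profinite-completion} gives $\widehat G\not\cong\widehat H$.

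Suppose instead that $\F(G)=\F(H)$. Since $Q_G$ is a finite quotient of $G$, its isomorphism class lies in $\F(G)=\F(H)$. Hence $H$ has a finite non-abelian quotient isomorphic to $Q_G$. By the defining property of $Q_H$ as the smallest non-abelian quotient of $H$, either $|Q_G|>|Q_H|$ or $Q_G\cong Q_H$. The second alternative is excluded by hypothesis, so $|Q_G|>|Q_H|$.

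Exchanging the roles of $G$ and $H$, the class of $Q_H$ lies in $\F(H)=\F(G)$. The same reasoning then gives $|Q_H|>|Q_G|$. This contradicts the previous inequality, so $\F(G)\neq\F(H)$ and therefore $\widehat G\not\cong\widehat H$.

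There is no genuine obstacle here; the only point needing care is the interpretation of the hypothesis. The statement should be read as: both groups admit smallest non-abelian quotients, and these are non-isomorphic. With that reading the argument is purely formal, using only the definition of SNAQ and Theorem \ref{thm-same-finite-quotients-same-profinite-completion}.
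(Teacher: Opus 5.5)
Your argument is correct and is essentially the paper's: the paper records this statement as an immediate consequence of Theorem \ref{thm-same-finite-quotients-same-profinite-completion}, and your symmetric size comparison spells out why the smallest non-abelian quotient, when it exists, is determined by $\F(G)$. Your reading that both smallest non-abelian quotients are assumed to exist, and that they are non-abelian, matches how the paper applies the corollary.
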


We now state some useful results about profinite completions, whose proofs can be found in \cite{Reid-Survey}. Given a group $G$, we will denote by $G^\ab:=G/[G,G]$ its abelianized. The following proposition can be found in \cite[Proposition 3.2]{Reid-Survey}.

\begin{prop}\label{prop-profinite-same-abelianization}
   If $G,H$ are finitely generated residually finite groups such that $\widehat{G}\cong \widehat{H}$, then $G^\ab\cong H^\ab$. 
\end{prop}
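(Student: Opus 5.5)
The idea is to show that the abelianization is determined by the finite abelian quotients of the group, and that these are exactly the finite quotients of $\widehat{G}$ that happen to be abelian. Since $G$ is finitely generated, $G^\ab$ is a finitely generated abelian group, so $G^\ab\cong \Z^r\oplus T$ with $T$ finite. I will show that $r$ and $T$ can be read off from the set of finite abelian quotients of $G$. By Theorem \ref{thm-same-finite-quotients-same-profinite-completion}, the hypothesis $\widehat{G}\cong\widehat{H}$ gives $\F(G)=\F(H)$. Hence $G$ and $H$ have the same finite abelian quotients, and the claim follows.

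First I check that the finite abelian quotients of $G$ are exactly the finite quotients of $G^\ab$. Any surjection $G\to A$ with $A$ abelian factors through $G^\ab$. Conversely, any quotient of $G^\ab$ is a quotient of $G$. So the set of finite abelian quotients of $G$ equals $\F(G^\ab)$, and the same holds for $H$. We therefore get $\F(G^\ab)=\F(H^\ab)$.

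Next I recover $G^\ab$ from $\F(G^\ab)$. Write $G^\ab\cong\Z^r\oplus T$. For every integer $n\ge 1$, the largest quotient of $G^\ab$ of exponent dividing $n$ is $G^\ab/nG^\ab\cong(\Z/n)^r\oplus T/nT$. Every finite quotient of $G^\ab$ of exponent dividing $n$ is a quotient of this group. So $G^\ab/nG^\ab$ is characterized inside $\F(G^\ab)$ as the unique element of maximal order among the quotients of exponent dividing $n$. Take $n=|T|\cdot p$ for a prime $p$. The $p$-part gives information about $r$: more simply, choose $n=|T|^2$, so that $T/nT=T$ and $G^\ab/nG^\ab\cong(\Z/n)^r\oplus T$. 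Since $|T|$ is not known a priori, I compare $G$ and $H$ by a uniform argument instead. For every $n$, the groups $G^\ab/nG^\ab$ and $H^\ab/nH^\ab$ are isomorphic. Taking $n=N^2$ with $N=|T_G|\cdot|T_H|$, I get $(\Z/n)^{r}\oplus T_G\cong(\Z/n)^{s}\oplus T_H$. Since $n$ is divisible by the square of every exponent appearing in $T_G$ and $T_H$, I can compare the number of cyclic summands of order exactly $n$ in the invariant factor decomposition. This gives $r=s$. Cancelling the free summands, which is legitimate by uniqueness of elementary divisors, gives $T_G\cong T_H$. Hence $G^\ab\cong H^\ab$.

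The only delicate point is the last bookkeeping step: making sure that cyclic factors of order $n$ cannot come from the torsion parts. This holds because every elementary divisor of $T_G$ and $T_H$ is strictly smaller than the corresponding prime-power part of $n$. Residual finiteness is not actually needed for this argument. It is part of the hypothesis only to keep the paper's standing convention.
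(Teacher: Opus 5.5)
Your overall route is the standard one; the paper gives no proof of its own and simply cites \cite[Proposition 3.2]{Reid-Survey}. By Theorem~\ref{thm-same-finite-quotients-same-profinite-completion} the two groups have the same finite abelian quotients, and these are exactly $\F(G^\ab)$ and $\F(H^\ab)$. Your characterization of $G^\ab/nG^\ab$ as the unique member of maximal order among the elements of $\F(G^\ab)$ of exponent dividing $n$ is correct. So you do obtain $G^\ab/nG^\ab\cong H^\ab/nH^\ab$ for every $n\ge 1$. You are also right that residual finiteness plays no role.

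The gap is in the choice of $n$. With $N=|T_G|\cdot|T_H|$ and $n=N^2$, you get $n=1$ as soon as both torsion subgroups are trivial. Then both $G^\ab/nG^\ab$ and $H^\ab/nH^\ab$ are trivial, and counting cyclic summands of order $n$ says nothing about $r$ and $s$. Your ``delicate point'' does not cover this. What detects $r$ is the number of elementary divisors equal to $p^{v_p(n)}$ for some prime $p$ dividing $n$, and there is no such prime when $n=1$.

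As written, your argument therefore cannot distinguish $G^\ab\cong\mathbb{Z}$ from $H^\ab\cong\mathbb{Z}^2$. That is precisely how the paper uses this proposition: in the proofs of Theorem~\ref{mainthm} and Proposition~\ref{prop-2-v-1-rigidity}, and for odd versus even $m$ in Lemma~\ref{lem-internal_rigidity_dihedral}.

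The repair is immediate. Either take $N=2|T_G||T_H|$ (any $N\ge 2$ divisible by $|T_G||T_H|$ works), or first read off $r$ from $|G^\ab/pG^\ab|=p^r$ for a prime $p\nmid |T_G||T_H|$. The second option is essentially the $n=|T|\cdot p$ idea you started and then dropped. With $n\ge 2$, your elementary-divisor count and the subsequent cancellation are correct.
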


Given two groups $G$ and $P$, denote by $\mathrm{Hom}(G,P)$ the set of homomorphisms $G\to P$ and by $\mathrm{Epi}(G,P)$ the set of epimorphisms $G\to P$. The following proposition can be found in \cite[Lemma 4.1 and Corollary 4.2]{Reid-Survey}.

\begin{prop}\label{prop-profinite-same-epis}
    Given two finitely generated residually finite groups $G$ and $H$, if $\widehat G\cong\widehat H$, then for any finite group $P$ we have $|\mathrm{Hom}(G,P)|=|\mathrm{Hom}(H,P)|$ and $|\mathrm{Epi}(G,P)|=|\mathrm{Epi}(H,P)|$.
\end{prop}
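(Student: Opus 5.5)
The plan is to identify both $\mathrm{Hom}(G,P)$ and $\mathrm{Epi}(G,P)$ with sets defined purely in terms of the topological group $\widehat G$. Concretely, for a finite group $P$ (with the discrete topology), I will show that precomposition with $i:G\to\widehat G$ gives bijections
\[
\mathrm{Hom}_{c}(\widehat G,P)\longrightarrow \mathrm{Hom}(G,P),\qquad \mathrm{Epi}_{c}(\widehat G,P)\longrightarrow \mathrm{Epi}(G,P),
\]
where the subscript $c$ means continuous homomorphisms. A topological isomorphism $\widehat G\cong\widehat H$ induces bijections $\mathrm{Hom}_c(\widehat G,P)\cong\mathrm{Hom}_c(\widehat H,P)$ and $\mathrm{Epi}_c(\widehat G,P)\cong\mathrm{Epi}_c(\widehat H,P)$. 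Combining these bijections gives both equalities of cardinalities.

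\emph{Extension.} Let $f:G\to P$ be a homomorphism. Its kernel $N=\ker f$ is a finite index normal subgroup, so $N\in\N$, and $f$ factors as $G\to G/N\xrightarrow{\bar f}P$. Define $\hat f=\bar f\circ\pi_N$, where $\pi_N:\widehat G\to G/N$ is the restriction of the coordinate projection. Then $\pi_N$ is a continuous homomorphism, so $\hat f$ is continuous, and $\hat f\circ i=f$ by construction.

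\emph{Uniqueness.} If two continuous homomorphisms $\widehat G\to P$ agree on $i(G)$, then the set where they agree is closed, because $P$ is Hausdorff. It contains the dense subset $i(G)$, since $\widehat G=\overline{i(G)}$. Hence the two maps agree everywhere, and the restriction map on $\mathrm{Hom}$ is a bijection.

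\emph{Surjectivity is preserved.} Let $\phi:\widehat G\to P$ be continuous. Then $\phi(\widehat G)=\phi(\overline{i(G)})\subseteq\overline{\phi(i(G))}=\phi(i(G))$, because every subset of the discrete space $P$ is closed. So $\phi$ and $\phi\circ i$ have the same image. Therefore $\phi$ is surjective if and only if $\phi\circ i$ is, and the bijection on $\mathrm{Hom}$ restricts to a bijection on $\mathrm{Epi}$.

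The only point needing care is that the isomorphism $\widehat G\cong\widehat H$ must be topological, so that it carries continuous homomorphisms to continuous homomorphisms. This is exactly the convention fixed earlier in this section, so no appeal to results such as Nikolov--Segal on automatic continuity is needed. Residual finiteness is not actually used; only finite generation enters implicitly, through the setup of $\widehat G$.
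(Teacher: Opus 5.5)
Your proof is correct and is essentially the standard argument behind the result the paper cites (Reid's Lemma 4.1 and Corollary 4.2). Restriction along $i$ gives a bijection between continuous homomorphisms $\widehat G\to P$ and homomorphisms $G\to P$, this bijection preserves surjectivity because $i(G)$ is dense, and one then transports along the topological isomorphism $\widehat G\cong\widehat H$; working with continuous maps under the paper's convention that $\cong$ means topological isomorphism is exactly what lets you avoid Nikolov--Segal, and you are right that residual finiteness plays no role.
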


A consequence of Proposition \ref{prop-profinite-same-epis} which shall be useful to us is the following (see \cite[Lemma 4.10]{Reid-Survey} for the proof).

\begin{lem}\label{lem-strict-quotient-rigidity}
    Given two finitely generated residually finite groups $G$ and $H$, if $H$ is a strict quotient of $G$, then $\widehat G$ and $\widehat H$ are not isomorphic. 
\end{lem}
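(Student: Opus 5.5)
Since $H$ is a strict quotient of $G$, I would fix a surjection $\pi: G \to H$ with non-trivial kernel $K$ and produce a finite group that counts epimorphisms differently for $G$ and $H$. Then Proposition \ref{prop-profinite-same-epis} gives $\widehat G\not\cong\widehat H$.

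The first step is to detect $K$ in a finite quotient. Choose $g\in K\setminus\{\id\}$. Since $G$ is residually finite, there is a finite index normal subgroup $N\trianglelefteq G$ with $g\notin N$. Put $P=G/N$. The quotient map $q:G\to P$ is then an epimorphism that does not factor through $\pi$, because $q(g)\neq\id$ while $\pi(g)=\id$.

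The second step is a counting comparison. Precomposition with $\pi$ gives an injection
\[
\mathrm{Epi}(H,P)\longrightarrow \mathrm{Epi}(G,P),\qquad f\longmapsto f\circ\pi .
\]
It is injective because $\pi$ is surjective, and $f\circ\pi$ is surjective whenever $f$ is. The epimorphism $q$ lies outside the image, since every map of the form $f\circ\pi$ kills $K$. Both sets are finite, because $G$ and $H$ are finitely generated and $P$ is finite. Hence $|\mathrm{Epi}(G,P)|>|\mathrm{Epi}(H,P)|$.

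Finally, if $\widehat G\cong\widehat H$, Proposition \ref{prop-profinite-same-epis} would force these two cardinalities to be equal, which is a contradiction. There is no real obstacle here. The only point needing care is that $q$ must be chosen to be non-trivial on $K$, and residual finiteness of $G$ is exactly what guarantees such a $q$ exists. Residual finiteness of $H$ is not actually needed, except to place the statement in the setting of Proposition \ref{prop-profinite-same-epis}.
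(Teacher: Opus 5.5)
Your proof is correct and is essentially the argument the paper intends: it presents this lemma as a consequence of Proposition~\ref{prop-profinite-same-epis} (deferring the details to Reid's survey), and the proof is exactly yours. Residual finiteness of $G$ yields a finite quotient $P$ on which the kernel survives, so precomposition with $G\to H$ embeds $\mathrm{Epi}(H,P)$ into $\mathrm{Epi}(G,P)$ without being surjective. You are also right that residual finiteness of $H$ plays no role in the argument.
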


Now, a useful result, which is specific to profinite completions of Artin groups of spherical type, is the following.

\begin{thm}[Marin \cite{Marin}]\label{prop-rigidity-one-class-marin}
Let $\Gamma,\Omega$ be two irreducible Coxeter graphs of spherical type which are not of type $E,F$ or $H$ and such that $(\A[\Gamma])^{\mathrm{ab}}=(\A[\Omega])^{\mathrm{ab}}=\mathbb Z$. If $\widehat{\A[\Gamma]}\cong \widehat{\A[\Omega]}$, then $\Gamma=\Omega$.
\end{thm}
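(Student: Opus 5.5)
The statement only involves irreducible spherical types with infinite cyclic abelianization and no type $E,F,H$. Inspecting Figure \ref{fig:Cox-graphs-spherical-type}, these are $A_{n}$ ($n\geq 1$), $D_n$ ($n\geq 4$) and $I_2(m)$ with $m$ odd (note $I_2(3)=A_2$). I would therefore prove the statement in two steps. First recover the rank $|S|$ from $\widehat{\A[\Gamma]}$. Then, inside each rank, separate the finitely many remaining candidates using counts of homomorphisms to small finite groups, which are profinite invariants by Proposition \ref{prop-profinite-same-epis}.

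\emph{Step 1: the rank.} The plan is to show that $\A[\Gamma]$ is \emph{good} in the sense of Serre. This means that for every finite $\A[\Gamma]$-module $M$, the natural map $H^*(\widehat{\A[\Gamma]},M)\to H^*(\A[\Gamma],M)$ is an isomorphism.

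I would try to obtain goodness from the Brieskorn--Deligne picture. The pure Artin group $\ker(\A[\Gamma]\to\W[\Gamma])$ is the fundamental group of the complement of the complexified reflection arrangement. For types $A$ and $D$ this arrangement is fiber-type, so the pure group is an iterated extension of finitely generated free groups by free groups with the usual splitting properties. Goodness then follows by the standard results that such iterated extensions are good. Goodness passes to finite-index overgroups, so $\A[\Gamma]$ is good. For $I_2(m)$ one can argue directly: the central quotient is $\mathbb Z/2*\mathbb Z/m$, which is virtually free.

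Since $\A[\Gamma]$ is torsion-free, is good, and has a finite $K(\pi,1)$ of dimension $|S|$ (Deligne), we get $\mathrm{cd}\,\widehat{\A[\Gamma]}=\mathrm{cd}\,\A[\Gamma]=|S|$. This holds at least for suitable primes $p$, using $\mathrm{cd}_p$ and a finite-index pure subgroup. Hence the rank is a profinite invariant. I expect this goodness and cohomological-dimension step to be the main obstacle, especially checking that fiber-type arguments apply uniformly to $D_n$.

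\emph{Step 2, rank $2$.} The groups are $\A[I_2(m)]$ with $m$ odd. A homomorphism $\A[I_2(m)]\to \W[I_2(k)]$ sending the two generators to reflections exists and is onto exactly when $k\mid m$. I would count $|\mathrm{Epi}(\A[I_2(m)],\W[I_2(k)])|$ for odd $k$ and show that it is nonzero if and only if $k\mid m$. By Proposition \ref{prop-profinite-same-epis}, this determines the set of odd divisors of $m$, hence $m$.

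\emph{Step 2, rank $n\geq 3$.} Here the candidates are $A_n$ and, when $n\geq 4$, $D_n$. Because all generators are conjugate, any homomorphism to $\S_3$ sends them into a single conjugacy class. I would count $|\mathrm{Hom}(\A[\Gamma],\S_3)|$ as follows.

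\begin{itemize}
\item Images in the class of $3$-cycles must coincide on adjacent vertices, since distinct $3$-cycles do not satisfy the braid relation. The graph is connected, so this contributes the same number of homomorphisms for $A_n$ and $D_n$.
\item For images among transpositions, two distinct transpositions satisfy the braid relation but do not commute. So a homomorphism corresponds to a labelling of $S$ by three transpositions in which non-adjacent vertices receive equal labels.
\end{itemize}

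For $n\geq 4$ the graph $D_n$ has a vertex of degree $3$. Its complement graph therefore has a connectivity structure different from that of the path $A_n$, and the resulting label counts differ. If $\S_3$ happens not to separate some small case, I would instead compare $|\mathrm{Hom}(-,\S_4)|$, or use the quotient $\W[D_n]\to\W[D_n]/Z$. Its order is a finite quotient of $\A[D_n]$, and I would check by Theorem \ref{thm-Kolay}-style arguments that it is not a quotient of $\A[A_n]$.
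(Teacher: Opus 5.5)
The paper does not prove this statement; it only cites Marin. So I judged your argument on its own merits. The architecture is sound: recover the rank, then separate the candidates of each rank by finite quotients. Your rank-$2$ step is correct; it is Lemma \ref{lem-quotients_of_dihedral}(ii).

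\textbf{The $A_n$ versus $D_n$ step fails for every $n\ge5$.} Your claim that the complement of $D_n$ has a different connectivity structure from that of the path is false there. The complement of a tree is disconnected only if some vertex is adjacent to all others. For $D_n$ this forces $n\le4$, and for $A_n$ it forces $n\le3$. So for $n\ge5$ both complements are connected, and your count gives $|\mathrm{Hom}(\A[D_n],\S_3)|=1+2+3=6=|\mathrm{Hom}(\A[A_n],\S_3)|$. Only $n=4$ is separated, with $12$ versus $6$.

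The $\S_4$ fallback fails for the same reason, and so does every small group. By Theorem \ref{thm-Kolay} and Theorem \ref{thm-SNAQ}(i), for $n\ge5$ neither group has a non-abelian finite quotient of order $<n!$. Hence for $|P|<n!$ every homomorphism to $P$ factors through the abelianization $\mathbb Z$, and both counts equal $|P|$.

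Your last fallback is the argument that works, and it should be the main one, with the justification made explicit:
\begin{itemize}
\item $\S_n$ is a quotient of $\A[D_n]$: send $\sigma_n\mapsto a_{n-1}$ and compose $\A[D_n]\to\A[A_{n-1}]\to\S_n$, as in the proof of Proposition \ref{prop-SNAQ-firstcases}.
\item By Theorem \ref{thm-Kolay}(i), every non-abelian finite quotient of $\A[A_n]=\B_{n+1}$ has order at least $(n+1)!>n!$, since $n+1\ge5$.
\item Theorem \ref{thm-same-finite-quotients-same-profinite-completion} then separates the completions, uniformly for all $n\ge4$.
\end{itemize}

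\textbf{Step 1 rests on a false claim.} The reflection arrangement of $D_n$ is not fiber-type (equivalently, not supersolvable) for $n\ge4$. For $D_4$, a modular coatom would have localization of type $A_3$ or $A_1^3$. That would force the exponents to be $\{1,2,3,6\}$ or $\{1,1,1,9\}$, whereas they are $\{1,3,3,5\}$. So your iterated free-extension argument does not give goodness of $\A[D_n]$.

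Goodness is nevertheless true, and you can get it in either of two ways:
\begin{itemize}
\item Use the Crisp--Paris splitting $\A[D_n]\cong F_{n-1}\rtimes\A[A_{n-1}]$. A semidirect product $F\rtimes Q$ with $F$ finitely generated free and $Q$ good is good.
\item Invoke Theorem \ref{prop-not-E-F-H-is-good} directly.
\end{itemize}
With goodness in hand, Corollary \ref{cor-rank-profinite} is exactly your rank invariance. Its lower bound uses the finite module of Proposition \ref{prop-rank-profinite}. Your alternative also works: take a pure subgroup with trivial $\mathbb F_p$ coefficients (the top Orlik--Solomon degree is non-zero) and use $\mathrm{cd}_p$ of an open subgroup.

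With these two repairs the proof goes through.
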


We now briefly discuss some useful results about the cohomology of Artin groups of spherical type and that of their profinite completions. For an introduction to the cohomology of groups and profinite groups, see \cite[Sections 6 and 7]{RibesZalesskii2010}. Note that rather than diving in the details of group cohomology, we use these results as a toolkit. 
A useful notion relating the cohomology of a group $G$ with its profinite completion is that of good groups, as introduced by Serre in \cite{Serre}.

\begin{defn}
A group $G$ is \textit{good in the sense of Serre} if for every finite $\widehat{G}-$module $M$ and all $i\in\mathbb N$, the map
\begin{equation*}
        H^i(\widehat{G},M)\to H^i(G,M)
    \end{equation*}
is an isomorphism.
\end{defn}

The interest of this notion to us is made clear by the following results.

\begin{thm}[Marin \cite{Marin}]\label{prop-not-E-F-H-is-good}
  If $\Gamma$ is an irreducible Coxeter graph of spherical type which is not of type $E,F$ or $H$, then $\A[\Gamma]$ is good in the sense of Serre. 
\end{thm}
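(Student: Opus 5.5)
The plan is to reduce to a handful of standard closure properties of goodness, due to Serre, and then treat the types $A$, $B$, $D$ and $I_2(m)$ one at a time.

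\textbf{Closure properties.} I would use the following facts about goodness in the sense of Serre.
\begin{enumerate}
\item[(a)] Finitely generated free groups and $\mathbb Z$ are good.
\item[(b)] If $H\le G$ has finite index, then $G$ is good if and only if $H$ is good.
\item[(c)] Let $1\to N\to G\to K\to 1$ be exact, with $N$ finitely generated and good, and $K$ good. Assume $H^q(N,M)$ is finite for every finite $G$-module $M$ and every $q$ (for instance, $N$ of type $FP_\infty$). Then $G$ is good.
\end{enumerate}
Since every group in play is finitely generated, residually finite and of type $F$, the finiteness hypothesis in (c) will hold automatically in all our applications.

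\textbf{Type $A_n$.} The pure braid group $P_{n+1}$ has finite index in $\A[A_n]=\B_{n+1}$. By the Fadell--Neuwirth fibrations, $P_{n+1}$ is an iterated semidirect product $F_n\rtimes(F_{n-1}\rtimes\cdots\rtimes F_1)$. Applying (c) inductively, with (a) as the base case, shows that $P_{n+1}$ is good. Then (b) gives goodness of $\A[A_n]$.

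\textbf{Type $B_n$.} I would use the classical identification of $\A[B_n]$ with the subgroup of $\B_{n+1}$ consisting of braids whose permutation fixes the last strand. This subgroup has index $n+1$, so (b) and the type $A$ case give goodness of $\A[B_n]$.

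\textbf{Type $D_n$.} I would invoke the Crisp--Paris decomposition $\A[D_n]\cong F_{n-1}\rtimes \A[A_{n-1}]$. Then (c) applies, with kernel $F_{n-1}$ and quotient $\A[A_{n-1}]$, which is good by the type $A$ case. Alternatively, $\A[D_n]$ has finite index in $\A[B_n]$ after passing to suitable pure subgroups; this also reduces to the type $A$ case through iterated extensions of free groups.

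\textbf{Type $I_2(m)$.} The centre $Z$ is infinite cyclic, generated by $\Delta$ or $\Delta^2$. The quotient $\A[I_2(m)]/Z$ is a free product of two finite cyclic groups, hence virtually free, hence good by (a) and (b). Applying (c) to $1\to Z\to \A[I_2(m)]\to \A[I_2(m)]/Z\to 1$ gives goodness, since $Z\cong\mathbb Z$ is good and has finite cohomology with finite coefficients.

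\textbf{Expected obstacle.} The delicate point is (c). Serre's criterion requires the finiteness of $H^q(N,M)$ for all finite $G$-modules $M$, not only trivial ones, and this must hold at every stage of the iterated extensions. This follows from $N$ being free of finite rank (or $\mathbb Z$), but it has to be checked at each step.

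The identifications of $\A[B_n]$ and $\A[D_n]$ with finite-index subgroups or semidirect products also need care. It is exactly the absence of such fibration-type descriptions for $E$, $F$ and $H$ that explains why those types are excluded.
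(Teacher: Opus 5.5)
The paper gives no proof of this statement; it quotes it from Marin, so there is no internal argument to compare against. Your route is the standard one and it works. It uses goodness of $\B_{n+1}$ via the Fadell--Neuwirth tower of free groups, as in Grunewald--Jaikin-Zapirain--Zalesskii, then finite index for type $B$, the Crisp--Paris splitting $\A[D_n]\cong F_{n-1}\rtimes\A[A_{n-1}]$ for type $D$, and a central extension for $I_2(m)$.

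One remark on your criterion (c). The delicate hypothesis in Serre's criterion is not the finiteness of $H^q(N,M)$, which is automatic for free groups of finite rank and for $\mathbb{Z}$. It is the exactness of $1\to\widehat N\to\widehat G\to\widehat K\to 1$. Finite generation of $N$ together with goodness of $K$ is exactly what guarantees this, so your formulation of (c) is the right one.

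Two statements in your proposal need correcting.

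First, for $m=2k$ even, the quotient $\A[I_2(m)]/Z(\A[I_2(m)])$ is not a free product of two finite cyclic groups. With $u=s_1s_2$ one has $\A[I_2(m)]=\langle u,s_1\mid s_1u^k=u^ks_1\rangle$ and $Z=\langle u^k\rangle$, so the quotient is $\mathbb{Z}*\mathbb{Z}/k$. Indeed it must have infinite abelianization, since $\A[I_2(m)]^{\mathrm{ab}}\cong\mathbb{Z}^2$ while $Z$ has rank one. This group is still virtually free: the kernel of the map to $\mathbb{Z}/k$ that kills the $\mathbb{Z}$ factor is torsion-free of index $k$, hence free by Kurosh. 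So your conclusion for $I_2(m)$ survives.

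Second, discard the alternative argument for type $D$. For $n\ge 4$, $\A[D_n]$ is not the obvious index-$2$ subgroup of $\A[B_n]$. Every index-$2$ subgroup of $\A[B_n]$ contains $[\A[B_n],\A[B_n]]$, so it surjects onto an index-$2$ sublattice of $\mathbb{Z}^2$ and has abelianization of rank at least $2$, whereas $\A[D_n]^{\mathrm{ab}}\cong\mathbb{Z}$. For the pure groups, the natural relation goes the other way. The complement of the $B_n$ reflection arrangement is a Zariski-open subset of the complement of the $D_n$ arrangement, so the pure Artin group of type $D_n$ is a quotient of the pure Artin group of type $B_n$. Quotients of good groups need not be good, so this gives nothing. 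The Crisp--Paris splitting alone is what you need for type $D$.
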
 

Let $\Gamma$ be a Coxeter graph none of whose connected components is of type $E$, $F$, or $H$. Now, using Theorem \ref{prop-not-E-F-H-is-good}, we turn to show that the profinite cohomological dimension of $\A[\Gamma]$ is equal to its rank (see Corollary \ref{cor-rank-profinite}). This invariant will be useful later to distinguish certain profinite completions of Artin groups. The first ingredient in the proof of Corollary \ref{cor-rank-profinite} is the following proposition. It is well known to experts, and a proof can be found in \cite[Proposition 3.1]{Par04}.

\begin{prop}\label{prop-cohomological-dimension-artin}
    The cohomological dimension of an Artin group of spherical type is equal to its rank. 
\end{prop}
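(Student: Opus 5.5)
The plan is to bound the cohomological dimension from both sides using a finite $K(\pi,1)$. Let $\Gamma$ be a spherical type Coxeter graph on $S$ with $|S|=n$. The classical input is Deligne's theorem \cite{Deligne}: the complement of the complexified reflection arrangement, quotiented by $\W[\Gamma]$, is a $K(\A[\Gamma],1)$. Instead of that manifold (of real dimension $2n$), I will use the Salvetti complex associated with $\W[\Gamma]$. It is a finite CW-complex homotopy equivalent to the orbit space, and its cells are indexed by the subsets $T\subseteq S$, with $\dim = |T|$. This gives a finite $n$-dimensional $K(\A[\Gamma],1)$, and hence $\mathrm{cd}\,\A[\Gamma]\le n$. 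Alternatively, one can use the Garside structure \cite{DehornoyParis}: the Charney or Squier resolution is a free resolution of $\mathbb Z$ of length $n$, with free modules of rank $\binom{n}{k}$ in degree $k$.

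For the lower bound I will exhibit a free abelian subgroup of rank $n$. Its cohomological dimension is $n$, since $T^n$ is a closed orientable $K(\mathbb Z^n,1)$ and $H^n(\mathbb Z^n,\mathbb Z)=\mathbb Z\neq 0$. Monotonicity of cohomological dimension under passing to subgroups then gives $\mathrm{cd}\,\A[\Gamma]\ge n$.

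The subgroup I have in mind is generated by the Garside elements $\Delta_{T}^2$ (or $\Delta_T$, whichever is central) of the standard parabolic subgroups $\A[T]$, for $T$ running over a maximal chain $T_1\subset T_2\subset\cdots\subset T_n=S$ with $|T_i|=i$. First, each $\Delta_{T_i}^2$ is central in $\A[T_i]$, and it lies in $\A[T_j]$ for $j\ge i$. Hence all these elements pairwise commute. Second, I need them to generate a copy of $\mathbb Z^n$. To see this, I map to a free abelian group by choosing a length or abelianization-type invariant. Concretely, I project $\A[T_i]\to\mathbb Z$ by word length, which is well defined since the relations are homogeneous. The element $\Delta_{T_i}^2$ has length $2\,|\text{reflections of } \W[T_i]|$. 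Lengths alone give only one coordinate, so I will instead argue linear independence via the restriction to nested parabolics. Suppose $\prod \Delta_{T_i}^{2a_i}=1$. I push this relation into the quotient by the normal closure, or use the retraction-free argument that Deligne's normal forms for positive elements are unique, and induct on $n$.

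I expect this independence check to be the main obstacle. The simplest route I would try first is the following. The subgroup $\langle \Delta_{T_1}^2,\dots,\Delta_{T_n}^2\rangle$ is abelian, and I compose with the map to $\A[\Gamma]^{\ab}\otimes\mathbb Q$ restricted along each parabolic. If that fails, I fall back on the Garside normal form, which detects the exponent $a_n$ via the infimum of $\Delta_S$ and then lets me induct inside $\A[T_{n-1}]$.
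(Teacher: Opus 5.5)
Your upper bound (Deligne's theorem plus the $n$-dimensional Salvetti complex, or Squier's length-$n$ resolution) is fine, as is the commutation of the $\Delta_{T_i}^2$; this is the standard strategy. The paper gives no argument of its own and cites \cite[Proposition 3.1]{Par04}.

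The gap is the step you flag yourself. You never prove that $\Delta_{T_1}^2,\dots,\Delta_{T_n}^2$ generate a group of rank $n$, and without this you have no lower bound at all.

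Your first route cannot work. Every homomorphism from $\A[\Gamma]$ to an abelian group factors through $\A[\Gamma]^{\mathrm{ab}}$. Its rank is the number of components of $\Gamma$ after deleting even-labelled edges, so it is $1$ or $2$ when $\Gamma$ is irreducible. Moreover, there is in general no homomorphism $\A[\Gamma]\to\A[T_i]$ to ``restrict along'': killing a generator kills every generator joined to it by an odd-labelled edge.

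Your fallback is not correct as stated either. $\inf_S$ does not read off $a_n$, because the factor in $\langle T_{n-1}\rangle$ contributes. For instance, in $\A[A_2]$ with $S=\{s,t\}$ and $T_1=\{s\}$, one has $\Delta_S^2s^{-2}=tsst$, while $\Delta_S s^{-2}=sts^{-1}$ is not positive. Hence $\inf_S(\Delta_S^{2a}s^{-2})=2a-2$. What is true, and suffices, is that $\inf_S(h)\le 0\le\sup_S(h)$ for every $h$ in a proper standard parabolic subgroup. Since $h=\prod_{i<n}\Delta_{T_i}^{2a_i}$ would equal $\Delta_S^{-2a_n}$, whose infimum and supremum are both $-2a_n$, this forces $a_n=0$. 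Continuing the induction inside $\A[T_{n-1}]$ then also needs $\langle T_{n-1}\rangle\cong\A[T_{n-1}]$ (van der Lek).

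A cleaner fix avoids normal forms and induction altogether. Since the $\Delta_{T_i}^2$ commute, a relation $\prod_i\Delta_{T_i}^{2a_i}=1$ can be rewritten as $U=V$, with $U=\prod_{a_i<0}\Delta_{T_i}^{-2a_i}$ and $V=\prod_{a_i>0}\Delta_{T_i}^{2a_i}$ positive. The Artin monoid embeds in $\A[\Gamma]$ (Brieskorn--Saito, Deligne), so $U=V$ in the monoid. Hence any positive words representing them are connected by a chain of defining relations, and each relation $sts\cdots=tst\cdots$ preserves the set of letters that occur. Let $j$ be the largest index with $a_j\neq 0$, and let $t_j$ be the element of $T_j\setminus T_{j-1}$. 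One of $U,V$ contains a factor $\Delta_{T_j}$, hence the letter $t_j$, since every positive word for $\Delta_{T_j}$ uses all of $T_j$. The other involves only letters of $T_{j-1}$, which is a contradiction. So $\mathbb Z^n\le\A[\Gamma]$ and $\mathrm{cd}\,\A[\Gamma]\ge n$.

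Note also that, within the paper, Proposition~\ref{prop-rank-profinite} already provides a finite module $M$ with $H^n(\A[\Gamma],M)\neq\{0\}$. That gives the lower bound directly, and the length-$n$ De Concini--Salvetti complex used there gives the upper bound.
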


The second ingredient is the following.

\begin{prop}\label{prop-rank-profinite}
    Given $\Gamma$ a Coxeter graph of spherical type of rank $n$, there exists a finite $\A [\Gamma]$-module $M$ such that $H^{n}(\A[\Gamma],M)\neq \{0\}$. 
\end{prop}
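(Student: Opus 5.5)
We want: for a spherical-type Artin group $\A[\Gamma]$ of rank $n$, a finite module $M$ with $H^n(\A[\Gamma],M)\neq 0$. The plan is to work with $\mathbb Z/p$ coefficients via the top integral cohomology. The input is the Salvetti complex (or the Deligne complex quotient), an $n$-dimensional finite $K(\pi,1)$ for $\A[\Gamma]$. Since cohomological dimension equals $n$ (Proposition \ref{prop-cohomological-dimension-artin}), there is a $\mathbb Z[\A[\Gamma]]$-module $N$ with $H^n(\A[\Gamma],N)\neq 0$; the point is to make it finite.

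The cleanest route is to use trivial coefficients $\mathbb F_p$, or a twisted rank-one module, whenever the top cohomology does not vanish. We reduce to the irreducible case by the Künneth formula: $\A[\Gamma]=\A[\Gamma_1]\times\cdots\times\A[\Gamma_k]$, each factor has a finite $K(\pi,1)$ of dimension $n_i$, and over a field $\mathbb F$ we have
\[
H^n(\A[\Gamma],M_1\otimes\cdots\otimes M_k)\cong \bigotimes_i H^{n_i}(\A[\Gamma_i],M_i)
\]
for finite-dimensional $\mathbb F$-modules $M_i$, since the top degree receives only the top-degree contributions. So it suffices to produce, for each irreducible factor, a finite $\mathbb F$-module (for a common field $\mathbb F$) with nonzero top cohomology.

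For a single factor we proceed via duality. An Artin group of spherical type is a duality group: $\A[\Gamma]/Z$ has finite index torsion-free subgroups, and the Salvetti complex is a finite $n$-dimensional $K(\pi,1)$. Rather than relying on duality, though, we use the cellular cochain complex directly. The top boundary $\partial_n:C_n\to C_{n-1}$ of the Salvetti complex is a map $\mathbb Z[G]\to \mathbb Z[G]^{n}$. Hence $H^n(G,M)=M/\mathrm{im}(\partial_n^*)$, where $\partial_n^*:M^n\to M$ is given by $(m_s)\mapsto\sum_s m_s\cdot a_s$ for certain elements $a_s\in\mathbb Z[G]$. For $M=\mathbb F_p$ with every generator acting by a scalar $q\in\mathbb F_p^\times$ (this is a legitimate module since all generators may act by the same scalar), each $a_s$ specializes to a Poincaré-type polynomial evaluated at $q$. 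Explicitly, $a_s$ is, up to sign, $W_{S}(q)/W_{S\setminus\{s\}}(q)$, where $W_X(t)$ denotes the Poincaré series of the parabolic subgroup, in the De Concini–Salvetti computation. We therefore choose $q$ and $p$ such that all these values vanish in $\mathbb F_p$. For instance, take $q=1$ and $p$ dividing every index $[\W:\W_{S\setminus\{s\}}]$, or choose $q$ a primitive $d$-th root of unity mod $p$, where $d$ is the highest degree (the Coxeter number $h$); this kills $W_S(q)$ but not the $W_{S\setminus\{s\}}(q)$. Then $\partial_n^*=0$ and $H^n=\mathbb F_p\neq 0$.

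The main obstacle is to justify the formula for the top coboundary with this rank-one local system. That formula is the standard computation of De Concini–Procesi–Salvetti, in which the coboundary on the cell $S$ from the face $S\setminus\{s\}$ is $\pm W_S(-q)/W_{S\setminus\{s\}}(-q)$. We must also check that the required prime $p$ and root $q$ can be chosen uniformly across factors. We can take $p\equiv1$ modulo the lcm of the Coxeter numbers, so that $\mathbb F_p$ contains suitable roots of unity for every factor at once, with $q$ chosen per factor.
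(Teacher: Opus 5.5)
Your root-of-unity route is essentially the paper's argument. The paper also starts from the De Concini--Salvetti description $H^n(\A,M)=M/\mathrm{im}(\delta^{n-1})$ and lets every generator act by one scalar. It then kills each coefficient $\W_S(-X)/\W_{S\setminus\{s\}}(-X)$ by making $-X$ a primitive $h_{\W}$-th root of unity, using Solomon's factorization and $h_{\W_J}<h_{\W}$ for $J\subsetneq S$. Two differences are worth noting:
\begin{itemize}
\item The paper's module $\mathbb F_p[X]/(\Psi(-X))$ is just a finite field $\mathbb F_{p^d}$ containing such a root, and it works for any prime $p\nmid h_{\W}$. Your rank-one module over $\mathbb F_p$ instead needs a prime $p\equiv 1 \pmod{\mathrm{lcm}(h_i)}$, i.e.\ the special case of Dirichlet's theorem.
\item For reducible $\Gamma$ the paper does not use K\"unneth. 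It observes directly that $\rho_S^{S\setminus\{s\}}=\rho_{S_1}^{S_1\setminus\{s\}}$ already annihilates $M_1\otimes\cdots\otimes M_r$. Your K\"unneth reduction is also valid: the resolutions are of finite type, the coefficients lie in a field, and the top degree only receives $\bigotimes_i H^{n_i}(\A[\Gamma_i],M_i)$.
\end{itemize}

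Three points need repair.
\begin{itemize}
\item Your two formulas for the coefficient are inconsistent. With every generator acting by $q$, the coefficient is $\sum_{w\in\W_S^{S\setminus\{s\}}}(-q)^{\ell(w)}=\W_S(-q)/\W_{S\setminus\{s\}}(-q)$. This is not $\pm\W_S(q)/\W_{S\setminus\{s\}}(q)$. So it is $-q$, not $q$, that must be the primitive $h$-th root of unity.
\item The alternative ``take $q=1$ and $p$ dividing every index'' is false and should be deleted. With trivial coefficients, for $\A[A_2]$ and $J=\{s_1\}$ the coefficient is $1-1+1=1$; indeed $H^2(\A[A_2];\mathbb F_p)=0$ for every $p$. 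Even with generators acting by $-1$, where the coefficients are the indices $[\W:\W_{S\setminus\{s\}}]$, this fails for $A_5$: the indices $\binom{6}{k}$, $1\le k\le 5$, have $\gcd(6,15,20)=1$.
\item You assert without argument that the chosen root does not kill $\W_{S\setminus\{s\}}$. This requires $h_{\W_J}<h_{\W}$ for every irreducible component of every proper $J\subsetneq S$, and this is exactly where irreducibility is used. State it, and combine it with $\W_J(X)=\prod_i (X^{m_i'+1}-1)/(X-1)$.
\end{itemize}
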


\begin{proof}
To simplify the notation, we write $\W = \W [\Gamma]$ and $\A =\A [\Gamma]$ whenever there is no ambiguity regarding the choice of the Coxeter graph $\Gamma$.
Let $S$ be the vertex set of $\Gamma$, viewed also as a generating set for both $\W$ and $\A$.
We denote by $\theta : \A \to \W$ the canonical epimorphism that maps $s$ to $s$ for every $s \in S$.
This epimorphism admits a natural set-section $\tau : \W \to \A$, defined as follows.
Let $w \in \W$.
Denote by $\ell (w)$ its word length.
Recall that an expression $w = s_1 s_2 \cdots s_k$ over $S$ is called \emph{reduced} if $k = \ell (w)$.
Choose a reduced expression $w= s_1 s_2 \cdots s_k$, and set $\tau (w) = s_1 s_2 \cdots s_k$, viewed as an element of $\A$.
It follows from \cite{Matsumoto} that the definition of $\tau (w)$ does not depend on the choice of the reduced expression.

For $J \subseteq S$, let $\W_J$ denote the subgroup of $\W$ generated by $J$.
For $J' \subseteq J \subseteq S$, define $\W_J^{J'} = \{ w \in \W_J \mid \ell (ws) > \ell (w) \text{ for all } s \in J' \}$, and denote by $\rho_J^{J'}$ the element of the group algebra $\mathbb Z [\A]$ given by
\[
\rho_J^{J'} = \sum_{w \in \W_J^{J'}} (-1)^{\ell (w)} \tau(w)\,.
\]

Let $M$ be an $\A$-module.
For each $0 \le k \le n = |S|$, let $\mathcal P_k (S)$ denote the set of subsets of $S$ of cardinality $k$, and let $C^k (M) = M^{\mathcal P_k (S)}$ be the module of maps from $\mathcal P_k (S)$ to $M$.
Fix an ordering $S = \{ s_1 < s_2 < \cdots < s_n \}$ and define a homomorphism $\delta^{n-1} : C^{n-1} (M) \to C^n (M)$ as follows.
For $f \in C^{n-1} (M)$, set
\[
\delta^{n-1}(f)(S) = \sum_{j=1}^n (-1)^j \rho_S^{S \setminus \{s_j\}} \cdot f(S \setminus \{s_j\})\,.
\]
Then, by \cite{ConciniSalvetti2},
\[
H^n (\A, M) \cong C^n (M) / \mathrm{im} (\delta^{n-1})\,.
\]

Since $C^n (M) \cong M$, to show that $H^n (\A, M) \neq \{0\}$ (assuming, of course, that $M \neq \{ 0 \}$), it suffices to show that $\mathrm{im} (\delta^{n-1}) = \{0\}$.
By the definition of $\delta^{n-1}$, it suffices to show that $\rho_S^{S \setminus \{s_j\}} \cdot M = \{0\}$ for every $1 \le j \le n$.
Let $p$ be a prime number, and let $\mathbb F_p$ denote the finite field with $p$ elements.
We shall construct such a module $M$, where $M$ is an $\mathbb F_p$-vector space.

First, assume that $\Gamma$ is irreducible, that is, connected.
Let $P \in \mathbb F_p [X]$ be a non-constant monic polynomial such that $X$ is invertible in $\mathbb F_p [X]/ (P)$.
We define an $\mathbb{F}_p$-linear action of $\A$ on $\mathbb{F}_p[X]/(P)$ by letting each standard generator act via multiplication by $X$.
Then we set $M = \mathbb F_p [X]/ (P)$, endowed with this $\A$-module structure.

For every $w \in \W$, the element $\tau(w)$ acts on $M$ by multiplication by $X^{\ell(w)}$.
Hence, $\rho_S^{S \setminus \{s_j\}}$ acts on $M$ by multiplication by the polynomial $Q_j = \sum_{w \in \W_S^{S \setminus \{s_j\}}} (-X)^{\ell(w)}$.
It follows from \cite[Chapitre 4, Exercise 1.26]{Bourbaki} that $Q_j = \W_S (-X)/\W_{S \setminus \{s_j\}} (-X)$, where, for each $J \subseteq S$, $\W_J (X) = \sum_{w \in \W_J} X^{\ell(w)}$.
Moreover, it is shown in \cite{Solomon} that
\[
\W_S(X) = \prod_{i=1}^n \frac{X^{m_i+1} -1}{X-1}\,,
\]
where $m_1 \le m_2 \le \cdots \le m_n$ are the exponents of $\W$.
We know that $m_n+1 = h_{\W}$ is the Coxeter number of $\W$, and we easily deduce from \cite[Table 2, Page 80]{Humphreys} that $h_{\W_J} < h_{\W}$ for every $J \subsetneq S$.
Note that the hypothesis that $\Gamma$ is irreducible is required for this last inequality.

Now, choose $p$ which does not divide $h_{\W}$.
Choose an irreducible factor $\Psi$ in $\mathbb F_p [X]$ of the reduction modulo $p$ of the cyclotomic polynomial $\Phi_{h_{\W}}$.
The polynomial $\Psi$ is not necessarily the reduction modulo $p$ of $\Phi_{h_{\W}}$, but all of its roots in $\overline{\mathbb F_p}$ are primitive $h_{\W}$-th roots of unity.
In particular, $0$ is not a root of $\Psi$.
Now, we set $P = \Psi (-X)$.
Note that $X$ is invertible in $\mathbb F_p [X] / (P)$, since $0$ is not a root of $P$.

Let $1 \le j \le n$.
To prove that $\rho_S^{S \setminus \{s_j\}} \cdot M = \{0\}$, it suffices to show that $Q_j = 0$ in $\mathbb F_p [X]/ (P)$, that is, that $P$ divides $Q_j$ in $\mathbb F_p [X]$.
Let $\mu \in \overline{\mathbb F_p}$ be a root of $\Psi$.
We know that $\W_{S \setminus \{s_j\}} (X) = \prod_{i=1}^{n-1} \frac{X^{m_i'+1}-1}{X-1}$, where $m_1' \le m_2' \le \cdots \le m_{n-1}'$ are the exponents of $\W_{S \setminus \{s_j\}}$.
Moreover, $m_{n-1}'+1 = h_{\W_{S \setminus \{s_j\}}} < h_{\W}$, and $\mu$ is a primitive $h_{\W}$-th root of unity.
Therefore, $\mu$ is not a root of $\W_{S \setminus \{s_j\}}(X)$, that is, $\W_{S \setminus \{s_j\}} (\mu) \neq 0$ in $\overline{\mathbb F_p}$.
On the other hand, since $\frac{X^{h_{\W}}-1}{X-1}$ divides $\W_S (X)$, we have $\W_S (\mu) = 0$ in $\overline{\mathbb F_p}$.
Since $Q_j (X) = \W_S (-X) / \W_{S \setminus \{s_j\}} (-X)$, it follows that $-\mu$ is a root of $Q_j$.
Hence, every root of $\Psi(-X)$ is a root of $Q_j$.
Since $P=\Psi(-X)$, we conclude that $P$ divides $Q_j$ in $\mathbb{F}_p[X]$.

Now, assume that $\Gamma$ is not connected.
Let $\Gamma_1, \dots, \Gamma_r$ be the connected components of $\Gamma$.
Choose a prime number $p$ which does not divide $h_{\W [\Gamma_i]}$ for any $i \in \{1, \dots, r\}$.
For each $i \in \{1, \dots, r\}$, let $M_i$ be the $\A[\Gamma_i]$-module constructed above, and define
\[
M = M_1 \otimes_{\mathbb F_p} M_2 \otimes_{\mathbb F_p} \cdots \otimes_{\mathbb F_p} M_r\,.
\]
We equip $M$ with the natural action of $\A [\Gamma] = \A [\Gamma_1] \times \cdots \times \A[\Gamma_r]$ on $M$ defined by
\[
(a_1, a_2, \dots, a_r) \cdot (x_1 \otimes x_2 \otimes \cdots \otimes x_r) =
(a_1 \cdot x_1) \otimes (a_2 \cdot x_2) \otimes \cdots \otimes (a_r \cdot x_r)\,.
\]
For each $1 \le i \le r$, let $S_i$ denote the vertex set of $\Gamma_i$.
Let $s \in S$.
Without loss of generality we may assume that $s \in S_1$.
Then $\W_S^{S \setminus \{s\}} = \W_{S_1}^{S_1 \setminus \{s\}}$, hence $\rho_S^{S \setminus \{s\}} = \rho_{S_1}^{S_1 \setminus \{s\}} \in \mathbb Z [\A [\Gamma_1]]$. 
It follows that
\[
\rho_S^{S \setminus \{s\}} \cdot M =
(\rho_{S_1}^{S_1 \setminus \{s\}} \cdot M_1) \otimes_{\mathbb F_p} M_2 \otimes_{\mathbb F_p} \cdots \otimes_{\mathbb F_p} M_r =
\{ 0 \} \otimes_{\mathbb F_p} M_2 \otimes_{\mathbb F_p} \cdots \otimes_{\mathbb F_p} M_r =
\{0\}\,.
\]
This completes the proof of Proposition~\ref{prop-rank-profinite}.
\end{proof}

\begin{corl}\label{cor-rank-profinite}
Let $\Gamma$ be a Coxeter graph of spherical type with no connected component of type $E$, $F$, or $H$.
Then the profinite cohomological dimension of $\widehat{\A [\Gamma]}$ is equal to the rank of $\Gamma$.
\end{corl}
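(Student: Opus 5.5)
The plan is to combine goodness in the sense of Serre (Theorem \ref{prop-not-E-F-H-is-good}) with the two ingredients stated above: Proposition \ref{prop-cohomological-dimension-artin} for the upper bound and Proposition \ref{prop-rank-profinite} for the lower bound. Write $\Gamma=\Gamma_1\sqcup\cdots\sqcup\Gamma_r$ for the decomposition into connected components and set $n=|S|$. Then $\A[\Gamma]=\A[\Gamma_1]\times\cdots\times\A[\Gamma_r]$, and since each $\A[\Gamma_i]$ is finitely generated and residually finite, $\widehat{\A[\Gamma]}\cong\widehat{\A[\Gamma_1]}\times\cdots\times\widehat{\A[\Gamma_r]}$.

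The first step is to show that $\A[\Gamma]$ itself is good. Each factor is good by Theorem \ref{prop-not-E-F-H-is-good}, because no component is of type $E$, $F$ or $H$. A direct product of finitely many good groups of type $FP_\infty$ (for instance, groups with finite $K(\pi,1)$, as spherical Artin groups have) is again good. I would either cite this standard fact (Grunewald--Jaikin--Zalesskii) or prove it by induction on $r$. For the induction, compare the Lyndon--Hochschild--Serre spectral sequence of $\A[\Gamma_1]\to\A[\Gamma]\to\A[\Gamma]/\A[\Gamma_1]$ with its profinite analogue. Goodness of the kernel makes the $E_2$ pages agree: the coefficient modules $H^q(\A[\Gamma_1],M)$ are finite and match the profinite ones. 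Hence the abutments agree. I expect this to be the only point needing care, and I would simply cite it.

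Next, goodness gives $H^i(\widehat{\A[\Gamma]},M)\cong H^i(\A[\Gamma],M)$ for every finite discrete $\widehat{\A[\Gamma]}$-module $M$. Any finite $\A[\Gamma]$-module extends uniquely to such a module, since the action factors through a finite quotient. For the upper bound, Proposition \ref{prop-cohomological-dimension-artin} gives $H^i(\A[\Gamma],M)=0$ for $i>n$, hence $H^i(\widehat{\A[\Gamma]},M)=0$ for all finite $M$ and $i>n$. For a profinite group, cohomological dimension is detected on finite (even simple $p$-torsion) modules, since every discrete torsion module is a direct limit of finite ones and cohomology commutes with direct limits. So $\mathrm{cd}(\widehat{\A[\Gamma]})\le n$.

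For the lower bound, take the finite module $M$ from Proposition \ref{prop-rank-profinite}, with $H^n(\A[\Gamma],M)\neq 0$. By goodness, $H^n(\widehat{\A[\Gamma]},M)\neq 0$, so $\mathrm{cd}(\widehat{\A[\Gamma]})\ge n$. Together this gives $\mathrm{cd}(\widehat{\A[\Gamma]})=n$, the rank of $\Gamma$.
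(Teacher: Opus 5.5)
Your proposal is correct and follows the paper's proof. Like the paper, you get goodness of $\A[\Gamma]$ from Theorem~\ref{prop-not-E-F-H-is-good} together with \cite[Proposition 3.4]{GrJaZa08}, and then combine Proposition~\ref{prop-cohomological-dimension-artin} and Proposition~\ref{prop-rank-profinite}. The only difference is that the paper cites \cite[Section 7]{RibesZalesskii2010} for the step from goodness to the profinite cohomological dimension, whereas you prove it directly by noting that profinite cohomological dimension is detected on finite modules.
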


\begin{proof}
By \cite[Section 7]{RibesZalesskii2010}, if a group $G$ is good in the sense of Serre and has finite cohomological dimension $n \in \mathbb{N}$, and there exists a finite $G$-module $M$ such that $H^n(G, M) \neq \{0\}$, then the profinite cohomological dimension of $\widehat{G}$ is $n$.

Let $n$ be the rank of $\Gamma$.
By Proposition \ref{prop-cohomological-dimension-artin}, the cohomological dimension of $\A [\Gamma]$ is $n$.
Let $\Gamma_1, \dots, \Gamma_r$ be the connected components of $\Gamma$.
By Theorem \ref{prop-not-E-F-H-is-good}, each group $\A [\Gamma_i]$ is good in the sense of Serre, and, by \cite[Proposition 3.4]{GrJaZa08}, a finite direct product of good groups in the sense of Serre is a good group in the sense of Serre.
It follows that $\A [\Gamma]$ is good in the sense of Serre.
Finally, by Proposition \ref{prop-rank-profinite}, there exists a finite $\A [\Gamma]$-module $M$ such that $H^n (\A [\Gamma], M) \neq \{0\}$.
We conclude that the profinite cohomological dimension of $\widehat{\A [\Gamma]}$ is $n$.
\end{proof}

\section{Smallest non-abelian quotients}\label{section-SNAQ}

The aim of this section is to determine the smallest non-abelian quotients of irreducible Artin groups of spherical type, except for those of type $E$ or $H$. These two cases are way more technical and require computations with GAP (\cite{GAP4}), hence we will treat them in Section \ref{sec-SNAQ-type-E-H}.

\begin{rem}\label{rem-generators-odd-conjugate}
    If two generators $a,b$ in $S$ are joined in $\Gamma$ by an odd-labelled edge, then $a$ and $b$ are conjugate in $\W[\Gamma]$ and in $\A[\Gamma]$. Consequently, these generators are identified in the abelianized $(\A[\Gamma])^{\mathrm{ab}}$. We see in particular that the abelianized of an irreducible Artin group of spherical type is either $\Z$ or $\Z^2$.
\end{rem}
We start by making a general observation which will be useful in the search of $\mathrm{SNAQ}(\A[\Gamma])$.

\begin{lem}\label{lem-SNAQ-is-centreless}
    Let $\Gamma$ be a Coxeter graph such that $\A[\Gamma]^{\mathrm{ab}}\cong\mathbb Z$ and let $P$ be a non-abelian quotient of $\A[\Gamma]$.
    \begin{enumerate}
        \item[(i)] The group $P/Z(P)$ is not abelian.
        \item[(ii)] If $P$ has no non-abelian quotient, then $P$ has trivial centre.
    \end{enumerate}
\end{lem}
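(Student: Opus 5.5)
Since $\A[\Gamma]^{\mathrm{ab}}\cong\mathbb Z$ and $P$ is a quotient of $\A[\Gamma]$, the abelianization $P^{\mathrm{ab}}$ is a quotient of $\mathbb Z$, hence cyclic. For (i), I will use the classical fact that a group whose quotient by a central subgroup is cyclic is abelian. Suppose $P/Z(P)$ were abelian. Then $P/Z(P)$ is an abelian quotient of $P$, so the canonical map $P\to P/Z(P)$ factors through $P^{\mathrm{ab}}$. Thus $P/Z(P)$ is a quotient of the cyclic group $P^{\mathrm{ab}}$, so it is cyclic.

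Concretely, pick $g\in P$ whose image generates $P/Z(P)$. Every element of $P$ has the form $g^k z$ with $z\in Z(P)$, and any two such elements commute, so $P$ is abelian. This contradicts the assumption that $P$ is non-abelian, which proves (i).

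For (ii), suppose $P$ has no non-abelian quotient, meaning no proper non-abelian quotient, since $P$ itself is non-abelian. I read the statement in this sense, as that is the only way it is non-vacuous. The group $P/Z(P)$ is a quotient of $P$, and by (i) it is non-abelian. It is also a quotient of $\A[\Gamma]$, so (i) applies to it as well. If $Z(P)\neq\{1\}$, then $P/Z(P)$ would be a proper non-abelian quotient of $P$, which is excluded. Hence $Z(P)$ is trivial.

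There is no real obstacle here. The only point needing care is justifying that $P/Z(P)$ is cyclic, and this follows from the factorization through $P^{\mathrm{ab}}$, which is cyclic as a quotient of $\A[\Gamma]^{\mathrm{ab}}\cong\mathbb Z$.
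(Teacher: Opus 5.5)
Your proof is correct and follows essentially the same route as the paper: if $P/Z(P)$ were abelian it would be a quotient of $\A[\Gamma]^{\mathrm{ab}}\cong\mathbb Z$, hence cyclic, which forces $P$ to be abelian; (ii) then follows at once because $P/Z(P)$ is a non-abelian quotient of $P$. Your reading of (ii) as ``no proper non-abelian quotient'' is the intended one.
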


\begin{proof}
If $P/Z(P)$ is abelian, then it is a quotient of $\A[\Gamma]^{\mathrm{ab}}\cong\Z$, hence it is cyclic. By a classical result, this implies that $P$ itself is abelian, which is a contradiction. This shows (i). Point (ii) follows directly from Point (i). This concludes the proof. 
\end{proof}

The next two lemmas are used in the proof of Proposition \ref{prop-SNAQ-firstcases}, and their proofs are left as an exercise to the reader.

\begin{lem}\label{lem-S3-smallest-nonabelian}
    The symmetric group $\S_3$ is the the only non-abelian group of order 6 and there is no non-abelian group of order strictly smaller that 6.
\end{lem}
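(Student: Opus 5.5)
The plan is to argue entirely from elementary group theory, using only the orders of groups and the basic fact that groups of prime order, and groups of order $p^2$ for a prime $p$, are abelian.

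First we show that every group $G$ with $|G|\le 5$ is abelian. If $|G|=1$ there is nothing to prove. If $|G|\in\{2,3,5\}$, then $|G|$ is prime, so $G$ is cyclic by Lagrange's theorem, since any non-identity element generates a subgroup of order dividing $|G|$. If $|G|=4$, there are two cases. If $G$ has an element of order $4$, it is cyclic. Otherwise every non-identity element has order $2$, and then $xy=(xy)^{-1}=y^{-1}x^{-1}=yx$, so $G$ is abelian. Hence no non-abelian group has order strictly smaller than $6$.

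Next, let $G$ be non-abelian of order $6$.

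1. By Cauchy's theorem, $G$ contains an element $a$ of order $3$ and an element $b$ of order $2$.
2. The subgroup $N=\langle a\rangle$ has index $2$, so it is normal.
3. Since $N$ is normal, $bab^{-1}\in\{a,a^{2}\}$.
4. If $bab^{-1}=a$, then $a$ and $b$ commute, so $G=\langle a,b\rangle$ is abelian (indeed cyclic of order $6$), which is a contradiction. Hence $bab^{-1}=a^{-1}$.
5. The map sending $a\mapsto(1\,2\,3)$ and $b\mapsto(1\,2)$ respects the relations $a^3=b^2=1$ and $bab^{-1}=a^{-1}$. Every element of $G$ has the form $a^ib^j$, and $|G|=6=|\S_3|$.
6. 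It follows that $G\cong\langle a,b\mid a^3,b^2,bab^{-1}a\rangle\cong\S_3$. Equivalently, $G$ acts faithfully on the three cosets of $\langle b\rangle$, giving an injective map $G\to\S_3$ between groups of the same order.

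Finally, $\S_3$ itself is non-abelian, since for instance $(1\,2)$ and $(1\,3)$ do not commute, and it has order $6$. This completes the argument.

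The only mildly delicate step is the identification of a non-abelian group of order $6$ with $\S_3$. To keep it clean, we use the coset action: $\langle b\rangle$ has index $3$, and its core is a normal subgroup contained in $\langle b\rangle$, so it is trivial or equal to $\langle b\rangle$. It cannot be $\langle b\rangle$: in that case $b$ would be central (a normal subgroup of order $2$ is central), and together with the abelian subgroup $\langle a\rangle$ this would force $G$ to be abelian. So the core is trivial, and $G$ embeds in $\S_3$, hence $G\cong\S_3$ by counting orders.
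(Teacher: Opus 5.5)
Your proof is correct and complete. The paper gives no proof of its own for this lemma (it is explicitly left as an exercise to the reader), and your argument is the standard one: groups of prime order are cyclic, exponent $2$ forces commutativity in order $4$, and in order $6$ Cauchy's theorem together with the faithful action on the three cosets of $\langle b\rangle$ identifies $G$ with $\mathfrak S_3$.

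Two steps could be stated more explicitly.
\begin{itemize}
\item In step 4, $G=\langle a,b\rangle$ holds because $|\langle a,b\rangle|$ is divisible by both $2$ and $3$.
\item The presentation argument in steps 5--6 needs the remark that $\langle a,b\mid a^3,b^2,bab^{-1}a\rangle$ has at most $6$ elements. Then $G$ and $\mathfrak S_3$ are both $6$-element quotients of it, hence isomorphic to it.
\end{itemize}
Your coset-action argument already settles the identification on its own, so you could simply drop steps 5--6.
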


\begin{lem}\label{lem-only-group-order2p}
    For any prime $p$, the group $\W[I_2(p)]$ is the unique non-abelian group of order $2p$, up to isomorphism. 
\end{lem}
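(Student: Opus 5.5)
The statement is Lemma \ref{lem-only-group-order2p}: for a prime $p$, $\W[I_2(p)]$ is the unique non-abelian group of order $2p$ up to isomorphism. First note that $\W[I_2(p)]$ is the dihedral group of order $2p$, with presentation $\langle s,t \mid s^2=t^2=(st)^p=1\rangle$. It is non-abelian exactly when $p\geq 3$. For $p=2$ the group $\W[I_2(2)]\cong(\Z/2)^2$ is abelian, and every group of order $4$ is abelian, so in that case the statement must be read with $p$ odd (or vacuously). I will treat $p$ an odd prime and state this caveat.

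Let $G$ be a non-abelian group of order $2p$. By Cauchy's theorem, $G$ contains an element $r$ of order $p$ and an element $s$ of order $2$. The subgroup $N=\langle r\rangle$ has index $2$, hence is normal. Therefore $srs^{-1}=r^k$ for some $k$. Conjugating twice gives $r=s^2rs^{-2}=r^{k^2}$, so $k^2\equiv 1 \pmod p$. Since $p$ is prime, $\Z/p$ is a field and $k\equiv \pm1$.

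If $k\equiv 1$, then $s$ commutes with $r$. Since $s\notin N$, we have $G=\langle r,s\rangle$, so $G$ would be abelian, which is a contradiction. Hence $srs^{-1}=r^{-1}$.

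Now set $t=sr$. Then $t^2=srsr=r^{-1}r=1$, and $st=s^2r=r$ has order $p$. So $s,t$ satisfy the Coxeter relations of $I_2(p)$, giving an epimorphism $\W[I_2(p)]\to G$. The group $\W[I_2(p)]$ has order $2p$: it is the dihedral group, or one can see directly that its elements are the $r^i$ and $sr^i$. Since both groups have order $2p$, this epimorphism is an isomorphism.

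There is no real obstacle here. The only delicate point is the case $p=2$, which must be excluded or handled separately as noted above.
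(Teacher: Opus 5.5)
Your proof is correct and complete. The paper gives no proof of this lemma, leaving it as an exercise; your argument (Cauchy's theorem, normality of the index-$2$ subgroup $\langle r\rangle$, $k^2\equiv 1 \pmod p$ forcing $srs^{-1}=r^{-1}$, then a surjection $\W[I_2(p)]\to G$ between groups of order $2p$) is the standard one.

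Your caveat is also right: as written, the statement fails for $p=2$, since $\W[I_2(2)]\cong(\mathbb{Z}/2)^2$ is abelian. This does not affect the paper, which only applies the lemma with $p$ an odd prime.
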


We now prove the following result.

\begin{prop}\label{prop-SNAQ-firstcases}
    Points (i)--(iii) of Theorem \ref{thm-SNAQ} hold.
\end{prop}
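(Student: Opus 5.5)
The plan is to treat each family separately. In every case we exhibit the claimed quotient as an actual quotient and then rule out anything smaller. For the lower bounds the basic tool is Lemma \ref{lem-SNAQ-is-centreless}, combined with Lemmas \ref{lem-S3-smallest-nonabelian} and \ref{lem-only-group-order2p} and with Kolay's Theorem \ref{thm-Kolay}.

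\emph{Point (ii).} Since $\S_3$ is the smallest non-abelian group, it suffices to produce a surjection onto $\S_3$.
\begin{itemize}
\item For $A_2$, $A_3$ this is Theorem \ref{thm-Kolay}(ii).
\item For $B_n$ ($n=2,3,4$), $D_4$ and $F_4$, we first define a surjection onto a Coxeter group containing an $A_2$ or $A_3$ piece, and then compose.
  \begin{itemize}
  \item For $B_3$, $B_4$: send the generator at the $4$-labelled end to $1$ and the remaining generators to the transpositions of $\S_3$ or $\S_4$. The relation $s t s t = t s t s$ becomes $t^2=t^2$, so this gives a map onto $\S_3$ or $\S_4$.
  \item For $\S_4$, compose further with $\S_4\to \S_3$.
  \item For $D_4$: send the three outer generators to $(1\,2)$ and the central one to $(2\,3)$.
  \item For $F_4$: kill the generators on one side of the $4$-edge; the $A_2$ on the other side survives.
  \item For $B_2=I_2(4)$, and more generally $I_2(m)$ with $4\mid m$: send $s\mapsto(1\,2)$ and $t\mapsto(1\,2\,3)$. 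The word $stst\cdots$ of length $m$ equals $(st)^{m/2}$, and $st$ has order $2$, so $(st)^{m/2}=1$. Likewise $ts$ has order $2$, so $(ts)^{m/2}=1$. Thus the relation holds, and the two images generate $\S_3$.
  \end{itemize}
\end{itemize}

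\emph{Point (iii).} $\A[I_2(m)]$ surjects onto $\W[I_2(m)]$, which surjects onto $\W[I_2(p)]$ of order $2p$. For the lower bound, let $P$ be a non-abelian quotient of minimal order, generated by the images $a,b$ of $s,t$.
\begin{itemize}
\item If $m$ is odd, the abelianization is $\Z$ and $a,b$ are conjugate. Also $\Delta^2=(st)^m$ is central, so its image lies in $Z(P)$. Passing to $P/Z(P)$ via Lemma \ref{lem-SNAQ-is-centreless}, we may assume $P$ is centreless, with $(ab)^m=1$ and $a(ab)^{(m-1)/2}$ of order dividing $2$ after killing the centre. I would then show that $P$ is a dihedral-type group of order $2k$ with $k\mid m$ odd, or a larger group, so its order is at least $2p$.
\item If $m\equiv 2 \pmod 4$, the abelianization is $\Z^2$ and $(ab)^{m/2}$ is central. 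A non-abelian quotient of order less than $2p$ would have order $6$ only if $3\mid m$; since $p$ is the smallest odd divisor of $m$, this is consistent.
\end{itemize}
The honest route is a direct argument: every group of order less than $2p$ generated by two elements satisfying the braid relation of length $m$ is abelian. I would prove this by taking a minimal counterexample, quotienting by a minimal normal subgroup, and using the observation that the image of $st$ acts, with $(st)^{m}$ (or $(st)^{m/2}$) central and $st$ generating modulo a small abelian piece.

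\emph{Point (i).} We have $\A[B_n]\to \S_n$ via the map above with the $B$-end generator sent to $1$, and $\A[D_n]\to\S_n$ by sending the two fork generators to the same transposition. For the lower bound, the inclusions $\A[A_{n-1}]\subset \A[B_n]$ and $\A[A_{n-1}]\subset\A[D_n]$ are standard parabolic subgroups. Given a non-abelian quotient $P$ of $\A[B_n]$ or $\A[D_n]$, restrict to the image $P'$ of $\A[A_{n-1}]$.
\begin{itemize}
\item If $P'$ is non-abelian, then $|P|\ge|P'|\ge n!$ by Kolay, with equality forcing $P\cong\S_n$.
\item If $P'$ is abelian, all its generators have the same image $x$, since they are conjugate.
  \begin{itemize}
  \item In type $D$, the last generator is also conjugate to $x$ and commutes with all but one generator. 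This forces the relation with $x$ to be trivial, so $P$ is abelian.
  \item In type $B$, $P$ is generated by $x$ and the image $y$ of the end generator, with $xyxy=yxyx$. I would check that this yields a quotient of $\A[B_2]$ of a restricted form: a quotient of $\A[I_2(4)]$ in which $x$ commutes with $y x y^{-1}$ and with every further conjugate. Such a group is metabelian, and one expects small ones (for instance $\S_3$) to appear, which would contradict the claim. So the key step is to show that for $n\ge5$ the extra commutation relations collapse this: since $x$ commutes with $y x y^{-1}$ and these generate a normal abelian subgroup, one gets a bound showing the group is abelian or of order at least $n!$.
  \end{itemize}
\end{itemize}
This type-$B$ degenerate case is the main obstacle; I would attack it by a direct computation with the relations $x=\sigma_i$ for all $i$, which should force $yxy^{-1}x^{-1}$ to be central of small order and make $P/Z(P)$ abelian, contradicting Lemma \ref{lem-SNAQ-is-centreless} when the abelianization is cyclic. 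Since $\A[B_n]^{\mathrm{ab}}\cong\Z^2$, however, I would instead use that commutation directly.
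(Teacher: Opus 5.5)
Your treatment of point (ii) is fine. Your map $s\mapsto(1\,2)$, $t\mapsto(1\,2\,3)$ for $4\mid m$ is the paper's map $u=s_1s_2\mapsto a_1$, $s_1\mapsto a_2$ in disguise.

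Point (iii), however, has a genuine gap: in neither subcase do you actually prove the lower bound $|P|\ge 2p$.

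\emph{The case $m$ odd.} You correctly reduce to a centreless $P$, i.e.\ a quotient of $\A[I_2(m)]/Z(\A[I_2(m)])\cong\mathbb{Z}/m*\mathbb{Z}/2$ generated by the images of $u=st$ and $\Delta=(st)^{(m-1)/2}s$. (Note that this is not $a(ab)^{(m-1)/2}$ as you wrote.) But you then only announce that $P$ is ``dihedral-type or larger''.

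\emph{The case $m\equiv 2\pmod 4$.} Here your remark about order $6$ excludes nothing: non-abelian groups of orders $8,10,12,\dots$ below $2p$ are never addressed. The minimal-counterexample sketch has no mechanism. Moreover, Lemma~\ref{lem-SNAQ-is-centreless} is not available, since the abelianization is $\mathbb{Z}^2$.

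The missing idea is an order count on the image of $u$.
\begin{itemize}
\item For $m$ odd, the image of $\bar u$ has odd order $l\mid m$. We have $l>1$, since otherwise $P$ is generated by the image of $\bar\Delta$ and is cyclic; hence $l\ge p$. By Lagrange $l\mid|P|$, and $|P|\neq l$ because $P$ is not cyclic. So $|P|\ge 2l\ge 2p$.
\item For $m=2k$ with $k$ odd, use the presentation $\A[I_2(m)]=\langle s,u\mid su^k=u^ks\rangle$. Since $\varphi(u^k)$ is central, the image of $u$ in $P/Z(P)$ has order $q\mid k$. We have $q\neq 1$, since otherwise $\varphi(u)$ is central and $P=\langle\varphi(s),\varphi(u)\rangle$ is abelian. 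Thus $q\ge p$ and $q\mid|P|$. If $|P|=q$, then $Z(P)=\{\id\}$ and $P$ is cyclic, which is impossible. Hence $|P|\ge 2q\ge 2p$.
\end{itemize}
Lemma~\ref{lem-only-group-order2p} then settles the equality case.

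In point (i), type $B$, you leave open the case where the image of $\langle\sigma_1,\dots,\sigma_{n-1}\rangle\cong\B_n$ is abelian. You call it the main obstacle and even suggest that small metabelian quotients such as $\S_3$ might appear there. That case is in fact immediate, by the same argument you used for type $D$:
\begin{itemize}
\item all of $\sigma_1,\dots,\sigma_{n-1}$ map to one element $x$;
\item the end generator $\sigma_n$ commutes with $\sigma_1,\dots,\sigma_{n-2}$, so its image $y$ commutes with $x$;
\item hence $P=\langle x,y\rangle$ is abelian.
\end{itemize}
You lost this by keeping only the relation $xyxy=yxyx$ and dropping the commutation relations between $\sigma_n$ and the far generators. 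This is exactly the paper's observation that $\A[B_n]/\llangle\sigma_1\sigma_2^{-1}\rrangle\cong\mathbb{Z}^2$.

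With these two repairs, the rest of your argument for (i) coincides with the paper's: restriction to the parabolic copy of $\B_n$ plus Kolay's theorem, with $|P|=n!$ forcing $P\cong\S_n$.
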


\begin{proof}
\underline{$\Gamma=A_n$, $n\geq 4$:} This is the content of Theorem \ref{thm-Kolay}.

\underline{$\Gamma=B_n$, $n\geq 5$:} First, observe that the group $\W[A_{n-1}]$ is a quotient of $\A[B_n]$. If we denote by $a_1,\ldots,a_{n-1}$ the standard generators of $\A[A_{n-1}]$ and by $\sigma_1,\ldots,\sigma_n$ the standard generators of $\A[B_{n}]$, then the map $\A[B_n]\to \A[A_{n-1}]$ sending $\sigma_i$ to $a_i$ for all $i=1,\ldots,n-1$ and $\sigma_n$ to $\id$ is a surjective homomorphism. By Theorem \ref{thm-Kolay}, $\S_n$ is a quotient of $\A[A_{n-1}]$, and thus of $\A[B_n]$. We know show that there is no smaller non-abelian quotient.

Let $P$ be a non-abelian quotient of $\A[B_n]$ and $\varphi:\A[B_n]\to P$ an epimorphism. Observe that $\A[B_n]/\llangle \sigma_1\sigma_2^{-1}\rrangle\cong \mathbb Z^2$. Indeed, if two generators $\sigma_i, \sigma_j$ satisfy both the commutation and the braid relations, they must coincide. Therefore, imposing the relation $\sigma_1=\sigma_2$ produces $\sigma_1=\sigma_2=\cdots=\sigma_{n-1}$, while $\sigma_{n}$ remains distinct and commutes with $\sigma_{n-1}$. Also observe that the subgroup of $\A[B_n]$ generated by $\sigma_1,\ldots,\sigma_{n-1}$ is isomorphic to $\A[A_{n-1}]=\B_n$. Now, since $\A[B_n]/\llangle \sigma_1\sigma_2^{-1}\rrangle\cong \mathbb Z^2$ and $P$ is assumed non-abelian, we have $\varphi (\sigma_1) \neq \varphi (\sigma_2)$, hence $\varphi(\langle \sigma_1,\dots,\sigma_{n-1}\rangle)<P$ is not abelian, and is a quotient of $\B_n$. Thus, $|P|\geq |\varphi(\langle \sigma_1,\dots,\sigma_{n-1}\rangle)|\geq n!$ by Theorem \ref{thm-Kolay}. Moreover, again by Theorem \ref{thm-Kolay}, if $|P|=n!$, then $\varphi(\langle\sigma_1,\dots,\sigma_{n-1}\rangle)=P=\W[A_{n-1}]=\S_n$.

\underline{$\Gamma=D_n$, $n\geq 5$:} First, observe that the group $\W[A_{n-1}]$ is a quotient of $\A[D_n]$.
As in the previous case, we denote by $a_1, \dots, a_{n-1}$ the standard generators of $A [A_{n-1}]$ and by $\sigma_1, \dots, \sigma_n$ the standard generators of $\A [D_n]$.
Then the map $\A [D_n] \to \A [A_{n-1}]$ sending $\sigma_i$ to $a_i$ for all $i=1, \dots, n-1$ and $\sigma_n$ to $a_{n-1}$ is a surjective homomorphism.
Since $\W [A_{n-1}]=\S_n$ is a quotient of $\A [ A_{n-1}]$, it follows that $\W [A_{n-1}]$ is a quotient of $\A [D_n]$. Again, we show that this quotient is the smallest. Let then $P$ be a non-abelian quotient of $\A[D_n]$ and $\varphi:\A[D_n]\to P$ the quotient projection. Similarly to the previous paragraph, observe that $\A [D_n] / \llangle \sigma_1 \sigma_2^{-1} \rrangle \cong \A[D_n]^{\mathrm{ab}}\cong\mathbb Z$, and the subgroup of $\A[D_n]$ generated by $\sigma_1,\ldots,\sigma_n$ is isomorphic to the braid group on $n$ strands $\B_n$. 
Since $\A [D_n] / \llangle \sigma_1 \sigma_2^{-1} \rrangle \cong \mathbb Z$ and $P$ is assumed to be non-abelian, we have $\varphi (\sigma_1) \neq \varphi (\sigma_2)$, hence $\varphi (\langle \sigma_1, \dots, \sigma_{n-1} \rangle )<P$ is a non-abelian quotient of $\A[A_{n-1}]=\B_n$.
Thus, using Theorem \ref{thm-Kolay}, we obtain $|P|\geq n!$. Moreover, again by Theorem \ref{thm-Kolay}, if $|P|=n!$, then $\varphi(\langle\sigma_1,\dots,\sigma_{n-1})\rangle)\cong\W[A_{n-1}]=\S_n$, and therefore $P\cong\S_n$.

\underline{$\Gamma\in \{A_2,A_3,B_2,B_3,B_4,D_4,F_4\}\cup\{I_2(4m) \mid m \in \mathbb N_{\ge 2}\}$:} 
By Lemma \ref{lem-S3-smallest-nonabelian}, $\W [A_2] = \S_3$ is the unique non-abelian group of order at most $6$, hence it suffices to show that $\W [A_2]$ is a quotient of $\A [\Gamma]$.
The cases $\Gamma = A_2$ and $\Gamma = A_3$ are treated in Theorem \ref{thm-Kolay}.
As in the case $\Gamma = B_n$ with $n \ge 5$, we can show that $\A [A_{n-1}]$ is a quotient of $\A [B_n]$ for $n=3,4$, hence $\W [A_2]$ is a quotient of $\A [B_n]$ for $n=3,4$.
Similarly, the group $\W[A_2]$ is a quotient of $\A [D_4]$.
Let $a_1, a_2$ be the standard generators of $\A [A_2]$, and let $f_1, f_2, f_3, f_4$ be the standard generators of $\A [F_4]$, numbered as in Figure \ref{fig:Cox-graphs-spherical-type}.
There exists a surjective homomorphism $\A [F_4] \to \A [A_2]$ that maps $f_i$ to $a_i$ for $i=1,2$, and $f_i$ to $\mathrm{id}$ for $i=3,4$, hence $\W [A_2]$ is a quotient of $\A [F_4]$.
It remains to consider the case where $\Gamma \in \{ B_2\} \cup \{ I_2 (4m) \mid m \in \mathbb N_{\ge 2}\}$.

Set $B_2 = I_2(4)$, and take $\Gamma \in \{ I_2(4m) \mid m \in \mathbb{N}_{\ge 1} \}$.
Let $s_1, s_2$ be the standard generators of $\A[\Gamma]$, and let $a_1, a_2$ be the standard generators of $\W[A_2]$.
Set $u = s_1 s_2$.
It can be verified that $\A[\Gamma]$ admits the presentation $\A[\Gamma] = \langle u, s_1 \mid s_1 u^{2m} = u^{2m} s_1 \rangle$.
It follows that there exists a surjective homomorphism $\A[\Gamma] \to \W[A_2]$ that maps $u$ to $a_1$ and $s_1$ to $a_2$, and hence $\S_3$ is a quotient of $\A[\Gamma]$.

\underline{$\Gamma=I_2(m)$, $m$ odd:} Let $p$ be the smallest prime divisor of $m$. First, the group $\W[I_2(p)]$ is a quotient of $\A[\Gamma]$ since $\A[I_2(p)]$ is. Let then $P$ be a non-abelian quotient of $\A[\Gamma]$ and $\varphi:\A[\Gamma]\to P$ be an epimorphism. By Lemma \ref{lem-SNAQ-is-centreless}, we can assume that $P$ is centreless, so that $P$ is a quotient of $\A[\Gamma]/Z(\A[\Gamma])$, that is, $\varphi$ induces an epimorphism $\overline{\varphi} : \A [\Gamma]/ Z (\A [\Gamma]) \to P$.
Let $s_1, s_2$ be the standard generators of $\A [\Gamma]$.
Set $u =s_1 s_2$ and $\Delta = (s_1 s_2)^k s_1 = (s_2 s_1)^k s_2$, where $m = 2k+1$.
It can be verified that $\A [\Gamma]$ admits the presentation $\A [\Gamma] = \langle u, \Delta \mid u^m=\Delta^2 \rangle$ and $Z (\A [\Gamma])$ is the cyclic subgroup generated by $u^m = \Delta^2$.
This implies that $\A [\Gamma] / Z (\A [\Gamma]) \cong \mathbb Z/m * \mathbb Z/2$, where $\mathbb Z/m$ is generated by the class $\bar u$ of $u$ and $\mathbb Z/2$ is generated by the class $\bar \Delta$ of $\Delta$.
Now, the order of the image of $\bar u$ under $\overline{\varphi}$ is an odd divisor $l$ of $m$. In particular, $l\neq 1$, because if $\overline{\varphi}(\overline{u})=\id$ then $P=\langle \overline{\varphi}(\overline{\Delta})\rangle$ would be cyclic.
Since $P$ is not abelian, it is not cyclic and we have $l\mid |P|$. Furthermore, if $|P|=l$ then $P=\langle\overline{\varphi}(\overline{u})\rangle$ is abelian, which is absurd. Thus, $|P|\neq l$, hence $|P|\geq 2l\geq 2p=|\W[I_2(p)]|$. Thus, by Lemma \ref{lem-only-group-order2p}, the group $\W[I_2(p)]$ is the smallest non-abelian quotient of $\A[\Gamma]$.

\underline{$\Gamma=I_2(m)$, $m \equiv 2 \pmod{4}$:}
Write $m=2k$, where $k$ is odd.
Let $p$ be the smallest prime divisor of $k$.
Let $s_1, s_2$ be the standard generators of $\A [\Gamma]$.
Set $u=s_1 s_2$, and consider the presentation $\A [\Gamma] = \langle s_1, u \mid u^k s_1 = s_1 u^k \rangle$.
Note that $Z (\A [\Gamma]) = \langle u^k \rangle$ and $\A [\Gamma] / Z (\A [\Gamma]) = \langle \bar s_1, \bar u \mid \bar u^k=\id \rangle \cong \mathbb Z/k * \mathbb Z$.
We consider the presentation $\W [I_2 (p)] = \langle \sigma, \rho \mid \sigma^2 = \rho^p = \id,\ \sigma \rho \sigma = \rho^{-1} \rangle$.
Since $p$ divides $k$, it follows from these presentations that there exists a surjective homomorphism $\varphi : \A [\Gamma] \to \W [I_2 (p)]$ that maps $u$ to $\rho$ and $s_1$ to $\sigma$.

Let $P$ be a finite non-abelian group, and let $\varphi : \A [\Gamma] \to P$ be a surjective homomorphism.
Since $\varphi$ is surjective, it maps the centre onto the centre, hence it induces a homomorphism $\overline{\varphi} : \A [\Gamma] / Z (\A[\Gamma]) \to P/ Z(P)$.
If $\overline{\varphi} (\bar u) = \id$, then $\varphi (u) \in Z (P)$, hence $\varphi (u)$ would commute with $\varphi (s_1)$.
It follows that $P = \langle \varphi(u), \varphi(s_1) \rangle$ would be abelian, a contradiction.
Thus, $\overline{\varphi} (\bar u) \neq \id$.
Since $\bar u$ has order $k$, the element $\overline{\varphi}(\bar u)$ has order $q \ge p$ dividing $k$.
In particular, $q$ divides $|P/Z(P)|$, hence $q$ divides $|P|$.
If $q = |P|$, then $Z (P) = \{\id\}$ and $P = P/Z(P) = \langle \overline{\varphi} (\bar u) \rangle$, which is impossible since $P$ is non-abelian.
So, $|P|>q$, which means $|P| \ge 2q \ge 2p$.
Thus, by Lemma \ref{lem-only-group-order2p}, the group $\W[I_2(p)]$ is the smallest non-abelian quotient of $\A[\Gamma]$.
\end{proof}

\section{Profinite rigidity of irreducible Artin groups of spherical type}\label{subs-intermediate-results}

The aim of this section is to prove Theorem \ref{mainthm}. Instead of writing a lengthy and technical proof, we prove intermediate results and explain how they imply Theorem \ref{mainthm}. Note that our proofs heavily rely on Theorem \ref{thm-SNAQ}, whose proof will be completed in Section \ref{sec-SNAQ-type-E-H}.

The next series of results consists of, as far as we know, new results for which we provide detailed proofs.

\begin{lem}\label{lem-odd-dihedral-not-quotient-B_3}
    Let $m\in\mathbb N_{\geq 5}$ be odd. The group $\W[I_2(m)]$ is not a quotient of $\A[B_3]$. 
\end{lem}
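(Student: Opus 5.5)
Suppose, for contradiction, that $\varphi:\A[B_3]\to \W[I_2(m)]$ is surjective, where $m\ge 5$ is odd. Write $\sigma_1,\sigma_2,\sigma_3$ for the standard generators of $\A[B_3]$, with $\sigma_1\sigma_2\sigma_1=\sigma_2\sigma_1\sigma_2$, $\sigma_2\sigma_3\sigma_2\sigma_3=\sigma_3\sigma_2\sigma_3\sigma_2$, and $\sigma_1\sigma_3=\sigma_3\sigma_1$.

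I will use the following facts about $D=\W[I_2(m)]$ with $m$ odd. The elements are $m$ rotations (the cyclic subgroup $R\cong\mathbb Z/m$) and $m$ reflections. All reflections are conjugate. The centre is trivial. The abelianization is $\mathbb Z/2$, with kernel $R$. Two distinct reflections generate a dihedral subgroup of order $2d$, where $d\mid m$ is odd and $d\ge 3$.

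The first step handles $\sigma_1$ and $\sigma_2$. Since they are conjugate in $\A[B_3]$ (Remark \ref{rem-generators-odd-conjugate}), their images $x=\varphi(\sigma_1)$ and $y=\varphi(\sigma_2)$ are conjugate in $D$. I claim $x=y$.
\begin{itemize}
\item If both are rotations, they commute, so the braid relation gives $x^2y=xy^2$, hence $x=y$.
\item If both are reflections and $x\ne y$, then $xy$ is a rotation of odd order $d\ge 3$, while the braid relation $xyx=yxy$ forces $(xy)^3=1$. So $d=3$, i.e.\ $x,y$ generate a copy of $\S_3$. This is possible when $3\mid m$, so this subcase needs the third generator to rule it out.
\end{itemize}

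The second step handles $z=\varphi(\sigma_3)$, which commutes with $x$ and satisfies $(yz)^2=(zy)^2$.
\begin{itemize}
\item Suppose $x=y$. Then $z$ commutes with $x$, so $D=\langle x,z\rangle$ is abelian, a contradiction.
\item Suppose $x\ne y$ are reflections, and suppose also that $z$ is a nontrivial rotation. Then $z$ commutes with the reflection $x$, which forces $z^2=1$. But rotations have odd order, so $z=1$.
\item Suppose instead $x\ne y$ are reflections and $z$ is a reflection commuting with $x$. In a dihedral group of odd order $2m$, two commuting reflections are equal, since their product would be a rotation of order $\le 2$, hence trivial. So $z=x$.
\end{itemize}
In the last two subcases, $D=\langle x,y\rangle\cong\S_3$, which has order $6<2m$ because $m\ge5$. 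This contradicts surjectivity.

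The remaining case is $x$ and $y$ both rotations, already covered in the first step: then $x=y$, and the first bullet of the second step applies. Reflections and rotations cannot mix between $x$ and $y$, since conjugate elements are of the same kind. This completes the case analysis.

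The only delicate point is the $\S_3$ subcase when $3\mid m$, and the bound $m\ge5$ is exactly what excludes it. An alternative route is to factor through the abelianization. Alternatively, note that the image of $\langle\sigma_1,\sigma_2\rangle\cong\B_3$ must be a dihedral subgroup whose rotation part has order dividing $3$. I expect the case analysis above to be self-contained and short.
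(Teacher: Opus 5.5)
Your proof is correct and follows essentially the same route as the paper. Conjugacy forces $\varphi(\sigma_1),\varphi(\sigma_2)$ to be either rotations, hence equal, which gives an abelian image, or distinct reflections. The centralizer of a reflection in an odd dihedral group then forces $\varphi(\sigma_3)\in\langle\varphi(\sigma_1)\rangle$, and the braid relation makes the image $\S_3$, which is too small since $m\ge 5$.

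One cosmetic point: your first step does not prove the announced claim $x=y$. It only shows that either $x=y$ or $\langle x,y\rangle\cong\S_3$, and that dichotomy is all your subsequent case analysis uses.
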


\begin{proof}
Assume that $\W[I_2(m)]$ is a quotient of $\A[B_3]$ and let $\varphi:\A[B_3]\to\W[I_2(m)]$ be an epimorphism. Denote by $\sigma_1,\sigma_2,\sigma_3$ the standard generators of $\A[B_3]$, as in Figure \ref{fig:Cox-graphs-spherical-type}. The images $\varphi(\sigma_1)$ and $\varphi(\sigma_2)$ must be both non trivial. Indeed, if $\varphi(\sigma_i)$ were trivial for $i=1,2$, then $\varphi$ would induce a homomorphism $\overline{\varphi}:\A[B_3]/\llangle \sigma_i\rrangle\longrightarrow \W[I_2(p)]$, which is absurd since $\A[B_3]/\llangle \sigma_i\rrangle\cong \Z$ is abelian for $i=1,2$, and $\W[I_2(p)]$ is not. Moreover, by Remark \ref{rem-generators-odd-conjugate}, $\varphi(\sigma_1)$ and $\varphi(\sigma_2)$ are conjugate, and thus they are both reflections or both rotations.

If they are rotations, they commute, and since they must also satisfy a braid relation, we get $\varphi(\sigma_1)=\varphi(\sigma_2)$. Since $\sigma_3$ commutes with $\sigma_1$, the image $\varphi(\sigma_3)$ must commute with $\varphi(\sigma_1)=\varphi(\sigma_2)$. Hence, the image of $\varphi$ would be abelian, which is a contradiction. Therefore, the elements $\varphi(\sigma_1)$ and $\varphi(\sigma_2)$ must be reflections. Again, if $\varphi(\sigma_1)=\varphi(\sigma_2)$, the image of $\varphi$ would be abelian, so we must have $\varphi(\sigma_1)\neq\varphi(\sigma_2)$. The element $\varphi(\sigma_3)$ commutes with $\varphi(\sigma_1)$, and in odd dihedral groups, we know that the centralizer of a reflection $r$ is $\{1,r\}$. In particular, we obtain $\varphi(\sigma_3)\in\langle\varphi(\sigma_1)\rangle$, and $\W[I_2(m)]=\langle \varphi(\sigma_1),\varphi(\sigma_2)\rangle$. Since we have $\varphi(\sigma_1)\varphi(\sigma_2)\varphi(\sigma_1)=\varphi(\sigma_2)\varphi(\sigma_1)\varphi(\sigma_2)$, the group $\langle \varphi(\sigma_1),\varphi(\sigma_2)\rangle$ is isomorphic to $\W[I_2(3)]$, contradicting $m\geq 5$. This concludes the proof. 
\end{proof}

\begin{lem}\label{lem-odd-dihedral-not-quotient-E-H}
    If $\Gamma$ is of type $E$ or $H$, the group $\A[\Gamma]$ admits no dihedral group as a quotient. 
\end{lem}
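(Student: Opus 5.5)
Types $E$ and $H$ have only simply-laced or odd-labelled edges, so the Coxeter graph is connected through odd edges. By Remark \ref{rem-generators-odd-conjugate}, all standard generators are pairwise conjugate in $\A[\Gamma]$, and $\A[\Gamma]^{\ab}\cong\Z$. The labels are $3$ throughout $E_6,E_7,E_8$, and $3$ or $5$ in $H_3,H_4$.

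Suppose $\varphi:\A[\Gamma]\to D$ is an epimorphism onto a dihedral group $D=\W[I_2(m)]$ with $m\ge 3$; the cases $m=1,2$ give abelian groups, which I take to be excluded. I would argue exactly as in Lemma \ref{lem-odd-dihedral-not-quotient-B_3}. All the images $\varphi(s)$, $s\in S$, are conjugate in $D$. They therefore lie in a single conjugacy class: either all are rotations, or all are reflections. They cannot all be rotations, since then the image would lie in the cyclic rotation subgroup, contradicting surjectivity onto a non-abelian group. Hence every $\varphi(s)$ is a reflection.

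Now use the braid relations. For an edge with label $3$ between $s$ and $t$, write $r=\varphi(s)$ and $r'=\varphi(t)$. Since $r$ and $r'$ are involutions, $rr'r=r'rr'$ gives $(rr')^3=1$. For an edge with label $5$ (types $H_3,H_4$), we get $(rr')^5=1$. For commuting $s,t$, we get $(rr')^2=1$.

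The key step is then the observation that, for $D$ to be non-abelian, some product $rr'$ of two generator images must have order $\ge 3$. Since $m\ge 3$, the rotation subgroup has order $m$. The image of $\varphi$ is generated by reflections, so some pair of them must produce a rotation whose order equals $m$ (or at least the rotations $rr'$ must generate the order-$m$ rotation group). The constraints above force every $rr'$ to have order $1,2,3$, or $5$.

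I then need a finer argument showing that the extra commutation relations collapse the image. In $E_n$ there is a branch vertex $t$ adjacent to three vertices $a,b,c$, and $a,b,c$ commute pairwise. Reflections in $D$ commute only if they are equal or differ by the half-turn, and the half-turn exists only when $m$ is even. If $\varphi(a),\varphi(b),\varphi(c)$ are reflections with pairwise products of order $\le 2$, then each pair is equal or differs by the half-turn. Together with the relation $(\varphi(t)\varphi(x))^3=1$, propagating along the graph should force the image to be $\S_3$-like. This, however, would not rule out $\S_3=D_3$. In fact, $\A[E_6]$ surjects onto $\S_3$: send every generator to the transposition $(12)$? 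That gives an abelian image, so I would refine the choice instead. Any assignment of reflections to generators must respect commutation. Reflections in $D_3$ commute only if they are equal, and then connectedness through odd edges forces all the images to be equal, because the braid relation combined with equality would give an abelian image.

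The main step is therefore the following. In the odd dihedral case, commuting reflections are equal. Adjacent images $r,r'$ are distinct, with $rr'$ of order $3$ or $5$. The structure of $\Gamma$ (a path of length $2$ with commuting endpoints, $s_1-s_2-s_3$, together with a triangle-free condition) then forces a contradiction, as in Lemma \ref{lem-odd-dihedral-not-quotient-B_3}: $\varphi(s_1)=\varphi(s_3)$, and propagating this along the whole graph forces all images to lie in $\{r,r'\}$, which generates $D_3$ or $D_5$. I would check whether the branch vertex or the $H$-structure makes this assignment inconsistent. I expect the main obstacle to be the even-$m$ case, where the half-turn allows distinct commuting reflections, together with a careful check of whether a $D_3$-quotient really fails to exist.
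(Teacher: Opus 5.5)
Your first step (all generator images are conjugate, and they cannot all be rotations, so they are all reflections) matches the paper. The paper then observes that $\varphi$ factors through $\W[\Gamma]$ and that $\W[\Gamma]$ has no dihedral quotient. Finishing directly inside $\W[I_2(m)]$, as you try to do, is a legitimate and more elementary route. As written, however, your proposal reaches a contradiction in neither parity, so there is a genuine gap.

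\emph{Even $m$.} You flag this as the main obstacle, but it is immediate. Since $\A[\Gamma]^{\ab}\cong\mathbb{Z}$ is cyclic, every quotient of $\A[\Gamma]$ has cyclic abelianization. For $m$ even, however, $\W[I_2(m)]^{\ab}\cong(\mathbb{Z}/2)^2$. Equivalently, the pairwise conjugate generator images lie in one of the two conjugacy classes of reflections, and that class generates only an index-$2$ subgroup. No analysis of half-turns or the branch vertex is needed.

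\emph{Odd $m$.} Here your endpoint is wrong. You assert that adjacent images are distinct, conclude only that the images lie in $\{r,r'\}$ generating $\W[I_2(3)]$ or $\W[I_2(5)]$, and leave consistency ``to be checked.'' The correct mechanism runs the other way.
\begin{itemize}
\item Commuting generators must have \emph{equal} images, because the centralizer of a reflection $r$ in an odd dihedral group is $\{1,r\}$.
\item What matters is the connectivity of the \emph{commutation} graph (non-adjacent pairs), not connectivity through odd edges. For $E_6,E_7,E_8$ and $H_4$ this graph is connected. For example, in $E_6$ the generator $\sigma_1$ commutes with $\sigma_2,\sigma_4,\sigma_5,\sigma_6$, and $\sigma_3$ commutes with $\sigma_5$. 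So all images coincide and the image has order $2$.
\item For $H_3$, the two end vertices commute, so they share an image $r$. Let $r'$ be the image of the middle vertex. The two braid relations give $(rr')^3=1$ and $(rr')^5=1$, hence $r=r'$, and again the image is cyclic.
\end{itemize}
In particular, your aside that $\A[E_6]$ might surject onto $\S_3$ is exactly what the lemma rules out. With these two fixes, your argument becomes a complete, self-contained proof that avoids any appeal to the quotient structure of $\W[\Gamma]$.
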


\begin{proof}
    Assume that $m\in\mathbb N_{\geq 2}$ is such that $\W[I_2(m)]$ is a quotient of $\A[\Gamma]$, and fix $\varphi:\A[\Gamma]\to\W[I_2(p)]$ an epimorphism.  Since all Artin generators of $\A[\Gamma]$ are conjugate (Remark \ref{rem-generators-odd-conjugate}), the images by $\varphi$ of the Artin generators are either all reflections or all rotations. If they were all rotations, the image of $\varphi$ would be abelian, contradiction. Thus, they are all reflections and the order of the image of an Artin generator by $\varphi$ is $2$, so that $\varphi$ factors through $\W[\Gamma]$. In all cases, the group $\W[\Gamma]$ does not have any dihedral group as a quotient. Indeed, by imposing any dihedral relation between two distinct generators in $\W[\Gamma]$, we find a contradiction. This concludes the proof. 
\end{proof}

\begin{lem}\label{lem-type-B-not-quotient-direct-product}
   Let $n\in\mathbb N_{\geq 3}\backslash\{4\}$ and $G$ be a non-abelian group. The group $G\times \W[A_{n-1}]$ is not a quotient of $\A[B_n]$. 
\end{lem}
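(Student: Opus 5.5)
The plan is to argue by contradiction. Suppose $\varphi:\A[B_n]\to G\times \W[A_{n-1}]$ is an epimorphism, and let $\pi_1,\pi_2$ be the two projections. Write $\sigma_1,\dots,\sigma_n$ for the standard generators of $\A[B_n]$, let $\B_n=\langle\sigma_1,\dots,\sigma_{n-1}\rangle\cong\A[A_{n-1}]$, and set $K=\varphi(\B_n)$.

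\emph{Step 1: both projections of $K$ are non-abelian.} I reuse the argument from the proof of Proposition \ref{prop-SNAQ-firstcases}. Recall $\A[B_n]/\llangle\sigma_1\sigma_2^{-1}\rrangle\cong\mathbb Z^2$. Since $G$ and $\S_n$ are non-abelian quotients, we get $\pi_i\varphi(\sigma_1)\neq\pi_i\varphi(\sigma_2)$ for $i=1,2$. Hence $\pi_1(K)$ and $\pi_2(K)$ are non-abelian quotients of $\B_n$. By Theorem \ref{thm-Kolay}, for $n\ge5$ we get $|\pi_2(K)|\ge n!$. Since $\pi_2(K)\le\S_n$, this forces $\pi_2(K)=\S_n$. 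For $n=3$ the same conclusion holds because $\S_3$ is the only non-abelian subgroup of $\S_3$.

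\emph{Step 2: $\sigma_n$ dies in the $\S_n$ factor.} The element $\pi_2\varphi(\sigma_n)$ commutes with $\pi_2\varphi(\sigma_1),\dots,\pi_2\varphi(\sigma_{n-2})$. Because $\pi_2\varphi|_{\B_n}$ is onto $\S_n$ with non-cyclic image, I will identify these images as the standard transpositions $(i\ i{+}1)$ up to an automorphism of $\S_n$. For $n\ge5$ I would invoke Artin's classification of transitive homomorphisms $\B_n\to\S_n$ for this; this is why $n=4$ is excluded, and for $n=6$ one must also allow the outer automorphism, which is harmless here. Up to that automorphism, these elements then generate a copy of $\S_{n-1}$. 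Its centralizer in $\S_n$ is trivial for $n\ge4$, so $\pi_2\varphi(\sigma_n)=\id$. For $n=3$ the element $\pi_2\varphi(\sigma_1)$ is a transposition, and I would treat this case separately by a direct computation with the relation $\sigma_2\sigma_3\sigma_2\sigma_3=\sigma_3\sigma_2\sigma_3\sigma_2$.

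\emph{Step 3: derive a contradiction.} We now know $\varphi(\sigma_n)=(g,\id)$. Then $1\times\S_n\subseteq\varphi(\A[B_n])$ forces $1\times\S_n\subseteq K\cdot\langle\!\langle (g,\id)\rangle\!\rangle$. The normal closure of $(g,\id)$ lies in $G\times1$, so $K$ must project onto $\S_n$ modulo $G\times 1$; this is automatic, so a finer invariant is needed. I will use the derived subgroups: for $n\ge5$, $[\B_n,\B_n]$ is perfect, so $K'$ is perfect and subdirect in $\pi_1(K)'\times\A_n$ (here $\A_n$ denotes the alternating group). Goursat's lemma, together with simplicity of $\A_n$, then leaves two cases. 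Either $K'\supseteq 1\times\A_n$, or $K'$ is the graph of an epimorphism $\pi_1(K)'\to\A_n$. In the first case $\pi_2\circ\varphi$ is onto with kernel containing $\varphi^{-1}(G\times1)$, and I would use the fact that $\sigma_1$ and $\sigma_2$ are conjugate to show $\pi_1\varphi(\sigma_1)=\pi_1\varphi(\sigma_2)$. That would make $G$ abelian, a contradiction. In the second case, $(g,\id)$ must normalize the diagonal-type subgroup while $G\times\S_n$ is generated by $K$ and $(g,\id)$. Comparing abelianizations, a quotient of $\mathbb Z^2$ with one generator of the $\S_n$-sign part, should then show that $G\times\S_n$ cannot be generated.

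The main obstacle is this last Goursat case analysis, namely ruling out the case where $K$ is a "twisted diagonal". I expect to lean there on Kolay's minimality: a graph-type $K'$ forces $\pi_1(K)$ to have $\S_n$ or $\A_n$ as a quotient compatible with $\pi_2$, and I will try to show this collapses the product.
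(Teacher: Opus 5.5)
Your Steps 1 and 2 are correct, and Step 1 is exactly the first step of the paper's proof. Step 3, however, cannot be completed, because the statement itself is false.

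Once you know $\varphi(\sigma_n)=(g,\id)$, the image $H=\varphi(\A[B_n])$ contains $(hgh^{-1},\id)$ for every $h\in\pi_1(H)=G$. Hence $H\supseteq\llangle g\rrangle_G\times\{\id\}$. More precisely, $H=G\times\S_n$ if and only if $\pi_1\varphi(\ker(\pi_2\circ\varphi))=G$, and this kernel contains $\sigma_n$. So as soon as $g$ normally generates $G$, the map $\varphi$ is onto, whatever $K'$ looks like. In particular, the claim in your first Goursat case, that conjugacy of $\sigma_1$ and $\sigma_2$ forces $\pi_1\varphi(\sigma_1)=\pi_1\varphi(\sigma_2)$, has no basis. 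For $n=3$ your Step 3 contains no argument at all, since $[\B_3,\B_3]$ is free of rank $2$, not perfect.

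Here is an explicit counterexample for $n=3$. Let $a=(1\,2\,3)$ and $b=(1\,3\,4)$ in $\mathfrak A_4$, and define $\varphi\colon\A[B_3]\to\mathfrak A_4\times\S_3$ by $\sigma_1\mapsto(a,(1\,2))$, $\sigma_2\mapsto(b,(2\,3))$, $\sigma_3\mapsto(a^{-1},\id)$. The relations hold in the first coordinate:
\begin{itemize}
\item $aba=bab=(1\,3)(2\,4)$;
\item $a$ commutes with $a^{-1}$;
\item $ba^{-1}=(1\,4)(2\,3)$ and $a^{-1}b=(1\,2)(3\,4)$ are involutions, so $(ba^{-1})^2=(a^{-1}b)^2$.
\end{itemize}
The second coordinate is the standard map killing $\sigma_3$. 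Thus $\varphi$ is a homomorphism, and $\pi_1\circ\varphi$ is onto $\mathfrak A_4$. Since the $\mathfrak A_4$-conjugates of $a^{-1}$ generate $\mathfrak A_4$, the map $\varphi$ is onto $\mathfrak A_4\times\W[A_2]$, and $\mathfrak A_4$ is non-abelian.

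The same mechanism works for every $n\geq 5$. Compose the homomorphism $\A[B_n]\to\B_{n+1}$ given by $\sigma_i\mapsto\tau_{n+1-i}$ for $i<n$ and $\sigma_n\mapsto\tau_1^2$ with a finite non-abelian quotient $Q$ of $\B_{n+1}$ in which the $\tau_i$ have order $3$. For instance, take the Burau representation at $t=-1$ reduced modulo $3$, where the $\tau_i$ become transvections. Then $\psi(\sigma_n)=\bar\tau_1^{-1}$ normally generates $Q$, and $(\psi,\mathrm{std})$ maps onto $Q\times\S_n$.

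The paper's own proof breaks at the same place. From the surjectivity of $\pi_2\circ\overline{\varphi}$ and $\A[A_{n-1}]^{\ab}\cong\mathbb Z$, it concludes that $\pi_1\circ\overline{\varphi}$ is trivial. This does not follow: in the example above it is onto $\mathfrak A_4$.

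Your remark that surjectivity onto $\S_n$ is ``automatic, so a finer invariant is needed'' is exactly the right diagnosis. What survives is only the criterion $\pi_1\varphi(\ker(\pi_2\circ\varphi))=G$. The applications in Proposition~\ref{prop-2-v-1-rigidity} would therefore have to be checked directly for the specific groups $G=\W[\Gamma_1]$.
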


\begin{proof}
    Assume for a contradiction that there exists an epimorphism $\varphi:\A[B_n]\to G\times \W[A_{n-1}]$ and write $\overline{\varphi}$ for the restriction of $\varphi$ to $\langle \sigma_1,\dots,\sigma_{n-1}\rangle\cong\A[A_{n-1}]=\B_n$. Moreover, write $\pi_1:G\times\W[A_{n-1}]\to G$ and $\pi_2:G\times\W[A_{n-1}]\to\W[A_{n-1}]$ for the canonical projections. The morphism $\pi_2\circ\varphi:\A[B_n]\to \W[A_{n-1}]$ is surjective and thus, mimicking the proof of Proposition \ref{prop-SNAQ-firstcases} for $\A[B_n]$, we obtain that the morphism $\pi_2\circ\overline{\varphi}$ is surjective (note that we crucially use $n\neq 4$ here, in the form that $\W[A_{n-1}]$ is the smallest non-abelian quotient of $\A[A_{n-1}]$ which is false for $n=4$). But then, since $\A[A_{n-1}]^{\mathrm{ab}}\cong\Z$, this forces $\pi_1\circ \overline{\varphi}$ to be the trivial morphism. This shows that the image of $\pi_1\circ\varphi$ is a quotient of $\A[B_n]/\llangle \sigma_1\rrangle\cong\Z$, contradicting its surjectivity onto $G$ since $G$ is not abelian. This concludes the proof.
\end{proof}

\begin{lem}\label{lem-quotients_of_dihedral} Let $m,p\in\mathbb N_{\geq 3}$ with $p$ odd.
\begin{itemize}
\item[(i)]
If $m$ is divisible by $4$, then the group $\W[I_2(p)]$ is a quotient of $\A[I_2(m)]$.
\item[(ii)]
If $m$ is not divisible by $4$, then the group $\W[I_2(p)]$ is a quotient of $\A[I_2(m)]$ if and only if $p$ divides $m$.
\end{itemize}
\end{lem}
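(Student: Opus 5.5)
The plan is to work throughout with the presentations in terms of $u=s_1s_2$ already used in the proof of Proposition~\ref{prop-SNAQ-firstcases}, and the presentation $\W[I_2(p)]=\langle \sigma,\rho\mid \sigma^2=\rho^p=\id,\ \sigma\rho\sigma=\rho^{-1}\rangle$.

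\textbf{Part (i).} Write $m=4k$. Then $\A[I_2(m)]=\langle u,s_1\mid s_1u^{2k}=u^{2k}s_1\rangle$. Sending $u\mapsto\sigma$ and $s_1\mapsto\rho$ respects the single relation, since $u^{2k}\mapsto\sigma^{2k}=\id$. The image contains both generators, so this map is surjective. This settles (i) with no condition on $p$.

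\textbf{Part (ii), the easy direction.} Suppose $p\mid m$. If $m$ is odd, $\A[I_2(m)]=\langle u,\Delta\mid u^m=\Delta^2\rangle$, and I send $u\mapsto\rho$, $\Delta\mapsto\sigma$; both sides of the relation go to $\id$ because $p\mid m$. If $m=2k$ with $k$ odd, then $p\mid k$ since $p$ is odd. Using $\langle s_1,u\mid u^ks_1=s_1u^k\rangle$, I send $u\mapsto\rho$, $s_1\mapsto\sigma$; this is well defined because $\rho^k=\id$. In both cases the image generates $\W[I_2(p)]$.

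\textbf{Part (ii), the converse.} Let $\varphi:\A[I_2(m)]\to\W[I_2(p)]$ be an epimorphism with $4\nmid m$, and write $m=2^\epsilon k$ with $\epsilon\in\{0,1\}$ and $k$ odd. The target has trivial centre because $p$ is odd, so $\varphi$ kills $Z(\A[I_2(m)])$. In both subcases this centre contains $u^k$ (it is $\langle u^m\rangle$ for $m$ odd and $\langle u^k\rangle$ for $m=2k$); for $m$ odd, $u^m=u^k$. Hence $\varphi(u)^k=\id$. If $\varphi(u)=\id$, the image is generated by the single element $\varphi(\Delta)$, respectively $\varphi(s_1)$, so it is cyclic, a contradiction. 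Thus $\varphi(u)\neq\id$ and its order is odd, being a divisor of $k$. In $\W[I_2(p)]$ the nontrivial elements of odd order are exactly the nontrivial rotations, and their orders divide $p$. So $\varphi(u)$ is a rotation of order $d>1$ with $d\mid p$ and $d\mid k$.

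\textbf{Main obstacle.} For composite $p$ this only yields $\gcd(p,k)>1$, not $p\mid k$. I will close the gap with surjectivity. The image is generated by $\varphi(u)$ together with one further element $x$, either $\varphi(\Delta)$ or $\varphi(s_1)$. If $x$ is a rotation, the image lies in the abelian rotation subgroup, which is impossible. So $x$ is a reflection, and the image is $\langle\varphi(u)\rangle\rtimes\langle x\rangle$, of order $2d$. Surjectivity forces $2d=2p$, so $d=p$ and therefore $p\mid k\mid m$. The step requiring the most care is checking the centre descriptions, namely that $u^k$ is central in the case $m=2k$; this follows from the presentation $\langle s_1,u\mid u^ks_1=s_1u^k\rangle$, as noted in the proof of Proposition~\ref{prop-SNAQ-firstcases}.
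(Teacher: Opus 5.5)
Your proof is correct and follows essentially the same route as the paper. For the converse, you kill the central element $u^k$ (the paper passes to the central quotient), note that the image of $u$ is a nontrivial rotation and the image of the other generator a reflection, and deduce from $|\W[I_2(p)]|=2p$ that the rotation has order exactly $p$. The remaining differences are cosmetic: for part (i) and the ``if'' direction of (ii) you write explicit maps on the $u$-presentations, where the paper factors through $\mathbb Z*\mathbb Z/2$ and through $\A[I_2(p)]$, respectively.
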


\begin{proof}
Recall that $\A[I_2(m)]$ admits the presentation $\A[I_2(m)]=\langle u,s_1\mid s_1u^k=u^ks_1\rangle$ for $m=2k$ even, and $\A[I_2(m)]=\langle u,\Delta\mid u^m=\Delta^2\rangle$ for $m=2k+1$ odd, with the same notation as in Proposition \ref{prop-SNAQ-firstcases}. Also remind that the centre $Z(\A[I_2(m)])$ is cyclic, generated by $u^k$ if $m=2k$ is even, and by $u^m=\Delta^2$ if $m$ is odd. Thus, we have
    \begin{equation*}
         \A[I_2(m)]/Z(\A[I_2(m)])\cong  \begin{cases} \Z/2*\Z/m \text{ if $m$ is odd},\\
         \Z*\Z/(m/2) \text{ if $m$ is even}.
            
    \end{cases}
    \end{equation*}
 Thus, if $m=4k$, the group $\Z*\Z/2$ is a quotient of $\A[I_2(m)]$, which shows (i) since $\W[I_2(p)]$ is a quotient of $\Z*\Z/2$. This shows (i).

    For (ii), let $m$ be an integer not divisible by $4$ and $p$ be odd. If $p\mid m$ we have the epimorphisms composition $\A[I_2(m)]\to\A[I_2(p)]\to\W[I_2(p)]$, showing the if part of the statement. Now, assume that $\W[I_2(p)]$ is a quotient of $\A[I_2(m)]$ and let $\varphi:\A[I_2(m)]\to \W[I_2(p)]$ be an epimorphism. Since $\W[I_2(p)]$ has trivial centre, the morphism $\varphi$ factors through $\overline\varphi:\A[I_2(m)]/Z(\A[I_2(m)]\to \W[I_2(p)]$.
 Since $m$ is not divisible by $4$, when $m$ is even the integer $m/2$ is odd. Therefore, whether $m$ is even or odd, the group $\A[I_2(m)]/Z(\A[I_2(m)])$ can be generated by two elements $x$ and $y$ where $y$ has odd order dividing $m$. In particular, the element $\overline\varphi(y)\in\W[I_2(p)]$ is a rotation. Thus, since $\overline\varphi$ is surjective, the element $\overline\varphi(x)$ is a reflection. But then, we have $2p=|\W[I_2(p)]|=|\mathrm{im}(\overline\varphi)|=2o(\overline{\varphi(y)})$. In particular, the order of $\overline\varphi(y)$ is equal to $p$. Since this order divides the order of $y$ which in turn divides $m$, this concludes the proof. 
\end{proof}

Recall that the \emph{Euler totient} of an integer $p \ge 2$ is the number $\phi (p)$ of integers between $1$ and $p$ that are coprime to $p$.

\begin{lem}\label{lem-number_epi_I_2(m)_I_2(p)}
    Let $m,p\in\mathbb N_{\geq3}$ with $p$ odd and $m$ divisible by $4$. We have
    \begin{equation}
        \begin{aligned}
            |\mathrm{Epi}(\A[I_2(m)],\W[I_2(p)])|=\begin{cases}
                3p\,\phi(p) & \text{if $p$ divides $m$,}\\
                2p\,\phi(p) & \text{if $p$ does not divide $m$.}
            \end{cases}
        \end{aligned}
    \end{equation}
\end{lem}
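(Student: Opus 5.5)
The plan is a direct count of pairs of images. I use the presentation $\A[I_2(m)]=\langle u,s_1\mid s_1u^k=u^ks_1\rangle$ with $m=2k$ and $u=s_1s_2$, from the proof of Proposition \ref{prop-SNAQ-firstcases}. Since $4\mid m$, the integer $k$ is even. A homomorphism $\A[I_2(m)]\to \W[I_2(p)]$ is then the same thing as a pair $(U,S)$ of elements of $\W[I_2(p)]$ with $SU^k=U^kS$. Such a homomorphism is an epimorphism if and only if $\langle U,S\rangle=\W[I_2(p)]$. So I count the pairs $(U,S)$ that generate $\W[I_2(p)]$ and satisfy the commutation relation. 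I write $\W[I_2(p)]=\langle\rho,\sigma\mid \rho^p=\sigma^2=\id,\ \sigma\rho\sigma=\rho^{-1}\rangle$. It has $p$ rotations and $p$ reflections, and exactly $\phi(p)$ rotations of order $p$. I split according to whether $U$ and $S$ are rotations or reflections.

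\emph{Both $U$ and $S$ are rotations.} They generate an abelian subgroup, so these pairs contribute $0$.

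\emph{$U$ is a reflection.} Then $U^k=\id$ because $k$ is even, so the relation holds automatically, and only generation has to be checked. If $S$ is a rotation, then $\langle U,S\rangle=\W[I_2(p)]$ if and only if $S$ has order $p$, which gives $\phi(p)$ choices of $S$. If $S$ is a reflection, write $S=Ur$ with $r=US$ a rotation; then $\langle U,S\rangle=\langle U,r\rangle$, which is the whole group if and only if $r$ has order $p$. This gives another $\phi(p)$ choices. Since there are $p$ choices for $U$, this case contributes $2p\,\phi(p)$.

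\emph{$U$ is a rotation and $S$ is a reflection.} Generation forces $U$ to have order $p$, which gives $\phi(p)$ choices, while $S$ can be any of the $p$ reflections. The relation requires the rotation $U^k$ to commute with the reflection $S$. Since $p$ is odd, the only rotation commuting with a reflection is $\id$, so the relation holds if and only if $U^k=\id$, that is, $p\mid k$. As $p$ is odd, $p\mid k$ is equivalent to $p\mid m$. So this case contributes $p\,\phi(p)$ when $p\mid m$ and $0$ otherwise.

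Adding the three cases gives $3p\,\phi(p)$ when $p\mid m$ and $2p\,\phi(p)$ otherwise, as claimed. The only step that needs care is the generation criterion in the middle case, where both $U$ and $S$ are reflections; rewriting the pair as $(U,US)$ reduces it to the statement that a reflection and a rotation of order $p$ generate $\W[I_2(p)]$. I also need that every pair $(U,S)$ satisfying the relation really defines a homomorphism, which is immediate from the one-relator presentation.
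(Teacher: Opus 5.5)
Your proof is correct and follows essentially the same argument as the paper: a four-way split according to whether the images of $u$ and $s_1$ are rotations or reflections, with three of the cases contributing $p\,\phi(p)$ each, one of them only when $p\mid m$. The only difference is that the paper first passes to the central quotient $\mathbb{Z}*\mathbb{Z}/(m/2)$, using that the target dihedral group is centreless, whereas you check the relation $s_1u^{m/2}=u^{m/2}s_1$ directly in each case, which amounts to the same thing.
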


\begin{proof}
  Write $m=4k$ and let $\varphi:\A[I_2(m)]\to\W[I_2(p)]$ be an epimorphism. We have \\$\A[I_2(m)]/Z(\A[I_2(m)])\cong\Z*\Z/2k$. Since $\W[I_2(p)]$ has trivial centre, the morphism $\varphi$ factors through $\overline\varphi:\A[I_2(m)]/Z(\A[I_2(m)])\to\W[I_2(p)]$. Write $x$ for a generator of $\Z$ and $y$ for a generator of $\Z/2k$ in the above free product. We separate cases depending on whether $\bar x:=\overline\varphi(x),\bar y :=\overline\varphi(y)$ are reflections or rotations.
  
  \underline{$\bar x$ and $\bar y$ are reflections:}
  In this case, the order of $\bar x$ and $\bar y$ is $2$ and the only constraint is that $\bar x\bar y$ has order $p$. We freely choose the reflection equal to $\bar x$, giving us $p$ choices. Then for $\bar y$, there are $\phi(p)$ reflections for which $\bar x\bar y$ is a rotation of angle $\frac{2\pi}p$. We obtain a contribution of $p\phi(p)$ epimorphisms.

  \underline{$\bar x$ is a rotation and $\bar y$ is a reflection:} We have $p$ choices among reflections for $\bar y$, and $\phi(p)$ choices among rotations for $\bar x$, namely those of order $p$. We obtain a contribution of $p\phi(p)$ epimorphisms. 

  \underline{$\bar x$ is a reflection and $\bar y$ is a rotation:} This case is similar to the previous one, except from the fact that this forces the order $p$ of $\bar y$ to divide $2k$, hence $m$. We obtain a contribution of $p\phi(p)$ epimorphisms if $p\mid m$ and a contribution of $0$ epimorphisms if $p\nmid m$. 

  \underline{$\bar x$ and $\bar y$ are rotations:} In this case, the morphism $\overline\varphi$ is not surjective, contradiction. We obtain a contribution of $0$ epimorphisms.

  Summing up all contributions concludes the proof.
\end{proof}

\begin{lem}\label{lem-internal_rigidity_dihedral}
    Let $m_1,m_2\in\mathbb N_{\geq 3}$. If $\widehat{\A[I_2(m_1)]}=\widehat{\A[I_2(m_2)]}$, then $m_1=m_2$. 
\end{lem}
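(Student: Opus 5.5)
The plan is to recover $m$ from profinite invariants in stages, using the lemmas just proved. First, the parity of $m$. By Remark \ref{rem-generators-odd-conjugate}, $\A[I_2(m)]^{\ab}\cong\Z$ if $m$ is odd and $\Z^2$ if $m$ is even. Proposition \ref{prop-profinite-same-abelianization} then shows that $m_1$ and $m_2$ have the same parity. Next we separate the cases $4\mid m$ and $4\nmid m$. By Theorem \ref{thm-SNAQ}, the smallest non-abelian quotient is $\S_3$ if $4\mid m$, and $\W[I_2(p)]$ with $p$ the smallest odd divisor of $m$ otherwise. If $4\mid m_1$ and $4\nmid m_2$, then $m_2$ is odd or $\equiv 2 \pmod 4$. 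If $3\nmid m_2$, the two SNAQs differ and Corollary \ref{cor-smallest-diff-profinite-diff} gives a contradiction. If $3\mid m_2$, both SNAQs are $\S_3$, so we count epimorphisms instead. Lemma \ref{lem-number_epi_I_2(m)_I_2(p)} with $p=3$ gives $9\phi(3)$ or $6\phi(3)$ epimorphisms from $\A[I_2(m_1)]$. I would then count $|\mathrm{Epi}(\A[I_2(m_2)],\S_3)|$ directly, by the same reflection/rotation case analysis in $\A[I_2(m_2)]/Z\cong \Z/2*\Z/m_2$ or $\Z*\Z/(m_2/2)$. Here one factor has odd order, so it must go to a rotation of order $3$ and the other to a reflection. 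This gives $3\phi(3)=6$, which differs from both values, and Proposition \ref{prop-profinite-same-epis} finishes this case. (Strictly, the parity step already separates the odd case, but the count handles everything uniformly.)

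It remains to treat $m_1,m_2$ in the same class. For the class $4\nmid m$, Lemma \ref{lem-quotients_of_dihedral}(ii) shows that the set of odd $p\ge3$ with $\W[I_2(p)]\in\F(\A[I_2(m)])$ is exactly the set of odd divisors $\ge 3$ of $m$. Hence $m_1$ and $m_2$ have the same odd divisors, so the same odd part, and with equal parity and $4\nmid m_i$ this forces $m_1=m_2$. For the class $4\mid m$, Lemma \ref{lem-number_epi_I_2(m)_I_2(p)} and Proposition \ref{prop-profinite-same-epis} show that for each odd $p\ge 3$, $p\mid m_1$ if and only if $p\mid m_2$. So the odd parts of $m_1$ and $m_2$ agree.

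The main obstacle is the $2$-part when $4\mid m$: we must distinguish $m=2^a k$ from $m=2^b k$ with $a,b\ge2$ and $k$ odd. Here I would detect $2$-power dihedral or cyclic-type quotients that see the central relation. The quotient $\A[I_2(m)]/Z=\Z*\Z/(m/2)$ maps onto $\Z/2\times\Z/(m/2)$, but centres complicate this. So I would rather use $\mathrm{Hom}(\A[I_2(m)],P)$ for a finite group $P$, such as a dihedral group of order $2^{N}$ with $N$ large, via Proposition \ref{prop-profinite-same-epis}. Using the presentation $\langle u,s_1\mid s_1u^{m/2}=u^{m/2}s_1\rangle$, homomorphisms are pairs $(x,y)$ in $P$ with $y$ commuting with $x^{m/2}$. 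Taking $P=\W[I_2(2^N)]$ with $2^N > m_1,m_2$, a rotation $x$ of order $2^N$ has $x^{m/2}$ central exactly when $2^N\mid m$, and non-central otherwise. I would compute $|\mathrm{Hom}|$ as a function of $v_2(m)$ and check that it is strictly increasing in $v_2(m)$. This yields $v_2(m_1)=v_2(m_2)$ and hence $m_1=m_2$. If this count turns out messy, the fallback is the abelianization of a characteristic finite-index subgroup, namely the kernel of the map to $\Z/2\times\Z/2$ sending $s_1,s_2$ to the generators. Its abelianization should depend on $m$ and is a profinite invariant.
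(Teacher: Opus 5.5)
Your argument is correct, and it differs from the paper's in three places.

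Your parity step and your odd-part step for $4\mid m$ (via Lemma~\ref{lem-number_epi_I_2(m)_I_2(p)}) coincide with the paper's. The differences are as follows.

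(a) For $m_1,m_2$ both odd, the paper invokes Marin's Theorem~\ref{prop-rigidity-one-class-marin}. You instead use Lemma~\ref{lem-quotients_of_dihedral}(ii), which does cover odd $m$ and makes this case self-contained.

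(b) To separate $4\mid m_1$ from $m_2\equiv 2 \pmod 4$, you go through Theorem~\ref{thm-SNAQ} and a count of epimorphisms onto $\S_3$; your values $18$ or $12$ versus $6$ are right. The paper simply takes an odd prime $p\nmid m_2$. Then $\W[I_2(p)]$ is a quotient of $\A[I_2(m_1)]$ by Lemma~\ref{lem-quotients_of_dihedral}(i), but not of $\A[I_2(m_2)]$ by (ii).

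(c) The step you flag as the main obstacle is a one-liner in the paper. Once the odd parts agree, you may assume $v_2(m_1)\le v_2(m_2)$, so $m_1\mid m_2$. In the presentation $\langle u,s_1\mid s_1u^{m/2}=u^{m/2}s_1\rangle$, the relation for $m_1$ implies the one for $m_2$. Hence $\A[I_2(m_1)]$ is a proper quotient of $\A[I_2(m_2)]$ when $m_1\neq m_2$, and Lemma~\ref{lem-strict-quotient-rigidity} finishes the proof.

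Your homomorphism count is only a plan as written, but it does go through. Take $P=\W[I_2(2^N)]$, let $r$ be a rotation of order $2^N$, and assume $2\le v_2(m)\le N$. Count pairs $(x,y)$ with $y$ commuting with $x^{m/2}$:

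Every reflection $x$ has $x^{m/2}=1$, since $m/2$ is even. The $2^N$ reflections therefore contribute $2^N\cdot 2^{N+1}$.

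A rotation $r^i$ has $r^{im/2}$ central if and only if $2^{N-v_2(m)}\mid i$. This happens for $2^{v_2(m)}$ values of $i$, each contributing $2^{N+1}$.

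The remaining rotations have centralizer of order $2^N$, so each contributes $2^N$.

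This gives $|\mathrm{Hom}(\A[I_2(m)],P)|=3\cdot 4^N+2^{N+v_2(m)}$, which is strictly increasing in $v_2(m)$. Proposition~\ref{prop-profinite-same-epis} then gives $v_2(m_1)=v_2(m_2)$, so your fallback via abelianizations of a finite-index subgroup is unnecessary.

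Your route buys independence from Marin's theorem and an explicit numerical invariant detecting $v_2(m)$. The paper's route buys brevity, since the strict-quotient lemma handles the whole $2$-part with no computation.
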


\begin{proof}
    If $m_1$ is odd and $m_2$ is even the groups $\A[I_2(m_1)]^{\mathrm{ab}}$ and $\A[I_2(m_2)]^{\mathrm{ab}}$ are not isomorphic, hence neither are $\widehat{\A[I_2(m_1)]}$ and $\widehat{\A[I_2(m_2)]}$ by Proposition \ref{prop-profinite-same-abelianization}.

    If $m_1$ and $m_2$ are odd, Theorem \ref{prop-rigidity-one-class-marin} applies.

If $m_1$ and $m_2$ are even and not divisible by $4$ and $\widehat{\A[I_2(m_1)]}\cong\widehat{\A[I_2(m_2)]}$, then $\A[I_2(m_1)]$ and $\A[I_2(m_2)]$ share the same finite quotients by Theorem \ref{thm-same-finite-quotients-same-profinite-completion}. Therefore, Lemma \ref{lem-quotients_of_dihedral} shows that $m_1$ and $m_2$ have the same odd divisors. Since the divisor 2 appears in both $m_1$ and $m_2$ with exponent 1, we obtain $m_1=m_2$. 

    If $m_1$ is even and not divisible by $4$ and $m_2$ is divisible by $4$, there exists $p$ odd such that $\W[I_2(p)]$ is a quotient of $\A[I_2(m_2)]$ but not of $\A[I_2(m_1)]$. Therefore, the groups $\widehat{\A[I_2(m_1)]}$ and $\widehat{\A[I_2(m_2)]}$ are not isomorphic by Theorem \ref{thm-same-finite-quotients-same-profinite-completion}.

    Assume that $m_1$ and $m_2$ are divisible by $4$ and that $\widehat{\A[I_2(m_1)]}\cong\widehat{\A[I_2(m_2)]}$. By Proposition \ref{prop-profinite-same-epis}, for each odd $p$ we have $|\mathrm{Epi}(\A[I_2(m_1)],\W[I_2(p)])|=|\mathrm{Epi}(\A[I_2(m_2)],\W[I_2(p)])|$. But then, Lemma \ref{lem-number_epi_I_2(m)_I_2(p)} shows that $m_1$ and $m_2$ share the same odd divisors. Therefore, without loss of generality, we can assume that $m_1\mid m_2$. If $m_1\neq m_2$, the group $\A[I_2(m_1)]$ is then a strict quotient of $\A[I_2(m_2)]$, contradicting $\widehat{\A[I_2(m_1)]}\cong\widehat{\A[I_2(m_2)]}$ by Lemma \ref{lem-strict-quotient-rigidity}. We obtain $m_1=m_2$. This concludes the proof.

\end{proof}

\begin{lem}\label{lem-A_n_rigidity}
    Let $n\in\mathbb N^*$ and $\Omega$ be an irreducible Coxeter graph of spherical type. If $\widehat{\A[\Omega]}\cong\widehat{\A[A_n]}$, then $\Omega\cong A_n$.
\end{lem}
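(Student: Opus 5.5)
We will show that $\Omega\cong A_n$ by using the invariants that are preserved under isomorphism of profinite completions: the abelianization (Proposition \ref{prop-profinite-same-abelianization}) and the smallest non-abelian quotient (Corollary \ref{cor-smallest-diff-profinite-diff}, together with Theorem \ref{thm-same-finite-quotients-same-profinite-completion}). These reduce $\Omega$ to a short list of candidates, and Marin's Theorem \ref{prop-rigidity-one-class-marin} finishes the argument.

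\textbf{Trivial case.} If $n=1$, then $\A[A_1]\cong\Z$ is abelian. Hence every finite quotient of $\A[\Omega]$ is abelian, so $\A[\Omega]$ has no non-abelian finite quotient. Every irreducible $\Omega$ of rank at least $2$ does admit one, by Theorem \ref{thm-SNAQ}. Therefore $\Omega$ has rank $1$, i.e.\ $\Omega=A_1$.

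\textbf{Case $n\ge 2$.} We have $\A[A_n]^{\mathrm{ab}}\cong\Z$, so Proposition \ref{prop-profinite-same-abelianization} gives $\A[\Omega]^{\mathrm{ab}}\cong\Z$. By Remark \ref{rem-generators-odd-conjugate}, this excludes $B_k$ ($k\ge2$), $F_4$, and $I_2(m)$ with $m$ even. The remaining candidates for $\Omega$ are $A_k$, $D_k$ ($k\ge4$), $E_6,E_7,E_8$, $H_3,H_4$, and $I_2(m)$ with $m$ odd.

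\textbf{Ruling out $E$ and $H$.} By Theorem \ref{thm-SNAQ}(iv), the SNAQ of $\A[\Omega]$ for $\Omega$ of type $E$ or $H$ is $\W[\Omega]/Z(\W[\Omega])$. The possible groups are:
\begin{itemize}
\item $H_3$: $A_5$, of order $60$;
\item $H_4$: of order $7200$;
\item $E_6$: of order $51840$;
\item $E_7$: $\mathrm{Sp}_6(2)$, of order $1451520$;
\item $E_8$: of order $348364800$.
\end{itemize}
The SNAQ of $\A[A_n]$ is instead $\S_{n+1}$ for $n\ge4$, and $\S_3$ for $n=2,3$. None of the groups listed above is a symmetric group: the orders $7200$, $51840$ and $348364800$ are not factorials at all. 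The groups of factorial order are $A_5$ (order $60=5!/2$, not a factorial) and the $E_7$ group (order $1451520=8!\cdot 36$, not a factorial). So the SNAQs differ, and Corollary \ref{cor-smallest-diff-profinite-diff} excludes $E$ and $H$. Alternatively, Lemma \ref{lem-odd-dihedral-not-quotient-E-H} shows that $\A[\Omega]$ has no dihedral quotient, whereas $\S_3=\W[I_2(3)]$ is a quotient of $\A[A_n]$ for every $n\ge2$. This shorter argument avoids checking the orders.

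\textbf{Ruling out odd dihedral types.} If $\Omega=I_2(m)$ with $m$ odd, Marin's theorem applies directly, since both graphs avoid types $E,F,H$ and have abelianization $\Z$. It gives $\Omega=A_n$, which forces $m=3$ and $n=2$.

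\textbf{Finishing.} The remaining candidates are $A_k$ and $D_k$. Both avoid types $E,F,H$ and have abelianization $\Z$, so Theorem \ref{prop-rigidity-one-class-marin} gives $\Omega=A_n$ directly. The main obstacle is making sure that every type excluded from Marin's theorem ($E$, $F$, $H$) has been handled. Type $F$ is removed by the abelianization, and types $E$ and $H$ by the dihedral-quotient argument, which is the cleanest route because it relies only on Lemma \ref{lem-odd-dihedral-not-quotient-E-H} and the existence of the $\S_3$ quotient of $\A[A_n]$.
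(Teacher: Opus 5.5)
\textbf{The gap.} The step that fails is your ``alternative'' exclusion of types $E$ and $H$, which your last paragraph then adopts as the main route. You claim that $\S_3=\W[I_2(3)]$ is a quotient of $\A[A_n]$ for every $n\ge 2$. This is false for $n\ge 4$: by Theorem~\ref{thm-Kolay}, the smallest non-abelian quotient of $\A[A_n]=\B_{n+1}$ is then $\S_{n+1}$, of order $(n+1)!>6$.

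You can also see this directly. The generators are conjugate, so in a dihedral target they all go to rotations (giving an abelian image) or all to reflections $\tau_i$. In the reflection case, the products $\tau_1\tau_2$, $\tau_2\tau_3$, $\tau_3\tau_4$ are rotations of order dividing $3$, while $\tau_1\tau_3$, $\tau_2\tau_4$, $\tau_1\tau_4$ have order dividing $2$. This forces all the $\tau_i$ to coincide. So for $n\ge 4$ the group $\A[A_n]$ has no non-abelian dihedral quotient at all, and Lemma~\ref{lem-odd-dihedral-not-quotient-E-H} cannot distinguish it from the $E$ and $H$ types. The dihedral argument is valid only for $n=2,3$, where it has the small merit of not needing part (iv) of Theorem~\ref{thm-SNAQ}.

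\textbf{What survives.} Your first argument for $E$ and $H$ is correct and suffices. By Theorem~\ref{thm-SNAQ}(iv), the smallest non-abelian quotients in these types have orders $60$, $7200$, $51840$, $1451520$ and $348364800$. None of these is a factorial, so they differ from $\S_3$ and $\S_{n+1}$, and Corollary~\ref{cor-smallest-diff-profinite-diff} applies. Your sentence ``The groups of factorial order are \dots'' is garbled; what you mean is that none of these orders is a factorial. If you keep this argument and discard the dihedral one, your proof is complete.

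\textbf{Comparison with the paper.} The paper first uses Theorem~\ref{thm-SNAQ} to cut $\Omega$ down to a short list (e.g.\ $\{A_n,B_{n+1},D_{n+1}\}$ for $n\ge 4$), then uses the abelianization, then Theorem~\ref{prop-rigidity-one-class-marin}. You use the abelianization first, and observe that Marin's theorem already disposes of every remaining candidate except $E$ and $H$. Consequently you only need Kolay's theorem and part (iv) of Theorem~\ref{thm-SNAQ}, not the full classification. For $n=1$, you argue that $\mathbb Z$ has no non-abelian finite quotient while $\W[\Omega]$ is one when $\Omega$ has rank at least $2$. The paper instead uses Lemma~\ref{lem-strict-quotient-rigidity}. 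Both are fine.
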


\begin{proof}

\noindent
\underline{Case $n=1$:} If $\Omega\neq A_1$, the group $\A[A_1]\cong \Z$ is a strict quotient of $\A[\Omega]$ and Lemma \ref{lem-strict-quotient-rigidity} applies.

\underline{Case $n=2,3$:} In these cases, by Theorem \ref{thm-Kolay}, we have $\mathrm{SNAQ}(\A[A_2])=\W[A_2]$. Thus, by Corollary \ref{cor-smallest-diff-profinite-diff}, we have $\mathrm{SNAQ}(\A[\Omega])=\W[A_2]$. Therefore, by Theorem \ref{thm-SNAQ}, we obtain $\\\Omega\in\{A_2,A_3,B_2,B_3,B_4,D_4,F_4\}\cup\{I_2(m)\}_{4\mid m}\cup\{I_2(m)\}_{3\mid m}$. Moreover, since $\A[A_2]^{\mathrm{ab}}\cong \Z$, this is also true for $\A[\Omega]$. This further restricts to $\Omega\in \{A_2,A_3,D_4\}\cup\{I_2(3k)\}_{k \text{ odd}}$. But in this case, Theorem \ref{prop-rigidity-one-class-marin} applies.

\underline{Case $n\geq 4$:} In this case, by Theorem \ref{thm-Kolay} we have $\mathrm{SNAQ}(\A[A_n])=\W[A_n]$. Thus, by Corollary \ref{cor-smallest-diff-profinite-diff}, we have $\mathrm{SNAQ}(\A[\Omega])=\W[A_n]$. Therefore, by Theorem \ref{thm-SNAQ}, we obtain $\Omega\in\{A_n,B_{n+1},D_{n+1}\}$. Moreover, since $\A[A_n]^{\mathrm{ab}}\cong \Z$, this is also true for $\A[\Omega]$. This further restricts to $\Omega=D_{n+1}$, in which case Theorem \ref{prop-rigidity-one-class-marin} applies. This concludes the proof.
\end{proof}

\begin{lem}\label{lem-B_n_rigidity}
    Let $n\in\mathbb N_{\geq 2}$ and $\Omega$ be an irreducible Coxeter graph of spherical type. If $\widehat{\A[\Omega]}\cong\widehat{\A[B_n]}$, then $\Omega\cong B_n$.
\end{lem}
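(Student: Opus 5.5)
The plan is to combine the abelianization, the smallest non-abelian quotient (Theorem \ref{thm-SNAQ}), the profinite cohomological dimension (Corollary \ref{cor-rank-profinite}) and a few explicit finite quotients. First, $\A[B_n]^{\mathrm{ab}}\cong\Z^2$, so by Proposition \ref{prop-profinite-same-abelianization} we also have $\A[\Omega]^{\mathrm{ab}}\cong\Z^2$. By Remark \ref{rem-generators-odd-conjugate}, this restricts $\Omega$ to $\{B_m\}_{m\ge 2}\cup\{F_4\}\cup\{I_2(m)\}_{m \text{ even}}$. By Corollary \ref{cor-smallest-diff-profinite-diff}, $\A[\Omega]$ and $\A[B_n]$ also have the same SNAQ.

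\emph{Case $n\ge 5$.} By Theorem \ref{thm-SNAQ}, the SNAQ is $\S_n$, so $\Omega\in\{A_{n-1},B_n,D_n\}$. The abelianization condition leaves only $\Omega=B_n$.

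\emph{Case $n\in\{2,3,4\}$.} Here the SNAQ is $\S_3$, so Theorem \ref{thm-SNAQ} together with the abelianization condition gives $\Omega\in\{B_2,B_3,B_4,F_4\}\cup\{I_2(4k)\}_{k\ge1}$.
\begin{itemize}
\item[(a)] None of the candidates other than $F_4$ is of type $E$, $F$ or $H$. So Corollary \ref{cor-rank-profinite} applies to them, and the profinite cohomological dimension recovers the rank: $2$ for $B_2$ and the $I_2(4k)$, and $3$ or $4$ for $B_3$ or $B_4$.
\item[(b)] If $n=2$ and $\Omega$ is dihedral, Lemma \ref{lem-internal_rigidity_dihedral} gives $\Omega=I_2(4)=B_2$.
\end{itemize}
It therefore remains to separate $F_4$ from $B_2$, $B_3$ and $B_4$. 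By Theorem \ref{thm-same-finite-quotients-same-profinite-completion}, it suffices to exhibit a finite quotient of one group that is not a quotient of the other.

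\emph{$F_4$ versus $B_2$.} By Lemma \ref{lem-quotients_of_dihedral}(i), $\W[I_2(5)]$ is a quotient of $\A[I_2(4)]$. I will show it is not a quotient of $\A[F_4]$. Let $\varphi:\A[F_4]\to\W[I_2(5)]$ be an epimorphism.
\begin{itemize}
\item Two elements of $\W[I_2(5)]$ satisfying a length-$3$ braid relation are either equal or generate a copy of $\S_3$. The latter is impossible since $6\nmid 10$. Hence $\varphi(f_1)=\varphi(f_2)=:a$ and $\varphi(f_3)=\varphi(f_4)=:b$.
\item Then $a$ and $b$ generate the group and satisfy $abab=baba$. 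If both are reflections, then $(ab)^2=(ba)^2=(ab)^{-2}$, so the order of $ab$ divides $4$ and $5$, hence $ab=\id$. Otherwise $a,b$ include a rotation and the group they generate is abelian or a proper subgroup.
\end{itemize}
Either way we get a contradiction, so $\W[I_2(5)]$ is not a quotient of $\A[F_4]$.

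\emph{$F_4$ versus $B_3$.} The group $\S_3\times\S_3$ is a quotient of $\A[F_4]$: send $f_1,f_2$ to the two standard transpositions of the first factor and $f_3,f_4$ to those of the second. The images of $f_2$ and $f_3$ commute, so the label-$4$ relation holds. By Lemma \ref{lem-type-B-not-quotient-direct-product} with $n=3$, $\S_3\times\W[A_2]$ is not a quotient of $\A[B_3]$.

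\emph{$F_4$ versus $B_4$.} This is the expected main obstacle, because Lemma \ref{lem-type-B-not-quotient-direct-product} excludes $n=4$. I would first try Proposition \ref{prop-profinite-same-epis} with $P=\S_3$, computing $|\mathrm{Hom}(\A[B_4],\S_3)|$ and $|\mathrm{Hom}(\A[F_4],\S_3)|$ explicitly. The images of $\sigma_1,\sigma_2,\sigma_3$ (respectively of $f_1,f_2$ and of $f_3,f_4$) are conjugate, so they are simultaneously trivial, 3-cycles or transpositions. The length-$4$ relation is automatic exactly when the two images commute or generate a subgroup of exponent dividing $4$. This makes both counts finite case enumerations, which I expect to differ. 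If they happen to agree, the fallback is to do the same count with $P=\S_4$, or to check by hand whether $\S_3\times\S_3$ is a quotient of $\A[B_4]$.
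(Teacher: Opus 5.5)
Your outline matches the paper's: abelianization and SNAQ to cut down the candidates, profinite cohomological dimension to separate ranks, and explicit finite quotients to separate $F_4$. It also handles $n\ge 5$ correctly. However, the proof is not finished. For $n=4$, $\Omega=F_4$ you only state an expectation.

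The count you propose does settle this case, and it does so by hand, whereas the paper uses a GAP check that $\W[F_4]$ is a quotient of $\A[F_4]$ but not of $\A[B_4]$. In $\S_3$, two elements satisfying a length-$3$ braid relation are either equal or distinct transpositions.
\begin{itemize}
\item For $\A[B_4]$, using $\sigma_1\sigma_3=\sigma_3\sigma_1$, only two patterns survive. The first is $\sigma_1,\sigma_2,\sigma_3\mapsto x,x,x$ with $\sigma_4$ sent into the centralizer of $x$; this gives $18$ homomorphisms, the number of commuting pairs in $\S_3$. The second is $\sigma_1,\sigma_2,\sigma_3\mapsto t,t',t$ with $t\neq t'$ transpositions, which forces $\sigma_4\mapsto\id$; this gives $6$ more.
\item For $\A[F_4]$, one possibility is $f_1,f_2\mapsto u$ and $f_3,f_4\mapsto v$ with $uv=vu$, giving $18$. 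Otherwise exactly one of the pairs $(f_1,f_2)$, $(f_3,f_4)$ goes to distinct transpositions and the other to $\id$, giving $6+6$. Both pairs cannot go to distinct transpositions, since $f_1$ commutes with $f_3$ and $f_4$.
\end{itemize}
Hence $|\mathrm{Hom}(\A[B_4],\S_3)|=24\neq 30=|\mathrm{Hom}(\A[F_4],\S_3)|$, and Proposition \ref{prop-profinite-same-epis} concludes.

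Your $F_4$ versus $B_2$ argument contains a false step. If $a$ is a rotation of order $5$ and $b$ a reflection, then $\langle a,b\rangle=\W[I_2(5)]$ and $abab=\id=baba$. This is exactly why $\W[I_2(5)]$ is a quotient of $\A[I_2(4)]$, so the mixed case is not ``abelian or a proper subgroup''. The claim survives only because you did not use $f_1f_3=f_3f_1$, which gives $ab=ba$ at once.

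There are two smaller issues.
\begin{itemize}
\item Your candidate list for $n\le 4$ misses $I_2(m)$ with $m\equiv 2\pmod 4$ and $3\mid m$ (e.g.\ $I_2(6)$), whose SNAQ is $\S_3$ by Theorem \ref{thm-SNAQ}(iii). This is harmless, since your steps (a) and (b) cover all dihedral graphs.
\item Lemma \ref{lem-type-B-not-quotient-direct-product} is false for arbitrary non-abelian $G$. The groups $\mathfrak A_4$ (via $\A[A_2]\to\mathrm{SL}_2(\mathbb Z)\to\mathrm{PSL}_2(\mathbb F_3)$) and $\S_3$ are quotients of $\A[A_2]$ with no non-trivial common quotient. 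So $\mathfrak A_4\times\S_3$ is a quotient of $\A[A_2]$, hence of $\A[B_3]$ via $\sigma_3\mapsto\id$.
\end{itemize}

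The instance $G=\S_3$ that you need is nevertheless true by a direct check. Any surjection $\A[B_3]\to\S_3$ sends $\sigma_1,\sigma_2$ to distinct transpositions and $\sigma_3$ to $\id$. So a map $\A[B_3]\to\S_3\times\S_3$ with surjective projections has image inside the graph of an automorphism of $\S_3$, which has order $6$.

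With these repairs your proof is complete. It avoids the GAP verifications that the paper uses for all three comparisons with $F_4$.
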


\begin{proof}
    \underline{Case $n=2$:} In this case, by Theorem \ref{thm-SNAQ}, we have $\mathrm{SNAQ}(\A[B_2])=\W[A_2]$. Thus, by Corollary \ref{cor-smallest-diff-profinite-diff}, we have $\mathrm{SNAQ}(\A[\Omega])=\W[A_2]$. Therefore, by Theorem \ref{thm-SNAQ}, we obtain $\Omega\in\{A_2,A_3,B_3,B_4,D_4,F_4\}\cup\{I_2(m)\}_{4\mid m}\cup\{I_2(m)\}_{3\mid m}$. Moreover, since $\A[B_2]^{\mathrm{ab}}\cong \Z^2$, this is also true for $\A[\Omega]$. This further restricts to $\Omega\in \{B_3,B_4,F_4\}\cup\{I_2(m)\}_{4\mid m \text{ or } 6\mid m}$. If $\Omega\in\{B_3,B_4\}$, Corollary \ref{cor-rank-profinite} differentiates $\widehat{\A[B_2]}$ from $\widehat{\A[\Omega]}$. If $\Omega$ is of dihedral type, the case is handled by Lemma \ref{lem-dihedral_rigidity}. Moreover, we verify with GAP that $\W[B_2]$ is a quotient of $\A[B_3]$ but not of $\A[F_4]$. Thus, by Theorem \ref{thm-same-finite-quotients-same-profinite-completion}, the groups $\widehat{\A[B_2]}$ and $\widehat{\A[F_4]}$ are not isomorphic. 

    \underline{Case $n=3$:} In this case, by Theorem \ref{thm-SNAQ}, we have $\mathrm{SNAQ}(\A[B_3])=\W[A_2]$. Thus, by Corollary \ref{cor-smallest-diff-profinite-diff}, we have $\mathrm{SNAQ}(\A[\Omega])=\W[A_2]$. Therefore, by Theorem \ref{thm-SNAQ}, we obtain $\Omega\in\{A_2,A_3,B_2,B_4,D_4,F_4\}\cup\{I_2(m)\}_{4\mid m}\cup\{I_2(m)\}_{3\mid m}$. Moreover, since $\A[B_3]^{\mathrm{ab}}\cong \Z^2$, this is also true for $\A[\Omega]$. This further restricts to $\Omega\in \{B_2,B_4,F_4\}\cup\{I_2(m)\}_{4\mid m \text{ or } 6\mid m}$. If $\Omega\in\{B_2,B_4\}\cup\{I_2(m)\}_{4\mid m \text{ or } 6\mid m}$, Corollary \ref{cor-rank-profinite} differentiates $\widehat{\A[B_3]}$ from $\widehat{\A[\Omega]}$. Moreover, we verify with GAP that $\W[A_2]\times \W[A_2]$ is a quotient of $\A[F_4]$ but not of $\A[B_3]$. Thus, by Theorem \ref{thm-same-finite-quotients-same-profinite-completion}, the groups $\widehat{\A[B_3]}$ and $\widehat{\A[F_4]}$ are not isomorphic. 

    \underline{Case $n=4$:} In this case, by Theorem \ref{thm-SNAQ}, we have $\mathrm{SNAQ}(\A[B_4])=\W[A_2]$. Thus, by Corollary \ref{cor-smallest-diff-profinite-diff}, we have $\mathrm{SNAQ}(\A[\Omega])=\W[A_2]$. Therefore, by Theorem \ref{thm-SNAQ}, we obtain $\Omega\in\{A_2,A_3,B_2,B_3,D_4,F_4\}\cup\{I_2(m)\}_{4\mid m}\cup\{I_2(m)\}_{3\mid m}$. Moreover, since $\A[B_4]^{\mathrm{ab}}\cong \Z^2$, this is also true for $\A[\Omega]$. This further restricts to $\Omega\in \{B_2,B_3,F_4\}\cup\{I_2(m)\}_{4\mid m \text{ or } 6\mid m}$. If $\Omega\in\{B_2,B_3\}\cup\{I_2(m)\}_{4\mid m \text{ or } 6\mid m}$, Corollary \ref{cor-rank-profinite} differentiates $\widehat{\A[B_3]}$ from $\widehat{\A[\Omega]}$. Moreover, we verify with GAP that $\W[F_4]$ is a quotient of $\A[F_4]$ but not of $\A[B_4]$. Thus, by Theorem \ref{thm-same-finite-quotients-same-profinite-completion}, the groups $\widehat{\A[B_4]}$ and $\widehat{\A[F_4]}$ are not isomorphic. 

    \underline{Case $n\geq 5$:} In this case, by Theorem \ref{thm-SNAQ}, we have $\mathrm{SNAQ}(\A[B_n])=\W[A_{n-1}]$. Thus, by Corollary \ref{cor-smallest-diff-profinite-diff}, we have $\mathrm{SNAQ}(\A[\Omega])=\W[A_{n-1}]$. Therefore, by Theorem \ref{thm-SNAQ}, we obtain $\Omega\in\{A_{n-1},D_n\}$. Since $\A[B_n]^{\mathrm{ab}}\cong \Z^2$ and $\A[\Omega]^{\mathrm{ab}}\cong \Z$, Theorem \ref{prop-rigidity-one-class-marin} applies to conclude that $\widehat{\A[B_n]}$ is not isomorphic to $\widehat{\A[\Omega]}$. This concludes the proof. 

\end{proof}

\begin{lem}\label{lem-D_n_rigidity}
    Let $n\in\mathbb N_{\geq 4}$ and $\Omega$ be an irreducible Coxeter graph of spherical type. If $\widehat{\A[\Omega]}\cong\widehat{\A[D_n]}$, then $\Omega\cong D_n$.
\end{lem}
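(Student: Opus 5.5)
We follow the pattern of Lemmas \ref{lem-A_n_rigidity} and \ref{lem-B_n_rigidity}, splitting into the cases $n=4$ and $n\geq 5$. In both cases we first pin down $\mathrm{SNAQ}(\A[\Omega])$ using Corollary \ref{cor-smallest-diff-profinite-diff}. Then we restrict the candidates for $\Omega$ using Theorem \ref{thm-SNAQ} together with the abelianization invariant of Proposition \ref{prop-profinite-same-abelianization}. Finally, we distinguish the few remaining candidates using Theorem \ref{prop-rigidity-one-class-marin}, Corollary \ref{cor-rank-profinite}, or an explicit finite quotient.

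\underline{Case $n\geq 5$:} By Theorem \ref{thm-SNAQ}(i), $\mathrm{SNAQ}(\A[D_n])=\W[A_{n-1}]=\S_n$, so Corollary \ref{cor-smallest-diff-profinite-diff} gives $\mathrm{SNAQ}(\A[\Omega])\cong\S_n$.
\begin{itemize}
\item Since $|\S_n|=n!\geq 120$, parts (ii) and (iii) of Theorem \ref{thm-SNAQ} are excluded. Part (iii) gives dihedral groups, and these are never isomorphic to $\S_n$.
\item Part (iv) gives $\W[\Gamma]/Z(\W[\Gamma])$ for $\Gamma$ of type $E$ or $H$. None of these is isomorphic to a symmetric group: $\W[E_6]$ has order $51840\neq n!$, $\W[E_7]/Z$ has order $1451520$, $\W[E_8]/Z$ has order $348364800$, $\W[H_3]/Z\cong A_5$ is not $\S_5$, and $\W[H_4]/Z$ has order $7200$. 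None of these orders is a factorial except $120$, and $A_5\not\cong\S_5$.
\end{itemize}
Hence $\Omega\in\{A_{n-1},B_n,D_n\}$. The abelianization $\A[D_n]^{\mathrm{ab}}\cong\Z$ rules out $B_n$, whose abelianization is $\Z^2$. Finally, Theorem \ref{prop-rigidity-one-class-marin} applies to $A_{n-1}$ versus $D_n$, since both have abelianization $\Z$ and neither is of type $E$, $F$ or $H$. So $\Omega\cong D_n$.

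\underline{Case $n=4$:} Here $\mathrm{SNAQ}(\A[D_4])=\S_3$. Theorem \ref{thm-SNAQ} then restricts $\Omega$ to
\[
\{A_2,A_3,B_2,B_3,B_4,D_4,F_4\}\cup\{I_2(m)\}_{4\mid m}\cup\{I_2(m)\}_{3\mid m}.
\]
Requiring abelianization $\Z$ cuts this list down to $\{A_2,A_3,D_4\}\cup\{I_2(m)\}_{m \text{ odd},\,3\mid m}$, exactly as in the proof of Lemma \ref{lem-A_n_rigidity}. All of these are irreducible and not of type $E$, $F$ or $H$, so Theorem \ref{prop-rigidity-one-class-marin} gives $\Omega=D_4$. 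Alternatively, Corollary \ref{cor-rank-profinite} separates $D_4$, which has rank $4$, from the candidates of rank at most $3$.

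The only step needing care is the group-order check in the case $n\geq5$ excluding the type $E$ and $H$ quotients. I expect it to be the main, though mild, obstacle. If one prefers to avoid it, the abelianization still restricts $\Omega$ to simply laced or odd types, and in that situation Lemma \ref{lem-odd-dihedral-not-quotient-E-H} combined with the existence of an $\S_n$ quotient can be used instead. The order comparison is the direct route.
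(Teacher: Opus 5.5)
Your proof is correct and follows the paper's argument: for $n=4$ it uses the same SNAQ restriction via Theorem \ref{thm-SNAQ}, the same abelianization filter and Theorem \ref{prop-rigidity-one-class-marin}. For $n\geq5$ the paper simply cites Lemmas \ref{lem-A_n_rigidity} and \ref{lem-B_n_rigidity}, which rest on the same abelianization and Marin steps you write out, and your explicit order check excluding types $E$ and $H$ fills in a detail the paper leaves implicit. The fallback in your last paragraph is unnecessary and would not work as stated, since $\A[D_n]$ with $n\geq5$ has no dihedral quotient either; your main argument does not depend on it.
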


\begin{proof}
    \underline{Case $n=4$:} In this case, by Theorem \ref{thm-SNAQ}, we have $\mathrm{SNAQ}(\A[D_4])=\W[A_2]$. Thus, by Corollary \ref{cor-smallest-diff-profinite-diff}, we have $\mathrm{SNAQ}(\A[\Omega])=\W[A_2]$. Therefore, by Theorem \ref{thm-SNAQ}, we obtain $\Omega\in\{A_2,A_3,B_2,B_3,B_4,F_4\}\cup\{I_2(m)\}_{4\mid m}\cup\{I_2(m)\}_{3\mid m}$. Moreover, since $\A[D_4]^{\mathrm{ab}}\cong \Z$, this is also true for $\A[\Omega]$. This further restricts to $\Omega\in \{A_2,A_3\}\cup\{I_2(3k)\}_{k \text{ odd}}$. But in this case, Theorem \ref{prop-rigidity-one-class-marin} applies.

\underline{Case $n\geq 5$:} In this case, by Theorem \ref{thm-SNAQ} we have $\mathrm{SNAQ}(\A[D_n])=\W[A_{n-1}]$. Thus, by Corollary \ref{cor-smallest-diff-profinite-diff}, we have $\mathrm{SNAQ}(\A[\Omega])=\W[A_{n-1}]$. Therefore, by Theorem \ref{thm-SNAQ}, we obtain $\Omega\in\{A_{n-1},B_{n}\}$. These two cases are handled in Lemmas \ref{lem-A_n_rigidity} and \ref{lem-B_n_rigidity}. This concludes the proof.

\end{proof}

\begin{lem}\label{lem-F_4_rigidity}
    Let $\Omega$ be an irreducible Coxeter graph of spherical type. If $\widehat{\A[\Omega]}\cong\widehat{\A[F_4]}$, then $\Omega\cong F_4$.
\end{lem}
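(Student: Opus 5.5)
We follow the pattern of Lemmas \ref{lem-B_n_rigidity} and \ref{lem-D_n_rigidity}. By Theorem \ref{thm-SNAQ}(ii), $\mathrm{SNAQ}(\A[F_4])=\W[A_2]$, so Corollary \ref{cor-smallest-diff-profinite-diff} forces $\mathrm{SNAQ}(\A[\Omega])=\W[A_2]$. Theorem \ref{thm-SNAQ} then gives
\[
\Omega\in\{A_2,A_3,B_2,B_3,B_4,D_4\}\cup\{I_2(m)\}_{4\mid m}\cup\{I_2(m)\}_{3\mid m}.
\]
In the $F_4$ graph, $f_1,f_2$ are joined by an edge labelled $3$, as are $f_3,f_4$, while the middle edge is labelled $4$. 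Hence $\A[F_4]^{\mathrm{ab}}\cong\Z^2$ by Remark \ref{rem-generators-odd-conjugate}, and Proposition \ref{prop-profinite-same-abelianization} gives $\A[\Omega]^{\mathrm{ab}}\cong\Z^2$. This leaves
\[
\Omega\in\{B_2,B_3,B_4\}\cup\{I_2(m)\}_{4\mid m\text{ or }6\mid m}.
\]

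Each remaining case is already settled by a result above.
\begin{itemize}
\item[(a)] For $\Omega\in\{B_2,B_3,B_4\}$, Lemma \ref{lem-B_n_rigidity} applies directly: $\widehat{\A[\Omega]}\cong\widehat{\A[B_n]}$ forces $\Omega\cong B_n$ for any irreducible $\Omega$. Taking $\Omega=F_4$ there rules out $\widehat{\A[F_4]}\cong\widehat{\A[B_n]}$ for $n=2,3,4$. That lemma in turn relies on the GAP checks: $\W[B_2]$ is a quotient of $\A[B_2]$ but not of $\A[F_4]$; $\W[A_2]\times\W[A_2]$ is a quotient of $\A[F_4]$ but not of $\A[B_3]$; and $\W[F_4]$ is a quotient of $\A[F_4]$ but not of $\A[B_4]$.
\item[(b)] For $B_3$ one could instead argue by hand, avoiding GAP. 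Sending $f_1,f_2\mapsto a_1,a_2$ and $f_3,f_4\mapsto a_1',a_2'$ produces a surjection $\A[F_4]\to\S_3\times\S_3$. This requires checking that $f_2\mapsto a_2$ and $f_3\mapsto a_1'$ satisfy the $4$-relation $f_2f_3f_2f_3=f_3f_2f_3f_2$, which holds since they commute.
\item[(c)] For the dihedral candidates $\Omega=I_2(m)$, I would use Corollary \ref{cor-rank-profinite}. Neither $F_4$ nor $I_2(m)$ is of type $E$, $F$ or $H$ in the sense required there, so the profinite cohomological dimension would separate rank $4$ from rank $2$.
\end{itemize}

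The main obstacle is that Corollary \ref{cor-rank-profinite}, and the goodness result Theorem \ref{prop-not-E-F-H-is-good} behind it, exclude $F_4$. So the cohomological dimension of $\widehat{\A[F_4]}$ is not available. I would first try separating $I_2(m)$ from $F_4$ by a finite quotient instead. By Lemma \ref{lem-odd-dihedral-not-quotient-B_3}-type reasoning, an odd dihedral quotient $\W[I_2(p)]$ with $p\ge5$ exists for $I_2(m)$ when $4\mid m$ (Lemma \ref{lem-quotients_of_dihedral}). I would check that $\A[F_4]$, which contains $\A[B_3]$-like structure, admits none: the images of $f_1,f_2$ are conjugate, and so are those of $f_3,f_4$. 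If these images were rotations, they would be equal and the image would be abelian. If they are reflections, the braid relation with label $3$ forces rotations of order $3$. In the case $6\mid m$ with $4\nmid m$, $\W[I_2(3)]\times\Z/2$-type or epimorphism-count arguments (Proposition \ref{prop-profinite-same-epis}) could be used. Alternatively, $\S_3\times\S_3$ is a quotient of $\A[F_4]$ but not of $\A[I_2(m)]$, since the latter is generated by two elements with a central-quotient free product structure. I would verify this last claim by hand via the presentation $\langle u,s_1\mid s_1u^k=u^ks_1\rangle$, and fall back on GAP if needed.
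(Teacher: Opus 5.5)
Your final route is the paper's own. You use the same reduction, via Theorem~\ref{thm-SNAQ} and the abelianization, to $\Omega\in\{B_2,B_3,B_4\}\cup\{I_2(m)\}_{4\mid m\text{ or }6\mid m}$. You then invoke Lemma~\ref{lem-B_n_rigidity} for the $B_n$, and use the separating quotients $\W[I_2(5)]$ (when $4\mid m$) and $\mathfrak S_3\times\mathfrak S_3$ (when $6\mid m$, $4\nmid m$). The only difference is that the paper checks the two non-quotient claims with GAP, while you aim for hand proofs. Delete the first sentence of (c): $F_4$ \emph{is} of type $F$, so Corollary~\ref{cor-rank-profinite} says nothing about $\widehat{\A[F_4]}$, as you notice yourself a few lines later.

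Two of your hand arguments are not yet proofs as written.

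For $4\mid m$, your sketch leaves the mixed rotation/reflection case open. Also, ``rotations of order $3$'' gives no contradiction by itself when $3\mid p$. Fix $p=5$ instead. Since $xyx=yxy$ gives $y=(xy)x(xy)^{-1}$, the images of $f_1,f_2$ are conjugate. They are therefore either two rotations, which commute and hence coincide, or two reflections whose product has order dividing both $3$ and $5$, so they coincide again. The same holds for $f_3,f_4$. Since $f_1f_3=f_3f_1$, the image is generated by two commuting elements and is abelian.

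For $6\mid m$, $4\nmid m$, being two-generated is no obstruction, because $\mathfrak S_3\times\mathfrak S_3$ is itself two-generated. The working argument runs as follows.
\begin{itemize}
\item $\mathfrak S_3\times\mathfrak S_3$ is centreless, so any epimorphism from $\A[I_2(m)]$ factors through $\mathbb Z*\mathbb Z/(m/2)$, with $m/2$ odd.
\item The odd-order generator must land in $\mathfrak A_3\times\mathfrak A_3=\ker(\mathrm{sgn}\times\mathrm{sgn})$.
\item Hence $(\mathbb Z/2)^2$ would be generated by the image of the $\mathbb Z$ factor alone, i.e.\ cyclic, which is absurd.
\end{itemize}
This replaces the paper's GAP check that $\mathfrak S_3\times\mathfrak S_3$ is not a quotient of $\mathbb Z*\mathbb Z/3$.

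Your $\W[I_2(3)]\times\mathbb Z/2$ idea cannot separate the groups. That group is $\W[I_2(6)]$, which is a quotient of both $\A[I_2(m)]$ and $\A[F_4]$ (send $f_3,f_4$ to the generator of $\mathbb Z/2$).

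Finally, in (b) you only show that $\mathfrak S_3\times\mathfrak S_3$ is a quotient of $\A[F_4]$. The other half, that it is not a quotient of $\A[B_3]$, is Lemma~\ref{lem-type-B-not-quotient-direct-product} with $n=3$.
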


\begin{proof}
     In this case, by Theorem \ref{thm-SNAQ}, we have $\mathrm{SNAQ}(\A[F_4])=\W[A_2]$. Thus, by Corollary \ref{cor-smallest-diff-profinite-diff}, we have $\mathrm{SNAQ}(\A[\Omega])=\W[A_2]$. Therefore, by Theorem \ref{thm-SNAQ}, we obtain $\Omega\in\{A_2,A_3,B_2,B_3,B_4,D_4\}\cup\{I_2(m)\}_{4\mid m}\cup\{I_2(m)\}_{3\mid m}$. Moreover, since $\A[F_4]^{\mathrm{ab}}\cong \Z^2$, this is also true for $\A[\Omega]$. This further restricts to $\Omega\in \{B_2,B_3,B_4\}\cup\{I_2(m)\}_{4\mid m \text{ or } 6\mid m}$. If $\Omega\in\{B_2,B_3,B_4\}$, the case is handled by Lemma \ref{lem-B_n_rigidity}.

     If $\Omega=I_2(m)$ with $4\mid m$, then by Lemma \ref{lem-quotients_of_dihedral} the group $\W[I_2(5)]$ is a quotient of $\A[\Omega]$. However, we verify with GAP that $\W[I_2(5)]$ is not a quotient of $\A[F_4]$, showing that $\A[\Omega]$ and $\A[F_4]$ have non-isomorphic profinite completions by Theorem \ref{thm-same-finite-quotients-same-profinite-completion}. 

     If $\Omega=I_2(m)$ with $6\mid m$ and $4\nmid m$, the group $\W[A_2]\times\W[A_2]$ is not a quotient of $\A[\Omega]$. Indeed, since $\W[A_2]\times\W[A_2]$ has trivial centre, such a quotient has to factor through $\A[\Omega]/Z(\A[\Omega])\cong \Z*\Z/(m/2)$. By assumption, $m/2$ is odd. Therefore, the image of a generator of $\Z/(m/2)$ by such a quotient would need to have order $3$ (because $3$ is the only possible odd order for a non-trivial element in $\W[A_2]\times \W[A_2]$). In particular, such a quotient would factor through $\Z *\Z/3$. But we verify with GAP that $\W[A_2]\times \W[A_2]$ is not a quotient of $\Z*\Z/3$. Since $\W[A_2]\times \W[A_2]$ is a quotient of $\A[F_4]$, the groups $\A[\Omega]$ and $\A[F_4]$ have non-isomorphic profinite completions by Theorem \ref{thm-same-finite-quotients-same-profinite-completion}. This concludes the proof. 
\end{proof}

\begin{lem}\label{lem-dihedral_rigidity}
    Let $m\in\mathbb N_{\geq 5}$ and $\Omega$ be an irreducible Coxeter graph of spherical type. If $\widehat{\A[\Omega]}\cong\widehat{\A[I_2(m)]}$, then $\Omega\cong I_2(m)$.
\end{lem}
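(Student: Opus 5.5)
We split according to the residue of $m$ modulo $4$ and use the SNAQ classification of Theorem~\ref{thm-SNAQ}. By Corollary~\ref{cor-smallest-diff-profinite-diff}, $\mathrm{SNAQ}(\A[\Omega])\cong\mathrm{SNAQ}(\A[I_2(m)])$. By Proposition~\ref{prop-profinite-same-abelianization}, the abelianizations also agree. The first step is to list the candidates $\Omega$ these two invariants allow. The second is to eliminate every non-dihedral candidate. The third is to invoke Lemma~\ref{lem-internal_rigidity_dihedral} for the dihedral ones.

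\emph{Case $4\nmid m$.} Let $p$ be the smallest odd prime dividing $m$. By Theorem~\ref{thm-SNAQ}(iii), $\mathrm{SNAQ}(\A[I_2(m)])=\W[I_2(p)]$. If $p=3$, this is $\S_3$, and $\Omega$ lies in the list of Theorem~\ref{thm-SNAQ}(ii) or is dihedral.
\begin{itemize}
\item If $m$ is odd, the abelianization is $\Z$. The candidates are then $A_2,A_3,D_4$ and odd dihedral graphs, and Theorem~\ref{prop-rigidity-one-class-marin} concludes.
\item If $m\equiv 2\pmod 4$, the abelianization is $\Z^2$, leaving $B_2,B_3,B_4,F_4$ and dihedral graphs. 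These are excluded as follows:
\begin{itemize}
\item $B_3$ and $B_4$ are excluded by the profinite cohomological dimension, Corollary~\ref{cor-rank-profinite} (rank $3,4$ versus $2$).
\item $F_4$ is excluded because $\W[A_2]\times\W[A_2]$ is a quotient of $\A[F_4]$ but not of $\A[I_2(m)]$; this is exactly the argument in the proof of Lemma~\ref{lem-F_4_rigidity}.
\item $B_2=I_2(4)$ is dihedral and is covered by Lemma~\ref{lem-internal_rigidity_dihedral}.
\end{itemize}
\end{itemize}
If $p\ge5$, then $\W[I_2(p)]$ is not isomorphic to any SNAQ of types (i), (ii) or (iv). The orders rule this out: $2p\ne n!$ for $n\ge5$, $2p\ne 6$, and the groups in (iv) are not dihedral, by Lemma~\ref{lem-odd-dihedral-not-quotient-E-H}. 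Hence $\Omega$ must be dihedral, and Lemma~\ref{lem-internal_rigidity_dihedral} finishes the case.

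\emph{Case $4\mid m$, $m\ge 8$.} Here $\mathrm{SNAQ}=\S_3$ and the abelianization is $\Z^2$. As above, the candidates are $B_2,B_3,B_4,F_4$ and dihedral graphs.
\begin{itemize}
\item $B_3$ and $B_4$ are excluded by Corollary~\ref{cor-rank-profinite}.
\item $F_4$ is excluded because $\W[I_2(5)]$ is a quotient of $\A[I_2(m)]$ by Lemma~\ref{lem-quotients_of_dihedral}(i), but is not a quotient of $\A[F_4]$; this is the GAP check already used in Lemma~\ref{lem-F_4_rigidity}. Alternatively, one can cite Lemma~\ref{lem-F_4_rigidity} directly, since it shows that $\widehat{\A[\Omega]}\cong\widehat{\A[F_4]}$ forces $\Omega\cong F_4$.
\end{itemize}
The remaining candidates are dihedral, including $B_2=I_2(4)$, and Lemma~\ref{lem-internal_rigidity_dihedral} applies.

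\emph{Main obstacle.} The delicate step is making sure the SNAQ comparison in the case $p\ge5$ really excludes types $E$ and $H$. This needs $\W[\Gamma]/Z(\W[\Gamma])$ to be non-dihedral, which follows from Lemma~\ref{lem-odd-dihedral-not-quotient-E-H}: $\A[\Gamma]$ for $\Gamma$ of type $E$ or $H$ has no dihedral quotient at all. The rest is bookkeeping over the candidate list.
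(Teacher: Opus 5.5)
Your proof is correct and uses the same ingredients as the paper's proof: the SNAQ comparison via Theorem \ref{thm-SNAQ} and Corollary \ref{cor-smallest-diff-profinite-diff}, the abelianization, Theorem \ref{prop-rigidity-one-class-marin}, Corollary \ref{cor-rank-profinite}, the $F_4$-specific quotients $\W[I_2(5)]$ and $\W[A_2]\times\W[A_2]$, the non-dihedral SNAQ of types $E$ and $H$, and finally Lemma \ref{lem-internal_rigidity_dihedral}. The only difference is bookkeeping: the paper handles the non-dihedral, non-$E/H$ candidates by citing the per-type Lemmas \ref{lem-A_n_rigidity}--\ref{lem-F_4_rigidity}, which run these arguments from the side of $\Omega$, while you sort the candidates by the invariants of $\A[I_2(m)]$ and carry out the arguments inline (with $A_1$ excluded implicitly, since $\A[A_1]$ has no non-abelian finite quotient).
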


\begin{proof}
    If $\Omega$ is of type different from $E$ or $H$ and different from $I_2(m)$, Lemmas \ref{lem-internal_rigidity_dihedral}-\ref{lem-F_4_rigidity} show that $\A[I_2(m)]$ and $\A[\Omega]$ have non-isomorphic profinite completions. If $\Omega$ is of type $E$ or $H$, Theorem \ref{thm-SNAQ} show that $\mathrm{SNAQ}(\A[\Omega])$ is not of dihedral type, in contrast with $\mathrm{SNAQ}(\A[I_2(m)])$. Therefore, Corollary \ref{cor-smallest-diff-profinite-diff} concludes the proof. 
\end{proof}

\begin{lem}\label{lem-E_H_rigidity}
  Let $\Gamma$ be of type $E$ or $H$ and $\Omega$ be an irreducible Coxeter graph of spherical type. If $\widehat{\A[\Gamma]}\cong\widehat{\A[\Omega]}$, then $\Gamma\cong \Omega$.
\end{lem}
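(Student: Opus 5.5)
The plan is to follow the same scheme as Lemmas \ref{lem-A_n_rigidity}--\ref{lem-F_4_rigidity}: use the smallest non-abelian quotient to pin down $\Omega$ up to a short list of candidates, and then separate whatever candidates remain with the abelianization and the rank. Suppose $\widehat{\A[\Gamma]}\cong\widehat{\A[\Omega]}$ with $\Gamma\in\{E_6,E_7,E_8,H_3,H_4\}$.

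\textbf{Step 1: identify $\mathrm{SNAQ}(\A[\Omega])$.}
\begin{itemize}
\item By Theorem \ref{thm-SNAQ}(iv), $\mathrm{SNAQ}(\A[\Gamma])=\W[\Gamma]/Z(\W[\Gamma])$.
\item By Corollary \ref{cor-smallest-diff-profinite-diff}, $\A[\Omega]$ has the same smallest non-abelian quotient.
\item The groups $\W[\Gamma]/Z(\W[\Gamma])$ in question are $W(E_6)$ of order $51840$, $W(E_7)/Z\cong \mathrm{Sp}_6(2)$ of order $1451520$, $W(E_8)/Z$ of order $348364800$, $W(H_3)/Z\cong A_5$ of order $60$, and $W(H_4)/Z$ of order $7200$.
\end{itemize}

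\textbf{Step 2: reduce to $\Omega$ of type $E$ or $H$.}
\begin{itemize}
\item The SNAQs in Theorem \ref{thm-SNAQ}(ii) and (iii) are $\S_3$ or dihedral groups. None of these is isomorphic to the groups above.
\item The SNAQs in Theorem \ref{thm-SNAQ}(i) are symmetric groups $\S_n$. None of them is isomorphic to one of the five groups above: $A_5$ is not a symmetric group, and $W(E_6)$ is not one either, since its order $51840$ is not a factorial. The remaining three are checked by comparing orders and centres, since none of the orders $7200$, $1451520$, $348364800$ is a factorial.
\item Hence $\Omega$ is of type $E$ or $H$.
\end{itemize}

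\textbf{Step 3: separate the types $E$ and $H$ among themselves.}
\begin{itemize}
\item The five SNAQs have pairwise distinct orders, so they are pairwise non-isomorphic.
\item Therefore $\Omega\cong\Gamma$.
\end{itemize}

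The main obstacle is that this argument rests entirely on Theorem \ref{thm-SNAQ}(iv), which is proved only in Section \ref{sec-SNAQ-type-E-H}. Once that is granted, the remaining work is checking that these five groups are not isomorphic to any symmetric, dihedral or $\S_3$ quotient from parts (i)--(iii), and this reduces to comparing orders.

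If one prefers not to rely on exact SNAQ values, there is a backup route. Lemma \ref{lem-odd-dihedral-not-quotient-E-H} shows that $\A[\Gamma]$ has no dihedral quotient. By Theorem \ref{thm-same-finite-quotients-same-profinite-completion}, this excludes every $\Omega$ of type $A_2$, $A_3$, $B_n$, $D_n$, $F_4$ or $I_2(m)$, because each of these has an $\S_3$ or dihedral quotient. It also excludes $A_n$, $B_{n+1}$ and $D_{n+1}$ for $n\geq 4$, because their SNAQ is $\S_{n+1}$ while that of $\A[\Gamma]$ is not symmetric.
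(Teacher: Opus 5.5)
Your main argument is correct and is essentially the paper's proof. The paper's proof consists of inspecting the list in Theorem~\ref{thm-SNAQ} to see that $\W[\Gamma]/Z(\W[\Gamma])$ is not the smallest non-abelian quotient of any other irreducible spherical-type Artin group, and then invoking Corollary~\ref{cor-smallest-diff-profinite-diff}. Your order comparison makes that inspection explicit: the orders $60$, $7200$, $51840$, $1451520$, $348364800$ are pairwise distinct, none is a factorial, and none is of the form $2p$ with $p$ an odd prime.

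The backup route is unnecessary and slightly inaccurate as written. $\A[B_n]$ and $\A[D_n]$ with $n\geq 5$ have no $\S_3$ or dihedral quotient, so those cases are covered only by your following sentence. That route also still relies on knowing the smallest non-abelian quotient of $\A[\Gamma]$, and it does not separate the $E$ and $H$ types from one another.
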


\begin{proof}
    An inspection of the list provided in Theorem \ref{thm-SNAQ} shows that if $\Gamma$ is of type $E$ or $H$, the group $\mathrm{SNAQ}(\A[\Gamma])$ is not the smallest non-abelian quotient of any other irreducible Artin group of spherical type. Applying Corollary \ref{cor-smallest-diff-profinite-diff}, this concludes the proof. 
\end{proof}

\begin{prop}\label{prop-2-v-1-rigidity}
Let $\Gamma_1,\Gamma_2$ and $\Omega$ be irreducible Coxeter graphs of spherical type. Then $\widehat{\A[\Gamma_1]}\times\widehat{\A[\Gamma_2]}\ncong\widehat{\A[\Omega]}$.
\end{prop}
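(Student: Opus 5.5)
Throughout, write $G=\A[\Gamma_1]\times\A[\Gamma_2]$. Profinite completion commutes with finite direct products, so $\widehat{\A[\Gamma_1]}\times\widehat{\A[\Gamma_2]}\cong\widehat G$. We argue by contradiction and assume $\widehat G\cong\widehat{\A[\Omega]}$.

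\emph{Step 1: abelianization.} By Proposition \ref{prop-profinite-same-abelianization}, $G^{\ab}\cong\A[\Omega]^{\ab}$. By Remark \ref{rem-generators-odd-conjugate}, $\A[\Omega]^{\ab}$ has rank at most $2$, while $G^{\ab}$ has rank at least $2$. Hence $\A[\Gamma_1]^{\ab}\cong\A[\Gamma_2]^{\ab}\cong\mathbb Z$ and $\A[\Omega]^{\ab}\cong\mathbb Z^2$. This forces $\Omega\in\{B_n\}_{n\ge2}\cup\{F_4\}\cup\{I_2(m)\}_{m \text{ even}}$. It also forces each $\Gamma_i$ to be of type $A$, $D$, $E$, $H$, or $I_2(m)$ with $m$ odd. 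In particular Lemma \ref{lem-SNAQ-is-centreless} applies to each non-abelian factor.

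\emph{Step 2: both factors non-abelian.} Assume $\Gamma_1,\Gamma_2\neq A_1$, and order them so that $|Q_1|\le|Q_2|$, where $Q_i=\mathrm{SNAQ}(\A[\Gamma_i])$. By the Remark after Theorem \ref{thm-SNAQ}, $\mathrm{SNAQ}(G)=Q_1$. Corollary \ref{cor-smallest-diff-profinite-diff} then gives $\mathrm{SNAQ}(\A[\Omega])=Q_1$. Moreover $Q_1\times Q_2$ is a quotient of $G$, hence of $\A[\Omega]$ by Theorem \ref{thm-same-finite-quotients-same-profinite-completion}.
\begin{itemize}
\item If $\Omega=B_n$ with $n\ge5$, then $Q_1=\S_n=\W[A_{n-1}]$. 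So $\S_n\times Q_2$, with $Q_2$ non-abelian, would be a quotient of $\A[B_n]$, contradicting Lemma \ref{lem-type-B-not-quotient-direct-product}. The same lemma disposes of $B_3$ with $Q_1=\S_3$.
\item In the remaining cases $\Omega\in\{B_2,B_4,F_4\}\cup\{I_2(m)\}_{m\text{ even}}$. If $\mathrm{SNAQ}(\A[\Omega])$ is $\S_3$, then by Theorem \ref{thm-SNAQ} each $\Gamma_i$ with $Q_i=\S_3$ lies in $\{A_2,A_3,D_4\}\cup\{I_2(3k)\}_{k\text{ odd}}$. If $\mathrm{SNAQ}(\A[\Omega])$ is an odd dihedral group, then $\Gamma_1=I_2(\text{odd})$.
\item When no $\Gamma_i$ and $\Omega$ is of type $E,F,H$, Corollary \ref{cor-rank-profinite} gives $\mathrm{rk}\,\Gamma_1+\mathrm{rk}\,\Gamma_2=\mathrm{rk}\,\Omega$. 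Each $\Gamma_i\ne A_1$ has rank at least $2$, so this kills $B_2$ and all dihedral $\Omega$ at once.
\end{itemize}
What remains is the genuinely small list: $\Omega=B_4$ with $\Gamma_1,\Gamma_2$ both of rank $2$, and $\Omega=F_4$ with $\Gamma_i$ among $A_2,A_3,D_4,I_2(\mathrm{odd})$. For these I would look for a finite group that is a quotient of exactly one side, in the style of Lemmas \ref{lem-B_n_rigidity} and \ref{lem-F_4_rigidity}. One option is $\W[F_4]$ for $F_4$. Another is $\W[B_4]$ for $B_4$: it has non-trivial centre, so it cannot factor through the centreless quotients of the dihedral factors in the way that centreless groups do. Alternatively one can compare $|\mathrm{Epi}(-,\S_3)|$ and $|\mathrm{Epi}(-,\W[B_2])|$ on both sides via Proposition \ref{prop-profinite-same-epis}. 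These counts are computable by hand for dihedral factors and by GAP otherwise. If some $\Gamma_i$ is of type $E$ or $H$, then $Q_i=\W[\Gamma_i]/Z(\W[\Gamma_i])$ is not the SNAQ of any $\Omega$ in our list. So either $Q_1$ is of this form and we are done, or $Q_2$ is. In the latter case, $Q_1\times Q_2$ has order exceeding $|\W[F_4]|$ or the order allowed by $B_n$-quotients, which I would rule out by a Lemma \ref{lem-type-B-not-quotient-direct-product}-type argument.

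\emph{Step 3: one factor is $\mathbb Z$.} Say $\Gamma_1=A_1$, so $G=\A[\Gamma_2]\times\mathbb Z$. Then $\mathrm{SNAQ}(\A[\Omega])=\mathrm{SNAQ}(\A[\Gamma_2])$. If no type $E,F,H$ appears, the rank condition gives $\mathrm{rk}\,\Omega=\mathrm{rk}\,\Gamma_2+1$; for instance, for $\Omega=B_n$ with $n\ge5$ this forces $\Gamma_2=A_{n-1}$. To separate $\A[B_n]$ from $\B_n\times\mathbb Z$, I would use a finite quotient of $\A[B_n]$ that does not split as (quotient of $\B_n$)$\cdot$(central cyclic group). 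Concretely, take $\W[B_n]=(\mathbb Z/2)^n\rtimes\S_n$. Its centre is only $\mathbb Z/2$, so a surjection from $\B_n\times\mathbb Z$ would force $\B_n$ to surject onto a normal subgroup of index at most $2$, and one checks that this is impossible from the known small quotients of $\B_n$. If $\Gamma_2$ is of type $E$ or $H$, the SNAQ comparison alone concludes, as in Lemma \ref{lem-E_H_rigidity}.

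The main obstacle is the handful of small-rank coincidences in Steps 2 and 3, namely $\Omega\in\{B_4,F_4\}$ and the $\B_n\times\mathbb Z$ versus $\A[B_n]$ comparison. There the SNAQ, abelianization and rank all agree, and one must find an explicit distinguishing finite quotient or epimorphism count, presumably with GAP for $F_4$.
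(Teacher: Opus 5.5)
Your Step 2 treatment of $\Omega=B_3$ and $\Omega=B_n$ ($n\ge5$), via $Q_1\times Q_2$ and Lemma~\ref{lem-type-B-not-quotient-direct-product}, is sound and more uniform than a case split. So is the rank count that removes $B_2$ and even dihedral $\Omega$ when no factor is of type $E$ or $H$. But the proof is not complete: everything you list as ``remaining'' is left without an argument, and one claim in Step 3 is false.

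\emph{Step 2, $\Omega\in\{B_4,F_4\}$.} You only name candidate quotients. Your reason for $\W[B_4]$ (its non-trivial centre) is not an argument; the centre is exactly what blocks one. The missing idea is the following. Let $Q$ be a finite \emph{centreless} quotient of $A\times B$, and let $H_1,H_2$ be the images of the two factors. Then $H_1$ and $H_2$ commute and $H_1\cap H_2\subseteq Z(Q)=\{\id\}$, so $Q\cong H_1\times H_2$. If $Q$ is directly indecomposable, it is therefore a quotient of one factor, and so has cyclic abelianization when $A^{\ab}\cong B^{\ab}\cong\mathbb Z$. The groups $\W[B_4]/Z(\W[B_4])$ and $\W[F_4]/Z(\W[F_4])$ are centreless, indecomposable, and have abelianization $(\mathbb Z/2)^2$. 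So this settles $B_4$ and $F_4$ for all $\Gamma_1,\Gamma_2$ at once, including $A_1$ factors and factors of type $E$ or $H$.

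\emph{Step 2, a factor of type $E$ or $H$ and $\Omega=I_2(m)$ with $m$ even (including $B_2$).} Your order-based suggestion cannot work. For $4\mid m$, every finite group generated by an involution and one further element is a quotient of $\A[I_2(m)]/Z(\A[I_2(m)])\cong\mathbb Z*\mathbb Z/(m/2)$. So the size of $Q_1\times Q_2$ is no obstruction. What works is a quotient of $\A[\Omega]$ with non-cyclic abelianization all of whose proper subgroups are abelian, e.g.\ the Heisenberg group mod a prime $p\mid m/2$. The images of the two factors have cyclic abelianization, so they are proper, hence abelian, and since they commute and generate, the group would be abelian.

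\emph{Step 3.} It is incomplete, and its one worked case rests on a false claim: for odd $n$, $\W[B_n]$ \emph{is} a quotient of $\B_n\times\mathbb Z$.
\begin{itemize}
\item Send $\sigma_i$ to the signed permutation matrix with $e_i\mapsto e_{i+1}$, $e_{i+1}\mapsto -e_i$ (a quarter turn in the $(i,i+1)$-plane). This respects the braid relations.
\item The image contains the diagonal matrices $x_i^2$, which generate all even sign changes, and it maps onto $\S_n$.
\item Hence the image is the index-$2$ subgroup $I$ of signed permutations whose number of sign changes has the parity of the underlying permutation.
\item For $n$ odd, $-1\notin I$, so $\W[B_n]=I\times\{\pm1\}$ is the image of $\B_n\times\mathbb Z$.
\end{itemize}
The correct argument is immediate. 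Killing $[\sigma_{n-1},\sigma_n]$ gives $\A[B_n]/\llangle[\sigma_{n-1},\sigma_n]\rrangle\cong\B_n\times\mathbb Z$, a strict quotient, so Lemma~\ref{lem-strict-quotient-rigidity} applies.

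The other cases with an $A_1$ factor are not treated at all:
\begin{itemize}
\item $\Gamma_1=\Gamma_2=A_1$ with $\Omega$ even dihedral: use the strict quotient $\mathbb Z^2$.
\item $\Omega=B_3$ with $\Gamma_2=I_2(m)$, $m$ odd: use Lemma~\ref{lem-odd-dihedral-not-quotient-B_3} if $m>3$, and a strict quotient if $m=3$.
\item $\Omega\in\{B_4,F_4\}$: use the centreless-indecomposable argument above.
\end{itemize}
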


\begin{proof}

Let $\Gamma_1,\Gamma_2,\Omega$ be irreducible Coxeter graphs of spherical type. By Proposition \ref{prop-profinite-same-abelianization}, if $\A[\Gamma_1]\times \A[\Gamma_2]$ and $\A[\Omega]$ have isomorphic profinite completions, their abelianizations are isomorphic. Now, the rank of the abelianization of $\A[\Gamma_1]\times\A[\Gamma_2]$ is at least two whereas that of the abelianization of $\A[\Omega]$ is at most two. We obtain $\A[\Gamma_1]^{\mathrm{ab}}\cong\A[\Gamma_2]^{\mathrm{ab}}\cong\Z$ and $\A[\Omega]^{\mathrm{ab}}\cong\Z^2$. Thus, for the remainder of this proof, we shall restrict to this case.

 First, we can immediately rule out the case $\Omega=F_4$. Indeed, it can be checked with GAP that the group $\W[F_4]/Z(\W[F_4])$ is centreless, indecomposable as a direct product, and that its abelianization isomorphic to $(\Z/2)^2$. Since $\A[\Gamma_i]^{\mathrm{ab}}\cong \Z$, the group $\W[F_4]/Z(\W[F_4])$ is not a quotient of $\A[\Gamma_i]$. We now show that $\W[F_4]/Z(\W[F_4])$ is not a quotient of the direct product $\A[\Gamma_1]\times \A[\Gamma_2]$. Given a quotient $\varphi:\A[\Gamma_1]\times \A[\Gamma_2]\to \W[F_4]/Z(\W[F_4])$ and writing $H_i:=\varphi(\A[\Gamma_i])$, we would have $\W[F_4]/Z(\W[F_4])=H_1H_2$ with every element of $H_1$ commuting with every element of $H_2$. In particular, $H_1\cap H_2$ is central, hence trivial. Thus, $\W[F_4]/Z(\W[F_4])\cong H_1\times H_2$, contradiction. We conclude that $\W[F_4]/Z(\W[F_4])$ is a quotient of $\A[F_4]$ but not of $\A[\Gamma_1]\times \A[\Gamma_2]$. Therefore, Theorem \ref{thm-same-finite-quotients-same-profinite-completion} applies and the profinite completions of $\A[\Gamma_1]\times \A[\Gamma_2]$ and $\A[F_4]$ are not isomorphic. From now on, we assume that $\Omega\neq F_4$.

We now deal with the case where $\Gamma_1$ and $\Gamma_2$ are not of type $E$ or $H$. In this case, the groups $\A[\Gamma_1]\times\A[\Gamma_2]$ and $\A[\Omega]$ are good in the sense of Serre by Theorem \ref{prop-not-E-F-H-is-good}. In particular, by Corollary \ref{cor-rank-profinite}, we have $|\Gamma_1|+|\Gamma_2|=|\Omega|$.

    \underline{Case $\Omega$ even dihedral:} In this, case, we need $|\Gamma_1|+|\Gamma_2|=2$, which implies $\Gamma_1=\Gamma_2=A_1$. But then $\A[\Gamma_1]\times\A[\Gamma_2]\cong \Z^2$ is a strict quotient of $\A[\Omega]$ (being its abelianized), hence the profinite completions are not isomorphic by Lemma \ref{lem-strict-quotient-rigidity}.

    \underline{Case $\Omega=B_3$:} In this case, we need $|\Gamma_1|+|\Gamma_2|=3$. Without loss of generality, we can assume that $\Gamma_1=I_2(m)$ for some odd $m$ and that $\Gamma_2=A_1$. Now, by Theorem \ref{thm-SNAQ}, we have $\mathrm{SNAQ}(\A[\Omega])=\W[A_2]$. If $m>3$, the group $\W[I_2(m)]$ is a quotient of $\A[\Gamma_1]\times\A[\Gamma_2]$ but not of $\A[\Omega]$ by Lemma \ref{lem-odd-dihedral-not-quotient-B_3}, hence Theorem \ref{thm-same-finite-quotients-same-profinite-completion} applies. If $m=3$, the group $\A[\Gamma_1]\times\A[\Gamma_2]$ is a strict quotient of $\A[\Omega]$ (obtained by imposing the relation $[\sigma_{2},\sigma_3]=\id$ in $\A[B_n]$). Now Lemma \ref{lem-strict-quotient-rigidity} applies, showing that $\widehat{\A[\Gamma_1]}\times\widehat{\A[\Gamma_2]}\ncong\widehat{\A[\Omega]}$.

    \underline{Case $\Omega=B_4$:} In this case, it can be checked with GAP that the group $\W[B_4]/Z(\W[B_4])$ is centreless, indecomposable and that its abelianization is isomorphic to $(\Z/2)^2$. Thus, arguing in the exact same way as for $\Omega=F_4$, the profinite completions $\A[\Gamma_1]\times\A[\Gamma_2]$ and $\A[\Omega]$ are not isomorphic, because  $\W[B_4]/Z(\W[B_4])$ is a finite quotient of $\A[\Omega]$ but not of $\A[\Gamma_1]\times \A[\Gamma_2]$. 

    \underline{Case $\Omega=B_n$, $n\geq5$:} In this case, by Theorem \ref{thm-SNAQ}, we have $\mathrm{SNAQ}(\A[\Omega])=\W[A_{n-1}]$. Thus, by Corollary \ref{cor-smallest-diff-profinite-diff}, one can assume that at least one of $\A[\Gamma_1]$ or $\A[\Gamma_2]$ has $\W[A_{n-1}]$ as smallest non-abelian quotient. Without loss of generality, Theorem \ref{thm-SNAQ} implies that we need $\Gamma_1\in\{A_{n-1},D_n\}$. Now, since $|\Gamma_1|+|\Gamma_2|=|\Omega|$, the only possibility is $\Gamma_1=A_{n-1}$ and $\Gamma_2=A_1$. Observe as before that $\A[\Gamma_1 ]\times \A[\Gamma_2]$ is a strict quotient of $\A[\Omega]$.
    Indeed, by imposing the relation $[\sigma_{n-1},\sigma_n]=\id$ in $\A[B_n]$, we obtain the direct product of a copy of $\A[A_{n-1}]$ generated by $\sigma_1,\ldots,\sigma_{n-1}$, and a copy of $\A[A_1]$, generated by $\sigma_{n}$.
    Hence the profinite completions are different by Lemma \ref{lem-strict-quotient-rigidity}.

    From now on, assume that $\Gamma_1$ is of type $E$ or $H$. If $\Gamma_2\in\{A_1,E_6,E_7,E_8,H_3,H_4\}$, Theorem \ref{thm-SNAQ} tells us that the smallest non-abelian quotient of $\A[\Gamma_1]\times \A[\Gamma_2]$ is different from $\mathrm{SNAQ}(\A[\Omega])$. Thus, by Corollary \ref{cor-smallest-diff-profinite-diff}, the groups $\widehat{\A[\Gamma_1]}\times\widehat{\A[\Gamma_2]}$ and $\widehat{\A[\Omega]}$ are not isomorphic. Consequently, we can further assume that $\Gamma_2$ is not equal to $A_1$ nor of type $E$ or $H$. In particular, the group $\A[\Gamma_2]$ is good in the sense of Serre. Thus, if the groups $\widehat{\A[\Gamma_1]}\times\widehat{\A[\Gamma_2]}$ and $\widehat{\A[\Omega]}$ are isomorphic, the following inequality holds:
    
    \begin{equation}\label{ineq-cd}
    \begin{aligned}
        |\Omega|&=\mathrm{cd}(\widehat{\A[\Omega]})\\
        &=\mathrm{cd}(\widehat{\A[\Gamma_1]}\times\widehat{\A[\Gamma_2]})\\
        &\geq \mathrm{cd(\widehat{\A[\Gamma_2]}})\\
        &=|\Gamma_2|,
    \end{aligned}
    \end{equation}
where the first and last equalities come from Corollary \ref{cor-rank-profinite} and the inequality is a standard group theoretic fact. We can now complete our case by case analysis.

\underline{Case $\Omega$ even dihedral:} Writing $\Omega=I_2(m)$, we have $\A[\Omega]/Z(\A[\Omega])\cong \Z*\Z/(m/2)$. Given a prime $p$, recall that the subgroup of $\mathrm{GL}_3(\Z/p)$ of unitriangular upper matrices is called the Heisenberg group modulo $p$. This group has order $p^3$ and admits the group presentation $\langle a,b \,|\, a^p=b^p=\id, [a,[a,b]]=[b,[a,b]]=\id\rangle$. Moreover, its abelianization is isomorphic to $(\Z/p)^2$. Given $p$ a prime divisor of $m/2$, the Heisenberg group modulo $p$ is therefore a quotient of $\A[\Omega]$ whose abelianization is $(\Z/p)^2$. Because of the abelianization, this group is not a quotient of $\A[\Gamma_1]$ or $\A[\Gamma_2]$. Therefore, it is not a quotient of $\A[\Gamma_1]\times \A[\Gamma_2]$. Indeed, if it was the case, the images $H_1$ and $H_2$ of $\A[\Gamma_i]$ for $i=1,2$ by such an epimorphism would be subgroups of order $p$ or $p^2$ with the property that every element of $H_1$ commutes with every element of $H_2$. But since groups of order $p$ or $p^2$ are abelian, this would imply that the Heisenberg group modulo $p$ is abelian, which is false. Thus, this group is a quotient of $\A[\Omega]$ but not of $\A[\Gamma_1]\times \A[\Gamma_2]$, hence Theorem \ref{thm-same-finite-quotients-same-profinite-completion} shows that the profinite completions of $\A[\Gamma_1]\times \A[\Gamma_2]$ and $\A[\Omega]$ are not isomorphic.

\underline{Case $\Omega=B_3$:} In this case, Equation \eqref{ineq-cd} tells us that $|\Gamma_2|\leq 3$. Since $\Gamma_2\neq A_1$ by assumption, either $\Gamma_2=I_2(k)$ with $k$ odd or $\Gamma_2=A_3$. First, if $\Gamma_2=I_2(k)$ with $k>3$, the group $\W[I_2(k)]$ is a quotient of $\A[\Gamma_1]\times\A[\Gamma_2]$ but not of $\A[\Omega]$ by Lemma \ref{lem-odd-dihedral-not-quotient-B_3} so that Theorem \ref{thm-same-finite-quotients-same-profinite-completion} applies. \\
If $\Gamma_2\in\{A_2,A_3\}$, the group $\W[A_2]$ is a quotient of $\A[\Gamma_2]$ so that $\W[\Gamma_1]\times \W[A_2]$ is a quotient of $\A[\Gamma_1]\times\A[\Gamma_2]$. But by Lemma \ref{lem-type-B-not-quotient-direct-product}, the group $\W[\Gamma_1]\times \W[A_2]$ is not a quotient of $\A[B_3]$. Therefore, Theorem \ref{thm-same-finite-quotients-same-profinite-completion} applies and the profinite completions are not isomorphic.  

\underline{Case $\Omega=B_4$:} The exact same argument as in the case where $\Gamma_1$ is not of type $E$ or $H$ applies.

\underline{Case $\Omega=B_n$, $n\geq 5$:} In this case, we have $\mathrm{SNAQ}(\A[\Omega])=\W[A_{n-1}]$. Thus, by Corollary \ref{cor-smallest-diff-profinite-diff}, one can assume that $\mathrm{SNAQ}(\A[\Gamma_2])=\W[A_{n-1}]$ and Theorem \ref{thm-SNAQ} implies that $\Gamma_2\in\{A_{n-1},D_n\}$. But then the group $\W[A_{n-1}]$ is a quotient of $\A[\Gamma_2]$ so that the group $\W[\Gamma_1]\times\W[A_{n-1}]$ is a quotient of $\A[\Gamma_1]\times\A[\Gamma_2]$. By Lemma \ref{lem-type-B-not-quotient-direct-product}, the group $\W[\Gamma_1]\times\W[A_{n-1}]$ is not a quotient of $\A[B_n]$. Therefore, Theorem \ref{thm-same-finite-quotients-same-profinite-completion} applies and the profinite completions are not isomorphic. This concludes the proof.
\end{proof}

\begin{proof}[Proof of Theorem \ref{mainthm}]
   Let $\Gamma$ and $\Omega$ be Coxeter graphs such that $\Gamma$ is irreducible and $\widehat{\A [\Gamma]} \cong \widehat{\A [\Omega]}$. 
First, Proposition \ref{prop-profinite-same-abelianization} implies that $\A[\Gamma]^{\mathrm{ab}}\cong\A[\Omega]^{\mathrm{ab}}$. Since $\Gamma$ is irreducible, we have $\A[\Gamma]^{\mathrm{ab}}\in\{\Z,\Z^2\}$. Thus, the graph $\Omega$ has at most two irreducible components.
If $\Omega$ is irreducible, then Lemmas \ref{lem-internal_rigidity_dihedral}--\ref{lem-E_H_rigidity} show that $\Omega \cong\Gamma$. If $\Omega$ has two irreducible components, then the statement is that of Proposition \ref{prop-2-v-1-rigidity}. This concludes the proof. 
\end{proof}

\section{\texorpdfstring{Smallest non-abelian quotients: the cases $E$ and $H$.}{Smallest non-abelian quotients: the cases E and H.}}\label{sec-SNAQ-type-E-H}

\subsection{\texorpdfstring{First reductions for type $E$}{Type E}}\label{sec-reduction-type-E}

In type $E$, we mimic the proof strategy of \cite{Kolay} used for determining $\mathrm{SNAQ}(\A[A_n])$.\\
Let $\Gamma\in\{E_6,E_7,E_8\}$, $S$ be its set of Artin generators and $P$ be a non-abelian quotient of $\A[\Gamma]$. Denote by $p_i$ the image of $\sigma_i$ by this quotient and by $\C$ the conjugacy class of $p_i$ in $P$ (note that since all $p_i$'s are conjugate by Remark \ref{rem-generators-odd-conjugate}, this class does not depend on $i$). By the Orbit-Stabilizer formula, for all $i=1,\dots,|\Gamma|$, we have $|P|=|Z_P(p_i)||\C|$.\\

Our first goal is to find a set $X$ contained in the conjugacy class of the generating set $S$, as large as possible,  satisfying the following property: for every $x,y\in X$, we have $\A[\Gamma]/\llangle xy^{-1}\rrangle\cong \mathbb Z$. The existence of such an $X$ shows that $|\C|\geq |X|$. Indeed, the image of every element of $X$ in $P$ is conjugate to all $p_i$'s, and two such elements are pairwise distinct because $P$ is not abelian.

With this strategy, we are able to prove:

\begin{prop}\label{prop-size-conjugacy-class}
    Let $\Gamma,P$ and $\C$ be as above.
    \begin{enumerate}
        \item[(i)] If $\Gamma=E_6$, we have $|\C|\geq 36$.
    \item[(ii)] If $\Gamma=E_7$, we have $|\C|\geq 63$.
    \item[(iii)] If $\Gamma=E_8$, we have $|\C|\geq 109$. 
    \end{enumerate}

\end{prop}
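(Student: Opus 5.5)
The plan is to follow the strategy described just before the statement. For each $\Gamma\in\{E_6,E_7,E_8\}$ we exhibit an explicit set $X$ of conjugates of Artin generators with $|X|$ equal to the stated bound, such that for all distinct $x,y\in X$ we have $\A[\Gamma]/\llangle xy^{-1}\rrangle\cong\Z$. Since $\A[\Gamma]^{\mathrm{ab}}\cong\Z$ by Remark \ref{rem-generators-odd-conjugate}, the images of distinct elements of $X$ in the non-abelian group $P$ must then be distinct. All of these images lie in $\C$, which gives $|\C|\ge |X|$.

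\textbf{The basic collapsing mechanism.} If two generators $a,b$ joined by an edge of $\Gamma$ are identified, then every other generator adjacent to one of them is forced to equal it as well, because identified elements satisfy both a commutation and a braid relation (as in the proof of Proposition \ref{prop-SNAQ-firstcases}). Since $\Gamma$ is connected, this propagates along the whole graph. The same thing happens when we identify two commuting generators $a,b$ that have a common neighbour $c$: from $aca=cac$ and $bcb=cbc$ with $a=b$ nothing new is forced directly. This case is handled instead by the braid relations $\sigma c\sigma=c\sigma c$ together with $a=b$ commuting, which we use to get $a=c$. Concretely, we would check small configurations, such as the quotient of $\A[A_3]$ by $\sigma_1=\sigma_3$, to see whether they give $\Z$. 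For $\A[A_3]$, imposing $\sigma_1=\sigma_3$ gives $\A[A_2]$, which is not $\Z$. So the set $X$ has to be chosen with more care than just the generators.

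\textbf{Constructing $X$.} The natural candidates are the images in $\A[\Gamma]$ of positive lifts of reflections. For each reflection $r\in\W[\Gamma]$, write $r=w s w^{-1}$ with $\ell(r)=2\ell(w)+1$, and take $x_r=\tau(w)\,s\,\tau(w)^{-1}$. The number of reflections is $36,63,120$ for $E_6,E_7,E_8$. This suggests taking all reflections for $E_6$ and $E_7$, and a subset of $109$ of them for $E_8$, removing those pairs for which the collapse fails.

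For each pair $x,y$, we would show that $\A[\Gamma]/\llangle xy^{-1}\rrangle$ is cyclic. A structural argument works as follows. The quotient surjects onto $\W[\Gamma]/\llangle r r'\rrangle$. More to the point, the image of $\W[\Gamma]$-type data shows that any non-abelian quotient separates the reflections. It is cleaner, however, to verify cyclicity directly. Each such quotient is finitely presented, so the check that it is $\Z$ (equivalently, that its commutator subgroup is trivial) is done with GAP by coset enumeration over all pairs, using conjugation symmetry to reduce the number of pairs. For $E_8$, where the full set of $120$ lifts fails, we would search greedily, or with a clique search in GAP, for a large set in which every pair collapses, aiming at $109$.

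\textbf{Main obstacle.} The hard step is exactly this verification, and in particular the $E_8$ clique search. Showing that a finitely presented quotient is $\Z$ can be computationally heavy, and the choice of lifts, conjugating by $\tau(w)$ versus other words, may matter. To keep it tractable, we would first try to prove the collapse by hand for pairs related by the Dynkin diagram symmetry or by a common parabolic subgroup of type $A$. There we can invoke Theorem \ref{thm-Kolay}-style knowledge of braid groups, namely that identifying two distinct transpositions' lifts in $\B_n$ collapses it to $\Z$, whenever they generate $\S_n$ minimally. Only the remaining pairs would be sent to GAP.
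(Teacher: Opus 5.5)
Your framework matches the paper's. You take one positive lift of each reflection, show that $\A[\Gamma]/\llangle xy^{-1}\rrangle\cong\mathbb Z$ for every pair, and discard a few elements for $E_8$. The gap is that the step carrying all the content, certifying the collapse for each pair, is never supplied, and the tool you name for it does not do what you need. Coset enumeration halts only for subgroups of finite index. But $\A[\Gamma]/\llangle xy^{-1}\rrangle$ always surjects onto $\A[\Gamma]^{\mathrm{ab}}\cong\mathbb Z$, since $xy^{-1}$ dies in the abelianization. So its commutator subgroup and its trivial subgroup have infinite index, and ``checking that the commutator subgroup is trivial by coset enumeration'' never terminates. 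The workable version is to enumerate cosets of $\langle\sigma_1\rangle$ and hope to reach index $1$, which certifies cyclicity. That is only a semi-decision procedure, run here on presentations whose relators exceed a hundred letters for $E_8$, and nothing in your proposal shows it succeeds on enough pairs. In particular (iii) is not proved: you are ``aiming at $109$'' with no mechanism that produces that number. Your assertion that the full set of $120$ lifts fails is also unsupported. The paper only shows that its sufficient criteria leave $11$ pairs undecided, not that those quotients are non-cyclic. If such enumerations did succeed, your route would be a valid alternative; as written it is a plan, not a proof.

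The missing idea is to reduce collapse to relations that hold in $\A[\Gamma]$ itself, where the word problem is solvable. The paper uses three facts:
\begin{itemize}
\item Any two distinct standard generators of $E_6,E_7,E_8$ collapse the group (Lemma~\ref{lem-distinct-artin-generators}).
\item Two distinct commuting reflections of $\A[\Gamma]$ are simultaneously conjugate to two standard generators, by Cumplido--Gebhardt--Gonz\'alez-Meneses--Wiest, hence collapse (Lemma~\ref{lem-distinct-commuting-reflections}).
\item If a reflection $z\neq x$ commutes with $x$ and braids with $y$, then imposing $x=y$ forces $y=z$, hence $x=z$, which reduces to the commuting case (Lemma~\ref{lem-distinct-reflections-braid}).
\end{itemize}
A pair is then certified by checking identities such as $xy=yx$ or $yzy=zyz$ between explicit words in $\A[\Gamma]$. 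This computation always terminates, because the word problem is solved by word reversing. Running these filters over $\mathcal K[\Gamma]$, with witnesses $z\in\mathcal K[\Gamma]\cup\mathcal K'[\Gamma]$, leaves no pair for $E_6$ and $E_7$ and $11$ pairs for $E_8$. Deleting the second member of each leaves at least $120-11=109$ lifts.

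Some of your side remarks are also wrong:
\begin{itemize}
\item Braid relations do not force $a=c$ when commuting generators $a=b$ share a neighbour $c$; your own $\A[A_3]$ example shows this.
\item Nevertheless, any two distinct standard generators of $E_n$ do collapse the group: directly if they are adjacent, otherwise via a third generator adjacent to exactly one of them. So the standard generators need no extra care.
\item The lift $\tau(w)s\tau(w)^{-1}$ depends on the choice of $(w,s)$; already $\sigma_1\sigma_2\sigma_1^{-1}\neq\sigma_2\sigma_1\sigma_2^{-1}$ in $\A[A_2]$. A specific choice must be fixed, as the paper does with lexicographic normal forms.
\end{itemize}
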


For proving Proposition \ref{prop-size-conjugacy-class}, we need intermediate lemmas. 

\begin{lem}\label{lem-distinct-artin-generators}
    Let $\Gamma\in \{E_6,E_7,E_8\}$ and $s,t\in S$ with $s\neq t$. Then $\A[\Gamma]/\llangle st^{-1}\rrangle\cong\Z$. 
\end{lem}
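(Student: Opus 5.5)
We prove the lemma by propagating the imposed relation along the Coxeter graph. Since $\Gamma$ is connected and all its edges carry the label $3$, Remark \ref{rem-generators-odd-conjugate} gives $\A[\Gamma]^{\mathrm{ab}}\cong\Z$, with every generator mapping to $1$. The quotient $G:=\A[\Gamma]/\llangle st^{-1}\rrangle$ surjects onto $\Z$, so it suffices to show that $G$ is cyclic, generated by the common image of all the generators. Concretely, we show that in $G$ all elements of $S$ become equal. Then $G$ is generated by a single element, and since it surjects onto $\Z$ it is $\Z$.

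The basic tool is the observation already used in the proof of Proposition \ref{prop-SNAQ-firstcases}. If two elements $a,b$ satisfy both $ab=ba$ and $aba=bab$, then $a=b$. Likewise, if $a=b$ and $a$ is joined to $c$ by a label-$3$ edge while $b$ commutes with $c$, then $c$ commutes with $a$ and satisfies $aca=cac$, hence $c=a$.

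We split into two cases according to the position of $s$ and $t$ in $\Gamma$.

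\textbf{Case 1: $s$ and $t$ are adjacent.} The relation $aba=bab$ with $a=b$ is automatic, so this case needs a different argument. Since $\Gamma$ is of type $E$, it is not a path, so at least one of $s,t$ has a neighbour $c$ not adjacent to the other. Say $c$ is adjacent to $t$ but not to $s$; such a $c$ exists because $\Gamma$ has at least $6$ vertices and no triangles. Then $c$ commutes with $s=t$ and braids with $t$, so $c=t$. This enlarges the set of identified generators, and we have reached a connected set of size at least $3$.

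\textbf{Case 2: $s$ and $t$ are not adjacent.} Take the unique path $s=v_0,v_1,\dots,v_k=t$ in the tree $\Gamma$, with $k\ge 2$. Consider $v_1$: it braids with $s$. Since $\Gamma$ is a tree, $v_1$ is adjacent to $t$ only if $k=2$.
\begin{itemize}
\item If $k\ge 3$, then $v_1$ commutes with $t=s$ and braids with $s$, so $v_1=s$. We then continue along the path.
\item If $k=2$, then $v_1$ braids with both. This is not directly useful, so instead we use another vertex $c$ adjacent to exactly one of $s,t$. Such a $c$ exists unless $s$ and $t$ are both leaves attached to $v_1$ with no other neighbours, which does not happen in $E_n$ since $n\ge 6$. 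It gives $c$ equal to that endpoint, hence adjacent identified vertices, and we are back in Case 1.
\end{itemize}

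\textbf{Conclusion.} Once some connected set $J$ of identified generators contains an edge, every vertex $c$ outside $J$ adjacent to some vertex of $J$ satisfies the following. Either $c$ is adjacent to exactly one vertex of $J$, in which case it commutes with another identified vertex and braids with that one, so it joins $J$. Or, since $\Gamma$ is a tree and $J$ is connected, $c$ is adjacent to only one vertex of $J$ anyway. Any non-singleton $J$ has two vertices, and $c$ commutes with one of them, so induction on $|J|$ absorbs all of $S$.

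The main obstacle is Case 2 with $k=2$, together with the need to exploit that $\Gamma$ is not of type $A$. The statement indeed fails for $A_2$, where $s,t$ adjacent gives $\A/\llangle st^{-1}\rrangle\cong\Z$ trivially, but the propagation requires a third vertex. So I would write out the small configurations explicitly, using the labelling of Figure \ref{fig:Cox-graphs-spherical-type}.
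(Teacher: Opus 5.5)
Your proof is correct, but it is organised differently from the paper's. The paper reduces to type $A$. It invokes the fact, from the Remark preceding the lemma, that in $\A[A_n]$ with $n\ge 4$ identifying any two standard generators identifies all of them. It then asserts a covering property of $E_6,E_7,E_8$: every pair $s,t$ lies in an induced subdiagram of type $A_{\ge 4}$, and every other vertex $u$ lies in a second such subdiagram together with one of $s,t$ and a further vertex $v$ of the first. Identifications then spread from one subdiagram to the next.

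You instead work directly on the tree. Your absorption step says that a vertex adjacent to a connected identified set $J$ with $|J|\ge 2$ is adjacent to exactly one vertex of $J$ and commutes with another, hence joins $J$. This shows that identifying two adjacent generators collapses everything. Your case split by distance then reduces the non-adjacent case to the adjacent one, and the only delicate configuration is distance $2$.

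Your route is self-contained and precise. It replaces the covering claim, which the paper asserts without a detailed check, by a one-line graph-theoretic verification. It also pinpoints where the type enters: the argument works for any simply laced tree in which no vertex has two leaf neighbours. Within the $E$-series this is indeed $n\ge 6$, since $E_5=D_5$. The paper's version is shorter but leaves that combinatorial verification to the reader.

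One correction to your closing remark, which does not affect the proof. The lemma does not fail for $A_2$, where the quotient is $\mathbb Z$. Avoiding type $A$ is also not the point, since the statement holds for $A_n$ with $n\ge 4$. The genuine failures are exactly the configurations your argument excludes. One is $A_3$ with $s,t$ the two end vertices, where $\mathfrak S_3$ survives as a quotient. Another is $D_n$ with $s,t$ the two fork leaves, where $\mathfrak S_n$ survives as a quotient.
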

\begin{rem}
  Observe that in $\A[A_n]$, if $n\geq 4$, identifying a pair of Artin generators actually identifies all of them. In fact, taking two generators $s\neq t$ in $S$, there exists a third generator $r$ which commutes with one of the two, say $t$, and satisfies a braid relation with the other, say $s$. Identifying $s$ and $t$ then also identifies $r$ with them. Since identifying two adjacent generators (in this case $s$ and $r$) collapses all of them (as mentioned in the proof of Proposition \ref{prop-SNAQ-firstcases}), the remark follows.    
\end{rem}

\begin{proof}[Proof of Lemma \ref{lem-distinct-artin-generators}]
    We know that in $\A[A_n]$, if $n\geq 4$, identifying a pair of Artin generators actually identifies all of them. In $\Gamma$, every pair $\{s,t\}\subset S$ with $s\neq t$ belongs to a copy of $\A[A_n]$ for some $n\in\mathbb N_{\geq 4}$, such that every other generator $u$ belongs to a copy of another $\A[A_m]$ with $m\in \mathbb N_{\geq 4}$ simultaneously with $s$ or $t$ and a third generator $v\in S\cap \A[A_n]$. Thus, identifying $s$ and $t$ also identifies them to $v$, hence with $u$. This concludes the proof.
\end{proof}

Recall that the set of \textit{reflections} of $\A[\Gamma]$ is $\mathcal R[\Gamma]=\{gsg^{-1}\, |\, s\in S, g\in \A[\Gamma]\}$.

\begin{lem}\label{lem-distinct-commuting-reflections}
    Let $\Gamma\in\{E_6,E_7,E_8\}$ and $x,y\in\mathcal R[\Gamma]$ with $x\neq y$. If $xy=yx$, we have $\A[\Gamma]/\llangle xy^{-1}\rrangle\cong\Z$.
\end{lem}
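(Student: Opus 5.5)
Since $x,y\in\mathcal R[\Gamma]$, write $x=gsg^{-1}$ and $y=hth^{-1}$ with $s,t\in S$. Conjugating everything by $g^{-1}$ does not change the normal closure $\llangle xy^{-1}\rrangle$ up to an automorphism of $\A[\Gamma]$. So we may assume $x=s\in S$, and then $y$ is a reflection distinct from $s$ that commutes with $s$.

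The plan is to reduce to Lemma \ref{lem-distinct-artin-generators}, and the key step is to show that $y$ can be conjugated to a generator while $s$ stays a generator. Concretely, we want some $h'$ in the centralizer $Z_{\A[\Gamma]}(s)$ such that $h'yh'^{-1}=t'$ for some $t'\in S$ with $t'\neq s$. Once this holds, conjugation by $h'$ fixes $s$ and sends $xy^{-1}$ to $st'^{-1}$. Then $\A[\Gamma]/\llangle xy^{-1}\rrangle\cong\A[\Gamma]/\llangle st'^{-1}\rrangle\cong\Z$ by Lemma \ref{lem-distinct-artin-generators}.

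To get $h'$, I would use the theory of parabolic subgroups of spherical Artin groups (Cumplido--Gebhardt--González-Meneses--Wiest). Each reflection $r$ is the generator of a unique cyclic parabolic subgroup $P_r=\langle r\rangle$. Two reflections commute exactly when their parabolic subgroups commute, and by that theory there is then a parabolic subgroup of rank two containing both. After conjugating, this parabolic becomes standard, $\A[\{s_1,t_1\}]$, with $x\mapsto s_1$ (possibly after a further conjugation inside). The reflections of the rank-two standard parabolic $\A[\{s_1,t_1\}]$ are exactly the conjugates, within it, of $s_1$ and $t_1$. Since $x$ and $y$ commute and $\Gamma$ is simply laced, there are two possibilities. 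If $m_{s_1,t_1}=2$, the group is $\Z^2$, its only reflections are $s_1$ and $t_1$, and so $y=t_1$. If $m_{s_1,t_1}=3$, the reflections of $\A[A_2]$ that commute with $s_1$ are only $s_1$ itself (commuting reflections in $\B_3$ correspond to disjoint or equal arcs), and this case contradicts $x\neq y$.

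In summary, after a single conjugation $x=s_1$ and $y=t_1$ are two distinct generators, which is what we need. A simpler alternative, avoiding the parabolic machinery, would be to argue directly in the finite quotient. The images in $\W[\Gamma]$ are commuting distinct reflections, so their roots are orthogonal, and for the simply laced $E$ types $\W[\Gamma]$ acts transitively on pairs of orthogonal roots. However, lifting this back to $\A[\Gamma]$ still needs the parabolic uniqueness, so I would use the parabolic approach.

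The main obstacle is the middle step: justifying that two distinct commuting reflections of $\A[\Gamma]$ are simultaneously conjugate to a pair of standard generators. This relies on the Cumplido et al.\ result that the subgroup generated by two commuting parabolic subgroups is contained in a parabolic subgroup, and that the parabolic subgroup spanned by two commuting cyclic parabolics is itself parabolic of rank two. I would cite that result rather than reprove it.
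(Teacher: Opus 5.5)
Your proposal is correct and follows essentially the same route as the paper: the paper also uses (the proof of) Theorem 2.2 of Cumplido--Gebhardt--Gonz\'alez-Meneses--Wiest to conjugate $x$ and $y$ simultaneously to two distinct standard generators $s,t$, observes that $\llangle xy^{-1}\rrangle=\llangle st^{-1}\rrangle$ (normal closures are invariant under conjugation, so no automorphism is needed), and concludes with Lemma~\ref{lem-distinct-artin-generators}. Your case analysis of the rank-two parabolic ($m_{s_1,t_1}=2$ versus $3$) does no harm, but it is redundant once that simultaneous-conjugation result is cited, since the result already gives $gxg^{-1}=s$ and $gyg^{-1}=t$ directly.
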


\begin{proof}
    Let $x,y$ be two such reflections. An immediate consequence of the proof \cite[Theorem 2.2]{CGGW19} is that there exists $s,t\in S$ and $g\in \A[\Gamma]$ such that $gxg^{-1}=s$ and $gyg^{-1}=t$. Thus, $\llangle xy^{-1}\rrangle=\llangle st^{-1}\rrangle$ and we conclude using Lemma \ref{lem-distinct-artin-generators}.
\end{proof}

\begin{lem}\label{lem-distinct-reflections-braid}
    Let $\Gamma\in\{E_6,E_7,E_8\}$ and $x,y,z\in\mathcal R[\Gamma]$ with $x\neq z$. If $xz=zx$ and $yzy=zyz$, then $\A[\Gamma]/\llangle xy^{-1}\rrangle\cong \Z$.
\end{lem}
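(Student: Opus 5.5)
The plan is to reduce to Lemma \ref{lem-distinct-commuting-reflections} by working in the quotient $G:=\A[\Gamma]/\llangle xy^{-1}\rrangle$. Write $\bar a$ for the image of $a\in\A[\Gamma]$ in $G$. Then $\bar x=\bar y$. The hypotheses give $\bar x\bar z=\bar z\bar x$ and $\bar y\bar z\bar y=\bar z\bar y\bar z$. Substituting $\bar y=\bar x$ into the second relation yields $\bar x\bar z\bar x=\bar z\bar x\bar z$. Combined with commutation, the left side equals $\bar z\bar x^2$ and the right side equals $\bar z^2\bar x$, so cancelling gives $\bar x=\bar z$. This is the same elementary observation used in the proof of Proposition \ref{prop-SNAQ-firstcases}: two elements that both commute and braid must coincide.

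Consequently $xz^{-1}$ lies in $\llangle xy^{-1}\rrangle$, so $\llangle xz^{-1}\rrangle\subseteq\llangle xy^{-1}\rrangle$. Hence $G$ is a quotient of $\A[\Gamma]/\llangle xz^{-1}\rrangle$. Since $x\neq z$ are commuting reflections, Lemma \ref{lem-distinct-commuting-reflections} shows that this latter group is isomorphic to $\Z$. So $G$ is a quotient of $\Z$, in particular cyclic.

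To upgrade this to $G\cong\Z$, I would use the abelianization. The generators of $\A[\Gamma]$ are all conjugate by Remark \ref{rem-generators-odd-conjugate}, so every reflection maps to the same generator of $\A[\Gamma]^{\mathrm{ab}}\cong\Z$. Thus $xy^{-1}$ lies in $[\A[\Gamma],\A[\Gamma]]$, and the abelianization map factors through $G$, giving a surjection $G\to\Z$. A cyclic group surjecting onto $\Z$ is isomorphic to $\Z$, so $G\cong\Z$.

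There is essentially no obstacle here. The only point needing care is this last step: the reduction alone only exhibits $G$ as a quotient of $\Z$, and the surjection $G\to\Z$ from the abelianization is what pins it down.
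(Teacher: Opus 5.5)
Your proof is correct and follows the paper's argument: set $\bar x=\bar y$ in the quotient, deduce $\bar x=\bar z$ because these elements both commute and braid, and then apply Lemma \ref{lem-distinct-commuting-reflections} to the pair $(x,z)$. Your final abelianization step shows that $xy^{-1}\in[\A[\Gamma],\A[\Gamma]]$, so the quotient surjects onto $\mathbb Z$; this makes explicit a point the paper leaves implicit, namely that $\llangle xy^{-1}\rrangle$ actually equals $\llangle xz^{-1}\rrangle$ rather than merely containing it.
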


\begin{proof}
    Identifying $x$ with $y$ implies the relations $\bar y\bar z=\bar z\bar y$ and $\bar y\bar z\bar y=\bar z \bar y \bar z$ in the quotient, hence $\bar y=\bar z$. Thus, $x$ and $z$ are identified as well. Since they are different and they commute, we conclude using Lemma \ref{lem-distinct-commuting-reflections}.
\end{proof}

The strategy then consists in attaching reflections in $\A[\Gamma]$ to every root of our Coxeter system, and using Lemmas \ref{lem-distinct-commuting-reflections} and \ref{lem-distinct-reflections-braid} to obtain the largest possible subset $X$ with the property that for all $x,y\in X$ with $x\neq y$, we have $\A[\Gamma]/\llangle xy^{-1}\rrangle\cong\Z$.

We recall briefly what we need about root systems. Let $\Gamma$ be a Coxeter graph and $\M=(m_{s,t})_{s,t\in S}$ be its Coxeter matrix. Let $V=\bigoplus_{s\in S}\mathbb R\,\alpha_s$ a vector space with basis $\Pi=\{\alpha_s\}_{s\in S}$. One can define a symmetric form $\langle .,.\rangle:V\times V\to\R$ by 
\[
\langle \alpha_s, \alpha_t \rangle = \left\{ \begin{array}{ll}
- \cos (\pi/m_{s,t}) & \text{if } m_{s,t} \neq \infty\,,\\
-1 & \text{if } m_{s,t} = \infty\,.
\end{array} \right.
\]

For all $s \in S$, we define the reflection $\rho_s : V \to V$ by 
\[
\rho_s (x) = x - 2 \langle x, \alpha_s \rangle \, \alpha_s\,,\quad x \in V\,.
\]
The representation $\rho : \W [\Gamma] \to \mathrm{GL} (V)$ sending $s$ to $\rho_s$ for all $s \in S$ is then faithful (see \cite{Bourbaki} for more details). From now on, for all $x \in V$ and $w \in W [\Gamma]$, we write $w \cdot x = \rho(w) (x)$.

The \emph{root system} of $\W [\Gamma]$ is the subset $\Phi [\Gamma] = \{ w \cdot \alpha_s \mid w \in \W[\Gamma]\,,\ s \in S \}\subset V$. 
A rood $\beta \in \Phi [\Gamma]$ is \emph{positive} (resp. \emph{negative}) if $\beta = \sum_{s \in S} \lambda_s \alpha_s$ with $\lambda_s \ge 0$ (resp. $\lambda_s \le 0$) for all $s \in S$. 
Write $\Phi^+ [\Gamma]$ for the set of positive roots and $\Phi^- [\Gamma]$ for the set of negative roots. 
We know from \cite{Deodhar} that $\Phi^- [\Gamma] = - \Phi^+ [\Gamma]$ and that $\Phi [\Gamma] = \Phi^+ [\Gamma] \sqcup \Phi^- [\Gamma]$.

\noindent
The set of \textit{refections} of $\W[\Gamma]$ is defined as $R [\Gamma] = \{ wsw^{-1} \mid w \in \W [\Gamma] \,,\ s \in S\}$. 
To any root $\beta \in \Phi [\Gamma]$, one can associate the reflection $r_\beta = w s w^{-1} \in R [\Gamma]$, where $w \in W[\Gamma]$ and $s \in S$ are such that $\beta = w \cdot \alpha_s$. We know from \cite{Deodhar} that $r_\beta$ is independent from such $w$ and $s$. Moreover, given $\beta, \gamma \in \Phi [\Gamma]$, we have $r_\beta = r_\gamma$ if and only if $\gamma = \pm \beta$. In particular, this provides a bijection $\Phi^+ [\Gamma] \to R [\Gamma]$, $\beta \mapsto r_\beta$.

\bigskip\noindent
We have $| R[E_6]| = |\Phi^+ [E_6]| = 36$, $| R[E_7]| = |\Phi^+ [E_7]| = 63$ and $| R[E_8]| = |\Phi^+ [E_8]| = 120$ (see \cite[Planches]{Bourbaki}). Moreover, given $\Gamma \in \{E_6, E_7, E_8\}$, we have $\Phi [\Gamma] \subseteq \bigoplus_{s \in S} \Z \,\alpha_s$. There exists several softwares which allow the computation of roots and positive roots for $E_6,E_7$ and $E_8$. For example, in SageMath (\cite{Sag25}), the following sequence of statements lists the elements of $\Phi^+[E_6]$ as column vectors.

{\small
\begin{verbatim}
R = RootSystem(["E",6])
Phi = R.root_lattice()
PhiPlus1 = Phi.positive_roots()
PhiPlus=[vector(ZZ, alpha).column() for alpha in PhiPlus1]
\end{verbatim}}

\noindent
Let $\beta \in \Phi^+ [\Gamma]$ be a positive root. 
Writing $\beta = w \cdot \alpha_s$ with $s \in S$, $w \in \W [\Gamma]$ and $w$ of minimal length, the quantity $\ell_S(w) +1$ is the \emph{depth} of $\beta$, which we write $\mathrm{dpt} (\beta)$. 
If $w=s_1 s_2 \cdots s_p$ is a reduced expression for $w$, the sequence $(s_1, s_2, \dots, s_p,s)$ of length $\mathrm{dpt} (\beta)$ is called a \emph{reduced expression} of $\beta$. 
Fixing a total order on $S$ (as in $E_6$, $E_7$ and $E_8$), the smallest reduced expression of $\beta$ with respect to the lexicographic order is called the \emph{normal form} of $\beta$, which we write $\mathrm{Norm} (\beta)$.

\noindent
The following Lemma can be stated for any Coxeter graph, but we only need it in cases $E_6,E_7,E_8$, for which the proof is more elementary.

\begin{lem}\label{lem-normal-form-computable}
Let $\Gamma \in \{ E_6, E_7, E_8\}$. There exists an algorithm which, given $\beta \in \Phi^+ [\Gamma]$, computes $\mathrm{Norm} (\beta)$.
\end{lem}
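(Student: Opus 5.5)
The plan is to compute $\mathrm{Norm}(\beta)$ by a descending recursion on the height of $\beta$. Write $\beta=\sum_{s\in S}\lambda_s\alpha_s$ with $\lambda_s\in\Z_{\ge 0}$, and let $\mathrm{ht}(\beta)=\sum_s\lambda_s$ be its height. In types $E_6,E_7,E_8$ all edges are simply laced, so $\langle\alpha_s,\alpha_t\rangle\in\{1,0,-\tfrac12\}$. Hence $\rho_s(\beta)=\beta-2\langle\beta,\alpha_s\rangle\alpha_s$ changes only the $\alpha_s$-coefficient, by the integer $-2\langle\beta,\alpha_s\rangle$. All quantities live in $\Z^S$ and are computed exactly.

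The key facts I would establish first are standard.
\begin{enumerate}
\item[(a)] For $\beta\in\Phi^+[\Gamma]\setminus\Pi$ and $s\in S$, the root $s\cdot\beta$ is again positive.
\item[(b)] For $\beta\in\Phi^+[\Gamma]\setminus\Pi$ there is some $s$ with $\langle\beta,\alpha_s\rangle>0$. This holds because $\langle\beta,\beta\rangle=1=\sum_s\lambda_s\langle\beta,\alpha_s\rangle$ with $\lambda_s\ge0$. For such $s$, the root $s\cdot\beta$ has strictly smaller height.
\item[(c)] In the simply laced case, $\mathrm{dpt}(\beta)=\mathrm{ht}(\beta)$.
\end{enumerate}
Fact (c) is the crucial point. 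One direction uses (b) repeatedly, lowering the height by exactly one each time, since $\langle\beta,\alpha_s\rangle=\tfrac12$ when $\beta\ne\alpha_s$. The other direction holds because each simple reflection changes the height by at most one on roots different from $\pm\alpha_s$, and because $\mathrm{ht}(\alpha_s)=1$. It follows that if $(s_1,\dots,s_p,s)$ is a reduced expression of $\beta$, then $s_1\cdot\beta$ has depth $\mathrm{dpt}(\beta)-1$, and $(s_2,\dots,s_p,s)$ is a reduced expression of it. Conversely, $s_1$ is admissible as a first letter exactly when $\mathrm{ht}(s_1\cdot\beta)=\mathrm{ht}(\beta)-1$, equivalently when $\langle\beta,\alpha_{s_1}\rangle>0$.

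The algorithm is then a greedy lexicographic choice. If $\beta=\alpha_s$, output $(s)$. Otherwise, compute $\langle\beta,\alpha_t\rangle$ for every $t\in S$, and let $s_1$ be the smallest $t$ in the fixed order with $\langle\beta,\alpha_t\rangle>0$. Then output $s_1$ followed by $\mathrm{Norm}(s_1\cdot\beta)$, computed recursively.

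Correctness follows from the characterisation of admissible first letters. The lexicographically smallest reduced expression must begin with the smallest admissible letter. Its tail must then be the lexicographically smallest reduced expression of $s_1\cdot\beta$. Termination holds because the height drops by one at each step, so there are exactly $\mathrm{ht}(\beta)$ steps.

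The main obstacle is a subtlety in the base case. The sequence $(s_1,\dots,s_p,s)$ ends with the letter $s$, where $\beta=w\cdot\alpha_s$. At the last step, the remaining root is a simple root $\alpha_s$, and $s$ is forced, so there is no ambiguity there. One must still check that no shorter path reaches a different simple root. Fact (c) settles this, because every path has the same length $\mathrm{ht}(\beta)$. Fact (c) is also the step needing the most care, as it replaces the general depth theory of Brink--Howlett by an elementary height argument valid only in simply laced finite types.
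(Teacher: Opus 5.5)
Your proof is correct, and your algorithm is exactly the paper's: take the smallest $t$ in the fixed order with $\langle\beta,\alpha_t\rangle>0$, output $t$, and recurse on $t\cdot\beta$. The two arguments differ in how the key step is justified.

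The paper cites Brink--Howlett's Lemma~1.7. That lemma holds for arbitrary Coxeter groups and gives, for a positive root $\beta\neq\alpha_t$, that $\mathrm{dpt}(t\cdot\beta)=\mathrm{dpt}(\beta)-1$ if and only if $\langle\beta,\alpha_t\rangle>0$. You instead prove from scratch that $\mathrm{dpt}=\mathrm{ht}$ in simply laced finite type, and the same characterisation of admissible first letters follows from this.

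What your route buys:
\begin{itemize}
\item It is self-contained. It is precisely the ``more elementary'' proof the paper alludes to but does not give.
\item It spells out two things the paper leaves implicit. The first is why the smallest admissible letter followed by the normal form of the tail is the lexicographic minimum; this uses that all reduced expressions of $\beta$ have the same length. The second is that the recursion terminates after exactly $\mathrm{ht}(\beta)$ steps.
\end{itemize}
What the paper's route buys: it is shorter and works verbatim for every Coxeter graph, where depth and height no longer coincide.

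Two small points in your fact~(c) each deserve a line.
\begin{itemize}
\item You need $2\langle\gamma,\alpha_t\rangle\in\{-1,0,1\}$ for every root $\gamma\neq\pm\alpha_t$, not just for simple roots. This follows from two facts: $2\langle\cdot,\cdot\rangle$ is integral on $\bigoplus_s\mathbb{Z}\alpha_s$, and Cauchy--Schwarz is strict, because the form is positive definite and distinct roots are parallel only if they are opposite.
\item In the inequality $\mathrm{dpt}(\beta)\ge\mathrm{ht}(\beta)$, the step $\pm\alpha_t\mapsto\mp\alpha_t$ changes the height by $2$. You should handle it in one of two ways:
\begin{itemize}
\item track $|\mathrm{ht}|$, which changes by at most one at every step; or
\item note that along a reduced expression $(s_1,\dots,s_p,s)$ every intermediate root $s_{j+1}\cdots s_p\cdot\alpha_s$ is positive, hence never equals $\pm\alpha_{s_j}$.
\end{itemize}
\end{itemize}
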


\begin{proof}
    Let $\beta \in \Phi^+ [\Gamma]$. 
First, we have $\mathrm{dpt} (\beta) = 1$ if and only if there exists  $s \in S$ such that $\beta = \alpha_s$.  Furthermore, it is clear that one can determine algorithmically if $\beta = \alpha_s$ for $s \in S$, in which case $s$ can be made explicit. Then $\mathrm{Norm} (\beta) = \mathrm{Norm} (\alpha_s) = (s)$.

\noindent
We now assume $\mathrm{dpt}(\beta)\geq 2$ and proceed by induction on $\mathrm{dpt}(\beta)$. By \cite[Lemma 1.7]{BrinkHowlett}, we know that for all $j$, we have $\mathrm{dpt}(s_j \cdot \beta) = \mathrm{dpt} (\beta)-1$ if and only if $\langle \beta, \alpha_{s_j} \rangle >0$. Let $i$ be the smallest index such that $\langle \beta, \alpha_{s_i} \rangle >0$. Then we have $\mathrm{Norm} (\beta) = (s_i) \cdot \mathrm{Norm}(s_i (\beta))$, where $\cdot$ denotes word concatenation. By induction hypothesis, we can compute $\mathrm{Norm} (s_i (\beta))$, hence $\mathrm{Norm} (\beta)$.
\end{proof}

\noindent
For every $\beta \in \Phi^+ [\Gamma]$, one can define $\kappa (\beta), \kappa' (\beta) \in \mathcal R [\Gamma] \subseteq \A [\Gamma]$ as follows. Let $(s_1, \dots, s_p, s)$ be $\mathrm{Norm}(\beta)$. Then we define
\[
\kappa (\beta) = s_1 s_2 \cdots s_p s s_p^{-1} \cdots s_2^{-1} s_1^{-1}\,,\quad
\kappa' (\beta) = s_1^{-1} s_2^{-1} \cdots s_p^{-1} s s_p \cdots s_2 s_1\,.
\]
Moreover, we define
\[
\mathcal K [\Gamma] = \{ \kappa (\beta) \mid \beta \in \Phi^+ [\Gamma]\}\,,\quad
\mathcal K' [\Gamma] = \{ \kappa' (\beta) \mid \beta \in \Phi^+ [\Gamma]\}\,.
\]
Note that, thanks to Lemma \ref{lem-normal-form-computable}, the sets $\mathcal K [\Gamma]$ and $\mathcal K' [\Gamma]$ can be computed explicitly for $\Gamma \in \{E_6, E_7, E_8\}$. In particular $|\mathcal{K}[\Gamma]|=|\Phi^+[\Gamma]|$, which is equal to 36, 63 or 120 when $\Gamma$ is equal to $E_6, E_7$ or $E_8$, respectively. The following lemma completes the proof of Proposition \ref{prop-size-conjugacy-class}.

\begin{lem}
The following hold.
\begin{enumerate}

\item[(i)] For all $x,y \in \mathcal K [E_6]$ with $x \neq y$, we have $\A [E_6]/ \langle \! \langle xy^{-1} \rangle \! \rangle \cong \Z$.
\item[(ii)] For all $x,y \in \mathcal K [E_7]$ with $x \neq y$, we have $\A [E_7]/ \langle \! \langle xy^{-1} \rangle \! \rangle \cong \Z$. 
\item[(iii)] There exists $X \subseteq \mathcal K [E_8]$ with $|X|=109$, such that for all $x,y \in X$ with $x \neq y$, we have $\A [E_8]/ \langle \! \langle xy^{-1} \rangle \! \rangle \cong \Z$.
\end{enumerate}
\end{lem}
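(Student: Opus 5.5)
The plan is a computer-assisted verification that reduces every pairwise check to the two criteria already available: Lemma \ref{lem-distinct-commuting-reflections} (commuting distinct reflections) and Lemma \ref{lem-distinct-reflections-braid} (a third reflection $z$ commuting with $x$ and braiding with $y$).

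\textbf{Step 1: build the elements and test relations.} For each $\Gamma\in\{E_6,E_7,E_8\}$ we compute, via Lemma \ref{lem-normal-form-computable}, the normal forms $\mathrm{Norm}(\beta)$ for all $\beta\in\Phi^+[\Gamma]$. These give the words $\kappa(\beta)\in\A[\Gamma]$. Since spherical type Artin groups have solvable word problem (Garside normal forms, e.g.\ via GAP or a braid-like package), we can decide, for every pair of elements of $\mathcal K[\Gamma]$, whether they commute, and whether they satisfy the braid relation $xyx=yxy$. We record this as a graph on $\mathcal K[\Gamma]$ with two edge types, \emph{commuting} and \emph{braiding}. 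A useful sanity check on the images in $\W[\Gamma]$: commuting requires orthogonal roots, and braiding requires roots at angle $2\pi/3$ or $\pi/3$.

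\textbf{Step 2: close the relation ``$\A[\Gamma]/\llangle xy^{-1}\rrangle\cong\Z$''.} Call a pair $\{x,y\}$ \emph{good} if $\A[\Gamma]/\llangle xy^{-1}\rrangle\cong\Z$. Lemma \ref{lem-distinct-commuting-reflections} makes every commuting pair good. Lemma \ref{lem-distinct-reflections-braid} makes $\{x,y\}$ good whenever some $z$ commutes with $x$ and braids with $y$, or symmetrically. We also iterate a propagation rule from the same argument: if $\{x,y\}$ is good and $x$ braids with $u$ while $y$ braids with $u$ (or $y$ commutes with $u$), then identifying $x,y$ forces further identifications, so more pairs become good. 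If needed, as a fallback for stubborn pairs, we directly compute the abelianisation-type quotient $\A[\Gamma]/\llangle xy^{-1}\rrangle$ by coset enumeration/Tietze simplification in GAP and check it is $\Z$. (It always surjects onto $\Z$, so it suffices to show all generators become equal.)

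\textbf{Step 3: conclude.} For $E_6$ and $E_7$ we check that every pair in $\mathcal K[\Gamma]$ is good, giving (i) and (ii). For $E_8$ the full set presumably fails: the $\kappa$-lifts of roots whose images in $\W$ are far apart, such as opposite or non-orthogonal roots with no usable witness $z$, may not be certified. We therefore compute the certified-good graph and search for a clique of size $109$, greedily and then by a clique-finding routine, and output the explicit $X$.

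The main obstacle is Step 2 for $E_8$. Certifying enough pairs may need the iterated propagation or explicit quotient computations, and finding a clique of size exactly $109$ depends on which pairs we manage to certify. I would first try the one-step witness criterion with all $z\in\mathcal K'[E_8]\cup\mathcal K[E_8]$ as candidate witnesses before resorting to presentation computations.
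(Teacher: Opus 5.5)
Your plan is essentially the paper's proof. The paper discards commuting pairs and pairs admitting a witness $z\in\mathcal K'[\Gamma]\cup\mathcal K[\Gamma]$ as in Lemma \ref{lem-distinct-reflections-braid}, finds $0$, $0$ and $11$ uncertified pairs for $E_6$, $E_7$, $E_8$ respectively, and takes $X$ to be $\mathcal K[E_8]$ with one element of each of the $11$ bad pairs removed; this is a trivial instance of your clique search. One correction: your optional propagation rule is stated in the wrong direction. A pair becomes good when identifying it forces the identification of an already-good pair, not when it is itself good. Since the one-step witness test already suffices, you never need that rule.
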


\begin{proof}
Let $\Gamma \in \{E_6, E_7, E_8\}$. This proof relies on SageMath computations. In particular, we programmed the solution to the word problem for $\A[\Gamma]$ based on the word reversing algorithm given in \cite{DehornoyParis}. As we compute it, the set $\Phi^+ [\Gamma]$ consists of vectors, hence it is ordered. This induces an order $\leq$ on $\mathcal K [\Gamma]$ and $\mathcal K' [\Gamma]$. 
We now define multiple subsets of $\mathcal K[\Gamma]\times\mathcal K[\Gamma]$.

\begin{equation*}
\begin{aligned}
     \mathcal A_0 [\Gamma] &= \{ (x,y) \in \mathcal K [\Gamma] \times \mathcal K [\Gamma] \mid \ x < y \},\\
     \mathcal B_1[\Gamma]& = \{ (x,y) \in \mathcal A_0 [\Gamma] \mid x y = yx \},\\
     \mathcal A_1 [\Gamma] &= \mathcal A_0 [\Gamma] \setminus \mathcal B_1 [\Gamma],\\
     \mathcal B_2 [\Gamma] &= \{ (x,y) \in \mathcal A_1 [\Gamma] \mid \exists z \in \mathcal K' [\Gamma] \text{ such that } z \neq x, \ z x = xz,\ y z y = z y z \},\\
     \mathcal A_2 [\Gamma] &= \mathcal A_1 [\Gamma] \setminus \mathcal B_2 [\Gamma],\\
     \mathcal B_3 [\Gamma] &= \{ (x,y) \in \mathcal A_2 [\Gamma] \mid \exists z \in \mathcal K' [\Gamma] \text{ such that } z \neq y, \ z x z = x z x,\ y z = z y \},\\
    \mathcal A_3 [\Gamma] &= \mathcal A_2 [\Gamma] \setminus \mathcal B_3 [\Gamma],\\
    \mathcal B_4 [\Gamma] &= \{ (x,y) \in \mathcal A_3 [\Gamma] \mid \exists z \in \mathcal K [\Gamma] \text{ such that } z \neq x, \ x z = x z,\ y z y = z y z \},\\
    \mathcal A_4 [\Gamma] &= \mathcal A_3 [\Gamma] \setminus \mathcal B_4 [\Gamma],\\
    \mathcal B_5 [\Gamma] &= \{ (x,y) \in \mathcal A_4 [\Gamma] \mid \exists z \in \mathcal K [\Gamma] \text{ such that } z \neq y, \ x z x = z x z,\ y z = z y \},\\
    \mathcal A_5 [\Gamma] &= \mathcal A_4 [\Gamma] \setminus \mathcal B_5 [\Gamma].
    \end{aligned}
\end{equation*}

By Lemma \ref{lem-distinct-commuting-reflections}, if $(x,y)\in \mathcal B_1[\Gamma]$, we have $\A[\Gamma]/\llangle xy^{-1}\rrangle\cong \Z$. Moreover, by Lemma \ref{lem-distinct-reflections-braid}, if $(x,y)\in\mathcal B_2[\Gamma]\cup\mathcal B_3[\Gamma]\cup\mathcal B_4[\Gamma]\cup\mathcal B_5[\Gamma]$, we have $\A[\Gamma]/\llangle xy^{-1}\rrangle\cong\Z$. Using SageMath, we compute $|\mathcal A_5 [E_6]| =0$, $|\mathcal A_5 [E_7]| = 0$  et $|\mathcal A_5 [E_8]| = 11$. Thus, for $\Gamma\in\{E_6,E_7\}$ and for all $x,y\in\mathcal R[\Gamma]$ such that $x\neq y$, we have $\A[\Gamma]/\llangle xy^{-1}\rrangle\cong \Z$. Now, write $\mathcal V = \{ y \in \mathcal K [E_8] \mid \exists x \in \mathcal K [E_8] \text{ such that } (x,y) \in \mathcal A_5 [E_8] \}$ and $X = \mathcal K [E_8] \setminus \mathcal V$. 
Since $|\mathcal A_5 [E_8]| = 11$, we have $|\mathcal V| \le 11$, hence $|X| = |\mathcal K [E_8]| - |\mathcal V| \ge 120 - 11 = 109$. 
But now, for all $x,y \in X$ with $x \neq y$, we have $A [E_8] / \langle \! \langle x y^{-1} \rangle \! \rangle \simeq \Z$. This concludes the proof. 

\end{proof}

Finally, we will extensively use the following result. 

\begin{lem}\cite[Lemma 8]{Kolay}.\label{lem-size-conjugacy-kolay}
    Let $\Omega=A_n$ for $n\geq 2$, $P$ be a non-abelian quotient of $\A[\Omega]$ and $\C$ be the conjugacy class of the image of $\sigma_1$ under this quotient.
    \begin{enumerate}

    \item[(i)] If $n=2,3$, we have $|\C|\geq 3$. 
    \item[(ii)] If $n\geq 4$, we have $|\C|\geq \binom {n+1}2$.
    \end{enumerate}
\end{lem}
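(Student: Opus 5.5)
Lemma \ref{lem-size-conjugacy-kolay} is Kolay's Lemma 8. To reprove it, I would run the strategy of Section \ref{sec-reduction-type-E} in type $A_n$: find a set $X$ of conjugates of $\sigma_1$ in $\A[A_n]=\B_{n+1}$ whose elements pairwise have $\A[A_n]/\llangle xy^{-1}\rrangle\cong\Z$. Since $\A[A_n]^{\mathrm{ab}}\cong\Z$ and $P$ is non-abelian, any two elements of $X$ then have distinct images in $P$. All these images lie in $\C$, so $|\C|\ge|X|$.

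For $n\ge 4$, take $X$ to be the set of band generators
\[
\sigma_{i,j}=(\sigma_{j-1}\cdots\sigma_{i+1})\,\sigma_i\,(\sigma_{j-1}\cdots\sigma_{i+1})^{-1},\qquad 1\le i<j\le n+1 .
\]
These are the half-twists $\kappa(\beta)$ attached to the $\binom{n+1}{2}$ positive roots of $A_n$, and each is conjugate to $\sigma_1$. I then show that identifying two distinct ones collapses the quotient to $\Z$, splitting according to the relative position of the index pairs $\{i,j\}$ and $\{k,l\}$.

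\emph{Disjoint pairs.} In this case $\sigma_{i,j}$ and $\sigma_{k,l}$ commute. Conjugating by a braid in $\B_{n+1}$ sends them simultaneously to two distinct commuting standard generators. This is the type $A$ analogue of Lemma \ref{lem-distinct-commuting-reflections}, and it can be seen directly with a braid that moves the two arcs onto standard arcs. The Remark following Lemma \ref{lem-distinct-artin-generators} then shows the quotient is $\Z$, since $n\ge4$.

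\emph{Pairs sharing one endpoint.} Say the pairs are $\{i,j\}$ and $\{j,l\}$. Since $n+1\ge5$, there is a band generator $z=\sigma_{k,m}$ with $\{k,m\}$ disjoint from $\{i,j\}$ and sharing exactly one endpoint with $\{j,l\}$. Then $z$ commutes with $\sigma_{i,j}$ and braids with $\sigma_{j,l}$. Lemma \ref{lem-distinct-reflections-braid}, whose proof works verbatim in type $A$ with the previous case as input, gives the result.

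For $n=2,3$, one can argue more simply. The images of $\sigma_1$ and $\sigma_2$ are distinct conjugates, because identifying them abelianizes the group. If $|\C|\le2$, the centralizer of $p_1$ has index at most $2$, so it is normal and contains every square. The braid relation then forces $p_1p_2p_1^{-1}=p_2^{-1}p_1p_2$. Combined with $\C=\{p_1,p_2\}$, this yields $p_1p_2=p_2p_1$, hence $p_1=p_2$ from the braid relation, a contradiction. Alternatively, one can use the three elements $\sigma_1,\sigma_2,\sigma_1\sigma_2\sigma_1^{-1}$ and check pairwise collapse directly.

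The main obstacle is the shared-endpoint case when $n$ is small. One must check that a suitable third index is always available, which is exactly where $n\ge4$ is needed. One must also verify the commutation and braid relations between band generators carefully, since these depend on the nesting of the arcs.
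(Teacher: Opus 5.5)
The paper gives no proof of this lemma; it quotes Kolay. Your strategy for (ii) is Kolay's, which the paper adapts to type $E$ in Section~\ref{sec-reduction-type-E}. Your argument for (i) is correct; the remark about squares is superfluous, since $p_1p_2p_1^{-1}\in\C=\{p_1,p_2\}$ and $p_1p_2p_1^{-1}\neq p_1$ already give $p_1p_2=p_2p_1$.

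There is, however, a genuine gap in (ii). You assert that $\sigma_{i,j}$ and $\sigma_{k,l}$ commute whenever $\{i,j\}\cap\{k,l\}=\emptyset$, and this is false when the pairs interleave. Your $\sigma_{i,j}$ are the Birman--Ko--Lee generators, i.e.\ half-twists along chords joining punctures placed on a circle. Two distinct ones commute exactly when their chords are disjoint \emph{and do not cross}. For example, $\sigma_{1,3}^2=\sigma_2\sigma_1^2\sigma_2^{-1}$ and $\sigma_{2,4}^2=\sigma_3\sigma_2^2\sigma_3^{-1}$ are Artin's pure braid generators for the pairs $\{1,3\}$ and $\{2,4\}$. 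These do not commute, so $\sigma_{1,3}$ and $\sigma_{2,4}$ do not commute either. No braid can conjugate such a pair onto two commuting standard generators, so the interleaved case $i<k<j<l$ is covered by neither of your two cases.

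The same false implication appears in your shared-endpoint case: being disjoint from $\{i,j\}$ does not make $z$ commute with $\sigma_{i,j}$. Take the pairs $\{1,3\}$ and $\{2,3\}$ with $\{i,j\}=\{1,3\}$. Every admissible $z=\sigma_{2,m}$ ($m\geq 4$) crosses the chord $\{1,3\}$. You have to swap the roles and take $z=\sigma_{1,m}$, which commutes with $\sigma_{2,3}$ and braids with $\sigma_{1,3}$.

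The repair uses the device of Lemma~\ref{lem-distinct-reflections-braid}; this is exactly the role of the third element $z$ in the sets $\mathcal B_2,\dots,\mathcal B_5$ of the paper. Two of your generators commute when their chords are disjoint and non-crossing, and they satisfy the braid relation when the chords share exactly one endpoint.

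For crossing chords $\{i,j\}$ and $\{k,l\}$ with $i<k<j<l$, a fifth puncture $m$ exists because $n+1\geq 5$. Join $m$ to an endpoint adjacent to it in the cyclic order of $i,k,j,l$. This gives a chord $z$ that shares one endpoint with one of the two chords and is disjoint from, and does not cross, the other. Identifying the two given generators then identifies both with $z$, which reduces you to your (correct) disjoint non-crossing case.

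In the shared-endpoint case, join a fourth puncture to a cyclically adjacent endpoint other than the common one. With these two corrections the argument proves (ii).

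As a minor point, your $\sigma_{1,3}=\sigma_2\sigma_1\sigma_2^{-1}$ is $\kappa'(\alpha_1+\alpha_2)$, not the paper's $\kappa(\alpha_1+\alpha_2)=\sigma_1\sigma_2\sigma_1^{-1}$. This plays no role in the argument.
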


We conclude this subsection with a remark on smallest non abelian quotients of Coxeter groups of type $E$.

\begin{rem}\label{rem-SNAQ-W-Case-E} For $\Gamma$ of type $E$, the smallest non abelian quotient of the Coxeter group $\W[\Gamma]$ is $\W[\Gamma]/Z(\W[\Gamma])$ (which is $\W[\Gamma]$ itself for $\Gamma=E_6$). With the same argument as in the proof of Lemma \ref{lem-SNAQ-is-centreless}, we see that SNAQ($\W[\Gamma]$) is centreless, which implies in particular that $Z(\W[\Gamma])$ must be contained in the kernel of the quotient projection. In \cite[Sections 2.8.4 and 3.12.4]{Wilson2009} we can find a decomposition of $\W[\Gamma]$ for $\Gamma=E_7,E_8$ as $
\W[\Gamma]=\W[\Gamma]/Z(\W[\Gamma])\times Z(\W[\Gamma])$, observing that $\W[\Gamma]/Z(\Gamma)$ is simple. Thus SNAQ($\W[\Gamma]$) must be the only non-trivial quotient of $\W[\Gamma]/Z(\W[\Gamma])$, which is the central quotient itself. For $\Gamma=E_6$, the Coxeter group $\W[E_6]$ has trivial centre, but contains a single normal subgroup of index 2 (see again \cite[Sections 2.8.4 and 3.12.4]{Wilson2009}). This assures that SNAQ$(\W[E_6])=\W[E_6]$.
\end{rem}

\subsection{\texorpdfstring{Case $\Gamma=E_6$}{E6}}\label{section-E6}
For this subsection, we fix $\Gamma=E_6$ and $P$ a non-abelian quotient of $\A[\Gamma]$. Write $p_i$ the image of $\sigma_i$ by this quotient. Moreover, denote by $\C$ the conjugacy class of the $p_i$'s.

The aim of this subsection is to show the following result.

\begin{prop}\label{prop-SNAQ-E6}
    The smallest non-abelian quotient of $\A[E_6]$ is $\W[E_6]$. 
\end{prop}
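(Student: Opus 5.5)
The plan follows Kolay's counting strategy. Since $\W[E_6]$ is a quotient of $\A[E_6]$ and is non-abelian of order $51840$, it suffices to prove that every non-abelian quotient $P$ of $\A[E_6]$ satisfies $|P|\geq 51840$, and that equality forces $P\cong\W[E_6]$. By Lemma~\ref{lem-SNAQ-is-centreless}, and because $\A[E_6]^{\mathrm{ab}}\cong\Z$, we may assume that $P$ has trivial centre and that every proper quotient of $P$ is abelian. We write $|P|=|\C|\cdot|Z_P(p_1)|$. By Proposition~\ref{prop-size-conjugacy-class}(i) this gives $|\C|\geq 36$, so the main work is to bound $|Z_P(p_1)|$ from below.

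\textbf{Bounding the centralizer.} The generator $\sigma_1$ (an end vertex of the long branch) commutes with the standard parabolic subgroup generated by the generators not adjacent to it. This subgroup has type $A_4$ or $D_4$ together with possibly an extra vertex, depending on the labelling. Take the largest such subgraph, which I expect to be of type $A_4$, say generated by $\sigma_3,\dots,\sigma_6$, and let $Q$ be its image in $P$, so that $\langle p_1\rangle Q\leq Z_P(p_1)$. Two cases arise.

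\emph{First case: $Q$ is non-abelian.} Then $Q$ is a non-abelian quotient of $\B_5$, so $|Q|\geq 120$ by Theorem~\ref{thm-Kolay}. More precisely, Lemma~\ref{lem-size-conjugacy-kolay} gives $|\C_Q|\geq 10$, and we iterate inside $Q$: the centralizer of the image of $\sigma_3$ in $Q$ contains the image of a commuting $A_2$. Combining this with the cyclic factor $\langle p_1\rangle$, whose order is at least $2$ (and equals $2$ with the intersection $\langle p_1\rangle\cap Q$ trivial only in the extremal situation), we aim for $|Z_P(p_1)|\geq 2\cdot 720=1440$. This yields $|P|\geq 36\cdot1440=51840$. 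If we only get $120$ from $Q$, we instead use a finer parabolic in the centralizer, namely $A_5$ generated by $\sigma_2,\sigma_4,\sigma_5,\sigma_6$ and $\sigma_3$ in suitable labelling. A refinement of this kind is necessary.

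\emph{Second case: $Q$ is abelian.} Then the images of two adjacent generators coincide, and by Lemma~\ref{lem-distinct-artin-generators} this makes $P$ abelian, a contradiction.

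\textbf{Main obstacle.} The hard step is making the numerology sharp: $36\cdot|Z|\geq 51840$ needs $|Z_P(p_1)|\geq1440$, which is exactly $|Z_{\W[E_6]}(s)|=2\cdot|\W[A_5]|$. So we must show that the centralizer contains a non-abelian image of an $A_5$-type Artin group (giving at least $720$ by Theorem~\ref{thm-Kolay}) together with $p_1$ outside it. I would try to realize $Z_{\W}(s)=\langle s\rangle\times\W[A_5]$ inside $\A[E_6]$ by explicit reflections $\kappa(\beta)$ commuting with $\sigma_1$ that satisfy the $A_5$ braid relations (roots orthogonal to $\alpha_1$), checked by the word-reversing SageMath code. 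Lemmas~\ref{lem-distinct-commuting-reflections} and~\ref{lem-distinct-reflections-braid} then guarantee that this image is non-abelian.

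\textbf{Equality case.} If $|P|=51840$, all these bounds are equalities. In particular $p_1$ has order $2$, so $P$ is a quotient of $\W[E_6]$ of the same order, hence $P\cong\W[E_6]$. This is consistent with Remark~\ref{rem-SNAQ-W-Case-E}. Where the theoretical estimates fall short, a GAP search over perfect or centreless groups of order below $51840$ containing a class of size at least $36$ is the fallback.
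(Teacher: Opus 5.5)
There is a genuine gap. The whole argument rests on $|Z_P(p_1)|\geq 1440$, and you never prove it; you only say you ``aim for'' it.

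The refinement you propose does not exist. In $E_6$ the generators commuting with $\sigma_1$ are $\sigma_2,\sigma_4,\sigma_5,\sigma_6$, which span a parabolic of type $A_4$ ($\sigma_3$ is adjacent to $\sigma_1$). Whatever the labelling, no vertex of $E_6$ has an $A_5$ among its non-neighbours.

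Iterating inside this $A_4$ cannot reach $1440$. The iteration (a class of size $\geq 10$ from Lemma \ref{lem-size-conjugacy-kolay}, then a commuting $A_2$, then cyclic factors) only gives a bound of the form $|Z_P(p_1)|\geq 10k^4$, where $k$ is the order of $p_1$. For $k=2$ this is $160$. This is not slack in the estimates. In $P=\W[E_6]$ itself, $\langle p_1\rangle$ times the image of the $A_4$ parabolic has order $240$, one sixth of $Z_P(p_1)$. The missing $\W[A_5]$ factor needs reflections such as the one of the highest root, which is not a standard generator.

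Your fallback is unverified. You would need to realize $\W[A_5]$ through lifted reflections $\kappa(\beta)$ that commute with $\sigma_1$ and satisfy the $A_5$ braid relations in $\A[E_6]$. Even granting this, you would still have to show that the image meets $\langle p_1\rangle$ trivially.

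The equality case has the same problem. ``All bounds are equalities, so $p_1$ has order $2$'' cannot follow from bounds that never involve the order of $p_1$.

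The missing idea is to split according to $k=o(p_1)$.

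\emph{Case $k=2$.} Then $P$ is a quotient of $\W[E_6]$. By Remark \ref{rem-SNAQ-W-Case-E}, the only non-abelian quotient of $\W[E_6]$ is $\W[E_6]$ itself, so no centralizer estimate is needed.

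\emph{Case $k\geq 3$.} Work with $K_2=\langle p_2,p_4,p_5,p_6\rangle\subseteq Z_P(p_1)$ and $K_3=K_2/(\langle p_1\rangle\cap K_2)$, then with $\langle q_5,q_6\rangle$ modulo $\langle q_2\rangle$. Lemma \ref{lem-SNAQ-is-centreless} shows that the generators keep order exactly $k$ in each successive quotient: an element commuting with all the $p_i$ is central, hence trivial. This yields $|Z_P(p_1)|\geq k\cdot 10k\cdot k^2=10k^4>1440$ for $k\geq 4$. For $k=3$, the image of $\langle\sigma_2,\sigma_4,\sigma_5,\sigma_6\rangle$ is a non-abelian quotient of $\A[A_4]/\llangle\sigma_1^3\rrangle\cong G_{32}$, hence far larger than $1440$.

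Because the inequality is strict for every $k\geq 3$, the equality case $P\cong\W[E_6]$ comes for free from the case $k=2$.
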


By Proposition \ref{prop-size-conjugacy-class}, we have $|\C|\geq 36$. Moreover, we have $|\W[E_6]|=51\, 840$ and $51\,840/36=1\, 440$. Therefore, to prove Proposition \ref{prop-SNAQ-E6}, it is enough to prove the following.

\begin{prop}\label{prop-order-centralizer-E6}
    We have $|Z_P(p_1)|\geq 1\, 440$, with equality if $P=\W[E_6]$. 
\end{prop}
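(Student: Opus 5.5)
The plan is to follow Kolay's centralizer strategy: exhibit inside $Z_P(p_1)$ a large non-abelian quotient of a braid group, together with $p_1$ itself.

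\textbf{Step 1: a braid subgroup commuting with $\sigma_1$.} In $\A[E_6]$ (Bourbaki numbering), $\sigma_1$ is adjacent only to $\sigma_3$. Hence $\sigma_1$ commutes with $\sigma_2,\sigma_4,\sigma_5,\sigma_6$, which form a chain of type $A_4$. In $\W[E_6]$ the centralizer of $s_1$ is $\langle s_1\rangle\times \W[A_5]$, of order $2\cdot 720=1440$, because the roots orthogonal to $\alpha_1$ form a root system of type $A_5$. I will lift this to $\A[E_6]$. Using the normal forms of Lemma \ref{lem-normal-form-computable}, I will pick an element $x\in\mathcal K[E_6]$ whose root is orthogonal to $\alpha_1$ and which extends the chain $\sigma_2,\sigma_4,\sigma_5,\sigma_6$ to an $A_5$-chain; for instance, $x$ should braid with an endpoint of the chain and commute with the other three. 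I will then check with the word-reversing solution to the word problem that:
\begin{itemize}
\item $x\sigma_1=\sigma_1x$;
\item $x$ satisfies the required braid and commutation relations with $\sigma_2,\sigma_4,\sigma_5,\sigma_6$.
\end{itemize}
This gives a homomorphism $\B_6=\A[A_5]\to \A[E_6]$ whose image commutes with $\sigma_1$. Let $H\le Z_P(p_1)$ be the image of this copy of $\B_6$ in $P$. This computer verification, and finding the right $x$, is the concrete step I expect to require care.

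\textbf{Step 2: $H$ is large.} Suppose $H$ were abelian. Then the images of two adjacent generators of the chain coincide, say $p_2=p_4$. Lemma \ref{lem-distinct-artin-generators} then gives $\A[E_6]/\llangle \sigma_2\sigma_4^{-1}\rrangle\cong\Z$, so $P$ is abelian, a contradiction. Hence $H$ is a non-abelian quotient of $\B_6$, and Theorem \ref{thm-Kolay} gives $|H|\ge 720$.

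\textbf{Step 3: gaining the factor $2$ (the main obstacle).} Since $p_1$ centralizes $H$, the set $\langle p_1\rangle H$ is a subgroup of $Z_P(p_1)$, and it suffices to show $|\langle p_1\rangle H|\ge 1440$. If $|H|\ge 1440$ we are done. Otherwise $720\le |H|<1440$. Here I need a strengthening of Kolay's result: every non-abelian quotient of $\B_6$ of order less than $1440$ is isomorphic to $\S_6$. I would try to prove this by rerunning Kolay's conjugacy-class bound (Lemma \ref{lem-size-conjugacy-kolay}, which gives $|\C|\ge 15$) together with a bound on the centralizer inside $\B_6$, by induction. Failing that, I would handle the few remaining orders by a GAP search.

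Granting $H\cong\S_6$, suppose $p_1\in H$. Then $p_1$ is central in $H$, which is centreless, so $p_1=\id$. Every generator is conjugate to $\sigma_1$, so $P$ would be trivial, a contradiction. Hence $p_1\notin H$, and $|\langle p_1\rangle H|\ge 2|H|=1440$.

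\textbf{Step 4: equality for $P=\W[E_6]$.} Here $\C$ is the set of the $36$ reflections, so $|Z_P(s_1)|=51\,840/36=1440$.
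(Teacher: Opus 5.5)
Your route differs from the paper's, and it breaks down at Step 1. This is not a routine computer check: no $x\in\A[E_6]$ commutes with $\sigma_1$ and extends $\sigma_2,\sigma_4,\sigma_5,\sigma_6$ to an $A_5$-chain, at either end of the chain.

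Here is why. Suppose such an $x$ existed. Let $\Delta$ be the Garside element of $\A[E_6]$ and $\Delta_Y$ that of the standard parabolic subgroup $\A_Y$. Let $D=(x\sigma_2\sigma_4\sigma_5\sigma_6)^6$ (or $(\sigma_2\sigma_4\sigma_5\sigma_6x)^6$), the image of the full twist of $\B_6$. Then $D$ centralizes $\A_X$ with $X=\{1,2,4,5,6\}$.

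\begin{itemize}
\item By the Paris--Godelle description of normalizers of standard parabolic subgroups, every element of the normalizer of $\A_X$ is an element of $\A_X$ times a loop of elementary ribbons. The only ribbon leaving $X$ is $\Delta\Delta_X^{-1}$, which goes to $\{1,2,3,4,6\}$. The only ribbon leaving that set returns to $X$, and the resulting loop is $\Delta^2\Delta_X^{-2}$.
\item Hence the centralizer of $\A_X$ is $\langle\sigma_1,E,\Delta^2\rangle$ with $E=\Delta_{\{2,4,5,6\}}^2$, so $D=\sigma_1^aE^b\Delta^{2c}$ for some integers $a,b,c$.
\item Now use the reflection representation $\rho$ of the Iwahori--Hecke algebra at generic $q$. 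Conjugates of generators act as pseudo-reflections with eigenvalue $-q$. Moreover, $\rho(\Delta^2)$ is a scalar $\lambda$ with $\lambda^6=q^{72}$, and $\rho(\sigma_1)$ is $-q$ on a line $L$ and the identity on a hyperplane $H_1$.
\item The $30$ factors of $D$ and the $20$ factors of $E$ commute with $\sigma_1$, and none is sent to $\rho(\sigma_1)$. For $x$ this holds because $\rho(\sigma_1)$ commutes with, and differs from, the image of the generator that $x$ braids with. So each factor acts trivially on $L$ and has determinant $-q$ on $H_1$.
\item Comparing actions on $L$ gives $(-q)^a\lambda^c=1$, hence $a=-12c$.
\item Comparing determinants on $H_1$ gives $q^{20b}\lambda^{5c}=q^{30}$, hence $2b+6c=3$, which has no integer solution.
\end{itemize}

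So the non-parabolic $A_1\times A_5$ centralizer of $s_1$ in $\W[E_6]$ does not lift to $\A[E_6]$ in the form you need. Steps 2 and 3 then have no subgroup to act on.

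Even if Step 1 were available, Step 3 rests on an unproved strengthening of Kolay's theorem: that every non-abelian quotient of $\B_6$ of order less than $1440$ is $\S_6$.

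The paper avoids both problems by never leaving the standard generators.
\begin{itemize}
\item It uses only $K_2=\langle p_2,p_4,p_5,p_6\rangle$, a quotient of $\B_5$ in which the conjugacy class of $q_2$ has at least $\binom{5}{2}=10$ elements.
\item It recovers the missing factors from the order $k$ of $p_1$, through the quotients $K_3=K_2/(\langle p_1\rangle\cap K_2)$ and $K_5$. This gives $|Z_P(p_1)|\ge 10k^4>1440$ for $k\ge 4$.
\item The case $k=2$, where $P$ is a quotient of $\W[E_6]$, and the case $k=3$, where $K_2$ is a non-abelian quotient of $G_{32}$, are treated separately.
\end{itemize}
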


The proof strategy is as follows. We fix the order $k$ of $p_1$ (equivalently, of all $p_i$'s) and prove a series of lemmas to obtain a lower bound on $|Z_P(p_1)|$ depending on $k$. This will prove the proposition for all cases but $k=3$, which we handle separately. 

Let us first define the groups we need to study and fix some notations.

\begin{defn}

\begin{enumerate}
    \item[(i)] Define $K_1=Z_P(p_1)$ and $k=o(p_1)$.
    \item[(ii)] Define $K_2=\langle p_2,p_4,p_5,p_6\rangle\subset K_1$, $K_3=\frac{K_2}{\langle p_1\rangle\cap K_2}$, $q_i$ to be the image of $p_i$ under the quotient and $k_2$ the order of $q_2$ (equivalently, of all $q_i$'s).
    \item[(iii)] Define $K_4=\langle q_5,q_6\rangle\subset K_3$, $K_5=\frac{K_4}{\langle q_2\rangle \cap K_4}$, $r_i$ to be the image of $q_i$ under the quotient and $k_3$ the order of $r_5$ (equivalently, of both $r_i$'s).
    \end{enumerate}
\end{defn}

\begin{rem}
 For $k=2$, the group $P$ is a quotient of $\W[\Gamma]$. Since $\mathrm{SNAQ}(\W[\Gamma])=\W[\Gamma]$ by Remark \ref{rem-SNAQ-W-Case-E}, we can assume that $k\geq 3$.
\end{rem}

\begin{lem}\label{lem-K3K5-not-abelian-E6}
    The groups $K_3$, $K_4$ and $K_5$ are not abelian.
\end{lem}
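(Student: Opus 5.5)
The idea is to reduce each non-commutativity statement to the fact that two distinct Artin generators of $\A[E_6]$ cannot have equal images in $P$. I would use the following two observations repeatedly.
\begin{itemize}
\item[(a)] If two elements $a,b$ of a group satisfy both $ab=ba$ and $aba=bab$, then $a=b$. This is the observation already used in the proof of Proposition \ref{prop-SNAQ-firstcases}.
\item[(b)] By Lemma \ref{lem-distinct-artin-generators}, if $p_i=p_j$ for some $i\neq j$, then $P$ is a quotient of $\A[E_6]/\llangle \sigma_i\sigma_j^{-1}\rrangle\cong\Z$. This contradicts the assumption that $P$ is non-abelian, so the $p_i$ are pairwise distinct.
\end{itemize}
Throughout I use the Bourbaki numbering of $E_6$: the chain is $1-3-4-5-6$, with $2$ attached to $4$. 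Then $p_1$ commutes with $p_2,p_4,p_5,p_6$, so $K_2\subseteq K_1$. Also $p_2$ commutes with $p_5$ and $p_6$. The pairs $(4,5)$ and $(5,6)$ are joined by edges labelled $3$, so the corresponding images satisfy braid relations.

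\emph{Step 1: $K_2$, and hence $K_3$, is not abelian.} Suppose $K_3$ is abelian. Then $q_4q_5=q_5q_4$, and since $q_4q_5q_4=q_5q_4q_5$, observation (a) gives $q_4=q_5$. Lifting to $K_2$, this means $p_4=p_5p_1^j$ for some $j$. The key point is that $p_1$ commutes with $p_5$. Therefore
\[p_4p_5=p_5p_1^jp_5=p_5(p_5p_1^j)=p_5p_4.\]
Combined with the braid relation between $p_4$ and $p_5$, observation (a) gives $p_4=p_5$. This contradicts (b). Hence $K_3$ is not abelian, and neither is $K_2$, since $K_3$ is a quotient of $K_2$.

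\emph{Step 2: $K_4$ is not abelian.} The same argument applies one level down. If $q_5$ and $q_6$ commute, then $q_5=q_6$ by their braid relation and observation (a). Lifting, $p_5=p_6p_1^j$. Since $p_1$ commutes with $p_6$, the computation of Step 1 shows $p_5$ and $p_6$ commute. Their braid relation and (a) then give $p_5=p_6$, contradicting (b).

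\emph{Step 3: $K_5$ is not abelian.} If $K_5$ is abelian, then $r_5=r_6$ by (a), so $q_5=q_6q_2^j$ in $K_3$. Since $q_2$ commutes with $q_6$, the computation of Step 1 shows that $q_5$ and $q_6$ commute. By (a), $q_5=q_6$, so $K_4=\langle q_5,q_6\rangle$ is cyclic. This contradicts Step 2.

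The only delicate point is checking, from the Dynkin diagram, that the element being quotiented out ($p_1$, respectively $q_2$) commutes with the relevant generators. This commutation is exactly what turns a coincidence modulo $\langle p_1\rangle$ or $\langle q_2\rangle$ into a genuine commutation. In every case it holds because the corresponding vertices of $E_6$ are non-adjacent, so I expect no real obstacle.
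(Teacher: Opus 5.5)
Your proof is correct and follows essentially the same route as the paper. In each case you assume abelianness, use that commuting braid-related elements coincide, lift that coincidence through the subgroup $\langle p_1\rangle\cap K_2$ (resp.\ $\langle q_2\rangle\cap K_4$), and reach a contradiction with Lemma \ref{lem-distinct-artin-generators} (resp.\ with the previous step).

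The differences are minor. You use directly that $p_1$ (resp.\ $q_2$) commutes with both members of the adjacent pair. The paper instead gets commutation of a different adjacent pair through a third generator, e.g.\ $[p_2,p_4]=1$ from $p_4p_5^{-1}$ commuting with $p_2$. For $K_5$ the paper also argues differently, noting that $K_4/Z(K_4)$ would be cyclic.
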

\begin{proof} First observe that $\langle p_1\rangle\subseteq Z(K_2)$, thus $\langle p_1\rangle\cap K_2=\langle p_1\rangle\cap Z(K_2)$.
Suppose that $K_3=K_2/(\langle p_1 \rangle \cap Z(K_2))$ is abelian. Then, since its generators $q_2,q_4,q_5,q_6$ commute and satisfy a braid relation, we obtain $q_2=q_4=q_5=q_6$. In particular, the equality $q_4=q_5$ implies that $p_4p_5^{-1}\in \langle p_1 \rangle \cap Z(K_2)$. Hence, $p_4p_5^{-1}$ commutes with $p_2\in K_2$, i.e. $[p_4p_5^{-1},p_2]=p_4p_5^{-1} p_2 p_5 p_4^{-1} p_2^{-1}=\id$. But since $p_2$ commutes with $p_5$, we obtain $[p_2,p_4]=\id$. Now $p_2$ and $p_4$ commute and are conjugated, hence $p_2=p_4$. Therefore, $P$ is a quotient of $\A[E_6]/\llangle \sigma_2\sigma_4^{-1}\rrangle$
, which is abelian by Lemma \ref{lem-distinct-artin-generators}, a contradiction. Then $K_3$ cannot be abelian.

 If $K_4$ was abelian, then we would have $[q_5,q_6]=\id$, which combined with $q_5q_6q_5=q_6q_5q_6$ would imply that $q_5=q_6$ in $K_3$. Thus $p_5p_6^{-1}\in Z(K_2)$, and $p_5p_6^{-1}$ must commute with $p_4\in K_2$, implying $[p_5p_6^{-1},p_4]=\id$ and, consequently, $[p_4,p_5]=\id$. Now $p_4$ and $p_5$ are conjugated and commute, so they must be identified in $K_2$. As before, this would imply that $P$ is abelian, which is absurd.

With a similar argument, assuming $K_5=K_4/(\langle q_2\rangle\cap K_4)=K_4/(\langle q_2\rangle\cap Z(K_4))$ abelian, we find that $r_5$ and $r_6$ coincide, so that $K_5=\langle r_6\rangle\cong \Z$ is cyclic. Then $K_4/Z(K_4)$, which is a quotient of $K_5$, is also cyclic. It is easy to see that if the quotient of a group by its centre is cyclic, then the group is abelian. Since in the previous paragraph we showed that $K_4$ in not abelian, we conclude that $K_5$ is not abelian as well.
 
\end{proof}

\begin{lem}\label{lem-k_1=k_2=k_3-E6}
    We have $k=k_2=k_3$.
\end{lem}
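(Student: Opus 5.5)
The plan is to prove the inequalities $k_3 \mid k_2 \mid k$ directly from the definitions, and then obtain the reverse equalities by showing that any collapse of order at either quotient step would force two adjacent generators to commute. Since $q_i$ is the image of $p_i$ under a quotient map and $r_i$ is the image of $q_i$, we get $k_2\mid k$ and $k_3\mid k_2$. It therefore suffices to show $k_2 = k$ and $k_3 = k_2$.

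For $k_2=k$, suppose $k_2<k$. Then $q_2^{k_2}=\id$ in $K_3$, so $p_2^{k_2}\in\langle p_1\rangle\cap K_2$, and $p_2^{k_2}$ is a non-trivial power of $p_1$. Both $p_1$ and $p_2$ are images of conjugate generators (Remark \ref{rem-generators-odd-conjugate}). The key step is to exploit a generator adjacent to exactly one of them. Since $p_2^{k_2}=p_1^{j}$ with $p_1^j\neq\id$, this element commutes with everything commuting with $p_2$ and with everything commuting with $p_1$. Take $p_4$, which commutes with $p_1$ but satisfies a braid relation with $p_2$ (in the numbering where $\sigma_4$ is the trivalent vertex adjacent to $\sigma_2$; $\sigma_1$ and $\sigma_4$ are not adjacent). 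Then $p_2^{k_2}$ is central in $\langle p_2,p_4\rangle$. Conjugating by $p_2p_4$, which sends $p_2$ to $p_4$ in the braid group on two generators, gives $p_4^{k_2}=p_2^{k_2}$. Propagating along the connected Dynkin diagram, where each generator is either commuting with or braided to the current one, shows that $z:=p_1^{j}=p_2^{k_2}=p_i^{k_2}$ for all $i$, and that $z$ is central in $P$. In particular $p_1^{k_2}=p_1^{j}$, so $p_1^{k_2-j}=\id$.

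This is where I expect the real work. We need a contradiction with $k_2<k$, or a contradiction with non-abelianness. First I would check whether $j$ can be pinned down: if $j\equiv k_2 \pmod k$, then $p_1^{k_2}=z=p_2^{k_2}$ lies in $\langle p_1\rangle\cap K_2$ anyway, so the order drop carries no contradiction by itself. Fallback plan: pass to $P/\langle z\rangle$, which is still a non-abelian quotient by Lemma \ref{lem-SNAQ-is-centreless}(i), since it is of the form $P/Z'$ with $Z'$ central, and this quotient is strictly smaller in which the generators have order $k_2$. The lemma is then only needed for minimal $P$: we may assume $P$ is centreless, as any minimal non-abelian quotient is centreless by Lemma \ref{lem-SNAQ-is-centreless}(ii). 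In that case $z$ central forces $z=\id$, hence $p_2^{k_2}=\id$, so $k\mid k_2$ and $k=k_2$. So I would first reduce to $P$ centreless, noting that this suffices for Proposition \ref{prop-order-centralizer-E6}, since a minimal counterexample can be taken centreless.

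For $k_3=k_2$, I would run the same argument inside $K_3=\langle q_2,q_4,q_5,q_6\rangle$, a quotient of the type $A_4$-like subdiagram $\sigma_2,\sigma_4,\sigma_5,\sigma_6$. If $q_5^{k_3}\in\langle q_2\rangle\cap K_4$ is non-trivial, the same propagation using the braid/commute pattern among $q_2,q_4,q_5,q_6$ ($q_4$ braids with $q_2$ and $q_5$ and commutes with $q_6$) shows that $q_5^{k_3}$ is central in $K_3$ and equals $q_i^{k_3}$ for all $i$. Lifting to $K_2$ gives $p_5^{k_3}\in\langle p_1\rangle\cdot p_2^{k_3}$-type relations, and combining this with the first step in the centreless setting forces $p_5^{k_3}=\id$. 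The hardest part of this step is controlling the lift through two successive quotients. Here I would use Lemma \ref{lem-K3K5-not-abelian-E6} to rule out degenerate cases where generators are identified.
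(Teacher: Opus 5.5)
\textbf{There is a genuine gap, in your last step.} Your first two steps are sound. The reduction to a centreless $P$ is right and in fact necessary. The paper uses it tacitly when it invokes Lemma~\ref{lem-SNAQ-is-centreless}, and the statement fails for general non-abelian quotients: $\sigma_i\mapsto (s_i,t)$ onto $\W[E_6]\times\mathbb Z/3$ gives $k=6$ but $k_2=2$. Your proof of $k_2=k$ is also correct, though the detour through $p_4$ is unnecessary. The element $p_2^{k_2}=p_1^j$ is already central, because every $\sigma_i$ commutes with $\sigma_1$ or with $\sigma_2$.

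Your propagation inside $K_3$ does show that $q_5^{k_3}=q_2^n$ is central in $K_3$: it passes through $q_6$, which commutes with $q_2$, and then $q_6^{k_3}$ commutes with $q_4$. But $K_3$ need not be centreless, so you must lift to $P$, and you lift the wrong element. From $p_5^{k_3}=p_1^m p_2^n$ you cannot read off centrality in $P$:
\begin{itemize}
\item $p_5^{k_3}$ commutes with $p_1,p_2,p_3$;
\item $p_1^m p_2^n$ commutes with $p_5,p_6$;
\item nothing controls commutation with $p_4$, since $\sigma_4$ is adjacent to both $\sigma_2$ and $\sigma_5$.
\end{itemize}
``Combining with the first step'' does not supply this missing commutation, and Lemma~\ref{lem-K3K5-not-abelian-E6} plays no role here.

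The missing idea is to use the end vertex $\sigma_6$. Either from $r_6^{k_3}=\id$ directly ($r_6$ also has order $k_3$), or from your own identity $q_6^{k_3}=q_5^{k_3}$, you get $q_6^{k_3}=q_2^n$, and lifting gives $p_6^{k_3}=p_1^m p_2^n$ in $P$. The only generator not commuting with $\sigma_6$ is $\sigma_5$, and $\sigma_5$ commutes with both $\sigma_1$ and $\sigma_2$. So this element is central in $P$, hence trivial. This gives $k\mid k_3$ directly, and together with $k_3\mid k_2\mid k$ all three equalities follow at once, so your separate argument for $k_2=k$ is not needed. This is exactly the paper's one-paragraph proof.
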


\begin{proof}
    Since $p_i$ is sent to $q_i$ which is sent to $r_i$ under the consecutive quotients, we have $k_3|k_2|k$. Since $r_6^{k_3}=\id$, we have $q_6^{k_3}\in \langle q_2\rangle$, so there exists $n\in\mathbb N$ such that $q_2^n=q_6^{k_3}$. But then $q_6^{k_3}q_2^{-n}=\id$, so $p_6^{k_3}p_2^{-n}\in \langle p_1\rangle$ and there exists $m\in\mathbb N$ such that $p_1^mp_2^n=p_6^{k_3}$. Now, every $\sigma_i$ either commutes with $\sigma_6$ or with $\sigma_1$ and $\sigma_2$. Thus, the element $p_1^mp_2^n=p_6^{k_3}$ is central in $P$. By Lemma \ref{lem-SNAQ-is-centreless}, we get $p_6^{k_3}=\id$. Thus, $k|k_3$ and $k=k_2=k_3$.
\end{proof}

\begin{lem}\label{lem-size-K5-E6}
    We have $|K_5|\geq k^2$.
\end{lem}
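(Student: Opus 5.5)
We need $|K_5|\geq k^2$, where $K_5$ is generated by $r_5,r_6$. These satisfy the braid relation $r_5r_6r_5=r_6r_5r_6$, both have order $k$ by Lemma \ref{lem-k_1=k_2=k_3-E6}, and $K_5$ is non-abelian by Lemma \ref{lem-K3K5-not-abelian-E6}. So $K_5$ is a finite non-abelian quotient of $\B_3=\A[A_2]$ in which the standard generators have order $k\geq 3$. The plan is to exhibit $k^2$ distinct elements of the form $r_5^a r_6^b$ with $0\le a,b<k$.

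Suppose $r_5^a r_6^b = r_5^{a'} r_6^{b'}$ for two such pairs. Then $r_5^{a-a'}=r_6^{b'-b}$, so the element $g=r_5^{c}=r_6^{d}$ lies in $\langle r_5\rangle\cap\langle r_6\rangle$. Such a $g$ commutes with both $r_5$ and $r_6$, hence lies in $Z(K_5)$.

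The key step is to show that $\langle r_5\rangle\cap Z(K_5)$ is trivial. For this I would lift back as in the proof of Lemma \ref{lem-k_1=k_2=k_3-E6}. If $r_5^c$ is central in $K_5$, then in $K_4$ we have $q_5^c\,q_2^{n}$ commuting with $q_5$ and $q_6$. Lifting to $K_2$ modulo $\langle p_1\rangle$, the elements $p_5^c p_2^n p_1^m$ and $p_5^c$ commute with $p_5$ and $p_6$ up to central corrections. Since commutation must be obtained exactly and not merely modulo something, I would argue directly instead. Assume $r_5^c=r_6^d$. Then $q_5^c q_6^{-d}\in\langle q_2\rangle$, and hence $p_5^cp_6^{-d}=p_2^np_1^m$ for some $m,n$.

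Now I use the commutation pattern of $E_6$. We have $\sigma_1$ and $\sigma_2$ commuting with $\sigma_5,\sigma_6$, and $\sigma_3$ commuting with $\sigma_5,\sigma_6$ (using the Bourbaki numbering, in which $3$ is adjacent to $1$ and $4$). Conjugation by $p_3$ fixes $p_5,p_6,p_2$, so it fixes $p_1^m$. Similarly, $p_4$ commutes with $p_1,p_6$, but $p_4$ does not commute with $p_5$ or $p_2$, so I would instead derive constraints from $p_1$-exponents. Concretely, $p_1^m=p_2^{-n}p_5^cp_6^{-d}$ commutes with $p_3$ and $p_1$. I then want to show that this forces $p_1^m$ to be central in $P$, which gives $p_1^m=\id$ by Lemma \ref{lem-SNAQ-is-centreless}(ii), assuming we have reduced to centreless $P$ (the SNAQ setting). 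Iterating the same argument then yields $p_2^n=\id$ and $p_5^c=p_6^d$, which is central in $P$ since $\sigma_5^c\sigma_6^{-d}$-type elements commute with $p_1,p_2,p_3$ and, by symmetry of the argument, with $p_4$. This gives $p_5^c=\id$, so $k\mid c$.

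Hence the $k^2$ elements $r_5^ar_6^b$ are pairwise distinct, and $|K_5|\geq k^2$. The main obstacle is the centrality bookkeeping in this last lifting step. If it fails, my fallback is purely group-theoretic. Since $\langle r_5\rangle\cap\langle r_6\rangle$ is central and $K_5$ is non-abelian with generator order $k$, the order of the image $\bar r_5$ in $K_5/Z(K_5)$ equals $k$; this uses the centre-killing argument already employed in Lemma \ref{lem-k_1=k_2=k_3-E6}. Then $\bar r_5$ and $\bar r_6$ generate cyclic subgroups of order $k$ with trivial intersection, and the count $|K_5|\ge|\langle \bar r_5\rangle\langle\bar r_6\rangle|=k^2$ follows.
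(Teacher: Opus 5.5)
Your overall plan is the right one: count the elements $r_5^ar_6^b$, reduce to a relation $r_5^c=r_6^d$, lift it to $p_5^cp_6^{-d}=p_2^np_1^m$ in $P$, and kill central elements with Lemma~\ref{lem-SNAQ-is-centreless}. But the centrality bookkeeping you flag as the main obstacle does break down.

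The step giving $p_1^m=\id$ is fine, and in fact immediate. The generator $p_1$ commutes with $p_2,p_4,p_5,p_6$, and its only neighbour $p_3$ commutes with $p_2^{-n}p_5^cp_6^{-d}$. However, ``iterating the same argument'' fails for $p_2^n=p_5^cp_6^{-d}$. The only neighbour of $\sigma_2$ in $E_6$ is $\sigma_4$, and $p_5^cp_6^{-d}$ has no reason to commute with $p_4$. So $p_2^n=\id$ is not established, hence neither is $p_5^c=p_6^d$, nor $k\mid c$.

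The missing idea is to isolate $p_6^d$ rather than $p_1^m$ or $p_2^n$. Write $p_6^d=p_1^{-m}p_2^{-n}p_5^c$. The left side commutes with $p_1,p_2,p_3,p_4,p_6$ and the right side with $p_5$, so this element is central, hence trivial. Then $k\mid d$, so $r_5^c=r_6^d=\id$. Now $k\mid c$ follows from $o(r_5)=k$ (Lemma~\ref{lem-k_1=k_2=k_3-E6}), with no need to kill $p_2^n$ or $p_5^c$ in $P$. This is exactly the paper's argument.

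Your fallback does not rescue the proof. The claim that $\bar r_5$ has order $k$ in $K_5/Z(K_5)$ is not delivered by the argument of Lemma~\ref{lem-k_1=k_2=k_3-E6}. Lifting ``$r_5^c$ is central'' only gives $[p_5^c,p_6]\in\langle p_1\rangle\langle p_2\rangle$, to which the centre-killing trick does not directly apply. In fact, a central $r_5^c$ equals its conjugate $(r_5r_6)r_5^c(r_5r_6)^{-1}=r_6^c$. So the claim is equivalent to $\langle r_5\rangle\cap\langle r_6\rangle=\{\id\}$ and needs the same missing step.

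The further claim that $\langle\bar r_5\rangle\cap\langle\bar r_6\rangle$ is trivial in $K_5/Z(K_5)$ is asserted without proof. Moreover, no argument internal to quotients of $\B_3$ can work. The group $\S_3\times\mathbb{Z}/3$, with both generators sent to (transposition, $1$), is a non-abelian quotient of $\B_3$ whose generators have order $6$, yet it has order $18<36$. The input from the ambient centreless group $P$ is essential.
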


\begin{proof}
    We show that $|\{r_5^ir_6^j\}_{i,j=0,\dots,k-1}|=k^2$. Let $0\leq i_1,i_2,j_1,j_2\leq k-1$ such that $r_5^{i_1}r_6^{j_1}=r_5^{i_2}r_6^{j_2}$. Then, we have $r_5^{i_1-i_2}=r_6^{j_2-j_1}$. Therefore, there exists $n,m\in\mathbb N$ such that $p_1^mp_2^np_5^{i_1-i_2}=p_6^{j_2-j_1}$. Now, every $\sigma_i$ either commutes with $\sigma_6$ or $\sigma_1,\sigma_2$ and $\sigma_5$. Thus, the element $p_1^mp_2^np_5^{i_1-i_2}=p_6^{j_2-j_1}$ is central in $P$. By Lemma \ref{lem-SNAQ-is-centreless}, we get $p_6^{j_2-j_1}=\id$. Since $-k<j_2-j_1<k$, this forces $j_1=j_2$ and $r_5^{i_1}=r_5^{i_2}$. By Lemma \ref{lem-k_1=k_2=k_3-E6} the order of $r_5$ is $k$, but $0\leq i_1,i_2\leq k-1$, hence $i_1=i_2$. This concludes the proof.  
\end{proof}

\begin{lem}\label{lem-size-K3-E6}
    We have $|K_3|\geq 10k|K_5|\geq 10k^3$
\end{lem}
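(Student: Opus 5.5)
We mimic Kolay's orbit–stabilizer strategy one level down, inside $K_3$. The group $K_3$ is generated by $q_2,q_4,q_5,q_6$. In the $E_6$ graph (numbered as in Figure \ref{fig:Cox-graphs-spherical-type}, with $\sigma_2$ attached to $\sigma_4$ and $\sigma_4-\sigma_5-\sigma_6$ a path), these generators satisfy the relations of $\A[A_4]$ via the chain $2-4-5-6$. So $K_3$ is a quotient of $\A[A_4]=\B_5$, and it is non-abelian by Lemma \ref{lem-K3K5-not-abelian-E6}. Let $\C_3$ be the conjugacy class of $q_2$ in $K_3$. Orbit–stabilizer gives
\[
|K_3| = |\C_3|\,|Z_{K_3}(q_2)|.
\]

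\textbf{Bounding the class.} By Lemma \ref{lem-size-conjugacy-kolay}(ii) applied with $n=4$, we get $|\C_3|\ge \binom{5}{2}=10$. Here $q_2$ plays the role of the end generator $\sigma_1$ of $A_4$; the generators are all conjugate, so the choice of generator does not matter.

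\textbf{Bounding the centralizer.} The centralizer $Z_{K_3}(q_2)$ contains $\langle q_2\rangle$ and $K_4=\langle q_5,q_6\rangle$, since $\sigma_2$ commutes with $\sigma_5,\sigma_6$. Hence it contains the product set $\langle q_2\rangle K_4$, whose size is
\[
|\langle q_2\rangle|\,|K_4| \,/\, |\langle q_2\rangle\cap K_4|
= k_2\,|K_5|
= k\,|K_5|,
\]
using $K_5=K_4/(\langle q_2\rangle\cap K_4)$ and Lemma \ref{lem-k_1=k_2=k_3-E6}. Combining with the class bound gives $|K_3|\ge 10k|K_5|$. Lemma \ref{lem-size-K5-E6} then yields $|K_3|\ge 10k^3$.

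\textbf{Main obstacle.} The delicate point is justifying the use of Kolay's bound. It requires $K_3$ to be a genuinely non-abelian quotient of $\B_5$ via the chain $q_2,q_4,q_5,q_6$, and this is exactly what Lemma \ref{lem-K3K5-not-abelian-E6} provides. One must also check that the order-based counting uses $k_2$, the order of $q_2$ in $K_3$, and not merely in $K_2$; Lemma \ref{lem-k_1=k_2=k_3-E6} handles this. I would double-check the adjacency in the $E_6$ labelling so that $2,4,5,6$ really forms an $A_4$ path with $2$ commuting with $5$ and $6$.
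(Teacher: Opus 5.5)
Your proposal is correct and is essentially the paper's own proof. Kolay's bound (Lemma~\ref{lem-size-conjugacy-kolay}) gives at least $\binom{5}{2}=10$ conjugates of $q_2$ in the $\B_5$-quotient $K_3$, the centralizer $Z_{K_3}(q_2)$ contains $\langle q_2\rangle K_4$ of order $k|K_5|$ by Lemma~\ref{lem-k_1=k_2=k_3-E6}, and orbit--stabilizer together with Lemma~\ref{lem-size-K5-E6} finishes. You also explicitly cite Lemma~\ref{lem-K3K5-not-abelian-E6} to supply the non-abelian hypothesis Kolay's lemma needs, which the paper leaves implicit.
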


\begin{proof}
The group $K_3$ is a quotient of $\A[A_4]=\B_5$, which is the subgroup of $\A[E_6]$ generated by $\sigma_2,\sigma_4,\sigma_5,\sigma_6$. Thus by Lemma \ref{lem-size-conjugacy-kolay} the conjugacy class of $q_2$ is at least $\binom 52=10$. Now, we have $\langle q_2\rangle \cdot K_4\subset Z_{K_3}(q_2)$, hence we obtain 
 \begin{equation*}
 \begin{aligned}
      |Z_{K_3}(q_2)|\geq |\langle q_2\rangle\cdot K_4|&=|\langle q_2\rangle|\left |\frac{\langle K_4\rangle}{\langle q_2\rangle\cap K_4}\right |  \\
      &= k |K_5| \text{ by Lemma \ref{lem-k_1=k_2=k_3-E6}.}
 \end{aligned}
 \end{equation*}
 Therefore, by the Orbit-Stabilizer formula, we obtain $|K_3|\geq 10k|K_5|$. The second inequality follows from Lemma \ref{lem-size-K5-E6}.
\end{proof}

\begin{lem}\label{lem-size-K1-E6}
    We have $|K_1|\geq k|K_3|\geq 10k^2|K_5|\geq 10k^4$. In particular, if $k\geq 4$, we have $|K_1|>1\, 440$.
\end{lem}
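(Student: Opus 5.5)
The plan is to mirror the proof of Lemma \ref{lem-size-K3-E6}, one level up. Recall that $K_1=Z_P(p_1)$. Since $\sigma_1$ commutes with $\sigma_2,\sigma_4,\sigma_5,\sigma_6$ in $\A[E_6]$ (the vertex $\sigma_1$ is adjacent only to $\sigma_3$), we have $K_2\subseteq K_1$. Also $\langle p_1\rangle\subseteq K_1$, and $\langle p_1\rangle$ is central in $K_1$. Hence $\langle p_1\rangle\cdot K_2$ is a subgroup of $K_1$, and
\[
|K_1|\geq |\langle p_1\rangle\cdot K_2| = |\langle p_1\rangle|\,\left|\frac{K_2}{\langle p_1\rangle\cap K_2}\right| = k\,|K_3|,
\]
where $k=o(p_1)$ by definition. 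This gives the first inequality.

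The second and third inequalities then follow directly by substituting the earlier bounds. Lemma \ref{lem-size-K3-E6} gives $|K_3|\geq 10k|K_5|$, so $|K_1|\geq 10k^2|K_5|$. Lemma \ref{lem-size-K5-E6} gives $|K_5|\geq k^2$, so $|K_1|\geq 10k^4$.

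For the final claim, if $k\geq 4$ then $10k^4\geq 10\cdot 256=2\,560>1\,440$. The only point requiring care is the product formula $|HN|=|H|\,|N|/|H\cap N|$. It applies here because $\langle p_1\rangle$ is central in $K_1$, so $\langle p_1\rangle\cdot K_2$ is indeed a subgroup. Note also that $K_3$ is defined exactly as $K_2/(\langle p_1\rangle\cap K_2)$, so no issue arises with using $k$, as opposed to $k_2$, for the order of $p_1$. There is no real obstacle here: the statement is a bookkeeping consequence of the preceding lemmas. The cases $k=2$ and $k=3$ are left to the separate treatment announced in the text.
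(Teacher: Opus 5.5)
Your proof is correct and is essentially the paper's argument: the inclusion $\langle p_1\rangle\cdot K_2\subseteq K_1$ together with the product formula gives $|K_1|\geq k|K_3|$, and the remaining inequalities are obtained by substituting Lemmas \ref{lem-size-K3-E6} and \ref{lem-size-K5-E6}. Two small remarks: the count $|HK|=|H|\,|K|/|H\cap K|$ holds for any two subgroups, so you do not need centrality of $\langle p_1\rangle$ for it; and $|\langle p_1\rangle|=k$ holds by definition, as you say, whereas the paper cites Lemma \ref{lem-k_1=k_2=k_3-E6} at that step.
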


\begin{proof}
We have $\langle p_1\rangle\cdot K_2\subset K_1$, hence we obtain 
 \begin{equation*}
 \begin{aligned}
      |K_1|\geq |\langle p_1\rangle\cdot K_2|&=|\langle p_1\rangle|\left |\frac{\langle K_2\rangle}{\langle p_1\rangle\cap K_2}\right |  \\
      &= k |K_3| \text{ by Lemma \ref{lem-k_1=k_2=k_3-E6}.}
 \end{aligned}
 \end{equation*}  
 The second and third inequalities follow from Lemmas \ref{lem-size-K5-E6} and \ref{lem-size-K3-E6}. For $k\geq 4$, we have $10k^4\geq 2\, 560> 1\, 440$, which concludes the proof. 
\end{proof}

The only case which is not handled by Lemma \ref{lem-size-K1-E6} is $k=3$. 

\begin{lem}\label{lem-size-K1-k=3-E6}
    If $k=3$, we have $|K_1|>1\, 440$.
\end{lem}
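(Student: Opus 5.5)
With $k=3$, Lemma \ref{lem-size-K1-E6} only gives $|K_1|\geq 10k^2|K_5|\geq 90|K_5|$, and $|K_5|\geq 9$ yields just $810$. Hence the plan is to sharpen the lower bound on $|K_5|$, and if needed on the conjugacy class of $q_2$ in $K_3$, when $k=3$. It suffices to show $|K_5|\geq 17$, since then $|K_1|\geq 90\cdot 17>1\,440$.

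\textbf{Step 1: bound $|K_5|$.} The group $K_5$ is a non-abelian quotient of $\A[A_2]=\B_3$ (Lemma \ref{lem-K3K5-not-abelian-E6}), generated by $r_5,r_6$ of order exactly $3$ (Lemma \ref{lem-k_1=k_2=k_3-E6}). It therefore factors through $\B_3/\llangle \sigma_1^3\rrangle$, which is the binary tetrahedral group $\mathrm{SL}_2(\mathbb F_3)$ of order $24$; this is the classical Coxeter result that $\langle a,b\mid aba=bab,\ a^3=1\rangle$ is finite of order $24$. Its non-abelian quotients are $\mathrm{SL}_2(\mathbb F_3)$ (order $24$) and $\mathrm{PSL}_2(\mathbb F_3)\cong \mathfrak A_4$ (order $12$); the abelian quotient $\Z/3$ is excluded.

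\textbf{Step 2: the case $K_5\cong\mathrm{SL}_2(\mathbb F_3)$.} Here $|K_1|\geq 90\cdot 24=2\,160>1\,440$, and we are done.

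\textbf{Step 3: the case $K_5\cong \mathfrak A_4$.} This is the main obstacle, since it only gives $|K_1|\geq 1\,080$. I would improve the orbit–stabilizer step in Lemma \ref{lem-size-K3-E6}. The group $K_3$ is a non-abelian quotient of $\B_5$ in which the generators have order $3$, so it is a quotient of $\B_5/\llangle\sigma_1^3\rrangle$. By Coxeter's theorem this group is finite, namely the group $3^{1+4}.\mathrm{Sp}_4(3)$-type complex reflection group $G_{32}$ modulo nothing, of order $155\,520$.

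A non-abelian quotient of it that is not $\S_5$-like (and $\S_5$ is impossible since the generators have order $3$) should have the images of the $\sigma_i$ forming a conjugacy class of size at least $40$, the number of reflection hyperplanes of $G_{32}$; equivalently, the class size should be strictly larger than $10$. I would either check this directly, by listing the normal subgroups of $G_{32}$ (GAP) and computing class sizes in each quotient, or argue that a class of size $10$ forces the Kolay equality case, which gives $\S_5$. Any class size $c\geq 17$ suffices, because then
\[
|K_1|\geq 3\cdot c\cdot 3\cdot 12\geq 1\,836>1\,440 .
\]

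As a fallback for Step 3, I would use that $K_1\supseteq\langle p_1\rangle\cdot K_2$, bound $|K_2|\geq |K_3|$, and apply the same finite-group computation to $K_3$ directly. The non-abelian quotients of $G_{32}$ in which the generators have order $3$ all have order at least $25\,920$ ($\mathrm{PSp}_4(3)$). This gives $|K_1|\geq 3\cdot 25\,920>1\,440$ immediately, which would make the case split above unnecessary.
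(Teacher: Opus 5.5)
Your fallback is exactly the paper's argument: $K_2=\langle p_2,p_4,p_5,p_6\rangle$ is a non-abelian quotient of $\B_5/\llangle\sigma_1^3\rrangle\cong G_{32}$, whose smallest non-abelian quotient is $G_{32}/Z(G_{32})\cong\mathrm{PSp}_4(3)$ of order $25\,920$ (your figure is correct; the paper's $29\,920$ is a misprint), and the paper uses $|K_1|\ge|K_2|$ where you use $|K_1|\ge k|K_3|$. This makes the $\mathrm{SL}_2(\mathbb F_3)$ versus $\mathfrak A_4$ case split unnecessary, which is fortunate, since Step~3 only asserts the needed class-size bound without proving it (and $G_{32}\cong\mathbb Z/3\times\mathrm{Sp}_4(3)$, not a $3^{1+4}$-extension).
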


\begin{proof}
    In this case, the group $\langle p_2,p_4,p_5,p_6\rangle \subset K_1$ is a quotient of $\A[A_4]/\llangle \sigma_1^3\rrangle\cong G_{32}$, where $G_{32}$ denotes one of the exceptional complex reflection groups in the Shephard-Todd notation (see \cite{LehrerTaylor} for more details). The cardinality of $G_{32}$ is $155\,520$, and its smallest non-abelian quotient is the simple group $G_{32}/Z(G_{32})$, of size $29\,920$ (see \cite[Table 1]{BMR}). Thus, we have $|K_1|\geq 29\,920>1\, 440$. This concludes the proof.
\end{proof}

\begin{proof}[Proof of Proposition \ref{prop-SNAQ-E6}]
    Lemmas \ref{lem-size-K1-E6} and \ref{lem-size-K1-k=3-E6} prove Proposition \ref{prop-order-centralizer-E6}, which in turn implies Proposition \ref{prop-SNAQ-E6}.
\end{proof}

\subsection{\texorpdfstring{Case $\Gamma=E_7$}{E7}}

For this subsection, we fix $\Gamma=E_7$ and $P$ a non-abelian quotient of $\A[\Gamma]$. Write $p_i$ the image of $\sigma_i$ by this quotient. Moreover, denote by $\C$ the conjugacy class of the $p_i$'s.

The aim of this subsection is to show the following result.

\begin{prop}\label{prop-SNAQ-E7}
    The smallest non-abelian quotient of $\A[E_7]$ is $\W[E_7]/Z(\W[E_7])$. 
\end{prop}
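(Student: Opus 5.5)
We follow the same template as for $E_6$. First, $\W[E_7]/Z(\W[E_7])$ is a quotient of $\A[E_7]$ through $\A[E_7]\to\W[E_7]$. Its order is $2\,903\,040/2=1\,451\,520$. By Proposition \ref{prop-size-conjugacy-class}(ii), every non-abelian quotient $P$ satisfies $|\C|\geq 63$. Since $1\,451\,520/63=23\,040$, it is enough to prove the following two things:
\begin{itemize}
\item $|Z_P(p_1)|\geq 23\,040$ for every non-abelian quotient $P$;
\item equality forces $P\cong \W[E_7]/Z(\W[E_7])$.
\end{itemize}
We may assume $P$ is centreless by Lemma \ref{lem-SNAQ-is-centreless}, since $\A[E_7]^{\mathrm{ab}}\cong\Z$.

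Let $k=o(p_1)$. If $k=2$, then $P$ is a quotient of $\W[E_7]$, and Remark \ref{rem-SNAQ-W-Case-E} gives $|P|\geq|\W[E_7]/Z(\W[E_7])|$, with equality only for that group. So we assume $k\geq 3$. We then build a descending chain of centralizer subgroups, as in the $E_6$ case. Take $K_1=Z_P(p_1)$. Let $K_2$ be the subgroup generated by the $p_i$ with $\sigma_i$ commuting with $\sigma_1$; in the numbering of Figure \ref{fig:Cox-graphs-spherical-type} this is $\langle p_2,p_4,p_5,p_6,p_7\rangle$, a quotient of $\A[D_5]$ or $\A[A_5]$ depending on the labelling. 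Set $K_3=K_2/(\langle p_1\rangle\cap K_2)$. Inside $K_3$, take the centralizer chain of $q_2$, namely $K_4=\langle q_5,q_6,q_7\rangle$ (type $A_3$), and $K_5=K_4/(\langle q_2\rangle\cap K_4)$. If needed, go one level further to a rank-$1$ group $K_6$ and $K_7=\langle s_7\rangle$ modulo the previous generator.

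The proofs of Lemmas \ref{lem-K3K5-not-abelian-E6}, \ref{lem-k_1=k_2=k_3-E6} and \ref{lem-size-K5-E6} should transfer essentially verbatim. We use Lemma \ref{lem-distinct-artin-generators} to show each stage is non-abelian. For the orders, every $\sigma_i$ commutes either with the last generator or with all earlier ones, so the relevant products are central and hence trivial; this shows the orders stay equal to $k$. Then we apply the Orbit–Stabilizer formula at each stage with Lemma \ref{lem-size-conjugacy-kolay}. A braid-group stage of type $A_5$ gives a conjugacy class of size at least $15$, and type $A_4$ gives at least $10$. For a $D_5$-type stage we use Theorem \ref{thm-SNAQ}(i) ideas or embed an $A_4$. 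This should yield a bound of the shape $|K_1|\geq c\,k^{r}$ with $r\geq 4$ and $c\geq 150$ or so. For $k\geq 4$ this gives $|K_1|\geq 150\cdot 256>23\,040$, and more room if the chain is longer.

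The main obstacle is the small-order cases, chiefly $k=3$ (and possibly $k=4$ if the constant is weaker than hoped). For $k=3$ we mimic Lemma \ref{lem-size-K1-k=3-E6}. The subgroup of $K_1$ generated by the $A_5$-type (or $A_4$-type) generators is a quotient of $\A[A_n]/\llangle\sigma_1^3\rrangle$. For $n\geq 5$ this quotient is infinite, so instead we use the $A_4$ sub-braid group, whose cubic quotient is $G_{32}$. Its non-abelian quotients have order at least $25\,920$, the simple quotient $G_{32}/Z(G_{32})\cong \mathrm{PSp}_4(3)$. Thus $|K_1|\geq 25\,920>23\,040$, which settles $k=3$ strictly. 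If the generic bound fails for $k=4$ or $5$, we refine by multiplying by the extra factor $k$ coming from $\langle p_1\rangle$ and the additional generator $p_3$ or $p_7$ outside the $A_4$. If that still fails, we fall back to a GAP check of the finitely many small cases.
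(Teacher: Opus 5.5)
\textbf{Verdict: the approach is the paper's, but the case $k=4$ is genuinely missing, and the generic bound you claim is not supported.}

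\textbf{What matches the paper.} Several parts are correct and follow the paper:
\begin{itemize}
\item the reduction to showing $|Z_P(p_1)|>23\,040$ for $k=o(p_1)\ge 3$;
\item the case $k=2$ via Remark~\ref{rem-SNAQ-W-Case-E};
\item the chain $K_1\supseteq\langle p_1\rangle K_2$, $K_3$, $K_4=\langle q_5,q_6,q_7\rangle$, $K_5$;
\item the case $k=3$ via $G_{32}$. Here $25\,920$ is indeed the order of $G_{32}/Z(G_{32})$.
\end{itemize}

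\textbf{The generic estimate is inflated.} Your chain has one $A_5$ stage (conjugacy class of size at least $15$) followed by an $A_3$ stage. There is no $A_4$ stage, so nothing supports a constant $c\ge 150$.
\begin{itemize}
\item The argument of Lemma~\ref{lem-size-K5-E6} does transfer, for instance by counting the elements $r_6^ar_7^b$. It gives only $|K_5|\ge k^2$, hence $|K_1|\ge k\cdot 15k\cdot |K_5|\ge 15k^4$. This beats $23\,040$ only for $k\ge 7$.
\item You also cannot descend one more level and gain another factor $k$. The central-element trick that keeps the orders equal to $k$ breaks down for $\langle r_5\rangle\cap\langle r_7\rangle$, because $\sigma_6$ is adjacent to both $\sigma_5$ and $\sigma_7$. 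After removing $p_1$, an equation $p_5^ip_7^{-j}=p_2^n$ is not visibly centralized by $p_4$.
\item The paper proves only $r_7\notin\langle r_5\rangle$, by an explicit braid-relation computation. It combines this with the four distinct conjugates $r_5,r_6,r_7,r_5r_6r_5^{-1}$ to get $|K_5|\ge 8k$, i.e.\ $|K_1|\ge 120k^3$. It gets $|K_1|\ge 60k^4$ only when $k$ is prime.
\item These sharper counts are what settle $k=6$ (giving $25\,920$) and $k=5$ (giving $37\,500$).
\item For $k=4$, the applicable bounds give at most $120\cdot 64=7\,680<23\,040$. Even $60k^4$ would give only $15\,360$.
\end{itemize}

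\textbf{Your fallback does not close $k=4$.}
\begin{itemize}
\item The factor $k$ from $\langle p_1\rangle$ is already used in $|K_1|\ge k|K_3|$.
\item $p_3$ does not centralize $p_1$.
\item $p_7$ already lies in $K_2$.
\item ``A GAP check of finitely many small cases'' is not a finite computation as stated. The natural truncated groups, such as $\A[A_3]/\llangle\sigma_1^4\rrangle$ and $\A[A_5]/\llangle\sigma_1^4\rrangle$, are infinite. There is also no feasible enumeration of candidate groups of order up to $1\,451\,520$.
\end{itemize}

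\textbf{What the paper does for $k=4$.} Since $|K_1|\ge 4|K_3|\ge 240|K_5|$, it suffices to rule out $|K_5|\le 96$ together with $|K_3|\le 5\,760$.
\begin{itemize}
\item The $K_5$-conjugacy class of $r_7$ contains the six elements $r_5,r_6,r_7$ and $r_7^ir_6r_7^{-i}$ for $i=1,2,3$. The case $i=2$ uses the GAP fact that $[\sigma_3^2,\sigma_2]=1$ forces $\sigma_3^2=\sigma_2^2$ in $\A[A_3]/\llangle\sigma_1^4\rrangle$, followed by the central-element trick.
\item $Z_{K_5}(r_7)$ contains the $16$ elements of $\langle r_5\rangle\langle r_7\rangle$ together with $(r_6r_7)^3$. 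This rests on GAP computations in $\A[A_5]/\llangle\sigma_1^4,\sigma_1^2\sigma_3^2\sigma_5^2\rrangle$ (order $46\,080$) and in $\A[A_3]/\llangle\sigma_1^4,(\sigma_2\sigma_3)^4\rrangle$ (order $192$).
\item Together these give $|K_5|\ge 6\cdot 17=102>96$.
\end{itemize}
Without an argument of this strength for $k=4$, and without the sharper counts for $k=5,6$, your proof is incomplete.
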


By Proposition \ref{prop-size-conjugacy-class}, we have $|\C|\geq 63$. Moreover, we have $|\W[E_7]/Z(\W[E_7])|=1\,451\,520$ and  $1\,451\,520/63= 23\,040$. Therefore, to prove Proposition \ref{prop-SNAQ-E7}, it is enough to prove the following.

\begin{prop}\label{prop-order-centralizer-E7}
    We have $|Z_P(p_1)|\geq 23\,040$, with equality if $P=\W[E_7]/Z(W[E_7])$. 
\end{prop}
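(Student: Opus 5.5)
We mimic the $E_6$ argument, now inside the centralizer of $p_1$ in $\A[E_7]$. With the Bourbaki numbering, $\sigma_1$ commutes with $\sigma_2,\sigma_4,\sigma_5,\sigma_6,\sigma_7$. These generate a copy of $\A[D_5]$ with branch vertex $4$ and arms $2$, $\{5,6,7\}$ and $3$ missing, so really the subgraph $2$--$4$--$5$--$6$--$7$ is of type $A_5$ together with $2$; in fact $\{2,4,5,6,7\}$ is a path of type $A_5$. Set $k=o(p_1)$. We may assume $k\ge 3$: for $k=2$ the group $P$ is a quotient of $\W[E_7]$, and we conclude by Remark \ref{rem-SNAQ-W-Case-E}. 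Equality when $P=\W[E_7]/Z(\W[E_7])$ is a direct check. The image of $s_1$ is a reflection class of size exactly $63$, since $Z(\W[E_7])=\{\pm1\}$ does not identify distinct reflections. Hence $|Z_P(p_1)|=1\,451\,520/63=23\,040$.

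We define a tower of subgroups, as in the $E_6$ case. First, $K_1=Z_P(p_1)\supseteq K_2=\langle p_2,p_4,p_5,p_6,p_7\rangle$, and $K_3=K_2/(\langle p_1\rangle\cap K_2)$, which is a quotient of $\B_6=\A[A_5]$. Next, inside $K_3$, the centralizer of $q_2$ contains $K_4=\langle q_5,q_6,q_7\rangle$, a quotient of $\B_4$. Set $K_5=K_4/(\langle q_2\rangle\cap K_4)$. Finally, the centralizer of $r_5$ in $K_5$ contains $\langle r_7\rangle$.

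The argument of Lemma \ref{lem-K3K5-not-abelian-E6} shows that $K_3$, $K_4$ and $K_5$ are non-abelian. It uses Lemma \ref{lem-distinct-artin-generators} and Lemma \ref{lem-SNAQ-is-centreless}. The key point is that every $\sigma_i$ commutes either with the last generator of the chain or with all the previously quotiented ones, so the relevant products are central in $P$, hence trivial. The argument of Lemma \ref{lem-k_1=k_2=k_3-E6} shows that all the orders equal $k$.

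We then chain Orbit--Stabilizer estimates, using Lemma \ref{lem-size-conjugacy-kolay}.
\begin{itemize}
\item[(a)] In $K_3$, a non-abelian quotient of $\A[A_5]$, the class of $q_2$ has size at least $\binom{6}{2}=15$.
\item[(b)] In $K_5$, a non-abelian quotient of $\A[A_3]$, the class of $r_5$ has size at least $3$, and $|Z_{K_5}(r_5)|\ge k^2$. This uses the products $r_5^ir_7^j$, which are distinct by the same centrality argument as Lemma \ref{lem-size-K5-E6}.
\end{itemize}
Combining these gives
\[
|K_1|\ge k|K_3|\ge 15k^2|K_5|\ge 45k^4 .
\]
This exceeds $23\,040$ once $k\ge 5$, since $45\cdot 625=28\,125$.

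The main obstacle is the small orders $k=3,4$. For $k=3$, I would argue as in Lemma \ref{lem-size-K1-k=3-E6}. The group $K_2$ is a quotient of $\A[A_5]/\llangle\sigma_1^3\rrangle$, and this quotient is infinite. So instead I would use the sub-chain $\langle p_2,p_4,p_5,p_6\rangle$, a quotient of $\A[A_4]/\llangle \sigma^3\rrangle\cong G_{32}$, whose smallest non-abelian quotient has order $25\,920$. This already gives $|K_1|\ge 25\,920>23\,040$. The step to double-check is the non-abelianness of this image, which follows from Lemma \ref{lem-distinct-artin-generators}.

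For $k=4$, the bound $45\cdot256=11\,520$ is insufficient, so I would sharpen it. One option is to apply Kolay's bound at the bottom level in $A_3$ more carefully: the true minimum of $|K_5|$ for a non-abelian quotient of $\B_4$ with generators of order $4$ should be checked. Another option is to use the quotient $\A[A_3]/\llangle\sigma^4\rrangle$, which is the finite complex reflection group $G_{31}$-type / Shephard group of order $7680$'s relative. This is a GAP computation of its smallest non-abelian quotient in which generators keep order $4$. It would give $|K_5|\ge 4\cdot 16$ or more, pushing the total past $23\,040$. I expect this $k=4$ case to require computer verification.
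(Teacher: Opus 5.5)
Your set-up agrees with the paper: the tower $K_1\supseteq K_2\to K_3\supseteq K_4\to K_5$, the reduction to $k\ge 3$, the equality case, and the case $k=3$ via $G_{32}$. In that last case your value $25\,920=|G_{32}/Z(G_{32})|$ is the correct one.

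The genuine gap is step (b). The bound $|Z_{K_5}(r_5)|\ge k^2$ requires $\langle r_5\rangle\cap\langle r_7\rangle=\{\id\}$, and the centrality trick of Lemma~\ref{lem-size-K5-E6} does not give this here. From $r_5^a=r_7^b$ you get $p_5^a p_7^{-b}p_2^{-n}=p_1^m$. The factor $p_1^m$ is indeed central, hence trivial. But you are then left with $p_5^a=p_7^b p_2^n$, and neither side is forced to commute with $p_4$ or $p_6$: $\sigma_5$ is adjacent to both, $\sigma_7$ to $\sigma_6$, and $\sigma_2$ to $\sigma_4$. So nothing becomes central. In $E_6$ the trick worked because $\sigma_6$ is a leaf whose only neighbour is the other factor $\sigma_5$.

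The paper does not prove triviality of this intersection in general. It only shows $r_7\notin\langle r_5\rangle$, by an explicit braid-relation computation: $r_7=r_5^n$ forces $q_5\in\langle q_6,q_7\rangle$, so $q_5$ commutes with $q_4$ and $K_3$ is abelian. This gives $|Z_{K_5}(r_5)|\ge 2k$ in general, and $\ge k^2$ only for $k$ prime. Indeed, for $k=4$ the paper cannot exclude $r_5^2=r_7^2$ and has to dispose of it by GAP.

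With only $2k$ and your class bound $3$, you get $|K_1|\ge 90k^3$, which fails at $k=6$ ($19\,440<23\,040$). So the paper also sharpens the class bound to $4$, by showing $r_7\notin\{r_5,r_6,r_5r_6r_5^{-1}\}$. This gives $120k^3$, which suffices for $k\ge 6$, and $60k^4$ for $k=5$.

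The case $k=4$ is not actually handled. The group $\A[A_3]/\llangle\sigma_i^4\rrangle$ is the truncated braid group $B_4(4)$. It is infinite by Coxeter's criterion ($1/4+1/4=1/2$), so there is no finite reflection group to compute in. In any case, $|K_5|\ge 64$ would only give $|K_1|\ge 240\cdot 64=15\,360<23\,040$; you need $|K_5|\ge 96$ or $|K_3|>5\,760$.

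The paper obtains this by showing two things. First, the class of $r_7$ in $K_5$ has at least $6$ elements. Second, $|Z_{K_5}(r_7)|\ge 17$ unless $|K_3|>5\,760$ or $|K_5|>96$. Each step relies on specific GAP computations. As written, your argument is complete only for $k\le 3$ and the equality case.
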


The proof strategy is as follows. We fix the order $k$ of $p_1$ (equivalently, of all $p_i$') and prove a series of lemmas to obtain a lower bound on $|Z_P(p_1)|$ depending on $k$. This will prove the proposition for all cases but $k=3,4$, which we handle separately. 

Let us first define the groups we need to study and fix some notations.

\begin{defn}

\begin{enumerate}
    \item[(i)] Define $K_1=Z_P(p_1)$ and $k=o(p_1)$.
    \item[(ii)] Define $K_2=\langle p_2,p_4,p_5,p_6,p_7\rangle\subset K_1$, $K_3=\frac{K_2}{\langle p_1\rangle\cap K_2}$, $q_i$ to be the image of $p_i$ under the quotient and $k_2$ the order of $q_2$ (equivalently, all $q_i$'s).
    \item[(iii)] Define $K_4=\langle q_5,q_6,q_7\rangle\subset K_3$, $K_5=\frac{K_4}{\langle q_2\rangle \cap K_4}$, $r_i$ to be the image of $q_i$ under the quotient and $k_3$ the order of $r_5$ (equivalently, all $r_i$'s).
    \end{enumerate}
\end{defn}

\begin{rem}
 For $k=2$, the group $P$ is a quotient of $\W[\Gamma]$. Since $\mathrm{SNAQ}(\W[\Gamma])=\W[\Gamma]/Z(W[\Gamma])$ by Remark \ref{rem-SNAQ-W-Case-E}, we can assume that $k\geq 3$.
\end{rem}

\begin{lem}\label{lem-K3K5-not-abelian-E7}
    The groups $K_3$, $K_4$ and $K_5$ are not abelian.
\end{lem}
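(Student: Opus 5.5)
We argue exactly as for Lemma \ref{lem-K3K5-not-abelian-E6}, now inside $E_7$. With the numbering of Figure \ref{fig:Cox-graphs-spherical-type}, $\sigma_2,\sigma_4,\sigma_5,\sigma_6,\sigma_7$ generate a copy of $\A[A_5]$ commuting with $\sigma_1$, and in it $\sigma_2-\sigma_4-\sigma_5-\sigma_6-\sigma_7$ is a path. First note $\langle p_1\rangle\subseteq Z(K_2)$, so $\langle p_1\rangle\cap K_2=\langle p_1\rangle\cap Z(K_2)$. Likewise $\langle q_2\rangle\subseteq Z(K_4)$, since $\sigma_2$ commutes with $\sigma_5,\sigma_6,\sigma_7$; hence $\langle q_2\rangle\cap K_4=\langle q_2\rangle\cap Z(K_4)$.

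\textbf{$K_3$ is not abelian.} Suppose it is. Adjacent generators $q_i,q_j$ then commute and satisfy the braid relation, so they coincide; thus $q_2=q_4=q_5=q_6=q_7$. From $q_4=q_5$ we get $p_4p_5^{-1}\in Z(K_2)$. Commuting this element with $p_2$, and using $[p_2,p_5]=\id$, gives $[p_2,p_4]=\id$. Since $p_2,p_4$ are conjugate and satisfy a braid relation, this forces $p_2=p_4$. Then $P$ is a quotient of $\A[E_7]/\llangle\sigma_2\sigma_4^{-1}\rrangle\cong\Z$ by Lemma \ref{lem-distinct-artin-generators}, contradicting that $P$ is non-abelian.

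\textbf{$K_4$ is not abelian.} If it were, $q_5$ and $q_6$ would commute and braid, so $q_5=q_6$ and $p_5p_6^{-1}\in Z(K_2)$. Commuting with $p_4$, which commutes with $p_6$, gives $[p_4,p_5]=\id$, hence $p_4=p_5$. Lemma \ref{lem-distinct-artin-generators} again makes $P$ abelian. Using the pair $q_6,q_7$ with $p_5$ as the commuting witness would work equally well.

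\textbf{$K_5$ is not abelian.} Suppose it is. Its generators $r_5,r_6,r_7$ form a path, so the braid-plus-commutation argument gives $r_5=r_6=r_7$, and $K_5$ is cyclic. Since $\langle q_2\rangle\cap K_4\subseteq Z(K_4)$, the group $K_4/Z(K_4)$ is a quotient of $K_5$, hence cyclic. Therefore $K_4$ is abelian, contradicting the previous step.

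The only point needing care is the bookkeeping. We must check that each "witness" generator used to force a commutation indeed commutes with the relevant neighbour in $E_7$: $\sigma_2$ with $\sigma_5$, $\sigma_4$ with $\sigma_6$, and $\sigma_2$ with $\sigma_5,\sigma_6,\sigma_7$. These are read off from the $E_7$ graph, where $\sigma_2$ is attached only to $\sigma_4$. No real obstacle is expected beyond this check.
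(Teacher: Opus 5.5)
Your proof is correct. For $K_3$ and $K_4$, however, it takes a different route from the paper's proof of this lemma; for $K_5$ it is the same argument.

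\textbf{Your route.} You transplant the argument the paper uses for $E_6$ (Lemma~\ref{lem-K3K5-not-abelian-E6}). An identification $q_i=q_{i+1}$ is lifted to a commutation in $P$ through a witness generator ($p_2$, resp.\ $p_4$). You then invoke the global collapse Lemma~\ref{lem-distinct-artin-generators} for $E_7$.

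\textbf{The paper's route.} It proceeds as a chain.
\begin{itemize}
\item First, $K_2$ is not abelian, since otherwise all the $p_i$ collapse and $P$ is cyclic.
\item It applies Lemma~\ref{lem-SNAQ-is-centreless} to $K_2$, viewed as a non-abelian quotient of $\A[A_5]$, which has abelianization $\mathbb{Z}$. So $K_2/Z(K_2)$ is non-abelian, and it is a quotient of $K_3$.
\item If $K_4$ were abelian, then $q_5=q_6=q_7$. This collapses the whole path $q_2,q_4,\dots,q_7$ and makes $K_3$ cyclic.
\item For $K_5$, it applies Lemma~\ref{lem-SNAQ-is-centreless} to $K_4$ as a quotient of $\A[A_3]$. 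This is the same ``$G/Z(G)$ cyclic implies $G$ abelian'' argument as your last step.
\end{itemize}

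\textbf{What each route buys.} The paper's route uses only the local type-$A$ structure of the subgroups and the non-cyclicity of $P$. It therefore needs neither Lemma~\ref{lem-distinct-artin-generators} nor any search for commuting witnesses in the graph. Your route follows exactly the template of the $E_6$ case, and each step is a short explicit computation in $P$.

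\textbf{A small slip.} $q_2$ need not lie in $K_4$, and $p_1$ need not lie in $K_2$. So write that $q_2$ centralizes $K_4$, rather than $\langle q_2\rangle\subseteq Z(K_4)$, and similarly for $p_1$ and $K_2$. What you actually use, $\langle q_2\rangle\cap K_4\subseteq Z(K_4)$, is correct.
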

\begin{proof}
Suppose that $K_2$ is abelian. Then, $p_2=p_4=p_5=p_6=p_7$, because consecutive generators satisfy at the same time a commutation and a braid relation. This implies that also $p_1=p_2=p_3$, which would mean that $P=\langle p_1\rangle$ is cyclic, a contradiction. Thus $K_2$ is not abelian. Now $K_2$ is a quotient of $\A[A_5]$ (generated by $\sigma_2,\ldots,\sigma_7$), whose abelianization is $\Z$. Thus, Lemma \ref{lem-SNAQ-is-centreless} applies and $K_2/Z(K_2)$ is not abelian. Since $K_2/Z(K_2)$ is a quotient of $K_3$, this shows that $K_3$ is not abelian. 

Similarly, the group $K_4$ is not abelian since $K_3$ is not. Indeed, if $K_4$ was abelian, this would mean that $q_5=q_6=q_7$ and thus that $K_3=\langle q_7\rangle$ is cyclic, which cannot be the case by what we said in the previous paragraph. Now observe that $K_4$ is a quotient of $\langle \sigma_5,\sigma_6,\sigma_7\rangle\cong\A[A_3]=\B_4$, whose abelianization is $\Z$. Thus, Lemma \ref{lem-SNAQ-is-centreless} applies and $K_4/Z(K_4)$ is not abelian. Since $K_4/Z(K_4)$ is a quotient of $K_5$, this shows that $K_5$ is not abelian.
\end{proof}

\begin{lem}\label{lem-k_1=k_2=k_3-E7}
    We have $k=k_2=k_3$.
\end{lem}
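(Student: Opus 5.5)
We mimic the proof of Lemma \ref{lem-k_1=k_2=k_3-E6}. First, $p_i\mapsto q_i\mapsto r_i$ under the successive quotient maps, so the orders satisfy $k_3\mid k_2\mid k$. It remains to show $k\mid k_3$, i.e. $p_7^{k_3}=\id$ (or $p_5^{k_3}=\id$, since all $p_i$ are conjugate and have the same order).

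Since $r_7^{k_3}=\id$ in $K_5=K_4/(\langle q_2\rangle\cap K_4)$, we get $q_7^{k_3}\in\langle q_2\rangle$, say $q_7^{k_3}=q_2^n$. Lifting to $K_2$, the element $p_7^{k_3}p_2^{-n}$ lies in $\langle p_1\rangle\cap K_2$, so $p_7^{k_3}=p_1^m p_2^n$ for some $m,n\in\mathbb N$. The key step is to show that this element is central in $P$. Using the $E_7$ numbering of Figure \ref{fig:Cox-graphs-spherical-type} (the branch vertex is $\sigma_4$, adjacent to $\sigma_2,\sigma_3,\sigma_5$, with the chain $\sigma_1-\sigma_3-\sigma_4-\sigma_5-\sigma_6-\sigma_7$), we check generator by generator: $\sigma_1,\sigma_2,\sigma_3,\sigma_4,\sigma_5$ commute with $\sigma_7$, while $\sigma_6$ and $\sigma_7$ commute with both $\sigma_1$ and $\sigma_2$. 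Hence each $p_i$ commutes either with $p_7^{k_3}$ or with $p_1^mp_2^n$, and since these are the same element, it commutes with every $p_i$ and so is central in $P$.

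Now we conclude as in type $E_6$. We may assume $P$ is a smallest non-abelian quotient, or more generally a non-abelian quotient with no non-abelian proper quotient. Since $\A[E_7]^{\mathrm{ab}}\cong\Z$, Lemma \ref{lem-SNAQ-is-centreless} then gives $Z(P)=\{\id\}$, so $p_7^{k_3}=\id$ and $k\mid k_3$, hence $k=k_2=k_3$. The only step needing care is the centrality check, which relies on the exact adjacencies in $E_7$. It works because $\sigma_7$ is a leaf at the far end of the long arm, far from both $\sigma_1$ and $\sigma_2$. The implicit reduction to centreless $P$ is handled the same way as in Subsection \ref{section-E6}: if $P$ has a non-abelian proper quotient, we replace $P$ by that smaller quotient.
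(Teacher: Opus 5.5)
Your proof is correct and follows the paper's argument essentially verbatim. You get $k_3\mid k_2\mid k$ from the successive quotients, lift $r_7^{k_3}=\id$ to $p_7^{k_3}=p_1^mp_2^n$, observe this element is central because every generator commutes with $\sigma_7$ or with both $\sigma_1$ and $\sigma_2$, and conclude via Lemma \ref{lem-SNAQ-is-centreless}; your remark about first reducing to a $P$ with no proper non-abelian quotient makes explicit an assumption the paper uses only implicitly.
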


\begin{proof}
    Since $p_i$ is sent to $q_i$ which is sent to $r_i$ under the consecutive quotients, we have $k_3|k_2|k$. Since $r_7^{k_3}=\id$, there exists $n\in\mathbb N$ such that $q_2^n=q_7^{k_3}$. But then, there exists $m\in\mathbb N$ such that $p_1^mp_2^n=p_7^{k_3}$. Now, every $\sigma_i$ either commutes with $\sigma_7$ or with $\sigma_1$ and $\sigma_2$. Thus, the element $p_1^mp_2^n=p_7^{k_3}$ is central in $P$. By Lemma \ref{lem-SNAQ-is-centreless}, we get $p_7^{k_3}=\id$. Thus, $k|k_3$ and $k=k_2=k_3$.
\end{proof}

\begin{lem}\label{lem-size-conjugacy-K5-E7}
 The conjugacy class of $r_i$ contains at least $4$ elements, and $r_5\neq r_7$. 
\end{lem}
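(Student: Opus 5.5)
The plan is to view $K_5$ as a non-abelian quotient of a braid group and then apply Kolay's conjugacy-class bound. By construction, $K_5$ is generated by the images $r_5,r_6,r_7$ of $p_5,p_6,p_7$. In the numbering of Figure \ref{fig:Cox-graphs-spherical-type}, $\sigma_5,\sigma_6,\sigma_7$ form a path of type $A_3$. Hence there is an epimorphism $\A[A_3]=\B_4\to K_5$ sending the standard generators to $r_5,r_6,r_7$. By Lemma \ref{lem-K3K5-not-abelian-E7}, $K_5$ is not abelian. Since $r_5,r_6,r_7$ are conjugate in $K_5$, they all lie in the conjugacy class of $r_5$, and it makes sense to speak of ``the conjugacy class of $r_i$''.

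For the first assertion, I would apply Lemma \ref{lem-size-conjugacy-kolay}(i) with $n=3$, which only gives $|\C|\geq 3$. To upgrade this to $4$, I would refine Kolay's count in the $\B_4$ case. The six reflections $\sigma_1,\sigma_2,\sigma_3$, $\sigma_1\sigma_2\sigma_1^{-1}$, $\sigma_2\sigma_3\sigma_2^{-1}$, $\sigma_1\sigma_2\sigma_3\sigma_2^{-1}\sigma_1^{-1}$ correspond to the transpositions of $\S_4$. The relevant fact is this: if two reflections commute, or are related as in Lemma \ref{lem-distinct-reflections-braid}, then identifying them collapses $\B_4$ to an abelian group, \emph{except} for the commuting pair $\sigma_1,\sigma_3$ and its conjugates. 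Indeed, $\B_4/\llangle\sigma_1\sigma_3^{-1}\rrangle\cong\B_3$, which is non-abelian (this is the $\S_4\to\S_3$ phenomenon).

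So I would pick four reflections such that no two of them form a pair of the type $\{\sigma_1,\sigma_3\}$, namely $\sigma_1,\sigma_2,\sigma_1\sigma_2\sigma_1^{-1},\sigma_2\sigma_3\sigma_2^{-1}$. These correspond to the transpositions $(12),(23),(13),(24)$. For each pair, I would check that the images are forced to be distinct by one of two mechanisms. Either the two elements satisfy a braid relation, in which case identifying them forces all generators equal. Or they commute, in which case we reduce to the braid case by one application of the argument of Lemma \ref{lem-distinct-reflections-braid}. In each case this would contradict the non-commutativity of $K_5$. Checking the six pairs is routine.

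For the second assertion, suppose $r_5=r_7$. Since $r_6$ satisfies braid relations with both, $K_5$ is a quotient of $\B_4/\llangle\sigma_1\sigma_3^{-1}\rrangle\cong\B_3$, which is not immediately contradictory. I therefore expect this to be the main obstacle: the contradiction must come from the ambient structure of $P$ rather than from $K_5$ alone. I would lift the equality $r_5=r_7$ back as in the proof of Lemma \ref{lem-k_1=k_2=k_3-E7}. This gives $p_7=p_5\,p_2^{n}p_1^{m}$ for some integers $n,m$. The element $p_5^{-1}p_7=p_2^np_1^m$ commutes with $p_1,p_2,p_3$ (using that $p_1$ and $p_2$ commute with $p_5$ and $p_7$, and that $p_3$ commutes with $p_5$ and $p_7$). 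Next I would analyse its interaction with $p_4$ and $p_6$ to show that it is central, or else that it forces an identification of two generators; in either case Lemma \ref{lem-SNAQ-is-centreless} or Lemma \ref{lem-distinct-artin-generators} makes $P$ abelian. If this direct approach stalls, I would fall back on a computer check, in the spirit of the rest of the section, that $\A[E_7]/\llangle\sigma_5\sigma_7^{-1}\sigma_2^{a}\sigma_1^{b}\rrangle$ has abelian finite quotients.
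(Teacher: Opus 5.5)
\paragraph{There is a genuine gap in your argument for the first assertion.}
Your four elements $\sigma_1,\sigma_2,\sigma_1\sigma_2\sigma_1^{-1},\sigma_2\sigma_3\sigma_2^{-1}$ map to $(1\,2),(2\,3),(1\,3),(2\,4)$ in $\S_4$. The transpositions $(1\,3)$ and $(2\,4)$ are disjoint, so you have not avoided the bad type of pair.

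Set $x=\sigma_1\sigma_2\sigma_1^{-1}$ and $y=\sigma_2\sigma_3\sigma_2^{-1}$. These neither commute nor braid in $\B_4$ (a braid relation would force $(1\,3)=(2\,4)$), so neither of your two mechanisms applies. Identifying them also does not collapse anything. Under $\B_4\to\S_4$, the element $xy^{-1}$ maps into the Klein subgroup $V=\{\id,(1\,2)(3\,4),(1\,3)(2\,4),(1\,4)(2\,3)\}$, so $\B_4/\llangle xy^{-1}\rrangle$ surjects onto $\S_4/V\cong\S_3$.

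No other choice of four elements repairs this. Pairwise non-disjoint transpositions of $\S_4$ are edges of the complete graph on $\{1,2,3,4\}$ forming a star or a triangle, so there are at most three of them. Moreover, $\S_3$ (via $\sigma_1,\sigma_3\mapsto(1\,2)$, $\sigma_2\mapsto(2\,3)$) is a non-abelian quotient of $\B_4$ in which the class of the image of $\sigma_1$ has exactly three elements. So the bound $4$ cannot follow from the fact that $K_5$ is a non-abelian quotient of $\B_4$. The fourth element must be separated using the ambient group $P$.

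\paragraph{How the paper proceeds, and the second assertion.}
The paper uses $r_5,r_6,r_5r_6r_5^{-1},r_7$, the images of $(1\,2),(2\,3),(1\,3),(3\,4)$, whose only disjoint pair is $\{r_5,r_7\}$. Every other coincidence makes $K_5$ abelian, contradicting Lemma \ref{lem-K3K5-not-abelian-E7}. For $r_7=r_5r_6r_5^{-1}$, conjugate by $r_5^{-1}$, which commutes with $r_7$, to get $r_7=r_6$. The remaining pair is exactly the second assertion, so the first assertion \emph{depends} on the second rather than being independent of it.

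For the second assertion your lift $p_7=p_5p_2^np_1^m$ is correct. However, showing $p_5^{-1}p_7=p_2^np_1^m$ central in one step stalls at $p_4$, which commutes with neither $p_5$ nor $p_2$. The argument has to be split in two.
First, $p_1^m=p_2^{-n}p_5^{-1}p_7$ commutes with every $p_i$ other than $p_3$ (from the left side) and with $p_3$ (from the right side). It is therefore central, hence trivial by Lemma \ref{lem-SNAQ-is-centreless}.
Second, this leaves $p_7=p_5p_2^n$ with $p_2$ commuting with $p_5,p_6$. Expanding $p_6p_7p_6=p_7p_6p_7$ gives $p_2^{n}p_6p_5p_6=p_2^{2n}p_6p_5p_6$, so $p_2^n=\id$. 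Then $p_5=p_7$, and $P$ is abelian by Lemma \ref{lem-distinct-artin-generators}.

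Your computer fallback would not be a finite check either: the exponents range up to $k$, which is unbounded.
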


\begin{proof}
 Observe again that $K_5$ is a quotient of $\langle\sigma_5,\sigma_6,\sigma_7\rangle\cong \A[A_3]=\B_4$. The three elements $r_5,r_6$ and $r_5r_6r_5^{-1}$ are distinct from each other as proved in \cite{Kolay} (or by an immediate hand computation). We claim that $r_7$ is distinct from $r_5,r_6$ and $r_5r_6r_5^{-1}$. First of all, if $r_6=r_7$ the group $K_5$ is abelian, contradicting Lemma \ref{lem-K3K5-not-abelian-E7}. If $r_7=r_5r_6r_5^{-1}$, conjugating by $r_5^{-1}$ yields $r_7=r_6$, again reaching a contradiction.
 
 Assume that $r_7=r_5$. Then, there exists $n\in \mathbb N$ such that $q_2^n=q_5q_7^{-1}$. Thus, there exists $m\in \mathbb N$ such that $p_1^m=p_5p_7^{-1}p_2^{-n}$. Now, every $\sigma_1$ either commutes with $\sigma_1$ or with $\sigma_2,\sigma_5$ and $\sigma_7$. Thus, the element $p_1^m=p_5p_7^{-1}p_2^{-n}$ is central in $P$. By Lemma \ref{lem-SNAQ-is-centreless}, we get $p_5p_7^{-1}p_2^{-n}=\id$. If $n=0$, this implies $p_5=p_7$ and $P$ cyclic, contradiction. Assume $n\geq 1$. If $p_7=p_5p_2^{-n}$, we get
    \begin{equation*}
        \begin{aligned}
            p_6p_7p_6&=p_7p_6p_7\\
            &=p_5p_2^{-n}p_6p_5p_2^{-n}\\
            &=p_2^{-2n}p_5p_6p_5 \text{ since $p_2$ commutes with $p_5$ and $p_6$}\\
            &=p_2^{-2n}p_6p_5p_6.
        \end{aligned}
    \end{equation*}
    Thus, we obtain $p_7=p_2^{-2n}p_5$. Since $p_7=p_2^{-n}p_5$, this implies $p_2^n=\id$ and $p_7=p_5$. Since $\A[E_7]/\llangle \sigma_2^n,\sigma_5\sigma_7^{-1}\rrangle\cong\Z/n$, the group $P$ is a quotient of $\mathbb Z/n$, contradiction. Then the elements $r_5,r_6,r_7$ and $r_5r_6r_5^{-1}$ are all distinct in $K_5$.
\end{proof}

\begin{lem}\label{lem-size-centralizer-K5-E7}
      We have $|Z_{K_5}(r_5)|\geq 2k$. Moreover, if $k$ is prime, we have $|Z_{K_5}(r_5)|\geq k^2$.
\end{lem}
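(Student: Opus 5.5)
We have to show $|Z_{K_5}(r_5)|\ge 2k$ in general, and $|Z_{K_5}(r_5)|\ge k^2$ when $k$ is prime. Inside $K_5=\langle r_5,r_6,r_7\rangle$ (a quotient of $\B_4$), the elements $r_5$ and $r_7$ commute. So the subgroup $H=\langle r_5,r_7\rangle$ is abelian and lies in $Z_{K_5}(r_5)$, and it suffices to bound $|H|$ from below. By Lemma \ref{lem-k_1=k_2=k_3-E7}, both $r_5$ and $r_7$ have order exactly $k$. By Lemma \ref{lem-size-conjugacy-K5-E7}, $r_5\neq r_7$. Hence $H$ is an abelian group generated by two distinct elements of order $k$, and $|H|\ge k$.

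For the general bound I would argue as follows. If $r_7\notin\langle r_5\rangle$, then $\langle r_5\rangle$ is a proper subgroup of $H$, so $[H:\langle r_5\rangle]\ge 2$ and $|H|\ge 2k$. The remaining case is $r_7=r_5^j$ for some $j\not\equiv 1 \pmod k$. Then I would pass back to $P$, exactly as in the proof of Lemma \ref{lem-size-conjugacy-K5-E7}. The relation $r_7=r_5^j$ lifts to $p_7=p_5^j p_2^{n}p_1^{m}$ for some integers $n,m$. Next I would use the braid relations with $p_6$; the elements $p_1,p_2$ commute with $p_5,p_6,p_7$ (they are in $K_1$ and commute with $K_2$'s relevant generators). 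Substituting into $p_6p_7p_6=p_7p_6p_7$ gives a relation of the form $p_6p_5^jp_6=c\,p_5^jp_6p_5^j$, with $c$ in the abelian group $\langle p_1,p_2\rangle$. I would then push this to force $p_5^{j}$ and $p_5$ to satisfy incompatible braid relations. More simply: in $K_5$, $r_7=r_5^j$ is conjugate to $r_6$ and satisfies the braid relation with $r_6$. So $r_5^j$ braids with $r_6$, and $r_5$ braids with $r_6$. Since conjugates of $r_5^j$ are powers of conjugates of $r_5$, comparing orders of $r_5^j$ with those of $r_5$ gives $\gcd(j,k)=1$. Using $r_6 r_5 r_6^{-1}=r_5^{-1}r_6r_5$, I would derive $r_6^{-1}r_5^{j}r_6=r_5r_6r_5^{-1}$ raised to the $j$th power, and hence $r_6=r_5^{\pm}$-conjugate collapses. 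This should force $r_5r_6r_5^{-1}$ to commute with $r_6$ and hence coincide with it. Then $K_5$ is abelian, contradicting Lemma \ref{lem-K3K5-not-abelian-E7}. This gives $|H|\ge 2k$.

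For $k$ prime, $H$ is an abelian group generated by two elements of prime order $k$. So $H$ is elementary abelian of order $k$ or $k^2$. Order $k$ means $r_7\in\langle r_5\rangle$, which the previous paragraph excludes. Hence $|H|=k^2$, and so $|Z_{K_5}(r_5)|\ge k^2$.

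The main obstacle is excluding $r_7\in\langle r_5\rangle\setminus\{r_5\}$. My first attempt would be the lifting-to-$P$ argument from Lemma \ref{lem-size-conjugacy-K5-E7}: conjugating the relation $p_7=p_5^jp_2^np_1^m$ by suitable generators should show that $p_5^{j-1}$ is central up to $\langle p_1,p_2\rangle$. Lemma \ref{lem-SNAQ-is-centreless} together with the observation that $\A[E_7]$ modulo such relations is cyclic would then finish the argument.
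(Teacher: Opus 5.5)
\textbf{Verdict: genuine gap.} Your reduction matches the paper: bound $|\langle r_5\rangle\cdot\langle r_7\rangle|\ge k\,[\langle r_7\rangle:\langle r_5\rangle\cap\langle r_7\rangle]$, show $r_7\notin\langle r_5\rangle$, then take the elementary abelian argument for prime $k$. The one step that carries the lemma, excluding $r_7=r_5^j$ with $r_7\neq r_5$, is not proved. Your ``more simply'' route, which stays inside $K_5$, cannot work, because $K_5$ is only a quotient of $\mathcal B_4$. Here is a counterexample.
\begin{itemize}
\item Take the rotation group of the cube ($\cong\mathfrak S_4$) with $\sigma_1\mapsto a=R_z(\pi/2)$, $\sigma_2\mapsto b=R_x(\pi/2)$ and $\sigma_3\mapsto a^{-1}$.
\item With $3\times 3$ matrices one checks that $aba=bab$ is the half-turn about $(1,0,1)$ and $a^{-1}ba^{-1}=ba^{-1}b$ is the half-turn about $(1,0,-1)$.
\item So this is a non-abelian quotient of $\mathcal B_4$ with $k=4$ and $\sigma_3=\sigma_1^3\neq\sigma_1$, in which $|\langle r_5\rangle\langle r_7\rangle|=k$.
\end{itemize}

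Consequences for your sketches:
\begin{itemize}
\item Your claim that $r_5r_6r_5^{-1}$ must commute with $r_6$ fails here, since $aba^{-1}=R_y(\pi/2)$.
\item The lifted claim that $p_5^{j-1}$ is central modulo $\langle p_1,p_2\rangle$ also fails, since $a^2ba^{-2}=b^{-1}$.
\item The identity $r_6^{-1}r_5^jr_6=(r_5r_6r_5^{-1})^j$ holds in every quotient of $\mathcal B_3$, so it carries no information.
\item Your ``incompatible braid relations'' route also fails. The factor $c\in\langle p_1,p_2\rangle$ commutes with $p_5,p_6$, and the example shows $p_5$ and $p_5^j$ can both braid with $p_6$.
\end{itemize}
Any contradiction must therefore use a generator outside $\{\sigma_5,\sigma_6,\sigma_7\}$.

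\textbf{The missing idea (as in the paper).} Work in $K_3$ and use $q_4$.
\begin{itemize}
\item Write $q_7=q_5^nq_2^m$ and substitute into $q_6q_7q_6=q_7q_6q_7$. Use that $q_2$ commutes with $q_5,q_6$, and that $q_5^nq_6q_5=q_6q_5q_6^n$.
\item This gives $q_5^{n-1}q_6=q_6^nq_5^{n-1}q_2^m$. Right-multiplying by $q_5$ gives $q_6q_5q_6^{n-1}=q_6^nq_7$, that is, $q_5=q_6^{n-1}q_7q_6^{1-n}\in\langle q_6,q_7\rangle$.
\item Since $q_4$ commutes with $q_6$ and $q_7$, it commutes with $q_5$. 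Together with $q_4q_5q_4=q_5q_4q_5$ this forces $q_4=q_5$.
\item Then all of $q_2,q_4,\dots,q_7$ coincide, so $K_3$ is cyclic, contradicting Lemma~\ref{lem-K3K5-not-abelian-E7}.
\end{itemize}
With this step supplied, the rest of your argument, including the prime case, is fine.
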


\begin{proof}
      We have $\langle r_5\rangle\cdot \langle r_7\rangle\subset Z_{K_5}(r_5)$. This gives 
  
  \begin{equation*}
      \begin{aligned}
          |Z_{K_5}|\geq |\langle r_5\rangle \cdot \langle r_7\rangle |&=|\langle r_5\rangle|\left |\frac{\langle r_7\rangle }{\langle r_5\rangle\cap \langle r_7\rangle}\right |\ \text{ by Lemma \ref{lem-k_1=k_2=k_3-E7}}\\ &=k\left|\frac{\langle r_7\rangle}{\langle r_5\rangle\cap\langle r_7\rangle}\right|.
      \end{aligned}
  \end{equation*}
 We  claim that $r_7\notin \langle r_5\rangle$, which implies that $\left|\frac{\langle r_7\rangle}{\langle r_5\rangle\cap\langle r_7\rangle}\right|\geq 2$.
 Assume for a contradiction that there exists $1\leq n\leq k-1$ such that $r_7=r_5^n$. This implies that there exists $0\leq m\leq k-1$ such that $q_7=q_5^nq_2^m$. We get
 \begin{equation*}
     \begin{aligned}
         q_6q_5^nq_6q_2^m&=q_6q_5^nq_2^mq_6 \text{ since $q_2$ commutes with $q_6$}\\
         &=q_6q_7q_6\\
         &=q_7q_6q_7\\
         &=q_5^nq_2^mq_6q_5^nq_2^m\\
         &=q_5^nq_6q_5^nq_2^{2m} \text{ Since $q_2$ commutes with $q_5$ and $q_6$.}
     \end{aligned}
 \end{equation*}
Recall that $q_5^nq_6q_5=q_6q_5q_6^n$, thus $q_5^nq_6q_5^n=q_6q_5q_6^nq_5^{n-1}$ and we obtain $q_6q_5^nq_6q_2^m=q_6q_5q_6^nq_5^{n-1}q_2^{2m}$. Equivalently, we have $q_5^{n-1}q_6=q_6^nq_5^{n-1}q_2^m$. Multiplying on the right by $q_5$ yields the equality $q_5^{n-1}q_6q_5=q_6^nq_7$. But $q_5^{n-1}q_6q_5=q_6q_5q_6^{n-1}$. Thus, we obtain $q_5\in\langle q_6,q_7\rangle$. In particular, this implies that $q_5$ commutes with $q_4$ so that $K_3$ is abelian, contradicting Lemma \ref{lem-K3K5-not-abelian-E7}. This shows the first claim. 

If $k$ is prime, then if any non trivial power of $r_7$ belongs to $\langle r_5\rangle$, so does $r_7$. Hence, by the first claim, we have $\langle r_5\rangle \cap \langle r_7\rangle={\id}$ and the second claim follows. This concludes the proof.
\end{proof}

\begin{lem}\label{lem-size-K5-E7}
 We have $|K_5|\geq 8k$. Moreover, if $k$ is prime, we have $|K_5|\geq 4k^2$.
\end{lem}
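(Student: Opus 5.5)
We apply the Orbit--Stabilizer formula inside $K_5$ to the element $r_5$, namely $|K_5| = |\mathcal C_{K_5}(r_5)|\cdot|Z_{K_5}(r_5)|$, where $\mathcal C_{K_5}(r_5)$ denotes the conjugacy class of $r_5$ in $K_5$. The two factors are bounded by the two preceding lemmas.

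Since $r_5$, $r_6$, $r_7$ are all conjugate in $K_5$, the conjugacy class of $r_5$ is the conjugacy class of the $r_i$'s. By Lemma \ref{lem-size-conjugacy-K5-E7}, this class contains the four distinct elements $r_5, r_6, r_7, r_5r_6r_5^{-1}$, so $|\mathcal C_{K_5}(r_5)|\geq 4$.

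By Lemma \ref{lem-size-centralizer-K5-E7}, $|Z_{K_5}(r_5)|\geq 2k$ in general, and $|Z_{K_5}(r_5)|\geq k^2$ when $k$ is prime. Multiplying the two bounds gives $|K_5|\geq 4\cdot 2k = 8k$ in general, and $|K_5|\geq 4k^2$ when $k$ is prime.

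The only point needing care is that the four elements of Lemma \ref{lem-size-conjugacy-K5-E7} are indeed conjugate to $r_5$ in $K_5$ itself, not merely in $P$. This holds because conjugation by $r_5$, $r_6$ and $r_7$ takes place inside $K_5$, and the braid relations among the $r_i$ (the images of the braid relations of $\sigma_5,\sigma_6,\sigma_7$) make them pairwise conjugate by elements of $K_5$. Everything else is direct substitution.
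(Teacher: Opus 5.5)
Your proposal is correct and follows the paper's proof exactly. Like the paper, you apply Orbit--Stabilizer to $r_5$ in $K_5$, taking the conjugacy-class bound $4$ from Lemma~\ref{lem-size-conjugacy-K5-E7} and the centralizer bounds $2k$ (resp.\ $k^2$ for $k$ prime) from Lemma~\ref{lem-size-centralizer-K5-E7}. Your extra check that these four elements are conjugate to $r_5$ inside $K_5$ itself (for instance $r_6=(r_5r_6)\,r_5\,(r_5r_6)^{-1}$ by the braid relation) is correct, and it makes explicit a detail the paper leaves implicit.
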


\begin{proof}
  Combining Lemmas \ref{lem-size-conjugacy-K5-E7} and \ref{lem-size-centralizer-K5-E7}, the proof follows from the Orbit-Stabilizer formula.
\end{proof}

\begin{lem}\label{lem-size-K3-E7}
    We have $|K_3|\geq 15k|K_5|$.
\end{lem}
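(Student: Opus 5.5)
We mirror the proof of Lemma \ref{lem-size-K3-E6}. The group $K_3$ is generated by $q_2,q_4,q_5,q_6,q_7$. These are the images of $\sigma_2,\sigma_4,\sigma_5,\sigma_6,\sigma_7$, which generate a copy of $\A[A_5]=\B_6$ inside $\A[E_7]$, since $\sigma_2-\sigma_4-\sigma_5-\sigma_6-\sigma_7$ is a path in $E_7$. Hence $K_3$ is a quotient of $\B_6$, and it is non-abelian by Lemma \ref{lem-K3K5-not-abelian-E7}. By Lemma \ref{lem-size-conjugacy-kolay}(ii) with $n=5$, the conjugacy class of $q_2$ in $K_3$ (the image of an extremal generator of $\B_6$) has at least $\binom{6}{2}=15$ elements. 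The Orbit--Stabilizer formula then gives
\[
|K_3|=|\mathrm{Cl}_{K_3}(q_2)|\,|Z_{K_3}(q_2)|\geq 15\,|Z_{K_3}(q_2)|.
\]

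The remaining step is to bound the centralizer, $|Z_{K_3}(q_2)|\geq k|K_5|$. Since $\sigma_2$ commutes with $\sigma_5,\sigma_6,\sigma_7$, the element $q_2$ commutes with every element of $K_4=\langle q_5,q_6,q_7\rangle$. Therefore the set $\langle q_2\rangle\cdot K_4$ is a subgroup of $Z_{K_3}(q_2)$, and its order is given by the product formula:
\[
|\langle q_2\rangle\cdot K_4|=|\langle q_2\rangle|\left|\frac{K_4}{\langle q_2\rangle\cap K_4}\right| = k_2\,|K_5|.
\]
By Lemma \ref{lem-k_1=k_2=k_3-E7}, $k_2=k$, so this equals $k|K_5|$. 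Combining the two displays gives $|K_3|\geq 15k|K_5|$.

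The only point needing care is the application of Kolay's bound. We must check that $q_2$ is indeed the image of an endpoint generator of the $A_5$ chain; in Kolay's lemma the class is that of $\sigma_1$, but all standard generators of $\B_6$ are conjugate, so this does not matter. We must also check that the quotient $K_3$ of $\B_6$ is non-abelian, which is exactly what Lemma \ref{lem-K3K5-not-abelian-E7} provides. Everything else is formal.
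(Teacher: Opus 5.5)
Your proof is correct and is essentially the paper's argument. You use Kolay's bound $\binom{6}{2}=15$ on the conjugacy class of $q_2$ in the non-abelian $\B_6$-quotient $K_3$, the inclusion $\langle q_2\rangle\cdot K_4\subseteq Z_{K_3}(q_2)$ with $|\langle q_2\rangle\cdot K_4|=k|K_5|$ via Lemma \ref{lem-k_1=k_2=k_3-E7}, and then Orbit--Stabilizer; your explicit appeal to Lemma \ref{lem-K3K5-not-abelian-E7} supplies the non-abelian hypothesis of Kolay's lemma, which the paper leaves implicit.
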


\begin{proof}
 Since $K_3$ is a quotient of $\A[A_5]=\langle \sigma_2,\sigma_4,\sigma_5,\sigma_6,\sigma_7\rangle$, by Lemma \ref{lem-size-conjugacy-kolay} the conjugacy class of $q_2$ has cardinality at least $\binom 62=15$. Now, we have $\langle q_2\rangle \cdot K_4\subset Z_{K_3}(q_2)$, hence we obtain 
 \begin{equation*}
 \begin{aligned}
      |Z_{K_3}(q_2)|\geq |\langle q_2\rangle\cdot K_4|&=|\langle q_2\rangle|\left |\frac{\langle K_4\rangle}{\langle q_2\rangle\cap K_4}\right |  \\
      &= k |K_5| \text{ by Lemma \ref{lem-k_1=k_2=k_3-E7}.}
 \end{aligned}
 \end{equation*}
 Therefore, by the Orbit-Stabilizer formula, we obtain $|K_3|\geq 15k|K_5|$. 
\end{proof}

\begin{lem}\label{lem-size-K1-E7}
    We have $|K_1|\geq k|K_3|\geq 15k^2|K_5|\geq 120k^3$. Moreover, if $k$ is prime, we have $|K_1|\geq 60k^4$. In particular, if $k\geq 5$, we have $|K_1|>23\, 040$.
\end{lem}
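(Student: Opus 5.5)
The plan follows the pattern of the proof of Lemma \ref{lem-size-K1-E6}. First I will show $|K_1|\geq k|K_3|$. Every $\sigma_i$ with $i\in\{2,4,5,6,7\}$ commutes with $\sigma_1$ in $E_7$, so $\langle p_1\rangle\cdot K_2$ is a subgroup of $K_1=Z_P(p_1)$. Indeed, $\langle p_1\rangle$ centralizes $K_2$, so this product is a subgroup. Hence
\[
|K_1|\geq|\langle p_1\rangle\cdot K_2|=|\langle p_1\rangle|\,\left|\frac{K_2}{\langle p_1\rangle\cap K_2}\right|=k|K_3|,
\]
where we use $o(p_1)=k$.

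Next I will chain the bounds already established. Lemma \ref{lem-size-K3-E7} gives $|K_3|\geq 15k|K_5|$, so $|K_1|\geq 15k^2|K_5|$. Lemma \ref{lem-size-K5-E7} gives $|K_5|\geq 8k$ in general, which yields $|K_1|\geq 120k^3$. When $k$ is prime it gives $|K_5|\geq 4k^2$, which yields $|K_1|\geq 60k^4$.

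Finally I will check the numerical claim for $k\geq 5$, splitting by the size of $k$.
\begin{itemize}
\item If $k=5$ or $k=7$, then $k$ is prime and $60k^4\geq 60\cdot 625=37\,500>23\,040$.
\item If $k=6$, the general bound gives $120\cdot 216=25\,920>23\,040$.
\item If $k\geq 8$, the general bound gives $120k^3\geq 120\cdot 512=61\,440>23\,040$.
\end{itemize}
This covers all $k\geq 5$.

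There is no real obstacle here. The only point needing care is the arithmetic for small $k$. For $k=5$ the cubic bound $120\cdot125=15\,000$ is too weak, so the prime bound is genuinely needed there. For $k=6$, which is not prime, the cubic bound $25\,920$ just suffices. The remaining cases $k=3,4$ fall below $23\,040$ with these bounds and must be handled separately, as announced in the paper's strategy.
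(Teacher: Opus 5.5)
Your proposal is correct and matches the paper's proof: the subgroup $\langle p_1\rangle\cdot K_2\subseteq Z_P(p_1)$ gives $|K_1|\geq k|K_3|$, and Lemmas~\ref{lem-size-K3-E7} and~\ref{lem-size-K5-E7} then supply the remaining inequalities. The only difference is in the final arithmetic: the paper uses $120k^3$ for every $k\geq 6$ and the prime bound only at $k=5$, so your separate treatment of $k=7$ is valid but not needed.
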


\begin{proof}
We have $\langle p_1\rangle\cdot K_2\subset K_1$, hence we obtain 
 \begin{equation*}
 \begin{aligned}
      |K_1|\geq |\langle p_1\rangle c K_2|&=|\langle p_1\rangle|\left |\frac{\langle K_2\rangle}{\langle p_1\rangle\cap K_2}\right |  \\
      &= k |K_3| \text{ by Lemma \ref{lem-k_1=k_2=k_3-E7}.}
 \end{aligned}
 \end{equation*}  
 The second and third inequalities follow from Lemmas \ref{lem-size-K5-E7} and \ref{lem-size-K3-E7}. For $k\geq 6$, we have $120k^3\geq 25\, 920> 23\, 040$. Since $5$ is prime, for $k=5$, we have $|K_1|\geq 60\times 5^4=37\,500>23\, 040$. This concludes the proof.  
\end{proof}

The only cases which are not handled by Lemma \ref{lem-size-K1-E7} are $k=3,4$. 

\begin{lem}\label{lem-size-K1-k=3-E7}
  If $k=3$, we have $|K_1|>23\, 040$.   
\end{lem}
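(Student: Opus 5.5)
Following the $E_6$ case (Lemma \ref{lem-size-K1-k=3-E6}), I would replace the generic counting by the structure of the complex reflection group obtained from braid groups modulo cube relations. For $\Gamma=E_7$ with $k=3$, the subgroup $K_2=\langle p_2,p_4,p_5,p_6,p_7\rangle\subseteq K_1$ is a quotient of $\langle \sigma_2,\sigma_4,\sigma_5,\sigma_6,\sigma_7\rangle\cong\A[A_5]$ in which the generators satisfy $p_i^3=\id$. Hence $K_2$ is a quotient of $\A[A_5]/\llangle\sigma_2^3\rrangle$. By Coxeter's classical results, $\B_n/\llangle \sigma_1^3\rrangle$ is finite only for $n\le 5$, so for $\B_6$ it is infinite and the $E_6$ argument cannot be copied verbatim. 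I would instead work inside $K_2$ with the sub-chain $\langle p_4,p_5,p_6,p_7\rangle$ (or $\langle p_2,p_4,p_5,p_6\rangle$), which is a quotient of $\A[A_4]/\llangle\sigma_1^3\rrangle\cong G_{32}$.

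The key steps are as follows. First, I would show that $H=\langle p_4,p_5,p_6,p_7\rangle$ is non-abelian. If it were abelian, consecutive generators would both commute and satisfy the braid relation, so they would coincide. Then $p_4=p_5$, and by Lemma \ref{lem-distinct-artin-generators} the group $P$ would be cyclic, a contradiction. Second, $H$ is a non-abelian quotient of $G_{32}$. Using \cite[Table 1]{BMR}, or the known normal subgroup structure of $G_{32}$ (centre $\Z/6$, with central quotient $\mathrm{PSp}_4(3)\times \Z/2$ up to the appropriate twist), every non-abelian quotient of $G_{32}$ has order at least $|\mathrm{PSp}_4(3)|=25\,920$. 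Third, since $H\subseteq K_2\subseteq K_1$, we get $|K_1|\ge 25\,920>23\,040$.

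The main obstacle is the second step. One has to make sure that every non-abelian quotient of $G_{32}$, including quotients in which $p_1$'s image meets $H$ nontrivially or in which part of the centre survives, has order at least $25\,920$. The $E_6$ proof quotes that the smallest non-abelian quotient is $G_{32}/Z(G_{32})$ of order $25\,920$ (the value $29\,920$ printed there appears to be a typo). The point to justify is that $G_{32}/Z(G_{32})\cong \mathrm{PSp}_4(3)$ is simple. Then any proper quotient of $G_{32}$ either contains $\mathrm{PSp}_4(3)$ as a composition factor, which makes it at least that large, or kills the perfect part, which makes it a quotient of $G_{32}^{\mathrm{ab}}\cong\Z/3$ and hence abelian.

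To check this, I would use the fact that $G_{32}\cong \Z/3\times \mathrm{Sp}_4(3)$. A normal subgroup either contains the perfect group $\mathrm{Sp}_4(3)$, giving an abelian quotient, or intersects it inside the centre $\{\pm1\}$. In the latter case the quotient still has $\mathrm{PSp}_4(3)$ as a section and so has order at least $25\,920$. This settles $k=3$ without any GAP computation, apart from confirming the identification of $G_{32}$.
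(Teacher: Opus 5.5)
Your proof is correct and is essentially the paper's argument. The paper also takes the $A_4$ sub-chain $\langle p_2,p_4,p_5,p_6\rangle\subseteq K_1$, views it as a quotient of $\A[A_4]/\llangle\sigma_1^3\rrangle\cong G_{32}$, and bounds it below by $|G_{32}/Z(G_{32})|$, which is indeed $25\,920$ rather than $29\,920$; so the $E_6$ argument does transfer verbatim, and your explicit non-abelianness check and your derivation of the bound from $G_{32}\cong\mathbb{Z}/3\times\mathrm{Sp}_4(3)$ fill in what the paper leaves implicit (just drop the stray claim that the central quotient is $\mathrm{PSp}_4(3)\times\mathbb{Z}/2$, since $G_{32}/Z(G_{32})\cong\mathrm{PSp}_4(3)$).
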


\begin{proof}
    In this case, the group $\langle p_2,p_4,p_5,p_6\rangle \subset K_1$ is a quotient of $\A[A_4]/\llangle \sigma_1^3\rrangle\cong G_{32}$, where $G_{32}$ denotes one of the exceptional complex reflection groups in the Shephard-Todd notation (see \cite{LehrerTaylor} for more details). The cardinality of $G_{32}$ is $155\,520$, and its smallest non-abelian quotient is the simple group $G_{32}/Z(G_{32})$, of size $29\,920$ (see \cite[Table 1]{BMR}). Thus, we have $|K_1|\geq 29\,920>23\, 040$. This concludes the proof.
\end{proof}

From now on, we assume $k=4$. By Lemma \ref{lem-size-K1-E7}, we have $|K_1|\geq 4|K_3|\geq 240|K_5|$. Therefore, we either need to show that $|K_5|\geq 96=23\, 040/240$, or $|K_3|>5\, 760=20\, 040/4$.

\begin{lem}\label{lem-size-conjugacy-K5-k=4-E7}
 The conjugacy class of $r_7$ is of size at least $6$.
\end{lem}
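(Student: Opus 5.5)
Throughout we assume $k=4$, so $K_5$ is a non-abelian quotient of $\langle\sigma_5,\sigma_6,\sigma_7\rangle\cong\A[A_3]=\B_4$ in which $r_5,r_6,r_7$ have order $4$ (Lemma~\ref{lem-k_1=k_2=k_3-E7}). By Lemma~\ref{lem-size-conjugacy-K5-E7} we already know four distinct conjugates of $r_7$, namely $r_5,r_6,r_7$ and $r_5r_6r_5^{-1}=r_6^{-1}r_5r_6$. The plan is to exhibit two more, for instance $r_6r_7r_6^{-1}$ and $r_5r_6r_7r_6^{-1}r_5^{-1}$, and show that all six are pairwise distinct. Under the standard map $\B_4\to\S_4$ these six elements go to the six transpositions of $\S_4$. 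So they are the natural candidates, and their pairwise distinctness is the reflection-type statement behind Kolay's bound $\binom{4}{2}=6$, which we cannot quote directly because Lemma~\ref{lem-size-conjugacy-kolay} only gives $3$ for $n=3$.

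Each coincidence $x=y$ among these six conjugates will be reduced, by conjugating with a suitable element of $K_5$, to one of a short list of basic coincidences. Every pair $(x,y)$ of such reflections in $\B_4$ is conjugate to a pair of either commuting type, with representative $(r_5,r_7)$, or braiding type, with representative $(r_5,r_6)$. The equality $r_5=r_6$ is excluded since $K_5$ is non-abelian (Lemma~\ref{lem-K3K5-not-abelian-E7}). The equality $r_5=r_7$ is excluded by Lemma~\ref{lem-size-conjugacy-K5-E7}. It therefore suffices to check, on the six explicit elements, that each pair is simultaneously conjugate inside $K_5$ to one of these two representative pairs. This is a finite computation in $\B_4$, the analogue of Lemmas~\ref{lem-distinct-commuting-reflections} and~\ref{lem-distinct-reflections-braid}. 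For example, $r_6r_7r_6^{-1}=r_7^{-1}r_6r_7$, and conjugating by $r_7$ turns the equality $r_5=r_7^{-1}r_6r_7$ into $r_7r_5r_7^{-1}=r_6$, that is, $r_5=r_6$ because $r_5$ and $r_7$ commute.

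The main obstacle is that conjugation alone may not reduce every pair. In $\B_4$, two pairs of reflections with the same intersection pattern need not be simultaneously conjugate: the pair $(r_5,\,r_6r_7r_6^{-1})$ is commuting-like in $\S_4$ but is not obviously conjugate to $(r_5,r_7)$. For such leftover pairs, the plan is to argue as in Lemma~\ref{lem-distinct-reflections-braid}. Identifying $x$ and $y$ forces, via braid relations with a third reflection $z$, an identification of two braiding generators, which contradicts non-abelianity. If that also fails, we fall back on the lifting argument of Lemma~\ref{lem-size-conjugacy-K5-E7}. An equality in $K_5$ lifts to an equality $q_i\cdots=q_j\cdots q_2^{n}$ in $K_3$, and then to $P$ with a factor $p_1^m$. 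Centrality together with Lemma~\ref{lem-SNAQ-is-centreless} then gives an identification of generators in $P$, which collapses $P$ to a cyclic group, a contradiction. This lifting is where the explicit relation manipulations become heavy, and it is the step I expect to need the most care.
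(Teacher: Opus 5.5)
Fourteen of your fifteen pairs do reduce by conjugation to $r_5=r_6$, $r_6=r_7$, $r_5=r_7$, or to two braiding generators commuting. The fifteenth does not, and it is not the pair you single out: $(r_5,r_6r_7r_6^{-1})$ maps to $(12),(24)$, and you already disposed of it. The bad pair is $D=r_5r_6r_5^{-1}$, $E=r_6r_7r_6^{-1}$, which maps to $(13),(24)$. These crossing lifts do not commute in $\B_4$, so they are not simultaneously conjugate to $(r_5,r_7)$. Since $r_5r_6r_5^{-1}=r_6^{-1}r_5r_6$, the equality $D=E$ is equivalent to $r_5=r_6^2r_7r_6^{-2}$. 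This genuinely occurs in groups having every property of $K_5$ that your plan uses. In $\mathrm{GL}_2(\mathbb C)$ put $z=\mathrm{diag}(i,1)$ and $y=\frac12\bigl(\begin{smallmatrix}1+i&i-1\\ i-1&1+i\end{smallmatrix}\bigr)$, a reflection of order $4$. Then $yzy=zyz$ and $y^2=\bigl(\begin{smallmatrix}0&-1\\ -1&0\end{smallmatrix}\bigr)$, so $x:=y^2zy^{-2}=\mathrm{diag}(1,i)$ commutes with $z$ and satisfies $xyx=yxy$. Hence $\sigma_1\mapsto x$, $\sigma_2\mapsto y$, $\sigma_3\mapsto z$ is an epimorphism from $\B_4$ onto the finite non-abelian group $\langle y,z\rangle$, a quotient of $\A[A_2]/\llangle\sigma_1^4\rrangle\cong G_8$. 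Its generators have order $4$ and $x\neq z$, yet $xyx^{-1}=y^2zyz^{-1}y^{-2}=yzy^{-1}$. Your six elements give only five there.

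So no argument internal to $K_5$ can separate $D$ from $E$. Conjugation and the trick of Lemma \ref{lem-distinct-reflections-braid} only use the $\B_4$ relations, non-abelianity, $r_5\neq r_7$ and the order $4$, all of which hold in the example. Your lifting fallback does not help either. The lifted relation $p_5=p_6^2p_7p_6^{-2}p_2^np_1^m$ does not split into two sides commuting with complementary sets of generators; neither side visibly commutes with $p_4,p_5,p_6$. So Lemma \ref{lem-SNAQ-is-centreless} gives nothing, and it is not even clear that $D\neq E$ holds.

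The missing idea is to choose the six conjugates so as to exploit $k=4$, as the paper does: take $r_5$, $r_7$ and the orbit $r_7^ir_6r_7^{-i}$ for $0\le i\le 3$. Conjugating by powers of $r_7$, which commute with $r_5$, every coincidence reduces to one of $r_5=r_6$, $r_6=r_7$, $[r_6,r_7]=1$ or $[r_7^2,r_6]=1$. Only the last is not formal. In $\A[A_3]/\llangle\sigma_1^4,[\sigma_3^2,\sigma_2]\rrangle$ one has $\sigma_3^2=\sigma_2^2$ (a GAP check), so $r_7^2=r_6^2$. This lifts to $p_7^2=p_6^2p_2^np_1^m$, and every generator commutes with one of the two sides. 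The element is therefore central, hence trivial by Lemma \ref{lem-SNAQ-is-centreless}, contradicting $o(p_7)=4$. In the example above, these six elements have the six distinct root lines $[1:0],[0:1],[1:\pm1],[1:\pm i]$, consistent with the lemma.
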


\begin{proof}
First, by Lemma \ref{lem-size-conjugacy-K5-E7}, the elements $r_5,r_6$ and $r_7$ are different. We show that for $i=1,2,3$, the element $r_7^ir_6r_7^{-i}$ is different from $r_5,r_6,r_7$. This also implies that they are different from each other. \\
If $r_5=r_7^ir_6r_7^{-i}$, conjugating by $r_7^i$ yields $r_5=r_6$. This implies that $K_5$ is abelian, contradicting Lemma \ref{lem-K3K5-not-abelian-E7}. Similarly, $r_7$ is different from these elements. For $i=1,3$, the equality $r_7^ir_6r_7^{-i}=r_6$ is equivalent in $r_7$ and $r_6$ commuting, again reaching a contradiction.

Finally, the equality $r_7^2r_6r_7^{-2}=r_6$ is equivalent to $r_7^2r_6=r_6r_7^2$. We verify with GAP that in $\A[A_3]/\llangle \sigma_1^4, [\sigma_3^2,\sigma_2]\rrangle$, the equality $\sigma_3^2=\sigma_2^2$ holds. This implies that $r_6^2=r_7^2$, hence there exists $n,m\in\mathbb N$ such that $p_7^2=p_6^2p_2^np_1^m$. Now, every $\sigma_i$ either commutes with $\sigma_7$ or with $\sigma_1,\sigma_2$ and $\sigma_6$. Thus, the element $p_7^2=p_6^2p_2^np_1^m$ is central in $P$. By Lemma \ref{lem-SNAQ-is-centreless}, we get $p_7^2=\id$, contradicting the fact that the order of $p_7$ is $4$. This concludes the proof.
\end{proof} 

\begin{lem}\label{lem-size-centralizer-K5-k=4-E7}
If $|Z_{K_5}(r_7)|\leq 16$, then $|K_3|>5\, 760$ or $|K_5|>96$.  
\end{lem}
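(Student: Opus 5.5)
We work in the case $k=4$, where $K_5$ is a quotient of $\A[A_3]/\llangle\sigma_1^4\rrangle$ generated by $r_5,r_6,r_7$, each of order $4$ (Lemma~\ref{lem-k_1=k_2=k_3-E7}). By Lemma~\ref{lem-size-conjugacy-K5-k=4-E7} the conjugacy class of $r_7$ in $K_5$ has at least $6$ elements. The first step is to pin down $|K_5|$ when the centralizer is small. By the Orbit--Stabilizer formula, $|K_5|=|Z_{K_5}(r_7)|\cdot|\mathrm{cl}(r_7)|$. Moreover $\langle r_5\rangle\langle r_7\rangle\subseteq Z_{K_5}(r_7)$, and the argument of Lemma~\ref{lem-size-centralizer-K5-E7} gives $r_7\notin\langle r_5\rangle$, so $|Z_{K_5}(r_7)|\ge 8$. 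Since $r_5^2\neq r_7^2$ by the same type of argument, we get $|Z_{K_5}(r_7)|\in\{8,16\}$. Hence, under the hypothesis $|Z_{K_5}(r_7)|\le 16$, we either already have $|K_5|>96$, or $|K_5|\le 96$. In the latter case $K_5$ is a non-abelian quotient of $\A[A_3]/\llangle\sigma_1^4\rrangle$ of order at most $96$ in which the generators have order exactly $4$ and satisfy $r_7^2r_6\neq r_6r_7^2$.

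The second step is to enumerate these groups. I would run GAP over all quotients of $\B_4/\llangle\sigma_1^4\rrangle$ of order at most $96$, using \texttt{LowIndexNormalSubgroups} or the \texttt{SmallGroups} library with an epimorphism search. The aim is to check that the only survivors satisfying the constraints above are the few groups of order $24$, $48$ or $96$, essentially extensions of $\S_4$ by a cyclic group of order $2$ or $4$. Lemma~\ref{lem-size-conjugacy-K5-k=4-E7}, together with the requirement that $r_5$ and $r_7$ be distinct commuting elements of order $4$, should already force $|K_5|\ge 48$.

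The third step handles the survivors by going up one level to $K_3$, which is a quotient of $\A[A_5]/\llangle\sigma_1^4\rrangle$. By Lemma~\ref{lem-size-K3-E7}, $|K_3|\ge 15\cdot4\cdot|K_5|$, and this is already greater than $5\,760$ as soon as $|K_5|=96$. For $|K_5|\in\{24,48\}$, I would improve the conjugacy-class estimate for $q_2$ in $K_3$. The plan is to exhibit more than $15$ distinct conjugates, for instance reflections of $\A[A_5]$ that are not Artin-type transpositions, distinguished using the order-$4$ condition as in Lemma~\ref{lem-size-conjugacy-K5-k=4-E7}. Alternatively, one can bound $Z_{K_3}(q_2)$ from below by $\langle q_2\rangle K_4$ together with an extra element commuting with $q_2$. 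The cleanest fallback is a direct GAP check that $\A[A_5]/\llangle\sigma_1^4\rrangle$ has no non-abelian quotient of order at most $5\,760$ in which $\sigma_2,\dots,\sigma_7$ map to elements of order $4$ and the induced $K_5$ has order at most $96$.

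I expect this third step to be the main obstacle. Enumerating quotients of order up to $5\,760$ of a $5$-generator group is expensive. My first attempt would therefore be the conjugacy-class improvement: show that for $k=4$ at least $30$ reflections of $\A[A_5]$ remain pairwise distinct in $K_3$, which would give $|K_3|\ge 30\cdot4\cdot48>5\,760$.
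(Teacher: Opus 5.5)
The proposal has two concrete gaps, and the one case that actually matters is left open.

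\textbf{The claim $r_5^2\neq r_7^2$ is not justified.} It does not follow ``by the same type of argument'' as Lemma~\ref{lem-size-centralizer-K5-E7}. That lemma excludes lifts of the form $q_7=q_5^nq_2^m$, whereas here you must exclude $q_7^2=q_2^nq_5^2$. The cases $n=0,1,3$ can be disposed of by hand, using conjugation and the centreless trick. The case $q_7^2=q_2^2q_5^2$ cannot. The paper settles it only by a GAP computation on $\A[A_5]/\llangle\sigma_1^4,\sigma_1^2\sigma_3^2\sigma_5^2\rrangle$, and the outcome is $|K_3|>5\,760$, not a contradiction.

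\textbf{The arithmetic in your third step fails.} We have $15\cdot 4\cdot 96=5\,760$, so Lemma~\ref{lem-size-K3-E7} gives only $|K_3|\ge 5\,760$ when $|K_5|=96$. Under your own first step, $Z_{K_5}(r_7)=\langle r_5\rangle\langle r_7\rangle$ has order $16$ and $|\mathrm{cl}(r_7)|\ge 6$. Then $|K_5|\le 96$ forces $|K_5|=96$ exactly. So the only surviving case is precisely the one where your inequality becomes an equality.

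\textbf{Your third step is only a plan.} Finding thirty distinct conjugates of $q_2$ in $K_3$, or enumerating quotients of order up to $5\,760$ of a five-generator group, comes with no argument that it succeeds.

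\textbf{The missing idea} is to exhibit one more element of $Z_{K_5}(r_7)$ outside $\langle r_5,r_7\rangle$. This contradicts $|Z_{K_5}(r_7)|\le 16$ directly inside $K_5$.
\begin{enumerate}
\item The full twist $(r_6r_7)^3$ is central in $\langle r_6,r_7\rangle$, so it commutes with $r_7$.
\item Suppose $(r_6r_7)^3=r_5^ir_7^j$. Lifting to $P$ gives $(p_6p_7)^3p_1^{-m}=p_5^ip_7^jp_2^n$.
\item The left side commutes with every $p_l$ except possibly $p_3$ and $p_5$. The right side commutes with $p_3$ and $p_5$. Hence this element is central, and so trivial by Lemma~\ref{lem-SNAQ-is-centreless}.
\item Repeating the argument on $(p_6p_7)^3=p_1^m$ gives $(r_6r_7)^3=\id$.
\item A GAP check on the resulting finite quotient of $\A[A_3]/\llangle\sigma_1^4\rrangle$ by the full-twist relation (order $192$) then shows $|K_5|>96$.
\end{enumerate}

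Putting this together: if $\langle r_5\rangle\cap\langle r_7\rangle\neq\{\id\}$, then $|K_3|>5\,760$. If $(r_6r_7)^3\in\langle r_5,r_7\rangle$, then $|K_5|>96$. Otherwise $|Z_{K_5}(r_7)|\ge 16+1=17$. This is how the paper proves the lemma, and it never needs a strict bound on $|K_3|$ at $|K_5|=96$.
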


\begin{proof}
We have $\langle r_5,r_7\rangle\subset Z_{K_5}(r_7)$.

\underline{Claim 1:} If $|\langle r_5,r_7\rangle|<16$, we have $|K_3|>5\, 760$. 

\textit{Proof of Claim 1.} We show that if $|\langle r_5\rangle\cap\langle r_7\rangle|>1$, we have $|K_3|>5\, 760$. The proof of Lemma \ref{lem-size-conjugacy-K5-E7} tells us that if $i$ is equal to $1$ or $3$, the elements $r_5^i$ and $r_7^j$ are different. Moreover, by Lemma \ref{lem-k_1=k_2=k_3-E7}, if $i=0$ or $j=0$ we have $r_5^i\neq r_7^j$. If $i=2$ and $j=1,3$, then $r_7^2=\id$, again reaching a contradiction. This leaves the case $i=j=2$. If $r_5^2=r_7^2$, by definition there exists $0\leq n\leq 3$ such that $q_2^nq_5^2=q_7^2$. If $n=1,3$, then $q_2^2=\id$, contradicting Lemma \ref{lem-k_1=k_2=k_3-E7}. If $n=0$, conjugating $q_5^2=q_7^2$ by $q_5q_4$ shows that $q_4^2=q_7^2$. But then, there exists $m\in\mathbb N$ such that $p_1^mp_4^2=p_7^2$. Now, every $\sigma_i$ either commutes with $p_7$ or with $p_1$ and $p_4$, so that $p_1^mp_4^2=p_7^2$ is central. By Lemma \ref{lem-SNAQ-is-centreless}, we get $p_7^2=\id$, contradicting the fact that the order of $p_7$ is $4$.

The only remaining case is $q_2^2q_5^2=q_7^2$, which we handle by computer using GAP. In this case, the group $K_3$ is a quotient of $\A[A_5]/\llangle \sigma_1^4, \sigma_1^2\sigma_3^2\sigma_5^2\rrangle$. This group is of order $46\, 080$ and its smallest non-abelian quotient where the order of the generators is $4$ is of cardinality $|11\, 520|>|5\, 760|$. This proves Claim 1.

\underline{Claim 2:} If $(r_6r_7)^3\in\langle r_5,r_7\rangle$, we have $|K_5|>96$.

\textit{Proof of Claim 2.} Let $0\leq i,j\leq 3$ be such that $(r_6r_7)^3=r_5^ir_7^j$. Then there exists $n,m\in\mathbb N$ such that $(p_6p_7)^3=p_5^ip_7^jp_2^np_1^m$, or equivalently $(p_6p_7)^3p_1^{-m}=p_5^ip_7^jp_2^n$.
Now, $(p_6p_7)p_1^{-m}$ commutes with every $p_i$ but possibly $p_3$ and $p_5$. Since $p_5^ip_7^jp_2^n$ commutes with $p_3$ and $p_5$, we get that $(p_6p_7)^3p_1^{-m}$ is central in $P$. By Lemma \ref{lem-SNAQ-is-centreless}, we get $(p_6p_7)^3=p_1^m$. But again, this implies that $(p_6p_7)^3=p_1^m$ is central, hence trivial. Thus, we have $(r_6r_7)^3=\id$.
It remains to prove that if $(r_6r_7)^3=\id$ we have $|K_5|>96$, which we handle with GAP. The group $\A[A_3]/\llangle \sigma_1^4,(\sigma_2\sigma_3)^4\rrangle$ is finite of order 192 and admits no quotient in which the images of $\sigma_1$ and $\sigma_3$ are different (necessary because $r_5\neq r_7$) and have order $4$. Thus, in this case, we have $|K_5|>96$. This proves Claim 2.

We proved that if $|K_5|\leq 96$ and $|K_3|\leq 5\, 760$, we have $|Z_{K_5}(r_7)|\geq |\langle r_5,r_7\rangle|+|\{(r_6r_7)^3\}|=17$. This concludes the proof. 
\end{proof}

\begin{lem}\label{lem-size-K1-k=4-E7}
    If $k=4$, we have $|K_1|>23\, 040$. 
\end{lem}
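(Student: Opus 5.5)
We assemble the bounds already established for $k=4$. Lemma \ref{lem-size-K1-E7} gives $|K_1|\geq 4|K_3|$ and $|K_3|\geq 15\cdot 4\,|K_5|=60|K_5|$, hence $|K_1|\geq 240|K_5|$. So it suffices to show that either $|K_3|>5\,760$ or $|K_5|>96$. We then have $|K_1|>4\cdot 5\,760=23\,040$ or $|K_1|>240\cdot 96=23\,040$, respectively. We argue by contradiction and assume $|K_3|\leq 5\,760$ and $|K_5|\leq 96$.

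Under this assumption, we apply the Orbit--Stabilizer formula in $K_5$ to the element $r_7$. By Lemma \ref{lem-size-conjugacy-K5-k=4-E7}, the conjugacy class of $r_7$ in $K_5$ has at least $6$ elements. By the contrapositive of Lemma \ref{lem-size-centralizer-K5-k=4-E7}, we get $|Z_{K_5}(r_7)|\geq 17$. Indeed, the proof of that lemma shows that $\langle r_5,r_7\rangle$ has order at least $16$ and that $(r_6r_7)^3$ is an extra element of the centralizer outside it. It follows that $|K_5|\geq 6\cdot 17=102>96$, which contradicts $|K_5|\leq 96$.

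We can also sharpen the centralizer bound. The subgroup $\langle r_5,r_7\rangle$ is contained in $Z_{K_5}(r_7)$ and has order at least $16$, and the centralizer contains an element outside it. Since $\langle r_5,r_7\rangle$ is a subgroup of $Z_{K_5}(r_7)$, Lagrange then gives $|Z_{K_5}(r_7)|\geq 32$. This yields $|K_5|\geq 6\cdot 32=192$, so the conclusion holds with a comfortable margin.

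We do not expect any step to be a real obstacle, since all the delicate case analysis (including the GAP verifications) is contained in Lemmas \ref{lem-size-conjugacy-K5-k=4-E7} and \ref{lem-size-centralizer-K5-k=4-E7}. The only point needing care is that the centralizer lemma is stated as an implication. We must use it in contrapositive form, together with the fact that $(r_6r_7)^3$ commutes with $r_7$: it commutes with $r_7$ because it is central in $\langle r_6,r_7\rangle$, being the square of the Garside element of $\A[A_2]$. Combined with Lemmas \ref{lem-size-K1-k=3-E7} and \ref{lem-size-K1-E7}, this completes the proof of Proposition \ref{prop-order-centralizer-E7} for all $k\geq 3$.
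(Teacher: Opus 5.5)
Your proof is correct and is essentially the paper's argument: assuming $|K_3|\le 5\,760$ and $|K_5|\le 96$, you combine the conjugacy-class bound $6$ from Lemma~\ref{lem-size-conjugacy-K5-k=4-E7} with the centralizer bound $17$ (the contrapositive of Lemma~\ref{lem-size-centralizer-K5-k=4-E7}) via Orbit--Stabilizer to get $|K_5|\ge 102>96$. Your Lagrange refinement is also valid but not needed: $Z_{K_5}(r_7)$ properly contains the order-$16$ subgroup $\langle r_5,r_7\rangle$, so it has order at least $32$.
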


\begin{proof}
If $|K_1|\leq 23\, 040$, then $|K_5|\leq 96$. By Lemmas \ref{lem-size-conjugacy-K5-k=4-E7} and \ref{lem-size-centralizer-K5-k=4-E7}, the Orbit-Stabilizer formula implies that $|K_5|\geq 6\times 17=102>96$, contradiction.
\end{proof}

\begin{proof}[Proof of Proposition \ref{prop-SNAQ-E7}]
    Lemmas \ref{lem-size-K1-E7}, \ref{lem-size-K1-k=3-E7} and \ref{lem-size-K1-k=4-E7} prove Proposition \ref{prop-order-centralizer-E7}, which in turn implies Proposition \ref{prop-SNAQ-E7}.
\end{proof}

\subsection{\texorpdfstring{Case $\Gamma=E_8$}{E8}}

For this subsection, we fix $\Gamma=E_8$ and $P$ a non-abelian quotient of $\A[\Gamma]$. Write $p_i$ the image of $\sigma_i$ by this quotient. Moreover, denote by $\C$ the conjugacy class of the $p_i$'s.

The aim of this subsection is to show the following result.

\begin{prop}\label{prop-SNAQ-E8}
    The smallest non-abelian quotient of $\A[E_8]$ is $\W[E_8]/Z(\W[E_8])$. 
\end{prop}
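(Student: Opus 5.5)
We follow the template of the $E_6$ and $E_7$ cases. The order of $\W[E_8]$ is $696\,729\,600$, so $|\W[E_8]/Z(\W[E_8])|=348\,364\,800$. By Proposition \ref{prop-size-conjugacy-class} we have $|\C|\geq 109$. The class of reflections in $\W[E_8]/Z(\W[E_8])$ has exactly $120$ elements, since the $120$ reflections stay distinct modulo the centre $\{\pm 1\}$. That class therefore has centralizer of order $348\,364\,800/120=2\,903\,040$.

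Using only $|\C|\geq 109$, it suffices to prove $|Z_P(p_1)|\geq 348\,364\,800/109$, i.e.\ $|Z_P(p_1)|\geq 3\,196\,008$. We should also check that equality of orders forces $P\cong \W[E_8]/Z(\W[E_8])$. If $|P|=348\,364\,800$, the bound gives $|\C|\le 120$. In that case we aim to show that $p_1$ has order $2$ (see below). Then $P$ is a non-abelian quotient of $\W[E_8]$ of that order, and Remark \ref{rem-SNAQ-W-Case-E} identifies it. As before, the case $k=o(p_1)=2$ is settled directly by Remark \ref{rem-SNAQ-W-Case-E}, so we assume $k\geq 3$.

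For $k\geq3$ we build the same tower of subgroups. Set $K_1=Z_P(p_1)$. Let $K_2=\langle p_2,p_4,p_5,p_6,p_7,p_8\rangle$, which is a quotient of $\A[A_6]=\B_7$. Let $K_3=K_2/(\langle p_1\rangle\cap K_2)$, let $K_4=\langle q_5,\dots,q_8\rangle$, a quotient of $\A[A_4]$, and let $K_5=K_4/(\langle q_2\rangle\cap K_4)$. The arguments of Lemmas \ref{lem-K3K5-not-abelian-E7} and \ref{lem-k_1=k_2=k_3-E7} should carry over verbatim. They use Lemma \ref{lem-SNAQ-is-centreless}, applied to quotients of braid groups with abelianization $\Z$, together with the fact that every $\sigma_i$ commutes either with $\sigma_8$ or with both $\sigma_1$ and $\sigma_2$. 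They give that $K_3,K_4,K_5$ are non-abelian and that all the generator orders equal $k$.

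Next, Lemma \ref{lem-size-conjugacy-kolay} gives conjugacy classes of size at least $\binom72=21$ in $K_3$. In $K_5$, a non-abelian quotient of $\B_5$, it gives classes of size at least $\binom52=10$. For the centralizer of $r_5$ in $K_5$ we use $\langle r_5\rangle\cdot\langle r_7,r_8\rangle$ and argue as in Lemma \ref{lem-size-K5-E6}. This should give $|Z_{K_5}(r_5)|\geq k^2$ and hence $|K_5|\geq 10k^2$, possibly $10k^3$ if the $E_6$-type argument for independent powers works on $\langle r_7,r_8\rangle$. Chaining these gives
\[
|K_1|\geq k|K_3|\geq 21k^2|K_5|\geq 210k^4 .
\]
This exceeds $3\,196\,008$ once $k\geq 12$. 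With the finer bound $|K_5|\ge 10k^3$ it suffices that $k\geq 9$.

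The main obstacle is the small orders, $3\le k\le 8$ or so, exactly as $k=3,4$ were in $E_7$. For $k=3$ we reuse the $G_{32}$ argument. Any non-abelian quotient of the relevant $\A[A_4]/\llangle\sigma^3\rrangle$ has order at least $25\,920$, which is too small. So instead I would look at $\langle p_3,\dots,p_8\rangle$ or $\langle p_2,p_4,\dots,p_8\rangle$, a quotient of $\A[A_6]/\llangle\sigma_1^3\rrangle$. The aim is to show with GAP that this quotient is large enough, or that cubic relations force a contradiction through centrality, as in Lemma \ref{lem-size-K1-k=3-E6}. For $k=4,5,6,7,8$ I would sharpen the conjugacy class counts in $K_3$ and $K_5$ by hand, as in Lemma \ref{lem-size-conjugacy-K5-k=4-E7}, by exhibiting more pairwise distinct conjugates such as $r_8^i r_7 r_8^{-i}$. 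In the remaining borderline cases I would finish with GAP computations on finite quotients such as $\A[A_n]/\llangle\sigma_1^k,\dots\rrangle$. I expect $k=3$ and $k=4$ to be the hardest, and these may need the full computer search.
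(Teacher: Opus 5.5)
Your generic estimate is sound. Centralizing $p_1$, the subgroup $K_2=\langle p_2,p_4,\ldots,p_8\rangle$ is a quotient of $\A[A_6]$, and the centrality arguments do carry over. Every $\sigma_i$ commutes with $\sigma_8$ or with $\sigma_1,\sigma_2$, and $\sigma_7$ commutes with $\sigma_1,\sigma_2,\sigma_5$, which gives $\langle r_5\rangle\cap\langle r_8\rangle=\{\id\}$. You therefore get $|K_1|\ge 21k^2|K_5|\ge 210k^4$, which settles $k\ge 12$. It also settles $k=3$, contrary to what you wrote: $K_5$ is then a non-abelian quotient of $\A[A_4]/\llangle\sigma_1^3\rrangle\cong G_{32}$, so $|K_5|\ge 25\,920$ and $|K_1|\ge 189\cdot 25\,920>3\,196\,008$. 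Your proposed substitute $\A[A_6]/\llangle\sigma_1^3\rrangle$ is the infinite truncated braid group $B_7(3)$, so no finite GAP check is available there.

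The genuine gap is the range $4\le k\le 11$. There you only state that you would sharpen counts by hand and use GAP, and this is exactly where the difficulty of the statement lies.
\begin{itemize}
\item The improved bound $|K_5|\ge 10k^3$ is not justified, and the centrality trick breaks for it. From $r_5^i\in\langle r_7,r_8\rangle$ you only get $p_5^i=w(p_7,p_8)\,p_2^np_1^m$. Here $\sigma_4$ commutes neither with $\sigma_5$ nor with $\sigma_2$, and $\sigma_6$ commutes neither with $\sigma_5$ nor with $\sigma_7$.
\item Even granting that bound, $k\in\{4,5,6\}$ would remain, since $210k^5$ already exceeds $3\,196\,008$ at $k=7$.
\item For $k=4$ your chain needs $|K_5|\ge\lceil 3\,196\,008/336\rceil=9\,512$, where $K_5$ is a non-abelian quotient of $B_5(4)=\A[A_4]/\llangle\sigma_1^4\rrangle$ with generators of order $4$. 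But $B_5(4)$ has such a quotient of order $7\,680$, with derived subgroup $\mathrm{PerfectGroup}(1920,1)$. So no argument intrinsic to $K_5$ can close this case in your tower. You would need extra input from the ambient $E_8$ relations, which you have not identified.
\end{itemize}

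The paper avoids most of this by centralizing the end vertex $p_8$ instead. Its centralizer contains the $\A[E_6]$-quotient $\langle p_1,\ldots,p_6\rangle$, so the whole $E_6$ analysis applies: conjugacy classes of size at least $36$ from the root-system computation, plus a three-step tower down to a quotient $K_7$ of $\A[A_2]$. This yields $|K_1|\ge 360k^5$ and leaves only $k\in\{3,4,5,6\}$, which are closed by specific arguments:
\begin{itemize}
\item $k=3$: the $G_{32}$ argument.
\item $k=4$: $B_5(4)'$ is perfect; its five perfect quotients of order at most $5\,454$ are listed and each is eliminated via centrality and GAP, once by passing to $K_3$.
\item $k=5$: $K_7$ is a quotient of $G_{16}$ with non-trivial centre.
\item $k=6$: an explicit set of $72$ distinct elements in $K_7$.
\end{itemize}
Without analogues of these steps for your larger residual range, the proposal does not prove the statement.
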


By Proposition \ref{prop-size-conjugacy-class}, we have $|\C|\geq 109$. Moreover, we have $|\W[E_8]/Z(\W[E_8])|=348\,364\,800$ and  $\lceil348\,364\,800/109\rceil= 3\, 196\, 008$. Therefore, to prove Proposition \ref{prop-SNAQ-E8}, it is enough to prove the following.

\begin{prop}\label{prop-order-centralizer-E8}
    We have $|Z_P(p_8)|\geq 3\, 196\, 008$, with equality if $P=\W[E_8]/Z(W[E_8])$. 
\end{prop}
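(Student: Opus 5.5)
We follow the same scheme as in the cases $E_6$ and $E_7$, now centralizing the end vertex $\sigma_8$. Number $E_8$ as in Figure~\ref{fig:Cox-graphs-spherical-type}. Then $\sigma_8$ commutes with $\sigma_1,\dots,\sigma_6$, and these generate a copy of $\A[E_6]$. Set $K_1=Z_P(p_8)$ and $k=o(p_8)$. If $k=2$, then $P$ is a quotient of $\W[E_8]$, and Remark~\ref{rem-SNAQ-W-Case-E} gives $|P|\ge|\W[E_8]/Z(\W[E_8])|$, with equality only for that group. So we may assume $k\ge3$.

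Next we define $K_2=\langle p_1,\dots,p_6\rangle\subseteq K_1$ and $K_3=K_2/(\langle p_8\rangle\cap K_2)$. Exactly as in Lemma~\ref{lem-K3K5-not-abelian-E7}, $K_2$ is not abelian, since otherwise all $p_i$ coincide by Lemma~\ref{lem-distinct-artin-generators}. Because $\A[E_6]^{\ab}\cong\Z$, Lemma~\ref{lem-SNAQ-is-centreless} shows that $K_3$ is a non-abelian quotient of $\A[E_6]$. We also get $o(\bar p_i)=k$ in $K_3$ by the centrality trick of Lemma~\ref{lem-k_1=k_2=k_3-E7}. Suppose $\bar p_1^{\,j}=p_8^m$. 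Every generator commutes either with $\sigma_1$ or with $\sigma_8$ (only $\sigma_3$ fails to commute with $\sigma_1$, and it commutes with $\sigma_8$). Hence the element is central, so it is trivial, and therefore $k$ divides $j$.

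This yields the basic bound $|K_1|\ge |\langle p_8\rangle K_2| = k|K_3|$. By Proposition~\ref{prop-SNAQ-E6} we have $|K_3|\ge 51\,840$, so $|K_1|\ge 51\,840\,k$. This already exceeds $3\,196\,008$ once $k\ge 62$. For smaller $k$ we refine the bound by going one level deeper inside $K_3$, reusing the $E_6$ analysis with its constants made explicit. By Proposition~\ref{prop-size-conjugacy-class}(i) the class of $\bar p_1$ in $K_3$ has at least $36$ elements. Its centralizer contains $\langle \bar p_1\rangle\cdot L$, where $L$ is the image of the $\A[A_5]$ generated by the generators commuting with $\sigma_1$. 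These are $\sigma_2,\sigma_4,\sigma_5,\sigma_6$ inside $E_6$, so in fact $L$ is the image of an $\A[A_4]$ of the kind in Lemma~\ref{lem-size-K3-E6}. That lemma gives a lower bound of $10k^3$ up to a factor of $k$, which leads to
\[
|K_1|\ge k\cdot 36\cdot k\cdot 10k^3 = 360k^5 .
\]
This beats $3\,196\,008$ as soon as $k\ge 7$. We need to check whether the chain in Lemmas~\ref{lem-size-K3-E6}--\ref{lem-size-K1-E6} transfers verbatim to the quotient $K_3$; one replaces centrality in $P$ by centrality in $K_3$ combined with the triviality of $Z(K_3)\cap$ (images of the relevant powers). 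For prime $k$ we can improve the exponents as in Lemma~\ref{lem-size-centralizer-K5-E7}.

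The remaining small orders $k\in\{3,4,5,6\}$ are the main obstacle. For $k=3$ we argue as in Lemma~\ref{lem-size-K1-k=3-E6}. The subgroup generated by four consecutive generators commuting with $p_8$ is a quotient of $\A[A_4]/\llangle\sigma_1^3\rrangle\cong G_{32}$. This is not enough on its own, since $29\,920<3\,196\,008$. We therefore use that $K_3$ is a non-abelian quotient of $\A[E_6]/\llangle\sigma_1^3\rrangle$, which is the Shephard--Todd group $G_{35}$'s Artin-type cousin; concretely, we bound $|K_3|$ by the $k=3$ lower bound from the $E_6$ section times the size of the remaining conjugacy-class factor. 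For $k=4,5,6$ we mimic Lemmas~\ref{lem-size-conjugacy-K5-k=4-E7}--\ref{lem-size-K1-k=4-E7}. That means exhibiting extra distinct conjugates and extra centralizing elements (such as suitable powers of $\bar p_i\bar p_j$), and disposing of the finitely many degenerate coincidences by GAP computations on finitely presented quotients like $\A[A_n]/\llangle\sigma_1^k,\dots\rrangle$. I expect the case $k=4$ to be the hardest and the most dependent on computer verification. Finally, equality for $P=\W[E_8]/Z(\W[E_8])$ is immediate, since there $|P|/|\C|=348\,364\,800/120$ exceeds the required bound.
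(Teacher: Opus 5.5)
\paragraph{Where the proposal matches the paper.} Your treatment of $k=2$ and $k\ge 7$ is essentially the paper's, up to the orientation issue below. You use $|K_1|\ge k|K_3|$, where $K_3$ is a non-abelian $\A[E_6]$-quotient whose generators keep order $k$. The $E_6$ chain then supplies $36$ conjugates, an $A_4$-level with $\ge 10$ conjugates, and an $A_2$-level of order $\ge k^2$, giving $|K_1|\ge 360k^5$.

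\paragraph{The gap: the cases $k\in\{3,4,5,6\}$.} Here $360k^5<3\,196\,008$, and these cases carry most of the proof. Saying ``mimic the $E_7$, $k=4$ argument with GAP'' does not indicate how to close the specific deficits. Write $K_5,K_7$ for the $A_4$- and $A_2$-levels, as in the paper. One needs:
\begin{itemize}
\item $|K_7|\ge 42$ for $k=6$;
\item $|K_7|\ge 72$ for $k=5$, against a generic bound of $25$;
\item $|K_5|\ge 5454$ or $|K_3|\ge 799\,002$ for $k=4$, a gain of a factor of about $9$.
\end{itemize}
The paper meets these with specific inputs:
\begin{itemize}
\item For $k=6$, it shows $(s_1s_3)^3\notin\langle s_1\rangle\langle s_3\rangle$, which doubles $36$ to $72$.
\item For $k=5$, $K_7$ is a quotient of $\A[A_2]/\llangle\sigma_1^5\rrangle\cong G_{16}$. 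Its only non-abelian quotient of order $\le 72$ is $\mathfrak A_5$, which is excluded because $(s_1s_3)^3$ is a nontrivial central element of $K_7$.
\item For $k=4$, it searches the perfect quotients of $B_5(4)'$ of order $\le 5454$. There are five, and each is excluded or shown to force $|K_5|>5454$ or $|K_3|>799\,002$.
\end{itemize}
Your $k=3$ sketch is also too weak as stated. You must keep the factor $k$, that is, use $|K_1|\ge 36k^2|K_5|$ with $K_5$ a non-abelian quotient of $G_{32}$. This gives $|K_5|\ge|\mathrm{PSp}_4(3)|=25\,920$; the paper's ``$29\,920$'' is a misprint. Taking ``$E_6$ bound $\times\,36\times k$'' yields only $3\cdot 36\cdot 25\,920=2\,799\,360$.

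\paragraph{Orientation of the chain.} The direction matters when you lift relations back to $P$. If you centralize $\bar p_1$ in $K_3$, the analogue of Lemma~\ref{lem-size-K5-E6} needs a relation such as $p_6^{b}=p_1^mp_2^np_5^ap_8^t$ to be central in $P$. The commutation argument breaks here, because $\sigma_7$ commutes with neither $\sigma_6$ nor $\sigma_8$. Passing to $Z(K_3)$ does not rescue it, since $K_3$ need not be centreless. The fix is to centralize $\bar p_6$ instead, as the paper does: take $\langle q_1,\dots,q_4\rangle$ modulo $\langle q_6\rangle$, then $\langle r_1,r_3\rangle$ modulo $\langle r_2\rangle$. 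The generators surviving to the deeper levels are then among $\sigma_1,\dots,\sigma_4$, all of which commute with $\sigma_7$, so this obstruction disappears.

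\paragraph{The final sentence is wrong.} We have $348\,364\,800/120=2\,903\,040<3\,196\,008$. So for $P=\W[E_8]/Z(\W[E_8])$, which has $k=2$, the bound actually fails. Since only $|\C|\ge 109$ is available, the inequality is needed only for $k\ge 3$. The ``equality'' clause should be dropped, with $k=2$ settled directly by Remark~\ref{rem-SNAQ-W-Case-E}, as you already do.
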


The proof strategy is as follows. We fix the order $k$ of $p_8$ (equivalently, of all $p_i$') and prove a series of lemmas to obtain a lower bound on $|Z_P(p_8)|$ depending on $k$. This will prove the proposition for all cases but $k=3,4,5,6$, which we handle separately. 

Let us first define the groups we need to study and fix some notations.

\begin{defn}

\begin{enumerate}
    \item[(i)] Define $K_1=Z_P(p_8)$ and $k=o(p_8)$.
    \item[(ii)] Define $K_2=\langle p_1,p_2,p_3,p_4,p_5,p_6\rangle\subset K_1$, $K_3=\frac{K_2}{\langle p_8\rangle\cap K_2}$, $q_i$ to be the image of $p_i$ under the quotient and $k_2$ the order of $q_1$ (equivalently, all $q_i$'s).
    \item[(iii)] Define $K_4=\langle q_1,q_2,q_3,q_4\rangle\subset K_3$, $K_5=\frac{K_4}{\langle q_6\rangle \cap K_4}$, $r_i$ to be the image of $q_i$ under the quotient and $k_3$ the order of $r_1$ (equivalently, all $r_i$'s).
    \item[(iv)] Define $K_6=\langle r_1,r_3\rangle\subset K_5$, $K_7=\frac{K_6}{\langle r_2\rangle \cap K_6}$, $s_i$ to be the image of $r_i$ under the quotient and $k_4$ the order of $s_1$ (equivalently, both $s_i$'s).
    \end{enumerate}
\end{defn}

\begin{rem}
 For $k=2$, the group $P$ is a quotient of $\W[\Gamma]$. Since $\mathrm{SNAQ}(\W[\Gamma])=\W[\Gamma]/Z(\W[\Gamma])$ by Remark \ref{rem-SNAQ-W-Case-E}, we can assume that $k\geq 3$.
\end{rem}

\begin{lem}\label{lem-K3-not-abelian-E8}
    The group $K_3$ is not abelian.
\end{lem}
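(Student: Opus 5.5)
The plan is to copy the argument of Lemma \ref{lem-K3K5-not-abelian-E7}. First we show that $K_2$ is not abelian. Then we pass to the central quotient of $K_2$ using Lemma \ref{lem-SNAQ-is-centreless}. Finally we observe that this central quotient is a quotient of $K_3$.

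\emph{Preliminary observation.} With the numbering of Figure \ref{fig:Cox-graphs-spherical-type}, the only generator joined to $\sigma_8$ is $\sigma_7$. Hence $\sigma_8$ commutes with $\sigma_1,\dots,\sigma_6$, and $p_8$ commutes with every generator $p_1,\dots,p_6$ of $K_2$. This gives $\langle p_8\rangle\cap K_2\subseteq Z(K_2)$, which is exactly what makes the argument work. It is also why $K_2\subseteq K_1=Z_P(p_8)$.

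\emph{Step 1: $K_2$ is not abelian.} Suppose $K_2$ is abelian. Then any two generators $p_i,p_j$ with $i,j\le 6$ that are joined in $E_6$ both commute and satisfy the braid relation $p_ip_jp_i=p_jp_ip_j$. This forces $p_i=p_j$. The full subgraph on $\{\sigma_1,\dots,\sigma_6\}$ is connected (it is $E_6$), so $p_1=p_2=\cdots=p_6$. In particular $p_1=p_3$, and $P$ is a quotient of $\A[E_8]/\llangle \sigma_1\sigma_3^{-1}\rrangle$. By Lemma \ref{lem-distinct-artin-generators} this group is $\Z$, so $P$ would be cyclic, a contradiction.

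\emph{Step 2: central quotient.} The group $K_2$ is a quotient of $\langle\sigma_1,\dots,\sigma_6\rangle\cong\A[E_6]$, a group image of a standard parabolic subgroup. Its abelianization is $\Z$ by Remark \ref{rem-generators-odd-conjugate}. So Lemma \ref{lem-SNAQ-is-centreless}(i), applied to the non-abelian quotient $K_2$ of $\A[E_6]$, shows that $K_2/Z(K_2)$ is not abelian.

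\emph{Step 3: conclusion.} By the preliminary observation, $K_3=K_2/(\langle p_8\rangle\cap K_2)$ surjects onto $K_2/Z(K_2)$. A group with a non-abelian quotient is non-abelian, so $K_3$ is not abelian.

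There is no serious obstacle. The only point that needs checking against the labelling of $E_8$ is that $\sigma_8$ commutes with all of $\sigma_1,\dots,\sigma_6$.
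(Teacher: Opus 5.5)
Your proposal is correct and follows the paper's proof: show $K_2$ is non-abelian, apply Lemma \ref{lem-SNAQ-is-centreless} to $K_2$ viewed as a quotient of $\A[E_6]$, and note that $K_2/Z(K_2)$ is a quotient of $K_3$. The differences are only cosmetic. In Step 1 you invoke Lemma \ref{lem-distinct-artin-generators} to conclude that $P$ is cyclic, whereas the paper propagates the equalities directly to $p_6=p_7=p_8$. You also state explicitly the inclusion $\langle p_8\rangle\cap K_2\subseteq Z(K_2)$, which the paper leaves implicit.
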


\begin{proof}
Suppose that $K_2$ is abelian. Then $p_1=p_2=p_3=p_4=p_5=p_6$, because consecutive generators satisfy at the same time a commutation and a braid relation. This implies that also $p_6=p_7=p_8$, which would mean that $P=\langle \sigma_1\rangle$ is cyclic, a contradiction. Thus $K_2$ is not abelian. Now $K_2$ is a quotient of $\A[E_6]$ (generated by $\sigma_1,\dots,\sigma_6$), whose abelianization is $\mathbb Z$. Thus, Lemma \ref{lem-SNAQ-is-centreless} applies and $K_2/Z(K_2)$ is not abelian. Since $K_2/Z(K_2)$ is a quotient of $K_3$, this shows that $K_3$ is not abelian.
\end{proof}

\begin{lem}\label{lem-k_1=k_2-E8}
    We have $k=k_2$.
\end{lem}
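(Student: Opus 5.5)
Since $p_i\mapsto q_i$ under the quotient $K_2\to K_3$, we have $k_2\mid k$, and it suffices to prove $k\mid k_2$, that is, $p_1^{k_2}=\id$. We follow the template of Lemmas \ref{lem-k_1=k_2=k_3-E6} and \ref{lem-k_1=k_2=k_3-E7}, now with $p_8$ playing the role of the distinguished generator.

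First, $q_1^{k_2}=\id$ in $K_3=K_2/(\langle p_8\rangle\cap K_2)$. Hence $p_1^{k_2}\in\langle p_8\rangle$, so there is $m\in\mathbb N$ with $p_1^{k_2}=p_8^m$.

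Next, we show that this common element is central in $P$. With the numbering of Figure \ref{fig:Cox-graphs-spherical-type} for $E_8$ (chain $\sigma_1,\sigma_3,\sigma_4,\sigma_5,\sigma_6,\sigma_7,\sigma_8$ with $\sigma_2$ attached to $\sigma_4$), the generator $\sigma_1$ commutes with every $\sigma_j$ except $\sigma_3$, and $\sigma_8$ commutes with every $\sigma_j$ except $\sigma_7$. So $p_1^{k_2}$ commutes with every $p_j$ for $j\neq 3$. Since $p_1^{k_2}=p_8^m$ and $p_8$ commutes with $p_3$, it also commutes with $p_3$. Thus $p_1^{k_2}$ commutes with all generators, hence lies in $Z(P)$. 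If the labelling in the figure differed, we would instead choose the generator in $K_2$ whose non-commuting neighbours are disjoint from those of $\sigma_8$; any leaf of the $E_6$ subdiagram far from $\sigma_7$ works.

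Finally, as in the earlier sections, we may assume $P$ is centreless. Indeed, by Lemma \ref{lem-SNAQ-is-centreless} (using $\A[E_8]^{\mathrm{ab}}\cong\Z$), a minimal counterexample can be replaced by $P/Z(P)$, which is smaller and still non-abelian. Then $p_1^{k_2}=\id$, so $k\mid k_2$ and $k=k_2$. The only point requiring care is this centrelessness reduction, used in the same way as in the $E_6$ and $E_7$ cases. The computation itself is routine once the commutation pattern of the diagram has been checked.
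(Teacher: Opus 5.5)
Your argument is correct and is essentially the paper's proof. From $q_1^{k_2}=\id$ you get $p_1^{k_2}=p_8^m$, which is central because every generator commutes with $\sigma_1$ or with $\sigma_8$, and Lemma~\ref{lem-SNAQ-is-centreless} then gives $p_1^{k_2}=\id$. The one difference is that you state explicitly the reduction to centreless $P$, which the paper uses tacitly. That reduction is genuinely needed: for $P=\W[E_8]/Z(\W[E_8])\times\mathbb{Z}/3$ with $\sigma_i\mapsto(\bar s_i,1)$, one has $k=6$ but $k_2=2$.
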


\begin{proof}
    Since $p_i$ is sent to $q_i$ we have $k_2|k$. Since $q_1^{k_2}=\id$, there exists $n\in\mathbb N$ such that $p_8^n=p_1^{k_2}$. Now, every $\sigma_i$ either commutes with $\sigma_1$ or $\sigma_8$. Thus, the element $p_8^n=p_1^{k_2}$ is central in $P$. By Lemma \ref{lem-SNAQ-is-centreless}, we get $p_1^{k_2}=\id$. Thus, $k|k_2$ and $k=k_2$.
\end{proof}

Thus, $K_3$ is a non-abelian quotient of $\A[E_6]$ in which the order of the Artin generators is $k$. In particular, all results of Subsection \ref{section-E6} apply. We obtain

\begin{lem}\label{lem-allK-not-abelian-E8}
The groups $K_1,K_3,K_5$ and $K_7$ are not abelian.
\end{lem}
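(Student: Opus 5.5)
The plan is to transport the $E_6$ analysis to $K_3$, using the identification $k=k_2$ from Lemma \ref{lem-k_1=k_2-E8}. The group $K_3$ is a non-abelian quotient of $\langle\sigma_1,\dots,\sigma_6\rangle\cong\A[E_6]$ in which the generators $q_i$ have order exactly $k$. I will treat $K_3$ as playing the role of $P$ in Subsection \ref{section-E6}: the role of $p_1$ there is played by $q_6$ (the extremal vertex of the $E_6$ subdiagram under the labelling used here), and $K_4,K_5,K_6,K_7$ play the roles of $K_2,K_3,K_4,K_5$ there. For $K_1$ there is nothing to prove, since $K_2\subseteq K_1$ and $K_2$ is non-abelian by the proof of Lemma \ref{lem-K3-not-abelian-E8}, so a subgroup of $K_1$ is already non-abelian.

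The one hypothesis to check carefully is the one used throughout Subsection \ref{section-E6}, namely that ``central elements are trivial.'' This came from Lemma \ref{lem-SNAQ-is-centreless}, applied under the standing assumption that $P$ has no non-abelian proper quotient. For $K_3$ that assumption is not automatic, so I will not reuse it. Instead I will repeat the arguments of Lemma \ref{lem-K3K5-not-abelian-E6} directly, since they mainly use that $K_3$ is non-abelian together with the braid/commutation trick.

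For $K_5$: suppose $K_5=K_4/(\langle q_6\rangle\cap K_4)$ is abelian, where $\langle q_6\rangle\cap K_4$ is central in $K_4$. Then two adjacent generators, for example $r_1$ and $r_3$, commute and satisfy a braid relation, so they coincide. Hence $q_1q_3^{-1}\in Z(K_4)$. Commuting this element with $q_4$ gives $[q_3,q_4]=\id$, which forces $q_3=q_4$, and then all $q_i$ in $K_4$ are equal. This in turn makes $q_5$ commute with the common value, so $q_4=q_5=q_6$, and $K_3$ becomes cyclic, contradicting Lemma \ref{lem-K3-not-abelian-E8}.

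For $K_7$: if $K_7=K_6/(\langle r_2\rangle\cap K_6)$ were abelian, then $s_1=s_3$, so $K_7$ is cyclic. Since $\langle r_2\rangle\cap K_6\subseteq Z(K_6)$, the group $K_6/Z(K_6)$ would be cyclic, and so $K_6$ would be abelian. That means $r_1$ and $r_3$ commute, hence $r_1=r_3$, which makes $K_5$ abelian by the same collapsing argument, a contradiction.

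The step I expect to be the main obstacle is making the collapse from ``two adjacent images coincide'' to ``the ambient group is abelian'' valid at each level. The concern is that the quotients $K_5$ and $K_7$ lose information. My first attempt will be to pull each identity back one level using the central intersection, exactly as in Lemma \ref{lem-K3K5-not-abelian-E6}. If that fails at some level, I will fall back on Lemma \ref{lem-SNAQ-is-centreless}(i), applied to the relevant $A_n$-quotient $K_4$ or $K_6$ (each has abelianization a quotient of $\mathbb Z$). This gives that $K_4/Z(K_4)$ and $K_6/Z(K_6)$ are non-abelian, and these are quotients of $K_5$ and $K_7$ respectively, which settles it.
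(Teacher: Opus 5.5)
Your proposal is correct and follows the paper's route. The paper proves this lemma by observing that $K_3$ is a non-abelian quotient of $\A[E_6]$ and transporting Lemma \ref{lem-K3K5-not-abelian-E6}, with $q_6$ in the role of $p_1$ and $K_4,\dots,K_7$ in the roles of $K_2,\dots,K_5$, which is exactly your correspondence. Your caution about central elements is harmless but not needed here, since that $E_6$ lemma only uses that the ambient group is non-abelian and that the relevant cyclic intersections are central, which is what your direct collapse arguments check.
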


\begin{lem}\label{lem-k_1=k_2=k_3=k_4-E8}
    We have $k=k_2=k_3=k_4$. 
\end{lem}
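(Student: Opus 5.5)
We prove the divisibility chain $k_4 \mid k_3 \mid k_2 = k$ and then reverse it, as in Lemmas \ref{lem-k_1=k_2=k_3-E6} and \ref{lem-k_1=k_2=k_3-E7}. Since $p_i$ maps to $q_i$, then to $r_i$, then to $s_i$ under successive quotients, the orders divide: $k_4\mid k_3\mid k_2$. By Lemma \ref{lem-k_1=k_2-E8} we already have $k_2=k$. It therefore suffices to show $k\mid k_4$. That is, we must show that $s_1^{k_4}=\id$ forces $p_1^{k_4}=\id$ in $P$, or the analogous statement for a suitably chosen generator among $p_1,p_3$.

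Start from $s_3^{k_4}=\id$ in $K_7=K_6/(\langle r_2\rangle\cap K_6)$, and lift it step by step.

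First, it gives $r_3^{k_4}=r_2^{a}$ for some $a$.

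Lifting to $K_3$, there is $b$ with $q_3^{k_4}=q_2^{a}q_6^{b}$, since the kernel of $K_4\to K_5$ lies in $\langle q_6\rangle$.

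Lifting to $P$, there is $c$ with
\[
p_3^{k_4}=p_2^{a}p_6^{b}p_8^{c}.
\]

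We now show that this element is central in $P$. The right-hand side commutes with every generator $p_j$ that commutes simultaneously with $p_2$, $p_6$ and $p_8$. The left-hand side commutes with every $p_j$ that commutes with $p_3$. So it is enough that every vertex of $E_8$ is either non-adjacent to $\sigma_3$, or non-adjacent to and distinct from each of $\sigma_2,\sigma_6,\sigma_8$. Using the Bourbaki numbering of Figure \ref{fig:Cox-graphs-spherical-type}, the neighbours of $\sigma_3$ are $\sigma_1$ and $\sigma_4$. The generator $\sigma_1$ commutes with $\sigma_2$, $\sigma_6$ and $\sigma_8$, so it is fine. The generator $\sigma_4$, however, is adjacent to $\sigma_2$, so this choice of index fails. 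I would then switch to $s_1$ instead of $s_3$. The neighbours of $\sigma_1$ consist of $\sigma_3$ alone, and $\sigma_3$ commutes with $\sigma_2$, $\sigma_6$ and $\sigma_8$. Every other $\sigma_j$ commutes with $\sigma_1$, so the criterion holds. Hence
\[
p_1^{k_4}=p_2^{a}p_6^{b}p_8^{c}
\]
is central in $P$.

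Here we may assume $P$ is a minimal candidate for the smallest non-abelian quotient, hence centreless by Lemma \ref{lem-SNAQ-is-centreless}(ii). This is the same implicit reduction used in the $E_6$ and $E_7$ subsections. We get $p_1^{k_4}=\id$, so $k\mid k_4$, and all four orders coincide.

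The main obstacle is precisely this choice of generator: one must pick the index in $K_6=\langle r_1,r_3\rangle$ whose closed neighbourhood in $E_8$ avoids adjacency with every element of $\{\sigma_2,\sigma_6,\sigma_8\}$ used in the successive quotients. The argument goes through with $\sigma_1$, and the divisibility chain then finishes the proof.
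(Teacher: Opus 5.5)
Your proof is correct. It uses the same lift-and-kill-a-central-element technique as the paper, but runs it at a different level.

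\textbf{The paper's route.} The paper gives no separate proof. It notes that $K_3$ is a non-abelian quotient of $\A[E_6]$ whose generators have order $k$. It then invokes the $E_6$ results, namely Lemma \ref{lem-k_1=k_2=k_3-E6} transported along the diagram symmetry $\sigma_1\leftrightarrow\sigma_6$, $\sigma_3\leftrightarrow\sigma_5$ of $E_6$. This gives $k_2=k_3=k_4$, and Lemma \ref{lem-k_1=k_2-E8} supplies $k=k_2$.

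\textbf{Your route.} You pull $s_1^{k_4}=\id$ back through all three quotients to the relation $p_1^{k_4}=p_2^{a}p_6^{b}p_8^{c}$ in $P$. You then verify centrality in $P$ directly: $\sigma_1$ is adjacent only to $\sigma_3$, and $\sigma_3$ commutes with $\sigma_2$, $\sigma_6$ and $\sigma_8$. Switching from $s_3$ to $s_1$ is legitimate, since $k_4$ is defined as the order of $s_1$.

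\textbf{Comparison.} The paper's route buys brevity and reuse of the $E_6$ subsection. Your route buys rigour at the final step. The proof of Lemma \ref{lem-k_1=k_2=k_3-E6} kills an element that is central in the ambient group by appealing to Lemma \ref{lem-SNAQ-is-centreless}. Transported literally, the ambient group becomes $K_3$, so one would need $K_3$ to be centreless. That is not automatic: $K_3$ is just some non-abelian quotient of $\A[E_6]$, not a minimal one. Your version only needs $P$ itself to be centreless, which is the standing reduction used throughout the paper.
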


The following result also holds.

\begin{lem}\label{lem-size-K3-E8}
    We have $|K_3|\geq 36k|K_5|\geq 360k^2|K_7|\geq 360k^4$.
\end{lem}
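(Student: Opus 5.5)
The plan is to transport the $E_6$ analysis of Subsection \ref{section-E6} to $K_3$. The key fact is the remark just made: $K_3$ is a non-abelian quotient of $\A[E_6]=\langle\sigma_1,\dots,\sigma_6\rangle$ whose generators $q_1,\dots,q_6$ all have order $k$ (Lemma \ref{lem-k_1=k_2-E8}). The three inequalities will be proved one at a time by an orbit–stabilizer argument at each level, exactly as in Lemmas \ref{lem-size-K3-E6} and \ref{lem-size-K1-E6}, but with the roles re-indexed: in $E_8$ the sub-diagram $\{1,\dots,6\}$ is $E_6$ and $q_6$ is its end vertex.

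\emph{First inequality.} Apply Proposition \ref{prop-size-conjugacy-class}(i) to the non-abelian quotient $K_3$ of $\A[E_6]$; the argument there only uses that the quotient is non-abelian. This gives that the conjugacy class of $q_6$ in $K_3$ has at least $36$ elements. Next observe that $\sigma_6$ commutes with $\sigma_1,\sigma_2,\sigma_3,\sigma_4$, so $\langle q_6\rangle\cdot K_4\subseteq Z_{K_3}(q_6)$. This yields
\[
|Z_{K_3}(q_6)|\ \ge\ |\langle q_6\rangle|\,\bigl|K_4/(\langle q_6\rangle\cap K_4)\bigr| \;=\; k\,|K_5|,
\]
using $o(q_6)=k$. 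Orbit–stabilizer then gives $|K_3|\ge 36k|K_5|$.

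\emph{Second inequality.} Work in $K_5$, which is a quotient of $\langle\sigma_1,\sigma_2,\sigma_3,\sigma_4\rangle\cong\A[A_4]=\B_5$; note that $\sigma_1,\sigma_3,\sigma_4,\sigma_2$ form an $A_4$ chain. Lemma \ref{lem-allK-not-abelian-E8} says $K_5$ is non-abelian, so Lemma \ref{lem-size-conjugacy-kolay}(ii) gives a conjugacy class of size at least $\binom52=10$ for $r_2$. Since $\sigma_2$ commutes with $\sigma_1$ and $\sigma_3$, we get $\langle r_2\rangle\cdot K_6\subseteq Z_{K_5}(r_2)$, and hence $|Z_{K_5}(r_2)|\ge k|K_7|$ because $o(r_2)=k$ by Lemma \ref{lem-k_1=k_2=k_3=k_4-E8}. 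Therefore $|K_5|\ge 10k|K_7|$, and so $|K_3|\ge 360k^2|K_7|$.

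\emph{Third inequality.} Copy Lemma \ref{lem-size-K5-E6}: the $k^2$ elements $s_1^is_3^j$ with $0\le i,j\le k-1$ are pairwise distinct. Suppose $s_1^{i}=s_3^{j}$ with $j\neq0$. Lifting this relation gives $p_3^{j}=p_1^{i}p_2^{n}p_6^{m}p_8^{l}$ in $P$. The left side commutes with every generator except $p_1$ and $p_4$, and the right side commutes with $p_1$ and $p_4$. Hence this element is central in $P$, so it is trivial by Lemma \ref{lem-SNAQ-is-centreless}, contradicting $o(p_3)=k$.

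The main obstacle is exactly this centrality bookkeeping. One must check, generator by generator, that every $\sigma_i$ commutes with one side of the lifted relation. Moving $p_3^j$ across, we need $p_1$ and $p_4$ to commute with $p_1^ip_2^np_6^mp_8^l$. This requires $\sigma_4$ to commute with $\sigma_1,\sigma_2,\sigma_6,\sigma_8$, which fails for $\sigma_2$ since $\sigma_2$ and $\sigma_4$ are joined in $E_8$. The fix is to choose the side assignment or the roles of $s_1$ and $s_3$ as in the $E_6$ case: solve for $s_1^{i}$ instead, i.e. $p_1^{i}=p_3^{j}p_2^{n}p_6^{m}p_8^{l}$. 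Here $\sigma_1$ commutes with everything except $\sigma_3$. The right side commutes with $p_3$ provided $\sigma_3$ commutes with $\sigma_2,\sigma_6,\sigma_8$, which holds. So $p_1^i$ is central and hence trivial, and symmetrically $j$ is forced as well. With this the chain of inequalities closes.
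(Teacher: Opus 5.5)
Your proof is correct and follows the paper's argument: orbit--stabilizer in $K_3$ with the bound $36$ from Proposition~\ref{prop-size-conjugacy-class}, combined with the chain $|Z_{K_3}(q_6)|\ge k|K_5|\ge 10k^2|K_7|\ge 10k^4$. The paper gets that chain by invoking the $E_6$ lemmas for the quotient $K_3$, while you re-derive it in $E_8$ labels and lift each relation back to $P$; this is a useful precision, since the centrality step uses that $P$ is centreless, which is not established for $K_3$. One small fix: $j=0$ does not follow ``symmetrically'' (you showed $p_3^{j}$ cannot be isolated as a central element), but from $s_3^{j}=s_1^{0}=\mathrm{id}$ together with $o(s_3)=k$ from Lemma~\ref{lem-k_1=k_2=k_3=k_4-E8}.
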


\begin{proof}
    The group $K_3$ is a non-abelian quotient of $\A[E_6]$. Therefore, by Proposition \ref{prop-size-conjugacy-class}, the conjugacy class of $q_1$ has cardinality at least $36$. Moreover, we showed in Lemma \ref{lem-size-K1-E7} that $|Z_{K_3}(q_1)|\geq k|K_5|\geq 10k^2|K_7|\geq 10k^4$. We conclude using the Orbit-Stabilizer formula. 
\end{proof}

Finally, we have

\begin{lem}\label{lem-size-K1-E8}
    We have $|K_1|\geq k|K_3|\geq 36k^2|K_5|\geq 360k^3|K_7|\geq 360k^5$. In particular, if $k\geq 7$, we have $|K_1|> 3\, 196\, 008$.
\end{lem}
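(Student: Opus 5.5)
The plan mirrors the proofs of Lemmas \ref{lem-size-K1-E6} and \ref{lem-size-K1-E7}. The bound $|K_1|\geq k|K_3|$ comes from the inclusion $\langle p_8\rangle\cdot K_2\subseteq K_1$. This inclusion holds because $p_1,\dots,p_6$ all commute with $p_8$, since $\sigma_8$ is only adjacent to $\sigma_7$ in $E_8$.

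The product formula then gives
\[
|\langle p_8\rangle\cdot K_2|=|\langle p_8\rangle|\,\left|\frac{K_2}{\langle p_8\rangle\cap K_2}\right|=k\,|K_3|.
\]
Here we use that the order of $p_8$ is exactly $k$, by definition of $k$.

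Next I substitute the chain of inequalities from Lemma \ref{lem-size-K3-E8}, namely $|K_3|\geq 36k|K_5|\geq 360k^2|K_7|\geq 360k^4$, and multiply it through by $k$. This yields
\[
|K_1|\geq k|K_3|\geq 36k^2|K_5|\geq 360k^3|K_7|\geq 360k^5.
\]
The last step, $|K_7|\geq k^2$, is the $E_8$ analogue of Lemma \ref{lem-size-K5-E6}. It applies because $K_3$ is a non-abelian quotient of $\A[E_6]$ whose generators have order $k$, by Lemmas \ref{lem-k_1=k_2-E8}, \ref{lem-allK-not-abelian-E8} and \ref{lem-k_1=k_2=k_3=k_4-E8}. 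The subgroups $K_5$ and $K_7$ play the roles of $K_3$ and $K_5$ in the $E_6$ subsection.

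For the final numerical claim, take $k\geq 7$. Then $360\cdot 7^5=360\cdot 16807=6\,050\,520>3\,196\,008$, and the bound $360k^5$ is increasing in $k$.

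The only potential obstacle is bookkeeping: checking that the $E_6$ results really transfer to $K_3$ with the correct identification of indices. Concretely, the relevant centralizers and subgroups in $K_3$ must be those used in Lemma \ref{lem-size-K3-E8}. Beyond that, the argument is a direct combination of the orbit-stabilizer and product-of-subgroups estimates already established.
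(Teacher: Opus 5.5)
Your proposal is correct and follows the paper's proof essentially verbatim. The inclusion $\langle p_8\rangle\cdot K_2\subseteq K_1$ together with the product formula gives $|K_1|\geq k|K_3|$, and the remaining inequalities and the bound $360\cdot 7^5=6\,050\,520$ come directly from Lemma \ref{lem-size-K3-E8}. The bookkeeping you flag belongs to that lemma: when transporting the $E_6$ arguments to $K_3$, the centrality steps have to be carried out in $P$, where Lemma \ref{lem-SNAQ-is-centreless} applies, and not in $K_3$.
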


\begin{proof}
    We have $\langle p_8\rangle\cdot K_2\subset K_1$. Thus, we get
    \begin{equation*}
        \begin{aligned}
            |K_1|\geq |\langle p_8\rangle\cdot K_2|&=|\langle p_8\rangle|\left |\frac{K_2}{\langle p_8\rangle\cap K_2}\right|\\
            &=k|K_3|.
        \end{aligned}
    \end{equation*}
We conclude that the claimed inequalities hold by Lemma \ref{lem-size-K3-E8}. For $k\geq 7$, this provides the lower bound $|K_1|\geq 360\times 7^5=6\,050\, 520>3\, 196\, 008$.
\end{proof}

\begin{lem}\label{lem-size-K1-k=3-E8}
    If $k=3$, we have $|K_1|>3\, 196\, 008$.
\end{lem}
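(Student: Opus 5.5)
The plan is to imitate the $k=3$ arguments in types $E_6$ and $E_7$ (Lemmas \ref{lem-size-K1-k=3-E6} and \ref{lem-size-K1-k=3-E7}): find inside $K_1=Z_P(p_8)$ a subgroup that is forced to be a non-abelian quotient of a finite complex reflection group, and bound its order from below. When $k=3$, the group $K_2=\langle p_1,\dots,p_6\rangle$ is a quotient of $\A[E_6]/\llangle \sigma_1^3\rrangle$. The group $K_3$ is a quotient of $K_2$, and it is non-abelian by Lemma \ref{lem-K3-not-abelian-E8}. By Lemma \ref{lem-k_1=k_2-E8} its generators have order exactly $3$. The bound $|K_1|\ge k|K_3|=3|K_3|$ from Lemma \ref{lem-size-K1-E8} then reduces the problem to showing $|K_3|>1\,065\,336$, since $3\,196\,008/3=1\,065\,336$.

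The first choice would be to bound $|K_3|$ from below by the orbit-stabilizer argument of Lemma \ref{lem-size-K3-E8}. The conjugacy class of $q_1$ has size at least $36$, and the centralizer of $q_1$ contains $\langle q_1\rangle\cdot K_4$. In that centralizer, the subgroup $\langle q_2,q_3,q_4,q_5\rangle$ (or any $A_4$-subgraph of $E_6$ commuting with one vertex) is a quotient of $\A[A_4]/\llangle\sigma_1^3\rrangle\cong G_{32}$. It is non-abelian, because otherwise its generators would be identified, and Lemma \ref{lem-distinct-artin-generators} would then make $K_3$ abelian. Since its smallest non-abelian quotient has order $25\,920$, we get $|Z_{K_3}(q_1)|\ge 25\,920$, hence $|K_3|\ge 36\cdot 25\,920=933\,120$. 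This falls just short of $1\,065\,336$, so the argument needs a little more.

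To close the gap, I would improve one of the factors. The cheapest improvement is to use that $\langle q_1\rangle$ together with the $A_4$-part generate more than the $A_4$-part alone. Take the $A_4$-subgraph commuting with $\sigma_1$, intersect it with $\langle q_1\rangle$, and apply the quotient-by-intersection trick already used repeatedly: the centralizer contains $\langle q_1\rangle\cdot H$ with $H$ a non-abelian quotient of $G_{32}$. Because $\langle q_1\rangle\cap H$ is central in $H$, the quotient $H/(\langle q_1\rangle\cap H)$ is a non-abelian quotient of $G_{32}$, so it still has order at least $25\,920$. This gives $|Z_{K_3}(q_1)|\ge 3\cdot 25\,920$ and therefore $|K_3|\ge 36\cdot 77\,760>1\,065\,336$.

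One caveat: in $E_6$ no vertex is disjoint from an $A_4$-subgraph, since the only vertex commuting with a whole $A_4$ would need a different configuration. So the centralizer argument should instead be run directly in $K_1$. There, $\langle p_1,\dots,p_6\rangle$ centralizes $p_8$, which gives $|K_1|\ge 3|K_3|$. I would then bound $|K_3|$ either by a GAP computation of the smallest non-abelian quotient of $\A[E_6]/\llangle\sigma_1^3\rrangle$ (a finite group, $3^{1+?}$ times $\W(E_6)$-related, namely the complex reflection group $G_{35}$ modulo order-$3$ generators, i.e. $G_{34}$-type), or by reusing $G_{32}$ inside $\langle p_8\rangle\cdot\langle p_1,\dots\rangle$.

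The main obstacle is this numerical gap. My fallback is the computer route used elsewhere in the paper: identify $\A[E_6]/\llangle\sigma_1^3\rrangle$ with the finite Shephard–Todd group of type $G_{35}$-order-$3$ analogue (for $A_4$ this is $G_{32}$; for $E_6$ the quotient is still finite, being a quotient of the $G_{34}$-type group), then check with GAP or \cite{BMR} that its smallest non-abelian quotient in which generators have order $3$ exceeds $1\,065\,336$. That gives $|K_1|\ge 3|K_3|>3\,196\,008$.
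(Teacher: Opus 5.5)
Your third paragraph is already a complete proof, and it is essentially the paper's. The paper writes $|K_1|\ge k|K_3|\ge 36k^2|K_5|=324|K_5|$. Here $K_4=\langle q_1,q_2,q_3,q_4\rangle$ centralizes $q_6$ in $K_3$, and $K_5=K_4/(\langle q_6\rangle\cap K_4)$ is a non-abelian quotient of $\A[A_4]/\llangle\sigma_1^3\rrangle\cong G_{32}$. Your bounds $|Z_{K_3}(q_1)|\ge 3\cdot 25\,920$ and $|K_3|\ge 36\cdot 77\,760$ are the same computation, with $q_1$ and $q_6$ exchanged by the symmetry of $E_6$. Your figure $25\,920=|G_{32}/Z(G_{32})|$ is correct. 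The paper's $29\,920$ is a typo; either way $|K_1|\ge 324\cdot 25\,920=8\,398\,080>3\,196\,008$.

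The problem is your fourth paragraph, which retracts this argument on a false premise. Use the paper's Bourbaki numbering of $E_6$: vertices $1,3,4,5,6$ form a chain and $2$ is joined to $4$. Then vertex $1$ commutes with the $A_4$-subgraph $\{2,4,5,6\}$, and vertex $6$ commutes with the $A_4$-subgraph $\{1,3,4,2\}$. This is exactly what the paper uses, both in the $E_6$ section ($\langle p_2,p_4,p_5,p_6\rangle\subseteq Z_P(p_1)$) and for $K_4\subseteq Z_{K_3}(q_6)$ here. So delete the caveat and the fallback.

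The fallback does not work as stated anyway. No Shephard--Todd group is presented as $\A[E_6]/\llangle\sigma_1^3\rrangle$: both $G_{34}$ and $G_{35}=\W[E_6]$ are generated by reflections of order $2$. There is also no reason to expect this group to be finite. Already $\A[A_5]/\llangle\sigma_1^3\rrangle=\B_6/\llangle\sigma_1^3\rrangle$ is infinite by Coxeter's theorem, which says $\B_n/\llangle\sigma_1^k\rrangle$ is finite if and only if $\frac1n+\frac1k>\frac12$.

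Two smaller repairs are needed in the argument you keep:
\begin{itemize}
\item The subgroup $\langle q_2,q_3,q_4,q_5\rangle$ named in your second paragraph is of type $D_4$. It does not centralize $q_1$, since $\sigma_1$ and $\sigma_3$ are joined. Take $H=\langle q_2,q_4,q_5,q_6\rangle$ instead.
\item Centrality of $\langle q_1\rangle\cap H$ in $H$ does not by itself make $H/(\langle q_1\rangle\cap H)$ non-abelian. Non-abelianness comes from Lemma \ref{lem-SNAQ-is-centreless}(i) applied to $H$, a quotient of $\A[A_4]$ with abelianization $\mathbb{Z}$, which makes $H/Z(H)$ non-abelian. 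The paper gets the same conclusion for $K_5$ this way.
\end{itemize}
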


\begin{proof}
    By Lemma \ref{lem-size-K1-E8}, we have $|K_1|\geq 36k^2|K_5|=324|K_5|$. Similarly to Lemma \ref{lem-size-K1-k=3-E6}, the group $K_5$ is a non-abelian quotient of $G_{32}$, hence $|K_5|\geq 29\, 920$.\\ We obtain $|K_1|\geq 324\times 29\, 920=9\,694\, 080> 3\, 196\, 008$. This concludes the proof. 
\end{proof}

The remaining cases are more involved, and for clarity we treat them separately. 

\subsubsection{Case k=4}

In this subsection, we fix $k=4$. By Lemma \ref{lem-size-K1-E8}, we have $|K_1|\geq 4|K_3|\geq 576|K_5|$. Therefore, we either need to show that $|K_3|\geq 799\, 002 =3\, 196\, 008/4$, or that $|K_5|\geq 5\, 454=\lceil 3\, 196\, 008/576\rceil$.

The strategy is as follows. The group $K_5$ is a quotient of the truncated braid group $B_5(4)=\A[A_4]/\llangle \sigma_i^4\rrangle$, whose derived subgroup $B_5(4)'$ is perfect. In particular, any non-abelian quotient of $B_5(4)$ has a perfect derived subgroup. We do computations with GAP to identify the potential small such perfect groups, and find contradictions for each of them. 

For the remainder of this subsection, we fix $Q$ a non-abelian quotient of $B_5(4)$. 

\begin{lem}\label{lem-perfect-quotient-derived-B_5(4)'}
    There are $5$ prefect quotients of $B_5(4)'$ of order less than or equal to $5\, 454$.
\end{lem}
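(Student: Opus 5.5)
This statement is a finite computation, so I would prove it by computer with GAP rather than by a structural argument. The first step is to make the group concrete. Write
\[
B_5(4)=\A[A_4]/\llangle \sigma_1^4,\dots,\sigma_4^4\rrangle,
\]
which is the truncated braid group. By Coxeter's classical results on truncated braid groups, $B_5(4)$ is finite. Equivalently, it is the quotient of the braid group $\B_5$ by fourth powers of generators, and I would confirm its finiteness by coset enumeration in GAP. The plan is then:
\begin{itemize}
\item enter the finitely presented group $\langle \sigma_1,\dots,\sigma_4\mid \text{braid relations},\ \sigma_i^4\rangle$;
\item obtain a faithful permutation representation via \texttt{IsomorphismPermGroup};
\item compute its derived subgroup $D=B_5(4)'$;
\item check that $D$ is perfect, which justifies the claim made just before the lemma.
\end{itemize}
For the perfectness check, I would also note that $\A[A_4]' $ is finitely generated, namely by the elements $\sigma_i\sigma_1^{-1}$ and their conjugates.

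Next, I would enumerate the perfect quotients of $D$ of order at most $5\,454$. The most direct route is to list the normal subgroups $N$ of $D$ with $[D:N]\le 5\,454$ using \texttt{NormalSubgroups} or \texttt{LowIndexNormalSubgroups}. Since a quotient of a perfect group is perfect, every such $D/N$ is automatically perfect. I would then identify these quotients up to isomorphism with \texttt{IdGroup}, or with \texttt{StructureDescription} when the order is not covered by the small groups library, and count the isomorphism classes. The expected list is small: the trivial group, $A_5$, $\mathrm{SL}(2,5)$, and perfect extensions of $A_5$ by small modules. As an independent cross-check, I would compare against the \texttt{PerfectGroup} library, which covers all perfect groups of order below $10^6$. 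For each library group $G$ of order at most $5\,454$, I would test whether an epimorphism $D\to G$ exists using \texttt{GQuotients}.

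The main obstacle is the size of $D$. If $B_5(4)$ is large, for instance on the order of $10^5$ or more with a complicated normal subgroup lattice, then a full \texttt{NormalSubgroups} computation may be expensive. In that case I would first compute the chief series of $D$, or its solvable residual together with the maximal normal subgroups, and use that a perfect quotient of order at most $5\,454$ has very restricted composition factors, essentially $A_5$, $\mathrm{PSL}(2,7)$, $A_6$, $\mathrm{PSL}(2,8)$, and so on. I would then restrict \texttt{GQuotients} to the finitely many library perfect groups of admissible order. A secondary subtlety is the phrase ``prefect quotients'': the count should include the trivial quotient (order $1$) if the statement intends it, and the exact count of $5$ should be read off from the computation under that convention.
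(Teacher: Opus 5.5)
Your plan rests on the claim that $B_5(4)$ is finite, and that claim is false. Coxeter's theorem on truncated braid groups says that $\B_n/\llangle \sigma_1^k\rrangle$ is finite if and only if $\frac1n+\frac1k>\frac12$. Here $n=5$ and $k=4$, and $\frac15+\frac14=\frac{9}{20}<\frac12$, so $B_5(4)$ is infinite. For five strands only $k\le 3$ gives a finite group, and $k=3$ is $G_{32}$. This is also why the paper adds extra relations, for instance passing to $B_5(4)/\llangle N\rrangle$ of order $7\,680$, whenever it needs a finite group. Consequently coset enumeration of $B_5(4)$ does not terminate and there is no faithful permutation representation. Every later step of your main line is therefore unavailable: realising $D$ as a permutation group, checking perfectness there, and computing \texttt{NormalSubgroups}, a chief series or a solvable residual. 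The argument as written does not get off the ground.

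What does work is the route you only mention as a cross-check, and it is the paper's. Since $B_5(4)^{\mathrm{ab}}\cong\mathbb{Z}/4$, the derived subgroup $D=B_5(4)'$ has index $4$, so the Reidemeister--Schreier algorithm gives an explicit finite presentation of $D$. Perfectness is read off from this presentation, since its abelian invariants are trivial. One then runs an exhaustive search for epimorphisms from the finitely presented group $D$ onto every perfect group $G$ in the \texttt{PerfectGroup} library with $|G|\le 5\,454$, for instance with \texttt{GQuotients}. You need to make this the main argument and supply the finite presentation of $D$, rather than extract $D$ from a permutation representation that does not exist.

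Finally, the count $5$ refers to non-trivial perfect quotients. These are $\mathfrak A_5$, the two groups $(\mathbb{Z}/2)^4\rtimes\mathfrak A_5$ of order $960$, and their two non-split central extensions by $\mathbb{Z}/2$ of order $1\,920$. The trivial quotient is not counted, and $\mathrm{SL}(2,5)$ does not occur, contrary to your expected list.
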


\begin{proof}
    We obtain a presentation of $B_5(4)'$ using Reidemeister-Schreier algorithm, then with GAP we do an extensive search of epimorphisms onto perfect groups of order less than or equal to $5\, 454$. 
\end{proof}

The list of these $5$ quotients consists of the alternating group $\mathfrak A_5$, two semi-direct products of $(\Z/2)^4$ by $\mathfrak A_5$ of order 960 and, for each of these semi-direct products, a non split central extension by $\Z/2$ of the group. In GAP, these $5$ groups are respectively called $\pg(60,1),\pg(960,1)$,\\ $\pg(960,6),\pg(1\, 920,1)$ and $\pg(1\, 920, 2)$. 

\begin{lem}\label{lem-pg60-1-not-quotient-E8}
The group $Q'$ is not isomorphic to $\pg(60,1)$.     
\end{lem}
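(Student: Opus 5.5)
The plan is to read the statement in the context where it is used: $Q$ plays the role of $K_5$, so the images $r_1,r_2,r_3,r_4$ of the Artin generators have order exactly $k=4$ by Lemma \ref{lem-k_1=k_2=k_3=k_4-E8}. This hypothesis is essential. Indeed, $\S_5$ is a quotient of $B_5(4)$ with derived subgroup $\mathfrak A_5$, but there the generators map to transpositions, of order $2$. The aim is to assume $Q'\cong\mathfrak A_5$ and show that some power $r_i^2$ must be central, which then propagates to a contradiction in $P$.

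\textbf{Step 1.} Since $B_5(4)^{\mathrm{ab}}\cong\Z/4$, the quotient $Q/Q'$ is cyclic of order dividing $4$, generated by the class of $r_1$. Conjugation on $Q'\cong\mathfrak A_5$ gives a homomorphism $c:Q\to\mathrm{Aut}(\mathfrak A_5)\cong\S_5$ with kernel $C=Z_Q(Q')$. Since $Z(\mathfrak A_5)=1$, we have $C\cap Q'=1$, so $C$ embeds into $Q/Q'$ and is cyclic of order dividing $4$. Moreover $c(Q)\supseteq\mathfrak A_5$, so $c(Q)\in\{\mathfrak A_5,\S_5\}$.

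\textbf{Step 2.} Suppose first that $c(Q)=\mathfrak A_5$. Then $Q=Q'\times C$, and projecting onto $Q'$ gives a surjective homomorphism $B_5\to\mathfrak A_5$. This yields a non-abelian quotient of $\B_5$ of order $60<120$, contradicting Theorem \ref{thm-Kolay}. Hence $c(Q)=\S_5$ and $Q/C\cong\S_5$.

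\textbf{Step 3.} The composite $B_5\to Q\to\S_5$ is then surjective onto a group of order $5!$. By the rigidity part of Theorem \ref{thm-Kolay}, together with the description of such epimorphisms (the generators are sent into a single conjugacy class of elements satisfying the braid relations), I expect $c(r_i)$ to be transpositions. If extracting this directly from Kolay's result is awkward, I would settle it with a short check: a $4$-cycle or a double transposition cannot satisfy the braid and commutation relations of $A_4$ while generating $\S_5$. It follows that $r_i^2\in C$. Since $C$ centralizes $Q'$ and $Q=Q'\langle r_1\rangle$ with $r_1$ commuting with $r_1^2$, the element $r_i^2$ is central in $Q=K_5$.

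\textbf{Step 4.} I then lift this centrality to $P$, following the pattern of Lemmas \ref{lem-k_1=k_2=k_3-E7} and \ref{lem-size-conjugacy-K5-k=4-E7}. Pick a generator $r_j$ of $K_5$, with $j\in\{1,2,3,4\}$, at the end of the $A_4$ chain, chosen so that every $\sigma_i$ of $E_8$ commutes either with $\sigma_j$ or with the generators used to build the quotients, namely $\sigma_6$ and $\sigma_8$ together with the relevant neighbours. Centrality of $r_j^2$, that is $[r_j^2,r_l]=1$ for $l=1,\dots,4$, lifts to relations of the form $[p_j^2,p_l]\in\langle p_6\rangle\langle p_8\rangle$. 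From these relations, combined with the commutation structure of $E_8$, I deduce that $p_j^2$ times a suitable element of $\langle p_6,p_8\rangle$ is central in $P$.

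By Lemma \ref{lem-SNAQ-is-centreless} that element is trivial. Repeating the argument as in the proof of Lemma \ref{lem-size-conjugacy-K5-E7}, one then gets $p_j^2=\id$, which contradicts $k=4$.

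\textbf{Main obstacle.} I expect Step 4 to be the hardest: checking the commutation bookkeeping precisely enough to make the lifted element genuinely central in $P$. As a fallback I would use GAP. One verifies that $B_5(4)/\llangle[\sigma_1^2,\sigma_2]\rrangle$ forces $\sigma_1^2=\sigma_2^2$, in the same way as for $\A[A_3]$ in Lemma \ref{lem-size-conjugacy-K5-k=4-E7}. This gives $r_1^2=r_2^2=r_3^2=r_4^2$, which can then be transported to $P$ as in that lemma.
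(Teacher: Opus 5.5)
Your proof is correct. It reaches the same pivot as the paper's proof, the relation $r_1^2=r_3^2$ in $K_5$, but by a different route.

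The paper obtains this relation purely by computer. It computes, from a Reidemeister--Schreier presentation of $B_5(4)'$, the unique normal subgroup $N$ of $B_5(4)'$ with quotient $\mathfrak A_5$, and checks in GAP that $r_1^2r_3^{-2}\in\llangle N\rrangle$.

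Your Steps 1--3 replace this with a structural argument:
\begin{itemize}
\item The kernel $C$ of $Q\to\mathrm{Aut}(\mathfrak A_5)\cong\S_5$ meets $Q'$ trivially. Hence $[Q,C]\subseteq C\cap Q'=1$, so $C$ is central. This is a cleaner justification than the one you give for $i\neq 1$.
\item Kolay's theorem excludes the image $\mathfrak A_5$.
\item The $r_i$ have order dividing $4$ and lie in one conjugacy class whose image generates $\S_5$, so their images are transpositions. Double transpositions are even. A $4$-cycle has cyclic centralizer of order $4$ in $\S_5$, which forces two adjacent generators to have equal images and collapses everything.
\end{itemize}
This route is computer-free and explains why the relation holds. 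Your remark that the statement must be read with $Q=K_5$ in the $E_8$ context is accurate: $\S_5$ and $\S_5\times\mathbb Z/4$ are quotients of $B_5(4)$ with derived subgroup $\mathfrak A_5$. In exchange, the paper's computation is mechanical. It is the same template the paper reuses for the groups of order $960$ and $1\,920$, where a hand argument would be much harder.

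The main line of your Step 4, however, is not the right mechanism. Lifting the commutator relations $[r_j^2,r_l]=1$ only yields $[p_j^2,p_l]\in\langle p_6\rangle\langle p_8\rangle$. You give no way to turn this into a central element of $P$, and none is evident, since $p_6$ does not commute with $p_5$ or $p_7$.

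What works is to convert centrality into an \emph{equality} first. Since $r_1r_3r_1=r_3r_1r_3$, we have $r_3=(r_1r_3)r_1(r_1r_3)^{-1}$. So centrality of $r_1^2$ in $K_5$ immediately gives $r_3^2=r_1^2$; no GAP check is needed, contrary to your fallback.

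Lifting gives $p_1^2=p_3^2p_6^np_8^m$. The left side commutes with every $p_i$ other than $p_3$, and the right side commutes with $p_3$. So this element is central, hence trivial by Lemma \ref{lem-SNAQ-is-centreless}, contradicting $k=4$. This is exactly the paper's final step, so with your fallback simplified in this way the argument is complete.
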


\begin{proof}
Using GAP, one can compute the normal subgroups $N$ of $B_5(4)'$ for which $B_5(4)'/N\cong\mathfrak A_5$. In fact, there is only $1$. If $K_5'$ is isomorphic to $\mathfrak A_5$, then $K_5$ is a quotient of $B_5(4)/\llangle N\rrangle$ and we can verify with GAP that $\llangle N\rrangle$ contains $r_1^2r_3^{-2}$. In particular, we have $r_1^2=r_3^2$. Thus, there exists $n,m\in\mathbb N$ such that $p_1^2=p_3^2p_6^np_8^m$. Now, every $\sigma_i$ either commutes with $\sigma_1$ or with $\sigma_3,\sigma_6$ and $\sigma_8$. This shows that $p_1^2=p_3^2p_6^np_8^m$ is central in $P$. By Lemma \ref{lem-SNAQ-is-centreless}, we get $p_1^2=\id$, contradiction.
\end{proof}

\begin{lem}\label{lem-pg960-1-not-quotient-E8}
    The group $Q'$ is not isomorphic to $\pg(960,1)$.
\end{lem}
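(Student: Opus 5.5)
I will follow the template of Lemma \ref{lem-pg60-1-not-quotient-E8}. Since here $Q=K_5$, assume $K_5'\cong\pg(960,1)$. Then $K_5'$ is a quotient of $B_5(4)'$ by some normal subgroup $N$, so the first step is to compute with GAP the list of all normal subgroups $N\trianglelefteq B_5(4)'$ with $B_5(4)'/N\cong \pg(960,1)$. These come from the Reidemeister--Schreier presentation already used in Lemma \ref{lem-perfect-quotient-derived-B_5(4)'}, together with the epimorphism search onto this specific group, taken up to automorphisms of the target. I expect only a small number of such kernels, possibly just one.

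For each such $N$, I use that $B_5(4)'$ is characteristic in $B_5(4)$, so $K_5$ is a quotient of $B_5(4)/\llangle N\rrangle$, where $\llangle N\rrangle$ is the normal closure in the whole of $B_5(4)$. I then compute in GAP whether $\llangle N\rrangle$ contains an element that forces a forbidden relation among the generators $r_1,r_2,r_3,r_4$. The most promising candidates are the following.
\begin{enumerate}
\item[(a)] $r_1^2r_3^{-2}$. As in Lemma \ref{lem-pg60-1-not-quotient-E8}, this lifts to $p_1^2=p_3^2p_6^np_8^m$, which is central, so $p_1^2=\id$. This contradicts $k=4$.
\item[(b)] $r_i^2$. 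Here Lemma \ref{lem-k_1=k_2=k_3=k_4-E8} gives a direct contradiction.
\item[(c)] $r_1r_3^{-1}$ or $r_1r_2^{-1}$. These contradict the non-abelianity of $K_7$, respectively of $K_5$, given by Lemma \ref{lem-allK-not-abelian-E8}.
\end{enumerate}
For relations of type (a), I lift through the chain of quotients $K_5\leftarrow K_4\subset K_3\leftarrow K_2$. I then use the commutation pattern of the $E_8$ graph: each $\sigma_i$ commutes either with $\sigma_1$ or with all of $\sigma_3,\sigma_6,\sigma_8$. From this the lifted element is central in $P$, and Lemma \ref{lem-SNAQ-is-centreless} kills it.

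If no such direct relation appears for some kernel, I fall back on the counting route. In that case $K_5$ is a quotient of $B_5(4)/\llangle N\rrangle$, an extension of $\pg(960,1)$ by an abelian group. I then use GAP to enumerate the quotients $K_5$ of this group in which the generators have order exactly $4$ and $r_1\neq r_3$. The aim is to show that every such quotient has order at least $5\,454$, which already suffices by the reduction made at the start of this subsection.

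The main obstacle is the computational step. One must ensure that the list of kernels $N$ is complete up to $\mathrm{Aut}(B_5(4)')$ as seen from $B_5(4)$, and that the normal closure computation in $B_5(4)$ is feasible, since $B_5(4)$ is infinite. To make it feasible, I would first test the membership of $r_1^2r_3^{-2}$ in $\llangle N\rrangle$ by coset enumeration in the finite quotient $B_5(4)/\llangle N\rrangle$. I expect this to succeed in the same way as in the $\pg(60,1)$ case.
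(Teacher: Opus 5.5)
Your proposal has a genuine gap. It never leaves the level of $K_5$, and the case $K_5'\cong\pg(960,1)$ cannot be excluded there.

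\textbf{Your candidate relations cannot lie in $\llangle N\rrangle$.} The kernel $N$ of $B_5(4)'\to K_5'$ is the intersection of $B_5(4)'$ with the kernel of $B_5(4)\to K_5$, so it is normal in $B_5(4)$. Hence $G=B_5(4)/\llangle N\rrangle=B_5(4)/N$ satisfies $G'\cong\pg(960,1)\cong(\mathbb Z/2)^4\rtimes\mathfrak A_5$, a group with trivial centre.
\begin{itemize}
\item[(b)] $r_i^2=\id$ would make $G$ a quotient of $\S_5$, which is impossible since $|G'|=960$.
\item[(c)] $r_1=r_3$ or $r_1=r_2$ would make $G$ abelian.
\item[(a)] Suppose $r_1^2=r_3^2$ in $G$. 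Then $r_1^2$ commutes with $r_1,r_2,r_4$ by the braid relations, and with $r_3$ because it equals $r_3^2$, so it is central. Every $r_i^2$ is conjugate to it, hence equal to it, so $G/Z(G)$ is a quotient of $\W[A_4]=\S_5$. Since $G'\cap Z(G)\subseteq Z(G')=\{\id\}$, the group $G'$ of order $960$ would embed in a group of order at most $120$, which is absurd.
\end{itemize}
So the $\pg(60,1)$ template does not transfer, and GAP will not find any of (a)--(c).

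\textbf{Your counting fallback cannot succeed.} $K_5^{\ab}$ is a quotient of $B_5(4)^{\ab}\cong\mathbb Z/4$. Under the hypothesis of the lemma this gives $|K_5|=960\,|K_5^{\ab}|\le 3840<5454$. The bound $|K_5|\ge 5454$ can therefore never be reached. Your enumeration would return $G$ itself: its generators have order $4$, since otherwise $G$ would be a quotient of $\S_5$, and $r_1\neq r_3$.

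\textbf{What is missing.} The contradiction has to come from how $K_5$ sits inside $P$. The paper takes an element of $\llangle N\rrangle$ that is not forbidden in itself, namely $r_3r_1r_2^2r_1r_3r_1^2$. It lifts this to $K_4\subseteq K_3$, where the relation only holds modulo $\langle q_6\rangle\cap K_4$: one gets $q_3q_1q_2^2q_1q_3q_1^2=q_6^n$ for some $0\le n\le 3$. Each value of $n$ is then treated in the corresponding quotient of $\A[E_6]$ with $\sigma_i^4=\id$. This uses the $E_6$ relations involving $q_5,q_6$, which are invisible in $B_5(4)$.
\begin{itemize}
\item[] For $n\in\{1,3\}$, the group $K_3$ becomes abelian.
\item[] For $n=2$, the generators get order $2$.
\item[] For $n=0$, one obtains $q_1^2=q_3^2$. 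This lifts to $p_1^2=p_3^2p_8^m$, which is central in $P$ and therefore trivial by Lemma~\ref{lem-SNAQ-is-centreless}, contradicting $k=4$.
\end{itemize}
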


\begin{proof}
   Using GAP, one can compute the normal subgroups $N$ of $B_5(4)'$ for which $B_5(4)'/N\cong\pg(960,1)$. In fact, there is only $1$. If $K_5'$ is isomorphic to $\mathfrak \pg(960,1)$, then $K_5$ is a quotient of $B_5(4)/\llangle N\rrangle$ and we can verify with GAP that $\llangle N\rrangle$ contains $r_3r_1r_2^2r_1r_3r_1^2$. In particular, there exists $n\in\mathbb N$ such that $q_3q_1q_2^2q_1q_3q_1^2=q_6^n$. For $n=1,3$, we verify with GAP that this implies that $K_3$ is abelian, contradicting Lemma \ref{lem-allK-not-abelian-E8}. For $n=2$, we verify with GAP that this implies that the order of the $q_i$'s is $2$, contradicting Lemma \ref{lem-k_1=k_2=k_3=k_4-E8}. For $n=0$, we verify with GAP that this implies that $q_1^2=q_3^2$. But then, there exists $m\in\mathbb N$ such that $p_1^2=p_3^2p_8^m$. Now, every $\sigma_i$ either commutes with $\sigma_1$ or with $\sigma_3$ and $\sigma_8$. This shows that $p_1^2=p_3^2p_8^m$ is central in $P$. By Lemma \ref{lem-SNAQ-is-centreless}, we get $p_1^2=\id$, contradiction. \\
\end{proof}

\begin{lem}\label{lem-pg960-6-not-quotient-E8}
    If the group $Q'$ is isomorphic to $\pg(960,6)$, then $|K_3|>799\,002=3\, 196\, 008/4$.
\end{lem}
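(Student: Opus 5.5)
We follow the template of Lemmas \ref{lem-pg60-1-not-quotient-E8} and \ref{lem-pg960-1-not-quotient-E8}, but aim for a size bound on $K_3$ rather than an outright contradiction. The case $\pg(960,6)$ presumably cannot be excluded directly inside $K_5$. So the plan is first to pin down $K_5$, then to translate the resulting relation back to $K_3$, and finally to bound $|K_3|$ through a GAP computation on an explicit finitely presented quotient of $\A[E_6]$.

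\textbf{Pinning down $K_5$.} Using the Reidemeister--Schreier presentation of $B_5(4)'$ from Lemma \ref{lem-perfect-quotient-derived-B_5(4)'}, we compute with GAP the normal subgroups $N$ of $B_5(4)'$ with $B_5(4)'/N\cong\pg(960,6)$; we expect there to be very few, possibly only one. For each such $N$, $K_5$ is a quotient of $B_5(4)/\llangle N\rrangle$. We then search $\llangle N\rrangle$ for a short word $w(r_1,r_2,r_3,r_4)$ whose vanishing is a usable relation, in the same spirit as $r_3r_1r_2^2r_1r_3r_1^2$ in Lemma \ref{lem-pg960-1-not-quotient-E8}.

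\textbf{Lifting to $K_3$.} Since $K_5=K_4/(\langle q_6\rangle\cap K_4)$, the relation gives $w(q_1,\dots,q_4)=q_6^n$ for some $0\le n\le 3$. We split into the four values of $n$.

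\textbf{Bounding $|K_3|$ in each case.} For each $n$, $K_3$ is a quotient of
\[
G_n=\A[E_6]/\llangle \sigma_i^4,\ w(\sigma_1,\dots,\sigma_4)\sigma_6^{-n}\rrangle .
\]
By Lemmas \ref{lem-allK-not-abelian-E8} and \ref{lem-k_1=k_2=k_3=k_4-E8}, in this quotient $K_3$ is non-abelian, the generators have order exactly $4$, and $r_1^2\neq r_3^2$ in $K_5$. The last point is needed to avoid the central-element contradiction used in Lemma \ref{lem-pg60-1-not-quotient-E8}.

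\begin{itemize}
\item For values of $n$ that force collapse (the generators having order $2$, or $K_3$ being abelian), GAP gives a contradiction as before.
\item Where GAP instead forces $q_1^2=q_3^2$, we get $p_1^2=p_3^2p_8^m$ centrally, hence $p_1^2=\id$ by Lemma \ref{lem-SNAQ-is-centreless}. This contradicts $k=4$.
\item In the surviving case(s), we expect $G_n$ (or its central quotient) to be finite. We enumerate with GAP its smallest quotients in which the generators have order $4$, $K_3$ is non-abelian, and $K_5$ has derived subgroup $\pg(960,6)$, and check that all of them have order greater than $799\,002$.
\end{itemize}

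A fallback for the surviving case is to use the orbit-stabilizer approach: $|K_3|\ge 36\cdot|Z_{K_3}(q_1)|$ with $|Z_{K_3}(q_1)|\ge 4|K_5|$. Then $|K_5|\ge 960\cdot|K_5^{\ab}|$ with $K_5^{\ab}$ cyclic of order divisible by $4$, which gives $|K_3|\ge 36\cdot4\cdot3840=552\,960$. This is not quite enough on its own, so we would add one extra element outside $\langle q_1\rangle K_4$ in the centralizer, or a larger conjugacy class bound, to cross $799\,002$.

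\textbf{Main obstacle.} The main difficulty is the surviving case. Coset enumeration in $G_n$ may be expensive, so we would first quotient by the centre and use low-index and perfect-group searches on $G_n'$, mirroring Lemma \ref{lem-perfect-quotient-derived-B_5(4)'}.
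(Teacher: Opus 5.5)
Your plan is essentially the paper's proof, with the same steps. GAP finds a unique $N$, and a relation (the paper gives $(r_2r_3r_4^2)^2$) lies in $\llangle N\rrangle$ and lifts to $K_3$ up to a factor $q_6^n$. The cases $n=1,3$ force $K_3$ abelian and $n=2$ forces order-$2$ generators. For $n=0$, the group $\A[E_6]/\llangle \sigma_1^4,(\sigma_2\sigma_3\sigma_4^2)^2\rrangle$ of order $6\,635\,520$ has only one non-abelian quotient with order-$4$ generators of order at most $799\,002$ (order $103\,680$), and it is excluded because $q_1^2=q_3^2$ holds there, exactly as in your central-element bullet.

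Your orbit--stabilizer fallback is therefore unnecessary. Its claim that $K_5^{\ab}$ has order divisible by $4$ would in any case need justification, since $r_1$ having order $4$ does not prevent $r_1^2\in K_5'$.
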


\begin{proof}
      Using GAP, one can compute the normal subgroups $N$ of $B_5(4)'$ for which $B_5(4)'/N\cong\pg(960,6)$. In fact, there is only $1$. If $K_5'$ is isomorphic to $\mathfrak \pg(960,6)$, then $K_5$ is a quotient of $B_5(4)/\llangle N\rrangle$ and we can verify with GAP that $\llangle N\rrangle$ contains $(r_2r_3r_4^2)^2$. In particular, there exists $n\in\mathbb N$ such that $q_3q_1q_2^2q_1q_3q_2^2=q_6^n$. For $n=1,3$, we verify with GAP that this implies that $K_3$ is abelian, contradicting Lemma \ref{lem-allK-not-abelian-E8}. For $n=2$, we verify with GAP that this implies that the order of the $q_i$'s is $2$, contradicting Lemma \ref{lem-k_1=k_2=k_3=k_4-E8}. For $n=0$, this implies that $K_3$ is a quotient of $\A[E_6]/\llangle \sigma_1^4,(\sigma_2\sigma_3\sigma_4^2)^2\rrangle$. We verify with GAP that this group has order $6\, 635\, 520$ and $3$ non-abelian quotients in which the images of the Artin generators have order $4$, including itself. Among these $3$ quotients, two have order more than $799\, 002$ and one has order $103\, 680$. In the quotient of order $103\, 680$, the equality $q_1^2=q_3^2$ holds. But if $q_1^2=q_3^2$, there exists $m\in\mathbb N$ such that $p_1^2=p_3^2p_8^m$. Similarly to Lemma \ref{lem-pg960-1-not-quotient-E8}, this leads to a contradiction. This shows that $|K_3|>799\, 002$, which concludes the proof. 
\end{proof}

\begin{lem}\label{lem-pg1920-not-quotient-E8}
    If $Q'$ is isomorphic to $\pg(1\, 920,1)$ or $\pg(1\, 920, 2)$, we have $|K_5|>5\, 454$. 
\end{lem}

\begin{proof}
    Using GAP, one can compute the normal subgroups $N$ of $B_5(4)'$ for which $B_5(4)'/N\cong\pg(1920,1)$. In fact, there is only one. The group $B_5(4)/\llangle N\rrangle$ has order $7\, 680$. Again with GAP, we verify that all strict quotients of this group have derived subgroup of order strictly less than $1\, 920$. Thus, by assumption, we have $K_5\cong B_5(4)/\llangle N\rrangle$ and $|K_5|>5\, 454$. The proof is exactly similar for $\pg(1\, 920,2)$. 
\end{proof}

\subsubsection{\texorpdfstring{Case $k=5$}{Case k=5}}

In this subsection, we fix $k=5$. By Lemma \ref{lem-size-K1-E8}, we have $|K_1|\geq 360k^3|K_7|=45\, 000|K_7|$. Therefore, we need to show that $|K_7|\geq 72=\lceil 3\, 196\, 008/45\, 000 \rceil$.

\begin{lem}\label{lem-(s_1s_3)^3-not-1-k=5-E8}
    We have $(s_1s_3)^3\neq \id$. In particular, the centre of $K_7$ is not trivial. 
\end{lem}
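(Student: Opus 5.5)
The argument mirrors the earlier lifting arguments (Lemma \ref{lem-k_1=k_2=k_3-E7}, Claim 2 of Lemma \ref{lem-size-centralizer-K5-k=4-E7}): assume the relation holds in $K_7$, pull it back through $K_5$, $K_3$ and $P$, show it produces a central element of $P$, and conclude with Lemma \ref{lem-SNAQ-is-centreless}.

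\textbf{Lifting the relation.} Suppose $(s_1s_3)^3=\id$ in $K_7=K_6/(\langle r_2\rangle\cap K_6)$. Then $(r_1r_3)^3=r_2^a$ for some $a$. Passing from $K_5$ to $K_3$, we get $(q_1q_3)^3=q_2^a q_6^b$. Passing from $K_3$ to $P$, we get
\[
(p_1p_3)^3=p_2^ap_6^bp_8^c
\]
for some $a,b,c\in\mathbb N$.

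\textbf{Centrality.} Since $\sigma_1\sigma_3$ is the Coxeter element of the $A_2$ parabolic $\langle\sigma_1,\sigma_3\rangle$, the element $(\sigma_1\sigma_3)^3=\Delta_{13}^2$ is central in that parabolic. It also commutes with every generator not adjacent to $\sigma_1$ or $\sigma_3$, that is, with $\sigma_2,\sigma_5,\sigma_6,\sigma_7,\sigma_8$. So the left-hand side commutes with every $p_i$ except possibly $p_4$. On the right-hand side, $p_2$, $p_6$ and $p_8$ all commute with $p_1$ and $p_3$. I must double-check this against the paper's numbering of $E_8$; if $\sigma_2$ or $\sigma_6$ is adjacent to $\sigma_1$ or $\sigma_3$, I will redistribute the factors as in the earlier lemmas. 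Once the commutations are confirmed, the element $x=(p_1p_3)^3p_8^{-c}=p_2^ap_6^b$ is checked generator by generator, each time using whichever expression commutes with that generator; this covers all generators, including $p_4$. Hence $x$ is central in $P$. We may assume $P$ is a minimal non-abelian quotient, so $P$ is centreless by Lemma \ref{lem-SNAQ-is-centreless}, and therefore $(p_1p_3)^3=p_8^c$. This element is again central, hence trivial. In particular, $\langle p_1,p_3\rangle$ is a quotient of $\A[A_2]/\llangle(\sigma_1\sigma_2)^3,\sigma_1^5\rrangle$.

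\textbf{Contradiction.} Set $\Delta=\sigma_1\sigma_2\sigma_1$. In $\A[A_2]$ we have $\Delta^2=(\sigma_1\sigma_2)^3$, so $\Delta^2=1$ in the quotient. Conjugation by $\Delta$ swaps $\sigma_1$ and $\sigma_2$. The quotient is $\Z/3*\Z/2$ with relation $\sigma_1^5=1$, and here $\sigma_1=u^{-1}\Delta$ where $u=\sigma_1\sigma_2$ has order $3$. Its order-$5$ quotients are related to $\mathfrak A_5$, so the contradiction must come from the ambient structure, not from $\langle p_1,p_3\rangle$ alone. The cleaner route is to propagate the relation by conjugation: conjugating by suitable elements transports $(p_1p_3)^3=1$ to $(p_ip_j)^3=1$ for every edge $\{i,j\}$ of the Coxeter graph. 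Then $P$ is a quotient of $\A[E_8]/\llangle(\sigma_i\sigma_j)^3,\sigma_i^5\rrangle$. This is a finite truncated group in the Coxeter-type family, and I would check with GAP, or via Coxeter's theorem on truncated braid groups $B_n(k)$, that it forces the generators to have order dividing $2$ or forces $P$ to be abelian. Either conclusion contradicts $k=5$ via Lemma \ref{lem-k_1=k_2=k_3=k_4-E8} or Lemma \ref{lem-allK-not-abelian-E8}.

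\textbf{Second assertion.} With $(s_1s_3)^3\neq\id$ established, $(s_1s_3)^3$ is the image of the central element $\Delta^2$ of $\langle\sigma_1,\sigma_3\rangle\cong\A[A_2]$, so it is central in $K_7=\langle s_1,s_3\rangle$. Being nontrivial, it shows $Z(K_7)\neq\{\id\}$.

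\textbf{Main obstacle.} The delicate step is the centrality argument. The commutation pattern depends on the exact labelling of $E_8$, and the lifted element may need the GAP verification to close the case.
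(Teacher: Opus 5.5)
There is a genuine gap in your centrality step. In the paper's labelling of $E_8$, $\sigma_1,\sigma_3,\sigma_4,\dots,\sigma_8$ form a chain and $\sigma_2$ is attached to $\sigma_4$ (Bourbaki's convention). This labelling is forced by three facts in the paper: $K_2=\langle p_1,\dots,p_6\rangle\subseteq Z_P(p_8)$, $q_6$ centralizes $K_4=\langle q_1,\dots,q_4\rangle$, and $r_2$ centralizes $K_6=\langle r_1,r_3\rangle$. With this labelling, neither side of your $x=(p_1p_3)^3p_8^{-c}=p_2^ap_6^b$ is known to commute with $p_4$, because $\sigma_4$ is adjacent to both $\sigma_3$ and $\sigma_2$. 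Likewise for $p_7$, which is adjacent to both $\sigma_8$ and $\sigma_6$.

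The grouping that does work is the paper's, $(p_1p_3)^3p_2^{-a}=p_6^bp_8^c$. The left side commutes with every generator except possibly $p_4$, and the right side with every generator except possibly $p_5,p_7$. This only yields $(p_1p_3)^3=p_2^a$ with $0\le a\le 4$. The factor $p_2^a$ cannot be removed by a second centrality argument, since both $(p_1p_3)^3$ and $p_2$ fail to commute with $p_4$. Nor can abelianization remove it: $a=1$ is compatible with $P^{\mathrm{ab}}\cong\mathbb Z/5$, as $6\equiv 1\pmod 5$. So your conclusion $(p_1p_3)^3=p_8^c=\id$ is not justified.

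As a result, your contradiction step covers only the case $a=0$. The edge-propagation trick turns $(\sigma_1\sigma_3)^3=1$ into $(\sigma_i\sigma_j)^3=1$ on every edge. A relation $(\sigma_1\sigma_3)^3=\sigma_2^a$ with $a\neq 0$ is not of that form, so the reduction says nothing about it. The paper treats all five values of $a$ directly. For each $0\le a\le 4$, a GAP computation shows that $\A[E_8]/\llangle \sigma_1^5,(\sigma_1\sigma_3)^3\sigma_2^{-a}\rrangle$ is trivial, which contradicts $P$ being non-abelian. For $a=1$ this group surjects onto $\mathbb Z/5$, so what is really used there is that it is abelian. Your lifting of the relation from $K_7$ back to $P$, and your deduction of the second assertion from the first, are correct and agree with the paper.
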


\begin{proof}
    If $(s_1s_3)^3=\id$, there exists $n,m,t\in\mathbb N$ such that $(p_1p_3)^3=p_2^np_6^mp_8^t$. Now, every $\sigma_i$ either commutes with $\sigma_1,\sigma_3$ and $\sigma_2$ or with $\sigma_6$ and $\sigma_8$. This shows that $(p_1p_3)^3p_2^{-n}$ is central in $P$. By Lemma \ref{lem-SNAQ-is-centreless}, we obtain $(p_1p_3)^3=p_2^n$. For $0\leq n\leq 4$, we verify with GAP that the group $\A[E_8]/\llangle \sigma_1^5, (\sigma_1\sigma_3)^3\sigma_2^{-n}\rrangle$ is trivial, contradiction. Thus $(s_1s_3)^3\neq \id$, showing in particular that $Z(K_7)\neq \{\id\}$ since $(s_1s_3)^3$ is central in $K_7$. This concludes the proof. 
\end{proof}

\begin{lem}\label{size-K7-k=5-E8}
  If $k=5$, we have $|K_7|>72$. 
\end{lem}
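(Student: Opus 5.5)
We aim to show $|K_7|>72$. The plan is to combine the structural information we already have on $K_7$ with an orbit–stabilizer count. By construction, $K_7=\langle s_1,s_3\rangle$ is a quotient of $\A[A_2]=\B_3$, and $s_1,s_3$ are conjugate. By Lemma \ref{lem-allK-not-abelian-E8}, $K_7$ is non-abelian, and by Lemma \ref{lem-k_1=k_2=k_3=k_4-E8} the generators $s_1,s_3$ have order exactly $5$. So $K_7$ is a non-abelian quotient of the truncated braid group $\B_3(5)=\A[A_2]/\llangle \sigma_1^5\rrangle$. By Coxeter's theorem this group is finite, of order $600$: it is the binary icosahedral group $2I$ (of order $120$) times the cyclic group $\Z/5$. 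Its central quotient is $\mathfrak A_5$.

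\textbf{Step 1 (counting).} By Lemma \ref{lem-size-conjugacy-kolay}(i), the conjugacy class of $s_1$ in $K_7$ has at least $3$ elements. The centralizer $Z_{K_7}(s_1)$ contains $\langle s_1\rangle\cdot Z(K_7)$. By Lemma \ref{lem-(s_1s_3)^3-not-1-k=5-E8}, the element $z=(s_1s_3)^3$ is a non-trivial central element. If $z\notin\langle s_1\rangle$, then $|Z_{K_7}(s_1)|\ge 10$, so $|K_7|\ge 30$. This is not yet enough, so we refine the argument using the structure of $\B_3(5)$.

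\textbf{Step 2 (structural).} Every quotient of $\B_3(5)\cong 2I\times\Z/5$ in which the generators have order $5$ and the group is non-abelian must map onto $\mathfrak A_5$. The reason is that $2I$ has only the normal subgroups $1$, $\Z/2$ and $2I$, and killing $2I$ would make the quotient abelian. So $K_7$ is one of the following:
\begin{itemize}
\item $2I\times\Z/5$ (order $600$),
\item $\mathfrak A_5\times\Z/5$ (order $300$),
\item $2I$ or $\mathfrak A_5$, possibly glued with the cyclic factor.
\end{itemize}
Here $z=(s_1s_3)^3$ is exactly $\Delta^2$, which generates the centre. Its image is trivial precisely when the $\Z/5$-part dies and the $\Z/2$-part dies. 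Lemma \ref{lem-(s_1s_3)^3-not-1-k=5-E8} excludes the case $K_7\cong\mathfrak A_5$, and the other candidates have order at least $120>72$. Hence it suffices to rule out $K_7\cong\mathfrak A_5$. Concretely, I would verify with GAP that the only non-abelian quotients of $\A[A_2]/\llangle\sigma_1^5\rrangle$ with generator images of order $5$ have orders in $\{60,120,300,600\}$, and that the order-$60$ one satisfies $(\sigma_1\sigma_2)^3=\id$.

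\textbf{Main obstacle.} The main obstacle is justifying the list of quotients cleanly, in particular that no quotient lies strictly between these in size below $72$. Doing this by hand requires the normal subgroup structure of $2I\times\Z/5$ (its normal subgroups are products of normal subgroups of each factor, together with possible diagonal ones, since the factors have coprime orders). Because the orders are coprime, all normal subgroups are direct products. This makes the enumeration rigorous: the quotient is non-abelian, so the $2I$-factor maps onto $2I$ or $\mathfrak A_5$. In every case except $\mathfrak A_5$ itself, the order is at least $120$, or $300$ if the $\Z/5$ factor survives. The case $\mathfrak A_5$ is exactly where $z$ is trivial, and Lemma \ref{lem-(s_1s_3)^3-not-1-k=5-E8} forbids it. Hence $|K_7|\ge 120>72$.
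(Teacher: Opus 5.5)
Your argument is essentially the paper's. The paper views $K_7$ as a quotient of $\A[A_2]/\llangle\sigma_1^5\rrangle\cong G_{16}$ (order $600$) and uses that the only non-abelian quotient of order at most $72$ is $G_{16}/Z(G_{16})\cong\mathfrak A_5$. It excludes that case because $K_7$ contains the non-trivial central element $(s_1s_3)^3$. Your Step~2 is the same argument, and Step~1 is unnecessary.

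One justification needs correcting. Since $|2I|=120$ is divisible by $5$, the factors of $2I\times\mathbb Z/5$ do \emph{not} have coprime orders, so coprimality does not show that every normal subgroup is a direct product.

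Your conclusion is still true, for instance because $2I$ is perfect:
\begin{itemize}
\item $2I$ is the derived subgroup of $2I\times\mathbb Z/5$, so its image in $K_7$ is $K_7'$.
\item $K_7'$ is a quotient of $2I$, hence isomorphic to $2I$ or $\mathfrak A_5$; it is non-trivial because $K_7$ is non-abelian.
\item $K_7/K_7'$ is a quotient of $\mathbb Z/5$.
\end{itemize}
Hence $|K_7|\in\{60,120,300,600\}$. The case $|K_7|=60$ forces $K_7=K_7'\cong\mathfrak A_5$, which is centreless. This contradicts the preceding lemma, which gives the non-trivial central element $(s_1s_3)^3\neq\id$ in $K_7$.

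Equivalently, by Goursat's lemma a non-product normal subgroup of $2I\times\mathbb Z/5$ would need a normal section of $2I$ isomorphic to $\mathbb Z/5$. The normal sections of $2I$ are $2I$, $\mathfrak A_5$, $\mathbb Z/2$ and the trivial group, so there is none.
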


\begin{proof}
    The group $K_7$ is a quotient of $\A[A_2]/\llangle \sigma_1^5\rrangle\cong G_{16}$, where $G_{16}$ denotes one of the exceptional complex reflection groups in the Shephard-Todd notation (\cite{LehrerTaylor}). The group $G_{16}$ has one non-abelian quotient of order less than or equal to $72$, namely $G_{16}/Z(G_{16})\cong\mathfrak A_5$ (see \cite{BMR}). But $K_7$ is not isomorphic to $\mathfrak A_5$ since it has non trivial centre by Lemma \ref{lem-(s_1s_3)^3-not-1-k=5-E8}. This concludes the proof. 
\end{proof}

\subsubsection{\texorpdfstring{Case $k=6$}{Case k=6}}

In this subsection, we fix $k=6$. By Lemma \ref{lem-size-K1-E8}, we have $|K_1|\geq 360k^2|K_7|=77\,760$. Therefore, we need to show that $|K_7|\geq 42=\lceil 3\, 196\, 008/77\, 760\rceil$.

\begin{lem}\label{lem-(s_1s_3)^3-not-1-k=6-E8}
    We have $(s_1s_3)^3\neq \id$. In particular, the centre of $K_7$ is not trivial. 
\end{lem}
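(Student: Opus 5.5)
I will mirror the proof of Lemma \ref{lem-(s_1s_3)^3-not-1-k=5-E8}, replacing the order $5$ by $6$. Suppose for a contradiction that $(s_1s_3)^3=\id$ in $K_7$. Unwinding the successive quotients defining $K_7$, $K_5$ and $K_3$ (namely by $\langle r_2\rangle\cap K_6$, then by $\langle q_6\rangle\cap K_4$, then by $\langle p_8\rangle\cap K_2$), this gives integers $n,m,t\in\mathbb N$ with
\[
(p_1p_3)^3=p_2^n\,p_6^m\,p_8^t\quad\text{in } P .
\]

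\textbf{Step 1: centrality.} In $\A[A_2]=\langle\sigma_1,\sigma_3\rangle$ the element $(\sigma_1\sigma_3)^3$ is central (it is $\Delta^2$). Every generator $\sigma_i$ of $\A[E_8]$ either commutes with all of $\sigma_1,\sigma_3,\sigma_2$, or commutes with both $\sigma_6$ and $\sigma_8$. The same split was used in the case $k=5$.
\begin{itemize}
\item If $\sigma_i$ commutes with $\sigma_1,\sigma_3,\sigma_2$, then $p_i$ commutes with $(p_1p_3)^3p_2^{-n}$.
\item If $\sigma_i$ commutes with $\sigma_6$ and $\sigma_8$, then $p_i$ commutes with $p_6^mp_8^t$, which equals $(p_1p_3)^3p_2^{-n}$.
\end{itemize}
This covers all $\sigma_i$: for instance $\sigma_4$ does not commute with $\sigma_2$ and $\sigma_3$, but it commutes with $\sigma_6$ and $\sigma_8$. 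Hence $(p_1p_3)^3p_2^{-n}$ is central in $P$. Since $\A[E_8]^{\mathrm{ab}}\cong\mathbb Z$, Lemma \ref{lem-SNAQ-is-centreless} lets us assume $P$ centreless, as in all the preceding arguments. Therefore $(p_1p_3)^3=p_2^n$, and we may reduce to $0\le n\le 5$ because $p_2^6=\id$.

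\textbf{Step 2: computation.} It then suffices to check with GAP that, for each $0\le n\le 5$, the group
\[
\A[E_8]/\llangle \sigma_1^6,\,(\sigma_1\sigma_3)^3\sigma_2^{-n}\rrangle
\]
is abelian, or at least that in it the generators cannot have order exactly $6$. Either outcome contradicts $P$ being non-abelian with $k=6$ (Lemma \ref{lem-k_1=k_2=k_3=k_4-E8}).

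\textbf{Main obstacle.} This coset-enumeration step is the hard part. Unlike $k=5$, the order $6$ allows quotients in which $p_i^2$ or $p_i^3$ becomes trivial, and these are not immediately excluded. Some values of $n$ may therefore yield a non-trivial group, for example a quotient factoring through $\W[E_8]$ or through an order-$3$ truncation. For such $n$ I would rule them out by showing that in those groups some $p_i^2$ or $p_i^3$ is trivial, or that two generators collapse. The first possibility contradicts $k=6$. The second forces abelianity by Lemma \ref{lem-distinct-artin-generators}.

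\textbf{Final step.} Once $(s_1s_3)^3\neq\id$ is established, the "in particular" follows as before. The element $(s_1s_3)^3$ is central in $K_7=\langle s_1,s_3\rangle$, since it is the image of the central element $(\sigma_1\sigma_3)^3$ of $\A[A_2]$. Being non-trivial, it shows $Z(K_7)\neq\{\id\}$.
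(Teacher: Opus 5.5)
Your Step 1 (centrality of $(p_1p_3)^3p_2^{-n}$, hence $(p_1p_3)^3=p_2^n$ with $0\le n\le 5$) and your final step agree with the paper. The gap is in Step 2 at $n=0$, and more computing power will not close it, because every exit you list is provably unavailable there. The group $G_0=\A[E_8]/\llangle\sigma_1^6,(\sigma_1\sigma_3)^3\rrangle$ surjects onto $\W[E_8]$, since both relators vanish there ($m_{1,3}=3$). So $G_0$ is non-abelian, and no two standard generators coincide in it. Moreover $G_0^{\mathrm{ab}}\cong\mathbb Z/6$ with every $\sigma_i\mapsto 1$. Hence each $\sigma_i$ has order exactly $6$ in $G_0$, and $\sigma_i^2\neq\id$, $\sigma_i^3\neq\id$. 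Consequently none of the following can be derived in this group: that it is abelian, that the generators cannot have order $6$, that some $\sigma_i^2$ or $\sigma_i^3$ is trivial, or that two generators collapse.

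What is missing is a second use of centrelessness. You need a power of a generator to become \emph{central} in the presented group, not trivial. Equivalently, $P$ is a quotient of that group modulo its centre, and $p_1^2=\id$ has to be seen there. The paper does exactly this, in the smaller group $\A[E_6]/\llangle\sigma_1^6,(\sigma_1\sigma_3)^3\rrangle$. That group suffices because $\langle p_1,\dots,p_6\rangle$ is a quotient of it, and it is far more tractable than your $E_8$ groups, which have order at least $|\W[E_8]|$ for even $n$. In it, $\sigma_1^2$ commutes with $\sigma_3$ (the text writes $\sigma_2$, but $\sigma_3$ is what is needed). Since $\sigma_1$ commutes with every generator of $\A[E_8]$ other than $\sigma_3$, this gives $p_1^2\in Z(P)=\{\id\}$, contradicting $k=6$. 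Your suspicion about $\W[E_8]$ is also justified beyond $n=0$. For $n=2,4$ both relators hold in $\W[E_6]$ as well, so the paper's claim that these $E_6$ groups are abelian for all $1\le n\le 5$ cannot hold for $n=2,4$. Those cases likewise need an extra argument, for instance that the generators have order $2$ there, or that some $\sigma_i^2$ becomes central.
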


\begin{proof}
    If $(s_1s_3)^3=\id$, there exists $n,m,t\in\mathbb N$ such that $(p_1p_3)^3=p_2^np_6^mp_8^t$. Now, every $\sigma_i$ either commutes with $\sigma_1,\sigma_3$ and $\sigma_2$ or with $\sigma_6$ and $\sigma_8$. This shows that $(p_1p_3)^3p_2^{-n}$ is central in $P$. By Lemma \ref{lem-SNAQ-is-centreless}, we obtain $(p_1p_3)^3=p_2^n$. For $1\leq n\leq 5$, we verify with GAP that $\A[E_6]/\llangle \sigma_1^6, (\sigma_1\sigma_3)^3\sigma_2^{-n}\rrangle$ is abelian. Thus, for $1\leq n\leq 5$, the equality $(p_1p_3)^3p_2^{-n}=\id$ does not hold by Lemma \ref{lem-distinct-artin-generators}. For $n=0$, the group $\A[E_6]/\llangle \sigma_1^6,(\sigma_1\sigma_3)^3\rrangle$ is not abelian but we can verify with GAP that $\sigma_1^2$ commutes with $\sigma_2$, which implies that $p_1^2$ commutes with $p_2$. But now, the element $p_1^2$ is central in $P$. By Lemma \ref{lem-SNAQ-is-centreless}, we get $p_1^2=\id$, contradiction.
\end{proof}

\begin{lem}\label{lem-(s_1s_3)^3-not-s_1^is_3^j-k=6-E8}
        We have $(s_1s_3)^3\notin \langle s_1\rangle\cdot\langle s_3\rangle$. 

\end{lem}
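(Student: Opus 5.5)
The plan is to argue by contradiction, following the template of Lemmas \ref{lem-(s_1s_3)^3-not-1-k=5-E8} and \ref{lem-(s_1s_3)^3-not-1-k=6-E8}. Suppose $(s_1s_3)^3=s_1^is_3^j$ for some $0\leq i,j\leq 5$. Since $(s_1s_3)^3$ is central in $K_7$ and $s_1s_3s_1=s_3s_1s_3$, conjugating by $s_1s_3$ (which swaps $s_1$ and $s_3$ up to the braid relation) gives $s_1^is_3^j=s_3^is_1^j$ as well. This should force a symmetry, and I expect to reduce to the cases $i=j$, or $s_1^{i}$ commuting with $s_3^{j}$. In every case, centrality of the left-hand side then imposes strong relations between powers of $s_1$ and $s_3$.

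Next, I lift the relation back to $P$. By definition of $K_7$ and $K_5$, $K_3$, there exist $n,m,t$ with
\[
(p_1p_3)^3=p_1^ip_3^jp_2^np_6^mp_8^t .
\]
As in Lemma \ref{lem-(s_1s_3)^3-not-1-k=6-E8}, every $\sigma_l$ commutes either with $\sigma_1,\sigma_2,\sigma_3$ or with $\sigma_6,\sigma_8$. I would like to move $p_6^mp_8^t$ to one side and conclude that $p_3^{-j}p_1^{-i}(p_1p_3)^3p_2^{-n}$ is central. However, $p_1^ip_3^j$ need not commute with $\sigma_4$, so I instead rewrite the equation as
\[
(p_1p_3)^3p_3^{-j}p_1^{-i}p_2^{-n}=p_6^mp_8^t .
\]
The left side lies in $\langle p_1,p_2,p_3\rangle$, which commutes with $p_5,p_6,p_7,p_8$. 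The right side lies in $\langle p_6,p_8\rangle$, which commutes with $p_1,p_2,p_3,p_4$. Hence the element is central, and by Lemma \ref{lem-SNAQ-is-centreless} it is trivial. This yields
\[
(p_1p_3)^3=p_2^np_1^ip_3^j
\]
inside the $E_6$ subgroup.

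Finally, for each of the finitely many triples $(i,j,n)\in\{0,\dots,5\}^3$, I would use GAP to examine $\A[E_6]/\llangle \sigma_1^6,(\sigma_1\sigma_3)^3\sigma_3^{-j}\sigma_1^{-i}\sigma_2^{-n}\rrangle$, with the labelling of Lemma \ref{lem-(s_1s_3)^3-not-1-k=6-E8}. The aim is to show that each such group is either abelian, which contradicts Lemma \ref{lem-distinct-artin-generators} via non-abelianness of $P$, or that the generators have order less than $6$, which contradicts Lemma \ref{lem-k_1=k_2=k_3=k_4-E8}, or that some proper power such as $\sigma_1^2$ or $\sigma_1^3$ becomes central. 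In the last case one argues as before: that power commutes with its neighbours in $P$, hence is central and trivial, which contradicts $k=6$. The case $i=j=0$ is exactly Lemma \ref{lem-(s_1s_3)^3-not-1-k=6-E8}.

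The main obstacle is the computational step. Some of these quotients may be infinite or too large for coset enumeration, and some may remain non-abelian with generators of order $6$. For such cases, I would first try to exploit the symmetry $i\leftrightarrow j$ to cut the list. I would then add the relation $[\sigma_1^d,\sigma_2]$ for $d\in\{2,3\}$ and test whether it is forced, by searching for low-index quotients in which it fails. If that does not work, I would fall back on bounding $|K_7|$ directly via the quotient $\A[A_2]/\llangle\sigma_1^6\rrangle$.
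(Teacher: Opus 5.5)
Your reduction is correct. Lifting through $K_7$, $K_5$ and $K_3$ gives $(p_1p_3)^3=p_1^ip_3^jp_2^np_6^mp_8^t$. Since $(p_1p_3)^3$ is the image of the central element $\Delta^2$ of $\langle\sigma_1,\sigma_3\rangle$ and $p_2$ commutes with $p_1$ and $p_3$, the element $p_6^mp_8^t$ lies in both $\langle p_1,p_2,p_3\rangle$ and $\langle p_6,p_8\rangle$. It is therefore central, hence trivial. This is even slightly stronger than what the paper uses, which is the relation $(r_1r_3)^3=r_1^ir_3^jr_2^n$ in $K_5$.

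The gap is that the proposal stops where the content of the lemma begins. The lemma holds only because of the outcome of a finite verification, and you neither carry it out nor say what it yields. You list three kinds of contradiction you hope to find among $216$ presentations over $\A[E_6]$, and you concede that some may be intractable or give none. Your final fallback, bounding $|K_7|$ directly, would replace the lemma rather than prove it. Nothing in the proposal rules out a triple $(i,j,n)$ whose group is non-abelian, has generators of order $6$, and has no central proper power.

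The symmetry step is also misstated. Conjugation by $s_1s_3$ sends $s_3$ to $s_1s_3s_1^{-1}$, not to $s_1$; you need $\Delta=s_1s_3s_1$. The expected reduction to $i=j$ or to commuting powers is unjustified, though you never use it.

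What is missing is an organisation of the computation that makes it small, together with its recorded outcomes. The paper proceeds as follows.
\begin{enumerate}
\item It first uses only that $K_7$ is non-abelian (Lemma \ref{lem-allK-not-abelian-E8}) and is a quotient of the two-generator group $P(i,j)=\A[A_2]/\llangle\sigma_1^6,(\sigma_1\sigma_3)^3\sigma_3^{-j}\sigma_1^{-i}\rrangle$. GAP shows $P(i,j)$ is abelian for all $(i,j)$ outside $\{(0,0),(0,3),(3,0),(0,4),(4,0),(2,4),(4,2),(3,3)\}$, with no dependence on $n$.
\item The pair $(0,0)$ is Lemma \ref{lem-(s_1s_3)^3-not-1-k=6-E8}, and $P(0,4)$, $P(4,0)$ force $s_1$ to have order $2$.
\item Only for the five remaining pairs is $n$ introduced, and in the $A_4$ parabolic on $\sigma_1,\sigma_3,\sigma_4,\sigma_2$ rather than in $\A[E_6]$. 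Each $n$ forces $K_5$ abelian, or $r_1$ of order $2$, or $r_1^2=r_2^2$. The last lifts to $p_1^2=p_2^2p_6^mp_8^t$, which is central, hence trivial, contradicting $k=6$.
\end{enumerate}

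The first filter is also immediate in your own setting. If $P(i,j)$ is abelian, then $p_1=p_3p_2^a$ for some $a$, and the braid relation, with $p_2$ commuting with $p_1$ and $p_3$, forces $p_2^a=\id$. So $p_1=p_3$, and $P$ is abelian by Lemma \ref{lem-distinct-artin-generators}.

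Your relation in $P$ makes $\langle p_1,p_2,p_3,p_4\rangle$ a quotient of the same $A_4$-type presentations, so the paper's checks transfer verbatim to your setup. Supplying this case list and its outcomes is what would turn your plan into a proof.
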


\begin{proof}
    If $(s_1s_3)^3=s_1^is_3^j$, then $K_7$ is a quotient of $P(i,j):=\A[A_2]/\llangle \sigma_1^6, (\sigma_1\sigma_3)^3\sigma_3^{-j}\sigma_1^{-i}\rrangle$. We verify with GAP that for $(i,j)\notin\{(0,0),(0,3),(0,4),(2,4),(3,0),(3,3),(4,0),(4,2),\}$, the group $P(i,j)$ is abelian, contradicting Lemma \ref{lem-allK-not-abelian-E8}. The case $(i,j)=(0,0)$ is treated separately in Lemma \ref{lem-(s_1s_3)^3-not-1-k=6-E8}. \\
    \fbox{Case $(i,j)\in\{(0,4),(4,0)\}$:} If $(s_1s_3)^3=s_1^is_3^j$, we verify with GAP that the order of $s_1$ is $2$, contradicting Lemma \ref{lem-k_1=k_2=k_3=k_4-E8}.\\
    \fbox{Case $(i,j)\in\{(0,3),(3,0)\}$:} If $(s_1s_3)^3=s_1^is_3^j$, by definition there exists $n\in\mathbb N$ such that $(r_1r_3)^3=r_1^ir_3^jr_2^n$. For $n=1,3,5$, we verify with GAP that this implies that $K_5$ is abelian, contradicting Lemma \ref{lem-allK-not-abelian-E8}. For $n=0,4$, we verify with GAP that this implies that the order of $r_1$ is $2$, contradicting Lemma \ref{lem-k_1=k_2=k_3=k_4-E8}. For $n=2$, we verify with GAP that this implies that $r_1^2=r_2^2$. Thus, there exists $m,t$ such that $p_1^2=p_2^2p_6^mp_8^t$. Now, every $\sigma_i$ either commutes with $\sigma_1$ or with $\sigma_2,\sigma_6$ and $\sigma_8$. This shows that $p_1^2=p_2^2p_6^mp_8^t$ is is central in $P$. By Lemma \ref{lem-SNAQ-is-centreless}, we get $p_1^2=\id$, contradiction. \\
    \fbox{Case $(i,j)\in\{(2,4),(4,2)\}$:}  If $(s_1s_3)^3=s_1^is_3^j$, by definition there exists $n\in\mathbb N$ such that $(r_1r_3)^3=r_1^ir_3^jr_2^n$. For $n=1,3,5$, we verify with GAP that this implies that $K_5$ is abelian, contradicting Lemma \ref{lem-allK-not-abelian-E8}. For $n=2,4$, we verify with GAP that this implies that the order of $r_1$ is $2$, contradicting Lemma \ref{lem-k_1=k_2=k_3=k_4-E8}. For $n=0$, we verify with GAP that this implies that $r_1^2=r_2^2$. Thus, there exists $m,t$ such that $p_1^2=p_2^2p_6^mp_8^t$. Now, every $\sigma_i$ either commutes with $\sigma_1$ or with $\sigma_2,\sigma_6$ and $\sigma_8$. This shows that $p_1^2=p_2^2p_6^mp_8^t$ is is central in $P$. By Lemma \ref{lem-SNAQ-is-centreless}, we get $p_1^2=\id$, contradiction. \\
    \fbox{Case $(i,j)=(3,3)$:} If $(s_1s_3)^3=s_1^3s_3^3$, by definition there exists $n\in\llbracket 0,5\rrbracket$ such that $(r_1r_3)^3=r_1^3r_3^3r_2^n$. We verify with GAP that in all cases, this implies that $K_5$ is abelian, contradicting Lemma \ref{lem-allK-not-abelian-E8}. This concludes the proof.
    
\end{proof}

\begin{lem}\label{lem-size-K7-k=6-E8}
    If $k=6$, we have $|K_7|\geq 72$. 
\end{lem}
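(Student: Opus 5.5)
I will follow the same Orbit--Stabilizer pattern used for $K_5$ in the case $E_7$: bound the conjugacy class of $s_1$ in $K_7$ from below, bound the centralizer $Z_{K_7}(s_1)$ from below, and multiply. Recall that $K_7$ is a non-abelian quotient of $\A[A_2]/\llangle \sigma_1^6\rrangle$, generated by $s_1,s_3$ of order $6$ by Lemma \ref{lem-k_1=k_2=k_3=k_4-E8}.

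\emph{Conjugacy class.} The elements $s_1$, $s_3$ and $s_1s_3s_1^{-1}$ are pairwise distinct. If two of them coincided, then $s_1$ and $s_3$ would commute, and $K_7$ would be abelian, contradicting Lemma \ref{lem-allK-not-abelian-E8}. Hence the class of $s_1$ has at least $3$ elements.

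\emph{Centralizer.} I will show that $|Z_{K_7}(s_1)|\ge 24$, which gives $|K_7|\ge 3\cdot 24=72$. The centralizer contains $\langle s_1\rangle\cdot\langle (s_1s_3)^3\rangle$, since $(s_1s_3)^3$ is central in $K_7$. By Lemma \ref{lem-(s_1s_3)^3-not-s_1^is_3^j-k=6-E8} (taking $j=0$), no power $s_1^i$ equals $(s_1s_3)^3$. So the image of $z:=(s_1s_3)^3$ in $K_7/\langle s_1\rangle$-cosets is nontrivial, and $|\langle s_1,z\rangle|\ge 6\cdot o'$, where $o'$ is the order of $z$ modulo $\langle s_1\rangle\cap\langle z\rangle$. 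Since $z^2=(s_1s_3)^6$ is the square of the Garside element $\Delta^2$, which is central, this alone only gives $12$. To reach $24$, I would use that $Z_{K_7}(s_1)\supseteq\langle s_1\rangle\langle z\rangle$, and argue that $z^2\notin\langle s_1\rangle$ and $z^3\notin\langle s_1\rangle$. For each candidate relation $z^a=s_1^i$ (with $a=2,3$ and $0\le i\le5$), I would check the finite presentation $\A[A_2]/\llangle\sigma_1^6,(\sigma_1\sigma_3)^{3a}\sigma_1^{-i}\rrangle$ in GAP, exactly as in Lemma \ref{lem-(s_1s_3)^3-not-s_1^is_3^j-k=6-E8}. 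Each case should yield one of three outcomes:
\begin{itemize}
\item the group is abelian, contradicting Lemma \ref{lem-allK-not-abelian-E8};
\item generators of order $2$, contradicting Lemma \ref{lem-k_1=k_2=k_3=k_4-E8};
\item an identity such as $s_1^2=s_3^2$.
\end{itemize}
The third outcome lifts, via the commutation pattern of the $\sigma_i$ in $E_8$, to $p_1^2$ being central, hence trivial by Lemma \ref{lem-SNAQ-is-centreless}, which is a contradiction. With $\langle s_1\rangle\cap\langle z\rangle$ trivial and $o(z)\ge 4$, we get $|\langle s_1\rangle\langle z\rangle|\ge 24$.

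\emph{Fallback.} Alternatively, I would try to enlarge the conjugacy class instead. The conjugates $s_3^i s_1 s_3^{-i}$ for $i=1,\dots,5$ would be shown to be distinct from $s_1$, $s_3$ and from each other using the same lifting trick as Lemma \ref{lem-size-conjugacy-K5-k=4-E7}. This gives a class of size $\ge 6$, so a centralizer bound of $12$, which is already in hand, would suffice.

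\emph{Main obstacle.} The hard part is making sure every residual GAP case lifts to a genuine contradiction in $P$, since only relations supported on generators with good commutation properties in $E_8$ can be pushed up through $K_5$ and $K_3$. I expect to need the intermediate lifts through $r_2$ and $q_6$ as in the previous lemma.
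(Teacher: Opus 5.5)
\textbf{The main route has a genuine gap.} The bound $|Z_{K_7}(s_1)|\ge 24$ needs $z^2\notin\langle s_1\rangle$, where $z=(s_1s_3)^3$. This cannot be obtained from the $\A[A_2]$-level checks you describe.

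Take $(a,i)=(2,0)$. The group $\A[A_2]/\llangle\sigma_1^6,(\sigma_1\sigma_3)^6\rrangle$ is $B_3/\llangle\sigma_1^6,\Delta^4\rrangle\cong\mathrm{SL}_2(\mathbb Z)/\llangle T^6\rrangle$. It is infinite, since it maps onto the Euclidean $(2,3,6)$ triangle group, so GAP has nothing to enumerate.

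None of your three outcomes occurs either. Reduction mod $6$ gives a quotient $\mathrm{SL}_2(\mathbb Z/6\mathbb Z)$ of order $144$, with $s_1\mapsto\begin{pmatrix}1&1\\0&1\end{pmatrix}$ and $s_3\mapsto\begin{pmatrix}1&0\\-1&1\end{pmatrix}$. In this group:
\begin{itemize}
\item it is non-abelian and the generators have order $6$;
\item $s_1^2\neq s_3^2$;
\item $z\mapsto -I\notin\langle s_1\rangle\langle s_3\rangle$.
\end{itemize}
So it satisfies every constraint on $K_7$ proved in the paper. Yet $z^2=1$, and the centralizer of $s_1$ has order exactly $12$.

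Lifting $z^2=s_1^i$ to $P$ does not rescue this. The centrality trick only reduces it to $(p_1p_3)^6=p_2^n$ in $P$, which it does not refute. Hence the count $3\cdot 24$ is not available, and your claim $o(z)\ge 4$ may simply be false for an actual $K_7$.

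\textbf{Your fallback works and is easy to complete.} The conjugates $s_3^is_1s_3^{-i}$, $0\le i\le 5$, are pairwise distinct as soon as $s_3^d$ does not commute with $s_1$ for $d=1,2,3$.
\begin{itemize}
\item For $d=1$, this is non-commutativity of $K_7$.
\item For $d\in\{2,3\}$, conjugate $[s_3^d,s_1]=\id$ by $\delta=s_1s_3s_1$, which swaps $s_1$ and $s_3$. This gives $[s_1^d,s_3]=\id$, so $s_1^d$ is central in $K_7$ and $s_3^d=\delta s_1^d\delta^{-1}=s_1^d$.
\item This lifts to $p_1^d=p_3^dp_2^ap_6^bp_8^c$ in $P$. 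The element commutes with $p_3$ by the right side and with every other $p_j$ by the left side. It is therefore central, hence trivial by Lemma \ref{lem-SNAQ-is-centreless}, contradicting $o(p_1)=6$.
\end{itemize}
Adding $s_3$, the class of $s_1$ has at least $7$ elements. Together with $|\langle s_1,z\rangle|\ge 12$, this gives $|K_7|\ge 84$.

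\textbf{Comparison with the paper.} The paper avoids Orbit--Stabilizer altogether. It takes the $36$ distinct products $s_1^is_3^j$ (as in Lemma \ref{lem-size-K5-E6}) and notes that their translate by the central element $z$ is disjoint from them; this disjointness is exactly Lemma \ref{lem-(s_1s_3)^3-not-s_1^is_3^j-k=6-E8}. Your fallback uses only the $j=0$ instances of that lemma, at the price of the short conjugacy-class argument above.
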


\begin{proof}
     We know from Lemma \ref{lem-size-K5-E6} that $|\langle s_1\rangle\cdot\langle s_3\rangle|=36$. Moreover, if $s_1^{i_1}s_3^{j_1}=s_1^{i_2}s_3^{j_2}(s_1s_3)^3$, then $s_1^{i_1-i_2}s_3^{j_1-j_2}=(s_1s_3)^3$, contradicting Lemma \ref{lem-(s_1s_3)^3-not-s_1^is_3^j-k=6-E8}. Thus, we have $$|K_7|\geq |\langle s_1\rangle\cdot\langle s_3\rangle\sqcup \langle s_1\rangle\cdot\langle s_3\rangle\cdot(s_1s_3)^3|=72.$$ This concludes the proof. 
\end{proof}

\begin{proof}[Proof of Proposition \ref{prop-SNAQ-E8}]
   Lemmas \ref{lem-size-K1-E8}, \ref{lem-size-K1-k=3-E8}, \ref{lem-pg60-1-not-quotient-E8}, \ref{lem-pg960-1-not-quotient-E8}, \ref{lem-pg960-6-not-quotient-E8}, \ref{lem-pg1920-not-quotient-E8} and \ref{lem-size-K7-k=6-E8} prove Proposition \ref{prop-order-centralizer-E8}, which in turn implies Proposition \ref{prop-SNAQ-E8}.
\end{proof}

\subsection{\texorpdfstring{Case $\Gamma=H_3$}{H3}}

For this subsection, we fix $\Gamma=H_3$ and $P$ a non-abelian quotient of $\A[\Gamma]$.

\begin{prop}\label{prop-SNAQ-H3}
    The smallest non-abelian quotient of $\A[\Gamma]$ is $\W[\Gamma]/Z(\W[\Gamma])\cong \mathfrak A_5$. 
\end{prop}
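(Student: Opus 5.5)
The plan is to show two things: that $\mathfrak A_5$ is a quotient of $\A[H_3]$, and that every non-abelian finite quotient $P$ of $\A[H_3]$ has order at least $60$, with equality only for $\mathfrak A_5$.

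Existence is immediate. Since $\W[H_3]\cong \mathfrak A_5\times \Z/2$ with $Z(\W[H_3])=\Z/2$, the group $\W[H_3]/Z(\W[H_3])\cong\mathfrak A_5$ is a quotient of $\W[H_3]$, hence of $\A[H_3]$. Explicitly, one sends $\sigma_1,\sigma_2,\sigma_3$ to the images of the Coxeter generators, i.e. to involutions $a,b,c$ in $\mathfrak A_5$ with $ab$ of order $5$, $bc$ of order $3$, and $ac$ of order $2$.

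For minimality, label the generators so that $m_{\sigma_1,\sigma_2}=5$, $m_{\sigma_2,\sigma_3}=3$ and $m_{\sigma_1,\sigma_3}=2$. All generators are conjugate (Remark \ref{rem-generators-odd-conjugate}), so $\A[H_3]^{\mathrm{ab}}\cong\Z$ and Lemma \ref{lem-SNAQ-is-centreless} applies. Replacing a minimal-order $P$ by $P/Z(P)$, we may assume $P$ is centreless, and then the order of $P$ is at most $60$.

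The core step is to bound $|P|$ through the subgroup $\langle p_1,p_2\rangle$. This subgroup is a quotient of $\A[I_2(5)]$ and is non-abelian: otherwise $p_1=p_2$, then $p_3$ braids with $p_1=p_2$ and commutes with it, forcing $p_3=p_1$ and $P$ cyclic. By the proof of Proposition \ref{prop-SNAQ-firstcases} for odd dihedral type, the image of $u=\sigma_1\sigma_2$ in $\langle p_1,p_2\rangle/Z$ has order $5$. So $5$ divides $|P|$, and $|\langle p_1,p_2\rangle|\ge 10$.

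Similarly, $\langle p_2,p_3\rangle$ is a non-abelian quotient of $\B_3$, so its central quotient has order divisible by $3$, and hence $3$ divides $|P|$. Since $P$ is non-abelian and centreless, we get $15\mid |P|$, and $|P|\neq 15$ because every group of order $15$ is cyclic. Therefore $|P|\in\{30,45,60\}$ if $|P|\le 60$.

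It remains to eliminate orders $30$ and $45$, and to identify $P$ at order $60$.
\begin{itemize}
\item Order $45$: every group of order $45$ is abelian.
\item Order $30$: every group of order $30$ has a normal cyclic subgroup of order $15$ with quotient $\Z/2$. The abelianization of $P$ is a cyclic quotient of $\Z$. Looking at the dihedral quotient $\langle p_1,p_2\rangle$, which surjects onto $\W[I_2(5)]$-type images with $p_i$ mapping to reflections, one checks that $P$ would have to be $\W[I_2(5)]\times\Z/3$ or $\W[I_2(15)]$. This is impossible by the argument of Lemma \ref{lem-odd-dihedral-not-quotient-E-H}: $\A[H_3]$ admits no dihedral quotient, and since $\Z/3$ is abelian and $P$ is centreless, $\W[I_2(5)]\times\Z/3$ cannot occur either. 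More cleanly, one can do this step with GAP by enumerating $\mathrm{Epi}(\A[H_3],G)$ for the four groups $G$ of order $30$.
\item Order $60$: a centreless group of order $60$ whose order is divisible by $15$ and which is generated by conjugates is either $\mathfrak A_5$ or a solvable group. The solvable ones of order $60$ that are generated by a single conjugacy class with abelianization cyclic reduce to dihedral or Frobenius-type groups. Here I would again use Lemma \ref{lem-odd-dihedral-not-quotient-E-H}: if the generators map to elements of order $2$, then $P$ is a quotient of $\W[H_3]$, whose only non-abelian quotients are $\mathfrak A_5$ and $\W[H_3]$ itself. Otherwise I would use a GAP check over the $13$ groups of order $60$.
\end{itemize}

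The main obstacle is the order-$30$/$60$ case analysis. I expect to settle it cleanly by computer, enumerating epimorphisms from $\langle \sigma_1,\sigma_2,\sigma_3\mid \text{Artin relations}\rangle$ onto every group of order at most $60$, as the paper does elsewhere.
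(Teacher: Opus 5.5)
Your proposal is correct, but it takes a genuinely different route from the paper. The paper relies on a single GAP computation: the derived subgroup of $\A[H_3]/Z(\A[H_3])$, which has index $15$, is perfect. Hence for a centreless non-abelian quotient $P$, the group $P'$ is a non-trivial perfect group, so $|P|\ge|P'|\ge 60$, with equality only if $P=P'\cong\mathfrak A_5$. This also shows that no non-abelian finite quotient of $\A[H_3]$ is solvable, and it needs no case analysis. You instead get $3\mid|P|$ and $5\mid|P|$ from the images of the rank-two parabolics $\langle\sigma_2,\sigma_3\rangle$ and $\langle\sigma_1,\sigma_2\rangle$, using the odd dihedral argument. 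You then reduce to groups of order $30$, $45$ and $60$. This is more elementary: it needs only facts about groups of order at most $60$, not a Reidemeister--Schreier computation.

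Two of your steps are phrased loosely, but both can be made precise without a computer.
\begin{itemize}
\item At order $30$, just note that $\W[I_2(15)]$ is the only centreless group of order $30$, and Lemma~\ref{lem-odd-dihedral-not-quotient-E-H} excludes it.
\item At order $60$, ``reduce to dihedral or Frobenius-type groups'' is not yet an argument. The precise fact is that the centreless groups of order $60$ are $\mathfrak A_5$, $\mathfrak S_3\times\W[I_2(5)]$, and $\mathbb Z/15\rtimes\mathbb Z/4$ with a generator $t$ of $\mathbb Z/4$ acting by multiplication by $2$.
\begin{itemize}
\item The second group has abelianization $(\mathbb Z/2)^2$, which is not cyclic, so it is not a quotient of $\A[H_3]$.
\item The third has abelianization $\mathbb Z/4$. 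So all $p_i$ have the form $(x_i,t^{\epsilon})$ with a common $\epsilon=\pm1$, since they are conjugate and generate.
\item Then $p_1p_3=p_3p_1$ forces $x_1=x_3$, because $1-2$ and $1-8$ are units mod $15$.
\item Now $p_2$ satisfies both the length-$3$ and the length-$5$ braid relations with $p_1=p_3$. This forces $p_1=p_2=p_3$, so $P$ is cyclic, a contradiction.
\end{itemize}
\end{itemize}
With these fixes your proof is complete and does not need the GAP fallback.
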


\begin{proof}
    We know by Lemma \ref{lem-SNAQ-is-centreless} that $\mathrm{SNAQ}(\A[\Gamma])$ is centreless. Thus, $P$ factors through $\A[\Gamma]/Z(\A[\Gamma])$. We verify with GAP that the derived subgroup of $\A[\Gamma]/Z(\A[\Gamma])$ is perfect. Thus, the group $P'$ is perfect and non trivial, since $P$ is not abelian. The smallest perfect group is $\mathfrak A_5$, which is a quotient of $\A[\Gamma]/Z(\A[\Gamma])$. This concludes the proof. 
\end{proof}
\subsection{\texorpdfstring{Case $\Gamma=H_4$}{H4}}

For this subsection, we fix $\Gamma=H_4$ and $P$ a non-abelian quotient of $\A[\Gamma]$. Write $p_i$ the image of $\sigma_i$ by this quotient. Moreover, denote by $\C$ the conjugacy class of the $p_i$'s. Finally, denote by $c$ the element $p_1p_2p_3p_4$. 

The aim of this subsection is to show the following result.

\begin{prop}\label{prop-SNAQ-H4}
    The smallest non-abelian quotient of $\A[\Gamma]$ is $\W[\Gamma]/Z(\W[\Gamma])$.
\end{prop}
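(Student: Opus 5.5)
Since $\W[H_4]$ has order $14\,400$ and centre of order $2$, the target quotient $\W[H_4]/Z(\W[H_4])$ has order $7\,200$. It is $(\mathfrak A_5\times\mathfrak A_5)\rtimes\Z/2$, which is non-abelian and is clearly a quotient of $\A[H_4]$. So it suffices to show that every non-abelian quotient $P$ of $\A[H_4]$ has $|P|\geq 7\,200$, with equality only for this group.

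I would follow the same two-pronged strategy as for $H_3$ and for type $E$. First, by Lemma \ref{lem-SNAQ-is-centreless}, we may assume $P$ is centreless. Then $P$ factors through $\A[H_4]/Z(\A[H_4])$, whose centre is generated by the image of $\Delta^2 = c^{h}$, where $h=30$ is the Coxeter number. So in $P$ we have $c^{30}=\id$.

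\textbf{Perfect-derived-subgroup route.} I would check with GAP (via Reidemeister--Schreier on the derived subgroup of $\A[H_4]/Z(\A[H_4])$, or directly on $\A[H_4]'$) that the commutator subgroup is perfect, as was done for $H_3$. Then $P'$ is a non-trivial perfect group and $P/P'$ is cyclic, since $\A[H_4]^{\mathrm{ab}}\cong\Z$. This gives $|P|\geq 2|P'|$ whenever $P\neq P'$; and $P\neq P'$ always holds, because $P^{\mathrm{ab}}$ is a non-trivial cyclic group (a perfect $P$ would force the generators to lie in $P'$ while generating a cyclic abelianization, which is fine only if that abelianization is trivial). The goal is then to enumerate, with GAP, the perfect quotients of $\A[H_4]'$ of order at most $3\,600$, and to show that each of them either fails to extend to a quotient $P$ of $\A[H_4]$ with $|P|<7\,200$, or forces a relation that makes $P$ abelian.

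\textbf{Fallback route.} If this enumeration is too heavy, I would argue as in the type $E$ sections instead. Bound the conjugacy class $\C$ of $p_1$ from below using the reflections of $\A[H_4]$ (there are $60$ positive roots) via the analogues of Lemmas \ref{lem-distinct-commuting-reflections} and \ref{lem-distinct-reflections-braid}. This would aim for $|\C|\geq 60$. Then bound the centralizer from below: $Z_P(p_4)$ contains the image of the subgroup $\langle\sigma_1,\sigma_2\rangle\cong\A[I_2(5)]$. Using $k=o(p_1)$, the case $k=2$ reduces to $\mathrm{SNAQ}(\W[H_4])=\W[H_4]/Z(\W[H_4])$ by the simple-factor structure. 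For $k\geq3$, one would need $|Z_P(p_4)|\geq 120$, with small values of $k$ handled by GAP on truncated groups such as $\A[H_3]/\llangle\sigma_1^k\rrangle$.

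\textbf{Main obstacle.} The hardest step is the computational one. Either route requires exhausting the small perfect groups (up to order $3\,600$: $\mathfrak A_5$, $\mathrm{SL}_2(5)$, $\mathrm{PSL}_2(7)$, $\mathfrak A_6$, $\mathrm{PSL}_2(8)$, $\mathrm{PSL}_2(11)$, various extensions $2^4\rtimes\mathfrak A_5$, $\mathfrak A_5\times\mathfrak A_5$, etc.). For each one, we must rule out that $P'$ is isomorphic to it. I would first try the direct test: decide whether each candidate $G$ admits a surjection from $\A[H_4]$ whose image of $\sigma_1$ lies in a conjugacy class satisfying the $H_4$ relations, and if such a surjection exists, check whether $G$ or $G\rtimes\Z/m$ actually appears. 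The case $P'\cong\mathfrak A_5\times\mathfrak A_5$ with $|P/P'|=2$ is precisely the equality case and yields the target group. For $|P/P'|=2$ and $P'=\mathfrak A_5$, the generators must act as outer involutions of order dividing $2$ modulo $P'$; this makes $P\cong\mathfrak S_5$, which is a quotient of $\A[A_4]$ but not of $\A[H_4]$, as the $5$-fold braid relation fails for transpositions.
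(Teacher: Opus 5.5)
Your main route fails where you halve the order bound. You claim $P\neq P'$ for every non-abelian quotient $P$, but the parenthetical justification is circular. A perfect $P$ simply has trivial abelianization, and $\A[H_4]^{\mathrm{ab}}\cong\mathbb Z$ does nothing to prevent that. The same reasoning would apply verbatim to $\A[H_3]$, whose abelianization is also $\mathbb Z$. Yet $\A[H_3]$ surjects onto the perfect group $\W[H_3]/Z(\W[H_3])\cong\mathfrak A_5$, which is its smallest non-abelian quotient.

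So the case $P=P'$ must be treated, and there $|P|<7\,200$ only gives $|P'|<7\,200$. Perfect groups of order between $3\,600$ and $7\,200$, such as $\mathrm{PSL}_2(16)$, $\mathrm{PSL}_3(3)$, $\mathrm{PSU}_3(3)$ or $\mathrm{PSL}_2(23)$, escape your enumeration. Repairing this means searching up to order $7\,200$. You propose to run that search over epimorphisms from the derived subgroup of the \emph{untruncated} group $\A[H_4]/Z(\A[H_4])$, a finite-index subgroup with a large Reidemeister--Schreier presentation. Neither its perfectness nor the feasibility of that search is established.

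Two smaller slips. First, $Z(\A[H_4])$ is generated by $\Delta=c^{15}$, not $\Delta^2$, because $w_0$ acts as $-1$. Second, your exclusion of $\mathfrak S_5$ only treats transpositions, whereas the generators could a priori map to $4$-cycles or to elements of cycle type $(2,3)$. Those cases are excluded because their centralizers are cyclic, which forces $p_3=p_4$.

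The missing idea is what keeps the computation small. The paper first proves two elementary bounds.
\begin{itemize}
\item $|\mathcal C|\geq 30$: one checks in $\A[H_4]/\langle c^{15}\rangle$ which relations $\sigma_{j_1}=c^i\sigma_{j_2}c^{-i}$ survive in centreless quotients.
\item $|Z_P(p_4)|\geq k^3$: one uses $\langle p_4\rangle\cdot\langle p_1,p_2\rangle\subseteq Z_P(p_4)$. Modulo $\langle p_4\rangle\cap\langle p_1,p_2\rangle$, the images of $p_1,p_2$ still have order $k$ and generate cyclic subgroups meeting trivially. Each fact comes from a centrality argument and Lemma~\ref{lem-SNAQ-is-centreless}.
\end{itemize}
This settles $k\geq 7$ by orbit--stabilizer. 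GAP is then only needed on the truncated groups $\A[H_4]/\llangle c^{15},\sigma_1^k\rrangle$ for $k\in\{3,4,5,6\}$. There the perfect quotients of order at most $7\,200$ of the derived subgroup are enumerated. The only survivors occur for $k=6$, namely $\mathfrak A_5$ and $\mathfrak A_5\times\mathfrak A_5$, and they are eliminated using $|P|=6|P'|$ and $|P|\geq 30\cdot 6^3$.

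Your fallback points in this direction, but its crux, $|\mathcal C|\geq 60$, is only a hope. The analogous reflection-pairing method already fell short of the full count for $E_8$. You also give no quantitative lower bound for the centralizer. As written, neither route closes the argument.
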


In order to get the most out of the Orbit-Stabilizer formula, we start by finding a lower bound on $|\C|$. 

\begin{lem}\label{lem-size-conjugacy-H4}
    We have $|\C|\geq 30$.
\end{lem}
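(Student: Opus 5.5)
I would follow the strategy of Proposition \ref{prop-size-conjugacy-class} for type $E$, adapted to $H_4$. The aim is to exhibit a set $X$ of at least $30$ elements of $\mathcal R[H_4]$ such that for any two distinct $x,y\in X$ we have $\A[H_4]/\llangle xy^{-1}\rrangle\cong\Z$. Every element of $X$ is conjugate in $\A[H_4]$ to $\sigma_1$, since all generators are conjugate by Remark \ref{rem-generators-odd-conjugate}. So the images of the elements of $X$ all lie in $\C$. They are pairwise distinct, because $P$ is non-abelian while each quotient $\A[H_4]/\llangle xy^{-1}\rrangle$ is abelian. This gives $|\C|\ge |X|$.

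First I would establish the analogues of Lemmas \ref{lem-distinct-artin-generators}, \ref{lem-distinct-commuting-reflections} and \ref{lem-distinct-reflections-braid} for $H_4$.

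\textbf{Pairs of generators.} For distinct $s,t\in S$ we need $\A[H_4]/\llangle st^{-1}\rrangle\cong\Z$.
\begin{enumerate}
\item[(a)] If $s,t$ are joined by an edge labelled $3$, identifying them gives $s=t$ together with commutation with their common neighbours. The identification then propagates along the path $\sigma_1-\sigma_2-\sigma_3-\sigma_4$, since a commuting pair that also satisfies a braid relation of odd length collapses.
\item[(b)] If $s,t$ are joined by the edge labelled $5$, then $s=t$ forces $s=t$ in a dihedral quotient of odd label, so they coincide, and again everything collapses to $\Z$.
\item[(c)] If $s,t$ commute, identifying them forces the third generator adjacent to one of them to satisfy both a braid relation and commutation with it, so it collapses too.
\end{enumerate}

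\textbf{Pairs of reflections.} For commuting distinct reflections I would need that they are simultaneously conjugate to a pair of standard generators. This is the step I expect to be the main obstacle. The argument of \cite{CGGW19} is stated for simply laced types, and for $H_4$ I am not sure the commuting case reduces to standard generators. If that fails, I would replace it by direct computation: for each candidate pair $(x,y)$, verify abelianity of the quotient, or use Lemma \ref{lem-distinct-reflections-braid}-type chains with a braid-related third reflection $z$.

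\textbf{The candidate set and the computation.} $H_4$ has $60$ positive roots, so $\mathcal K[H_4]$ has $60$ elements. The root coordinates lie in $\Z[\frac{1+\sqrt5}{2}]$, so they must be computed exactly. I would run the same filtration $\mathcal A_0\supseteq\mathcal A_1\supseteq\cdots\supseteq\mathcal A_5$ as in the type $E$ computation, using the Garside word-reversing solution to the word problem. Since only the bound $30$ is needed, there is considerable room: I would discard the second coordinate of every surviving bad pair and check that at least $30$ elements remain.

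\textbf{Fallback.} If the reflection-based bookkeeping is too weak, I would work instead in the quotient $\A[H_4]/Z(\A[H_4])$. This is legitimate because, via Lemma \ref{lem-SNAQ-is-centreless}, only centreless $P$ matter. In that quotient I would find enough pairwise "abelianizing" conjugates of $\sigma_1$ among small conjugates $g\sigma_1g^{-1}$ with $g$ a positive word of length at most $4$. For each pair, abelianity of the quotient would be checked directly with GAP via coset enumeration of the finitely presented quotient.
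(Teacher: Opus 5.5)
\paragraph{Verdict.} There is a genuine gap. Your reduction is sound, and it is the same principle the paper's proof rests on: exhibit conjugates of $\sigma_1$ such that identifying any two of them makes the quotient abelian, so their images in any non-abelian $P$ are pairwise distinct elements of $\C$. Your case analysis for pairs of standard generators is also fine. But the existence of $30$ such elements is the whole content of the lemma, and your proposal does not establish it.

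\paragraph{Where it breaks.} The filtration needs an $H_4$ analogue of Lemma~\ref{lem-distinct-commuting-reflections}, which you leave open. Even granting that analogue, the filters only \emph{certify} good pairs. A pair left in $\mathcal A_5[H_4]$ may simply be uncertified, or it may be genuinely bad, meaning that identifying it leaves a non-abelian quotient. Your greedy discard keeps at least $60-|\mathcal A_5[H_4]|$ elements. That is useless unless $|\mathcal A_5[H_4]|\le 30$, and nothing in your argument controls this. The ``considerable room'' is an unsupported guess: you give no reason why lifts of more than $30$ of the $60$ reflections should be pairwise separable in every non-abelian quotient. The fallback (searching among $g\sigma_1g^{-1}$ with $|g|\le 4$) has the same defect. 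It also needs care in the check: the quotients are infinite, so you should enumerate cosets of $\langle\sigma_1\rangle$ (index $1$ means cyclic), not of the trivial subgroup.

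\paragraph{The missing idea.} Use the Coxeter element $c=\sigma_1\sigma_2\sigma_3\sigma_4$.
\begin{itemize}
\item Its power $c^{15}=\Delta$ generates $Z(\A[H_4])$, so in $\A[H_4]/Z(\A[H_4])$ conjugation by $c$ has order dividing $15$.
\item Working modulo the centre is harmless even for non-centreless $P$. If adding a relation to $\A[H_4]$ makes the quotient by the image of $c^{15}$ abelian, then that group modulo its centre is a quotient of $\mathbb Z$, hence cyclic, so the group itself is abelian.
\item A two-line computation with the braid relations gives $c\sigma_2c^{-1}=\sigma_3$ and $c\sigma_3c^{-1}=\sigma_4$. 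Hence the family $\{c^i\sigma_jc^{-i}\}$ reduces to the two $c$-orbits of $\sigma_1$ and $\sigma_2$, each of size at most $15$.
\item By $c$-equivariance, pairwise distinctness of these elements in every non-abelian quotient reduces to a short finite list of checks. For each $0\le i\le 14$ and $j_1\le j_2$, adding $\sigma_{j_1}=c^i\sigma_{j_2}c^{-i}$ to $\A[H_4]/Z(\A[H_4])$ must either give an abelian group or impose a relation that already holds.
\end{itemize}
This is exactly what the paper verifies with GAP. It produces an explicit set of $30$ elements and needs no root system over $\mathbb Z[\frac{1+\sqrt5}{2}]$, no normal forms, and no commuting-reflection lemma.
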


\begin{proof}
    We verify with GAP that the set $\{c^ip_jc^{-i}\}_{i=0,\dots,14,j=1,\dots,4}$ is of cardinality $30$. More precisely, given $i\in \llbracket 0,14\rrbracket$,  the elements $c^ip_2c^{-i},c^{i+14}p_3c^{-i-14}$ and $c^{i+13}p_4c^{-i-13}$ are equal.\\
    The verification is as follows. Since $\mathrm{SNAQ}(\A[H_4])$ is centreless by Lemma \ref{lem-SNAQ-is-centreless}, any epimorphism $\A[H_4]\to \mathrm{SNAQ}(\A[H_4])$ factors through $\A[H_4]/Z(\A[H_4])=\A[H_4]/\langle c^{15}\rangle$.  
    For all 
    $0\leq i\leq 14$ and $j_1\leq j_2$, we consider the quotient of $\A[H_4]/Z(\A[H_4])$ by the relation $\sigma_{j_1}=c^i\sigma_{j_2}c^{-i}$. This quotient is either abelian, contradicting the hypothesis, or isomorphic to $\A[H_4]/Z(\A[H_4])$, in which case the relation holds in all centreless quotients of $\A[H_4]$. If $\sigma_{j_1}=c^i\sigma_{j_2}c^{-i}$, then for all $i'\in\llbracket 0,14\rrbracket$ we have $ c^{i'}\sigma_{j_1}c^{-i'}=c^{i+i'}\sigma_{j_2}c^{-i-i'}$. In our verification, we find that $p_2=c^{14}p_3c^{-14}=c^{13}p_4c^{-13}$ and $p_3=p^{14}p_4p^{-14}$. Now, since $c^{15}=\id$, the equality $p_3=c^{14}p_4c^{-14}$ is of the form $c^{14+i'}p_3c^{-14-i'}=c^{13+i'}p_4c^{-13-i'}$. Therefore, every element of the form $c^ip_1c^{-i}$ is distinct from all other elements of $\{c^ip_jc^{-i}\}_{i=0,\dots,14,j=1,\dots,4}$, which accounts for 15 elements, and we have 15 other distinct elements which are $\{c^ip_2c^{-i}\}_{i=0,\dots,14}$. This concludes the proof.

\end{proof}

Since $|\W[H_4]/Z(\W[H_4])|=7\, 200$ and $\lceil 7\, 200/30\rceil=240$, Lemma \ref{lem-size-conjugacy-H4} shows that it is enough to prove the following result.

\begin{prop}\label{prop-order-centralizer-H4}
We have $|Z_P(p_4)|\geq 240$, with equality if and only if $P\cong \W[H_4]/Z(\W[H_4])$.
\end{prop}

The proof strategy is as follows. We fix the order $k$ of $p_1$ (equivalently, of all $p_i$'s) and prove a series of lemmas to obtain a lower bound on $|Z_P(p_4)|$ depending on $k$. This will prove the proposition for all cases but $k=3,4,5$, which we handle separately. The case $k=2$ is the case where $P$ is a quotient of $\W[H_4]$, whose smallest non-abelian quotient is $\W[H_4]/Z(\W[H_4])$. Therefore, we assume $k\geq 3$.  

Let us first define the groups we need to study and fix some notations. 

\begin{defn}
    \begin{itemize}
        \item[(i)] Define $K_1=Z_P(p_4)$ and $k=o(p_4)$. 
        \item[(ii)] Define $K_2=\langle p_1,p_2\rangle$, $K_3=\frac{K_2}{\langle p_4\rangle\cap K_2}$, $q_i$ to be the image of $p_i$ under the quotient and $k_2$ the order of $q_1$ (equivalently, both $q_i$'s). 
    \end{itemize}
\end{defn}

\begin{lem}\label{lem-k=k1-H4}
    We have $k=k_2$.
\end{lem}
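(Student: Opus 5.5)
The argument is the same as in Lemma \ref{lem-k_1=k_2=k_3-E6}, Lemma \ref{lem-k_1=k_2=k_3-E7} and Lemma \ref{lem-k_1=k_2-E8}: show that $k$ and $k_2$ divide each other. In $H_4$ the generators are numbered so that $\sigma_1$ and $\sigma_2$ commute with $\sigma_4$ (the edges are $\sigma_1\sigma_2$ labelled $5$, $\sigma_2\sigma_3$ and $\sigma_3\sigma_4$). Hence $K_2=\langle p_1,p_2\rangle\subseteq K_1=Z_P(p_4)$, and $\langle p_4\rangle\cap K_2$ is central in $K_2$.

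First, $q_1$ is the image of $p_1$ in a quotient, so $k_2\mid k$.

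For the converse, $q_1^{k_2}=\id$ means $p_1^{k_2}\in\langle p_4\rangle$, so $p_1^{k_2}=p_4^n$ for some $n\in\mathbb N$. Every generator $\sigma_i$ commutes either with $\sigma_1$ (namely $\sigma_3,\sigma_4$) or with $\sigma_4$ (namely $\sigma_1,\sigma_2$). The element $p_1^{k_2}=p_4^n$ therefore commutes with every $p_i$, and so lies in $Z(P)$.

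Since $\A[H_4]^{\mathrm{ab}}\cong\Z$, Lemma \ref{lem-SNAQ-is-centreless} applies. As in the $E$ cases, we may assume $P$ is centreless: we are looking for a smallest non-abelian quotient, and passing to $P/Z(P)$ keeps it non-abelian while making it smaller unless $Z(P)$ is trivial. This gives $p_1^{k_2}=\id$, hence $k\mid k_2$, and so $k=k_2$.

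The one point needing care is the standing assumption that $P$ is centreless. It is used implicitly throughout this section, and I would state it explicitly here. The rest is a direct check of which generators commute in $H_4$.
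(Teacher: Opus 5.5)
Your proof is correct and matches the paper's argument: $k_2\mid k$ via the quotient map, then $p_1^{k_2}=p_4^n$ is central because every generator commutes with $\sigma_1$ or with $\sigma_4$, and Lemma~\ref{lem-SNAQ-is-centreless} forces $p_1^{k_2}=\id$. Making the centreless assumption explicit is a useful clarification of a standing hypothesis the paper leaves implicit; it is cleanest to phrase it as taking $P$ of minimal order among the non-abelian quotients.
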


\begin{proof}
    Since $p_i$ is sent to $q_i$, we have $k_2\mid k$. Since $q_1^{k_2}=\id$, there exists $n\in\mathbb N$ such that $p_1^{k_2}=p_4^n$. Now, every $\sigma_i$ either commutes with $\sigma_1$ or with $\sigma_4$. Thus, the element $p_1^{k_2}=p_4^n$ is central in $P$. By Lemma \ref{lem-SNAQ-is-centreless}, we get $p_1^{k_2}=\id$. Thus, $k|k_2$ and $k=k_2$. 
\end{proof}

\begin{lem}\label{lem-size-K3-H4}
    We have $|K_3|\geq k^2$. 
\end{lem}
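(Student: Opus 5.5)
The plan is to mimic Lemma \ref{lem-size-K5-E6}. We show that the $k^2$ elements $q_1^iq_2^j$ with $0\le i,j\le k-1$ are pairwise distinct in $K_3$.

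Suppose $q_1^{i_1}q_2^{j_1}=q_1^{i_2}q_2^{j_2}$. Then $q_1^{i_1-i_2}=q_2^{j_2-j_1}$ in $K_3$. Lifting to $K_2$, there is $n\in\mathbb N$ with
\[
p_1^{i_1-i_2}\,p_4^{n}=p_2^{j_2-j_1}.
\]

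The key step is to show that this common element is central in $P$. Write $x=p_2^{j_2-j_1}=p_1^{i_1-i_2}p_4^n$ and check each generator in the $H_4$ graph $\sigma_1-\sigma_2-\sigma_3-\sigma_4$ (with the label $5$ on one end edge).
\begin{itemize}
\item $p_3$ and $p_4$ commute with $p_1$. In the right-hand form, $p_4$ commutes with $p_1$ and with $p_4$, but $p_3$ does not commute with $p_4$, so this form does not settle $p_3$.
\item We therefore use both forms of $x$. Since $x=p_2^{j_2-j_1}$, it commutes with $p_2$ and $p_4$. Since $x=p_1^{i_1-i_2}p_4^n$, it commutes with $p_1$ and $p_4$.
\item That leaves $p_3$, which commutes with $p_1$ but neither with $p_2$ nor with $p_4$.
\end{itemize}
So this choice of $K_2=\langle p_1,p_2\rangle$ does not immediately give centrality, and this is the main obstacle.

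What I would try first is to use the fact that $p_4$ lies in the centre of $K_1\supseteq K_2$. That only shows $x$ commutes with $p_4$, which we already knew, so it does not help with $p_3$. Instead, I would work in $K_3$, where the images of $\langle p_4\rangle$ become trivial, and aim for the weaker conclusion $q_2^{j_2-j_1}=\id$. The relation $q_1^{a}=q_2^{b}$ in the quotient of $\A[I_2(m)]$ generated by $q_1,q_2$ means this element commutes with both $q_1$ and $q_2$, so it is central in $K_3$.

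If $b\ne 0$, the aim is to derive that $K_3$ is cyclic, or that $q_2$ has order less than $k$, contradicting Lemma \ref{lem-k=k1-H4}. This is done by case analysis on the edge label between $\sigma_1$ and $\sigma_2$.
\begin{itemize}
\item If that label is $3$, a central power $q_2^b=q_1^a$ passes through conjugation by $q_1q_2$, which swaps $q_1$ and $q_2$. This gives $q_1^b=q_2^a$, hence $q_2^{b}=q_2^{a}$ up to the relation. It also gives $a\equiv b$, and then $q_1^aq_2^{-a}$ is central. A small check (possibly by GAP, in the quotient of $\A[A_2]/\llangle\sigma_1^k\rrangle$) should show this forces $a\equiv 0 \bmod k$.
\item Back in $P$, $p_2^{b}\in\langle p_1\rangle\langle p_4\rangle$ then gives $p_2^b$ commuting with $p_1,p_2,p_4$, and the conjugation trick via $c$ handles $p_3$. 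Lemma \ref{lem-SNAQ-is-centreless} then yields $p_2^b=\id$, so $b\equiv0 \bmod k$ since $|b|<k$.
\end{itemize}

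Once $j_1=j_2$, we get $q_1^{i_1}=q_1^{i_2}$. Since $q_1$ has order $k$ by Lemma \ref{lem-k=k1-H4}, this forces $i_1=i_2$. Therefore $|K_3|\ge k^2$.
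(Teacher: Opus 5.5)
Your proposal has a genuine gap at the centrality step, and the obstacle you describe comes from which element you chose to test. Set $a=i_1-i_2$ and $b=j_2-j_1$. Lifting $q_1^{a}=q_2^{b}$ gives $p_1^{a}p_2^{-b}\in\langle p_4\rangle$. You then tested $x=p_2^{b}=p_1^{a}p_4^{n}$, and $p_3$ indeed does not visibly commute with it.

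Test instead $y=p_1^{a}=p_2^{b}p_4^{-n}$.
\begin{itemize}
\item As a power of $p_1$, $y$ commutes with $p_1,p_3,p_4$.
\item In the form $p_2^{b}p_4^{-n}$, $y$ commutes with $p_2$, because $\sigma_2$ and $\sigma_4$ commute.
\end{itemize}
In your own terms: you already showed $x$ commutes with $p_1,p_2,p_4$; so does $p_4$; hence so does $y=xp_4^{-n}$. And $p_3$ is automatic since $y\in\langle p_1\rangle$.

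So $y$ is central in $P$, and Lemma \ref{lem-SNAQ-is-centreless} gives $p_1^{a}=\id$, that is, $a\equiv 0\pmod k$. Then $q_2^{b}=q_1^{a}=\id$, so $b\equiv 0$ by Lemma \ref{lem-k=k1-H4}. This is exactly the paper's proof. It lifts directly as $p_1^{i}=p_2^{j}p_4^{n}$, deduces $\langle q_1\rangle\cap\langle q_2\rangle=\{\id\}$, and counts $|\langle q_1\rangle\langle q_2\rangle|=k^2$. It uses only that $\sigma_1$ commutes with $\sigma_3,\sigma_4$ and $\sigma_2$ with $\sigma_4$, so it does not matter which end edge carries the label $5$.

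The workaround you propose does not close the gap, for three reasons.
\begin{itemize}
\item It only treats the case where the edge $\sigma_1\sigma_2$ has label $3$.
\item Its key step is an unperformed computer check.
\item That check cannot succeed, because its conclusion is false inside $K_3$ alone. Compose the standard representation $\A[A_2]\to\mathrm{SL}_2(\mathbb Z)$ with reduction mod $3$, and take the product with the map to $\mathbb Z/2$ sending both generators to the generator. This gives a non-abelian finite quotient of $\A[A_2]/\llangle\sigma_1^6\rrangle$ with generators of order exactly $6$, in which $\sigma_1^3=\sigma_2^3$ is central and nontrivial. So $a=b=3\not\equiv 0\pmod 6$ is possible locally.
\end{itemize}
The vanishing therefore has to come from the centrelessness of $P$. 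Your final step, ``the conjugation trick via $c$ handles $p_3$'', is left unspecified, and it is your original obstacle again. A minor point: conjugation by $q_1q_2$ sends $q_1\mapsto q_2$ but $q_2\mapsto q_1q_2q_1^{-1}$, so it does not swap them. Your deduction $a\equiv b$ survives only because it uses just the first of these.
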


\begin{proof}
 Observe that $\langle q_1\rangle\cap\langle q_2\rangle=\{1\}$. Indeed, given $0\leq i,j\leq k-1$, if $q_1^i=q_2^j$, there exists $n\in\mathbb N$ such that $p_1^i=p_2^jp_4^n$. Now, every $\sigma_i$ either commutes with $\sigma_1$ or with $\sigma_2$ and $\sigma_4$. Thus, the element $p_1^i=p_2^jp_4^n$ is central in $P$. By Lemma \ref{lem-SNAQ-is-centreless}, we get $p_1^i=\id$. Since $o(p_1)=k$, this shows that $i=0$. Thus, we get $q_2^j=\id$ and again $j=0$ by Lemma \ref{lem-k=k1-H4}. Since $\langle q_1\rangle\cdot\langle q_3\rangle\subset K_3$, we obtain 
 \begin{equation*}
     |K_3|\geq|\langle q_1\rangle\cdot\langle q_2\rangle|=\frac{|\langle q_1\rangle||\langle q_2\rangle|}{|\langle q_1\rangle\cap\langle q_2\rangle|}=k^2.
 \end{equation*}

This concludes the proof.
\end{proof}

\begin{lem}\label{lem-size-K1-H4}
    We have $|K_1|\geq k^3$. In particular, if $k\geq 7$, we have $|K_1|>240$. 
\end{lem}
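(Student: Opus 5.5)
We follow exactly the pattern used for $E_6$, $E_7$, $E_8$. The group $K_1=Z_P(p_4)$ contains $\langle p_4\rangle$ and also $K_2=\langle p_1,p_2\rangle$, because $\sigma_4$ commutes with $\sigma_1$ and $\sigma_2$ in $\A[H_4]$. Hence $\langle p_4\rangle\cdot K_2\subseteq K_1$, and the product formula gives
\[
|K_1|\geq |\langle p_4\rangle\cdot K_2|=|\langle p_4\rangle|\left|\frac{K_2}{\langle p_4\rangle\cap K_2}\right|=k\,|K_3|.
\]
Here $|\langle p_4\rangle|=k$ by definition of $k$.

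Next we apply Lemma \ref{lem-size-K3-H4}, which gives $|K_3|\geq k^2$. Combining this with the inequality above yields $|K_1|\geq k^3$.

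For the numerical consequence, take $k\geq 7$. Then $k^3\geq 343>240$, so $|K_1|>240$.

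Everything here is routine given the earlier lemmas. The only point to check is that the product $\langle p_4\rangle\cdot K_2$ is indeed a subgroup of $K_1$. This holds because $p_4$ centralises $K_2$ and trivially centralises itself, so $\langle p_4\rangle$ normalises $K_2$. Note also that Lemma \ref{lem-size-K3-H4} already uses Lemma \ref{lem-k=k1-H4}, so no further input is needed.
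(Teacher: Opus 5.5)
Your proof is correct and matches the paper's argument: you embed $\langle p_4\rangle\cdot K_2$ in $K_1$, use the product formula to get $|K_1|\geq k|K_3|$, and then invoke Lemma \ref{lem-size-K3-H4} to conclude $|K_1|\geq k^3\geq 343>240$ for $k\geq 7$. Your check that $\langle p_4\rangle$ normalises $K_2$ is harmless but not needed, since the counting formula $|HK|=|H||K|/|H\cap K|$ holds for arbitrary subgroups $H,K$.
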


\begin{proof}
    We have $\langle p_4\rangle\cdot K_2\subset K_1$. Thus, we get
    \begin{equation*}
        \begin{aligned}
            |K_1|\geq |\langle p_4\rangle \cdot K_2|&=|\langle p_4\rangle|\left |\frac{K_2}{\langle p_4\rangle\cap  K_2}\right |\\
            &=k|K_3|.
        \end{aligned}
    \end{equation*}
    We conclude that the claimed inequality holds by Lemma \ref{lem-size-K3-H4}. For $k\geq 7$, this provides the lower bound $|K_1|\geq 7^3=343>240$. 
\end{proof}

Therefore, we can assume that $k\in\{3,4,5,6\}$. The following lemma handles these cases.

\begin{lem}\label{lem-size-P-k=3,4,5-H4}
    If $k\in\{3,4,5,6\}$, we have $|P|>7\, 200$.
\end{lem}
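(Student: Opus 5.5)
Throughout, $P$ is a non-abelian quotient of $\A[H_4]$ of order at most $7\,200$, and $k=o(p_i)\in\{3,4,5,6\}$. I argue by contradiction.

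\textbf{Reduction to a finite group.} Since $\A[H_4]^{\mathrm{ab}}\cong\Z$, Lemma \ref{lem-SNAQ-is-centreless} lets me pass to $P/Z(P)$, which is still non-abelian and has the same generator order $k$. So I may assume that $P$ is centreless. Then $P$ is a quotient of
\[
G_k=\A[H_4]/\llangle \sigma_1^k, c^{15}\rrangle,
\]
because $c^{15}$ generates $Z(\A[H_4])$.

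\textbf{Small quotients of $G_k$.} For each $k$ I would use GAP to list all normal subgroups of $G_k$ of index at most $7\,200$, using a LowIndexNormalSubgroups-type search or the epimorphism search already used for $B_5(4)'$. I then check that every corresponding quotient is abelian or has order greater than $7\,200$.

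I expect this to hold because of the cases $k=3,4,5$. For $k=3$ and $k=5$, $G_k$ should be finite or close to a known complex-reflection-type quotient. The sub-braid $\langle\sigma_1,\sigma_2\rangle$, which has a label-$5$ edge, with $\sigma_i^k=1$ is the group $\A[I_2(5)]/\llangle\sigma^k\rrangle$. This is finite for $k=3$ ($G_{20}$-type) and, I believe, infinite for $k\ge4$ except in special cases.

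\textbf{Fallback via the centralizer bound.} If a direct normal-subgroup search is infeasible, I would instead refine the bound $|K_1|\geq k|K_3|$ from Lemma \ref{lem-size-K1-H4}, mimicking Lemma \ref{lem-size-K7-k=6-E8}. The group $K_3$ is a non-abelian quotient of $\A[I_2(5)]/\llangle\sigma_1^k\rrangle$.
\begin{itemize}
\item For $k=3$, this is the exceptional group $G_{20}$, of order $360$, whose smallest non-abelian quotient is $\mathfrak A_5$ of order $60$. This gives $|K_1|\geq 180$, which is insufficient. So I would show that $K_3$ cannot be $\mathfrak A_5$: otherwise $(p_1p_2)^5$ would be trivial modulo $\langle p_4\rangle$, and the central-element argument would force a GAP-checkable collapse of $\A[H_4]/\llangle\sigma_1^3,(\sigma_1\sigma_2)^5\rrangle$ to an abelian group. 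Then $|K_3|\geq 360$ and $|K_1|\geq 1080>240$.
\item For $k=4,5,6$, I would similarly show that $(q_1q_2)^5\notin\langle q_1\rangle\langle q_2\rangle$. This gives $|K_3|\geq 2k^2$, hence $|K_1|\geq 2k^3\geq 128$. For $k=4$ this still falls short of $240$, so I need one more coset and $|K_3|\geq 4k^2$.
\end{itemize}

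\textbf{Main obstacle.} The hard part is $k=4$ (and possibly $k=6$), where the crude counts are insufficient and $G_k$ is presumably infinite. There I would fall back on a direct low-index computation. With $|P|\leq 7\,200$ and $|\C|\geq 30$ (Lemma \ref{lem-size-conjugacy-H4}), the point stabiliser of the action on $\C$ has index at most $30\cdot 240$ in $P$. So I would run GAP's LowIndexSubgroups on the finitely presented group $G_4$ up to index $7\,200$, or search for transitive permutation representations of degree $30$–$240$ on the conjugacy class. For each subgroup found, I would check that the normal core gives an abelian quotient or a quotient of order greater than $7\,200$.
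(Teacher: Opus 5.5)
\textbf{Verdict: there is a genuine gap.} The proposal never actually establishes the bound, and the computational criterion you plan to check is false for even $k$.

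\textbf{The criterion and the reduction.} The Coxeter number of $H_4$ is $30$ and its exponents $1,11,19,29$ are all odd. Hence $c^{15}$ maps to $w_0=-1$ in $\W[H_4]$. It follows that $\sigma_i\mapsto\bar s_i$ is an epimorphism $G_k\to \W[H_4]/Z(\W[H_4])$ for every even $k$, so $G_4$ and $G_6$ have a non-abelian quotient of order exactly $7\,200$.

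What is true, and what the lemma means in context, concerns centreless quotients in which the generators have order \emph{exactly} $k$. Any search must impose both conditions.

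Your reduction does not supply the second condition. Passing to $P/Z(P)$ keeps $P$ non-abelian but can lower the order of the generators. For example, $\sigma_i\mapsto(\bar s_i,1)$ maps $G_6$ onto $\W[H_4]/Z(\W[H_4])\times\mathbb Z/3$ with generators of order $6$, yet the central quotient has generators of order $2$. (In the paper, $P$ is the centreless SNAQ candidate from the start, so no reduction is needed.) A normal-subgroup enumeration of $G_k$ up to index $7\,200$ with the right filters could in principle replace the paper's computation, but you have not carried it out.

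\textbf{The hand fallback does not close.}
\begin{itemize}
\item For $k=3$, the relation $(p_1p_2)^5=p_4^n$ is not amenable to the central-element trick, since neither side commutes with $p_3$. So each $n\in\{0,1,2\}$ would need its own check.
\item Also for $k=3$, excluding $K_3\cong\mathfrak A_5$ only gives $|K_3|\ge 120$, not $360$. The centre of $G_{20}$ has order $6$, so $G_{20}$ has non-abelian quotients of order $120$ and $180$. The weaker bound would still suffice, but the claim as written is wrong.
\item For $k=4,5,6$, the claim $(q_1q_2)^5\notin\langle q_1\rangle\langle q_2\rangle$ is unproved, and $k=4$ is left open.
\item LowIndexSubgroups up to index $7\,200$, or even a few hundred, on this presentation is not a feasible computation.
\end{itemize}

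\textbf{The missing idea.} The paper makes the computation small by observing, via GAP, that $G_k'$ is perfect. Then $P'$ is a non-trivial perfect quotient of $G_k'$ of order at most $7\,200$, and one only needs to test epimorphisms from $G_k'$ onto the few perfect groups of that order. The survivors $\mathfrak A_5$ and $\mathfrak A_5\times\mathfrak A_5$ are then handled using
\[
|P|\ge|\C|\,|K_1|\ge 30k^3
\quad\text{and}\quad
|P|=|P^{\mathrm{ab}}|\,|P'|\le k\,|P'|.
\]

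Two cautions about this step. First, by the epimorphism above, these survivors also appear for $k=4$, whereas the paper reports them only for $k=6$. Second, discarding $P'\cong\mathfrak A_5\times\mathfrak A_5$ with $|P^{\mathrm{ab}}|<k$ (for instance $|P|=7\,200$) genuinely requires using both $o(p_i)=k$ and $Z(P)=\{1\}$. The paper's step ``$|P|=6|P'|$'' does not follow from $o(p_i)=6$ alone. Whichever computational route you choose, a complete argument has to treat this case explicitly.
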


\begin{proof}
    In all these cases, the derived subgroup $(\A[H_4]/\llangle c^{15},\sigma_1^k\rrangle)'$ is perfect. Thus, the group $P'$ is perfect and non trivial, since $P$ is not abelian. We verify with GAP that the only case where a perfect group of order less than or equal to $7\, 200$ is a quotient of $(\A[H_4]/\llangle c^{15},\sigma_1^k\rrangle)'$ is $k=6$, where the groups $\mathfrak A_5$ and $\mathfrak A_5\times \mathfrak A_5$ are such quotients. Combining Lemmas \ref{lem-size-conjugacy-H4} and \ref{lem-size-K1-H4}, if $k=6$ we know that $|P|\geq 30\times 6^3=6\, 480$. Moreover, the abelianization of $(\A[H_4]/\llangle c^{15},\sigma_1^6\rrangle)'$ is isomorphic to $\Z/6$ and is generated by the image of the Artin generators. In particular, since the order of these generators is assumed to be $6$ in $P$, we have $|P|=6|P'|$. We deduce that $|P'|\geq 6\, 480/6=1\, 080$. Therefore, the group $P'$ cannot be isomorphic to $\mathfrak A_5$ and if it is isomorphic to $\mathfrak A_5\times \mathfrak A_5$, we have $|P|=6|\mathfrak A_5\times \mathfrak A_5|=21\, 600>7\, 200$. 
    In all cases, we have $|P|>7\, 200$. This concludes the proof. 
\end{proof}

\begin{proof}[Proof of Proposition \ref{prop-SNAQ-H4}]
    Lemmas \ref{lem-size-K1-H4} and \ref{lem-size-P-k=3,4,5-H4} prove Proposition \ref{prop-order-centralizer-H4}, which in turn implies Proposition \ref{prop-SNAQ-H4}.
\end{proof}

\subsection{Proof of Theorem \ref{thm-SNAQ}}

The different statements of Theorem \ref{thm-SNAQ} are proven separately in Propositions \ref{prop-SNAQ-firstcases},\ref{prop-SNAQ-E6},\ref{prop-SNAQ-E7},\ref{prop-SNAQ-E8},\ref{prop-SNAQ-H3} and \ref{prop-SNAQ-H4}.

\printbibliography
\end{document}